\documentclass[11pt, a4paper]{article}\usepackage{amsfonts}
\usepackage{amsmath}
\usepackage{amsthm}
\usepackage{amscd}
\usepackage{amsmath}
\usepackage{amssymb}
\usepackage{amscd}
\usepackage{amsfonts}
\usepackage{amsbsy}
\usepackage{graphicx}
\usepackage{booktabs}
\usepackage{makecell}
\usepackage{enumitem}
\usepackage{tikz}
\usepackage{booktabs}
\usepackage{multirow}
\usepackage{diagbox}
\usepackage{appendix}
\usepackage{mathrsfs}
\usepackage{amsmath,amsfonts,amsthm,amssymb,amscd,color,xcolor}
\usepackage{graphicx,amscd,mathrsfs,wrapfig,mathrsfs,lipsum}
\usepackage{microtype}
\usepackage{float}
\usepackage{tikz}
\usepackage{multicol}
\usepackage{caption}
\usetikzlibrary{arrows}
\usepackage{comment}
\usepackage{framed}
\usepackage{tocloft}
\usepackage{hyperref}

\hypersetup{colorlinks=true,citecolor=blue,%
	linkcolor=blue,urlcolor=blue,bookmarksnumbered=true,
	bookmarksopen=true,bookmarksopenlevel=1,breaklinks=true}

\allowdisplaybreaks[4]

\newcommand{\la}{\lambda}

\newcommand{\dd}{{\textup d}}

\newcommand{\PPPP}{{\mathfrak P}}

\DeclareRobustCommand{\rchi}{{\mathpalette\irchi\relax}}
\newcommand{\irchi}[2]{\raisebox{\depth}{$#1\chi$}}

\let\emptyset\varnothing
\newtheorem*{maintheorem}{Main Theorem}

\newtheorem{theorem}{Theorem}[section]
\newtheorem{proposition}[theorem]{Proposition}
\newtheorem{lemma}[theorem]{Lemma}

\newtheorem{corollary}[theorem]{Corollary}

\theoremstyle{definition}

\theoremstyle{definition}

\numberwithin{equation}{section}

\numberwithin{figure}{section}
\def\9{{\infty}}

\def\({\left(}
\def\){\right)}
\def\<{\langle}
\def\>{\rangle}

\colorlet{darkblue}{blue!50!black}

\hypersetup{
	colorlinks,%
	citecolor=blue,%
	filecolor=red,%
	linkcolor=darkblue,%
	urlcolor=blue,%
	pdfnewwindow=true,%
	pdfstartview={FitH}
}

\def\polhk#1{\setbox0=\hbox{#1}{\ooalign{\hidewidth
			\lower1.5ex\hbox{`}\hidewidth\crcr\unhbox0}}}
\def\polhk#1{\setbox0=\hbox{#1}{\ooalign{\hidewidth
			\lower1.5ex\hbox{`}\hidewidth\crcr\unhbox0}}}
\def\polhk#1{\setbox0=\hbox{#1}{\ooalign{\hidewidth
			\lower1.5ex\hbox{`}\hidewidth\crcr\unhbox0}}} 
\def\polhk#1{\setbox0=\hbox{#1}{\ooalign{\hidewidth
			\lower1.5ex\hbox{`}\hidewidth\crcr\unhbox0}}}

\providecommand{\MR}{\relax\ifhmode\unskip\space\fi MR }

\title{Polynomial mixing for the weakly damped stochastic nonlinear Schr\"odinger equation on the whole space}

\date{\today}
\begin{document}

\author{Vahagn~Nersesyan\,\footnote{NYU-ECNU Institute of Mathematical Sciences at NYU Shanghai, 3663 Zhongshan Road North, Shanghai, 200062, China, e-mail: \href{mailto:vn2111@nyu.edu}{vn2111@nyu.edu}.}         \and 
  Tianyi Pan\,\footnote{NYU-ECNU Institute of Mathematical Sciences at NYU Shanghai, 3663 Zhongshan Road North, Shanghai, 200062, China, e-mail: \href{mailto:tp2861@nyu.edu}{tp2861@nyu.edu}.}         
}

\maketitle

\begin{abstract}
We consider the weakly damped stochastic nonlinear Schr\"odinger (NLS) equation on the real line, driven by a noise that is white in time and smooth in space. Assuming that the noise is sufficiently non-degenerate, we prove that the equation has a unique stationary measure in the class of probability measures concentrated on~$H^2$, and establish
polynomial mixing in the dual-Lipschitz metric over~$H^1$. We do not impose any restriction on the size of the damping.
The proof is based on a coupling argument, whose key ingredient is a
Foia\c{s}--Prodi-type estimate in the $H^1$-norm, derived by means of a Lyapunov functional adapted to the linearized NLS dynamics. To compensate for the loss
of compactness, we combine this estimate with a truncated Poincar\'e
inequality and a space-time weight function quantifying the spatial decay of
solutions.
\end{abstract}

\smallskip
\noindent
{\bf MSC2020:} {35Q55, 37A25, 37L40, 60H15}

\smallskip
\noindent
{\bf Keywords:} {NLS equation, unbounded domain, polynomial mixing, coupling method, Foia\c{s}--Prodi estimate,  weighted growth estimates}

 \tableofcontents

 \section{Introduction}

In this paper, we study the long-time behavior of the weakly damped stochastic nonlinear Schr\"odinger (NLS) equation
	\begin{align}
	\left\{
	\begin{aligned}
		&\partial_t u+\alpha u-i\partial_x^2 {u}+i|u|^2u=b\partial_t W, \quad x\in\mathbb R,\\
		&	{u}(0,x)={u}_0(x),  \label{equation-1}
	\end{aligned}	
	\right.
\end{align}where $\alpha>0$ is the damping coefficient, for which no largeness assumption is made. The equation is driven by a noise of the form \begin{align}\label{04130915} 
b\partial_tW=\sum_{j=1}^\infty b_j\big(\partial_t\beta^1_j(t) +i\partial_t\beta^2_j(t)\big)e_j(x), 
\end{align} where $\{b_j\}_{j\ge1}$ are non-negative constants,
$\{\beta_j^1,\beta_j^2\}_{j\ge1}$ are independent standard real-valued Brownian
motions on a filtered probability space
$(\Omega,\mathcal F,\mathcal F_t,\mathbb P)$  satisfying the usual conditions (see, e.g., Definition~2.25 in \cite{KS14}), and~$\{e_j\}_{j\ge1}$ is an orthonormal basis of $L^2(\mathbb R;\mathbb R)$. We also assume that $e_j\in H^4(\mathbb R;\mathbb R)$ for every~$j\ge1$ and 
\begin{gather}
   \mathcal B_\varphi:=\sum_{j=1}^\infty b_j^2\|\varphi e_j\|^2<\infty,
   \label{03300242}\\
   \mathcal B_4:=\sum_{j=1}^\infty b_j^2\|e_j\|_{H^4}^2<\infty,
   \label{03300242b}
\end{gather}
with $\varphi(x):=\sqrt{1+x^2}$. Both hold, for instance, for the Hermite basis,
provided the coefficients $b_j$ decay sufficiently fast. Here and in what follows, we write $H^k:=H^k(\mathbb R;\mathbb C)$ and $L^2:=L^2(\mathbb R;\mathbb C)$,
with norms $\|\cdot\|_{H^k}$ and $\|\cdot\|$; the real-valued spaces are denoted by
$H^k(\mathbb R;\mathbb R)$ and $L^2(\mathbb R;\mathbb R)$.

Under the above assumptions, problem~\eqref{equation-1} is well posed
in~$H^1$: for every $u_0\in H^1$ it has almost surely a unique solution
$u\in C(\mathbb R_+;H^1)$, depending continuously on~$u_0$, and the solutions
form a Feller family of Markov processes $(u_t,\mathbb P_{u_0})$ parameterized
by the initial condition $u_0\in H^1$, where $\mathbb P_{u_0}$ denotes the
probability corresponding to the solution issued from~$u_0$. Moreover, $H^2$-regularity is propagated:
if $u_0\in H^2$, then almost surely $u\in C(\mathbb R_+;H^2)$ (see, e.g., \cite{DD03}). For a Banach space~$X$, let $\mathcal{P}(X)$ denote the set of Borel probability
measures on~$X$. We write
$P_t(u,\Gamma)=\mathbb{P}_u\{u_t\in\Gamma\}$ for the associated
transition function and denote by 
\begin{align*}
 \PPPP_tf(u)&=\int_{H^1}f(v)\,P_t(u,\dd v),
 \qquad \PPPP_t:C_b(H^1)\to C_b(H^1),\\
 \PPPP^*_t\lambda(\Gamma)&=\int_{H^1}P_t(u,\Gamma)\,\lambda(\dd u),
 \qquad \PPPP^*_t:\mathcal{P}(H^1)\to \mathcal{P}(H^1)
\end{align*}
the corresponding Markov semigroups. Recall that a measure
$\nu \in \mathcal{P}(H^1)$ is stationary for $(u_t, \mathbb{P}_{u_0})$
if $\PPPP_t^* \nu = \nu$ for all~$t \ge 0$. Since $H^2$ is a Borel subset
of~$H^1$, we regard $\mathcal{P}(H^2)$ as a subset of~$\mathcal{P}(H^1)$
in the natural way, so that stationarity of a measure in~$\mathcal{P}(H^2)$
refers to the semigroup~$\PPPP_t^*$ acting on~$\mathcal{P}(H^1)$.
\begin{maintheorem}
There exists an integer $N_0\geq1$, depending only on $\alpha$, on the
basis $\{e_j\}_{j\ge1}$, and on the quantities $\mathcal{B}_\varphi$ and
$\mathcal{B}_4$, such that if
\begin{align}\label{07070547}
     b_j\neq 0 \quad \text{for }j=1,\ldots,N_0,
\end{align}
then the family $(u_t,\mathbb{P}_{u_0})$ admits a unique stationary measure
$\nu$ in~$\mathcal{P}(H^2)$. Moreover, $\nu$ has finite moments of every order
in~$H^2$, and for every $q\geq1$ there is a constant $c_q>0$ such that
\begin{align}\label{05050801}
     \|\PPPP_t^*\la-\nu\|^*_{L(H^1)}
     \leq c_q\left(1+\int_{H^2}\|u\|_{H^2}^{20}\,\la(\dd u)\right)t^{-q}
\end{align}
for any $\la\in\mathcal{P}(H^2)$ and $t>0$. Here $\|\cdot\|^*_{L(H^1)}$ is the
dual-Lipschitz metric defined in~\eqref{E:DL}.
\end{maintheorem}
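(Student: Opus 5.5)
We prove the Main Theorem by a coupling argument in the spirit of the Kuksin--Shirikyan framework for polynomial mixing (an abstract criterion in the style of Theorem~3.1.7 in \cite{KS14}). The structure is: (i) a priori and Lyapunov-type estimates, (ii) a Foia\c{s}--Prodi-type estimate in $H^1$, (iii) a Girsanov-based construction of a coupling $(u_t,u_t')$ issued from any pair $(u_0,u_0')\in H^2\times H^2$, and (iv) a polynomial bound on the tail of the coupling time. Existence of a stationary measure in $\mathcal P(H^2)$ with all $H^2$-moments follows separately, by Bogolyubov--Krylov applied to time averages of $\PPPP_t^*\delta_0$. The needed tightness comes from uniform-in-time moment bounds for $\|u_t\|_{H^2}$ and for the weighted quantity $\|\varphi u_t\|$, which compensate for the lack of compact embedding on $\mathbb R$. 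Uniqueness and the rate \eqref{05050801} then follow from the coupling estimates.

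\textbf{Step 1: moment bounds.} Itô's formula applied to the mass $\|u\|^2$ and to the energy $\frac12\|\partial_xu\|^2+\frac14\|u\|_{L^4}^4$ gives exponential moments and $\mathbb E\sup$ bounds with damping $\alpha$; here we use $\mathcal B_4<\infty$. A modified $H^2$ functional, namely $\|\partial_x^2u\|^2$ corrected by lower-order nonlinear terms, gives polynomial moments. These moments are only polynomial because the $H^2$ bound grows like a power of the $H^1$ energy, and this is the source of the power $20$ and of the polynomial rather than exponential rate. Applying Itô to $\|\varphi u\|^2$ and using $\mathcal B_\varphi$ yields a space-time weight bound of the form $\mathbb E\|\varphi u_t\|^2\le C(1+t)^{k}(1+\|\varphi u_0\|^2+\dots)$, or a stationary version of it.

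\textbf{Step 2: Foia\c{s}--Prodi estimate.} Let $P_N$ denote the projection onto $\mathrm{span}\{e_1,\dots,e_N\}$. Consider $u$ and the modified solution $v$ of
\[
\partial_t v+\alpha v-i\partial_x^2v+i|v|^2v=b\partial_tW+\lambda P_N(u-v).
\]
For the difference $w=u-v$ we use a Lyapunov functional adapted to the linearization, of the form $\|\partial_xw\|^2+\mathrm{Re}\int(\ldots)|w|^2+\ldots$ built from the quadratic part of the energy at $u$, so that the cubic terms cancel up to terms controlled by $\|u\|_{H^2},\|v\|_{H^2}$ times $\|w\|_{H^1}^2$. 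To absorb these terms we use a truncated Poincaré inequality: for $R$ large and $N\ge N(R)$,
\[
\|(1-P_N)w\|_{L^2(|x|\le R)}\le \varepsilon\|w\|_{H^1}.
\]
The region $|x|>R$ is handled through the decay weight $\varphi$, via $\|w\|_{L^\infty(|x|>R)}\lesssim R^{-1/2}\|\varphi w\|^{1/2}\|w\|_{H^1}^{1/2}$ or a similar interpolation. The outcome is
\[
\|w_t\|_{H^1}^2\le e^{-\alpha t+C\int_0^t(\|u\|_{H^2}^p+\|v\|_{H^2}^p+\ldots)\,ds}\,\|w_0\|_{H^1}^2
\]
on events where the weighted norms stay below $R$. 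The main obstacle is this step, which requires controlling the far-field contribution without compactness while keeping $N_0$ dependent only on $\alpha$, the basis, $\mathcal B_\varphi$ and $\mathcal B_4$. The plan is to introduce stopping times at which the growth functional $\int_0^t\|u\|_{H^2}^p-Lt$, or the weighted norm, exceeds a level $\rho$, and to show via the Step~1 moments that these stopping times are finite only with polynomially small probability.

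\textbf{Step 3: coupling and conclusion.} On each interval $[kT,(k+1)T]$ we construct a maximal coupling of the laws of $v$ (driven by the shifted noise) and $u'$. Novikov's condition holds before the stopping time, since $\lambda P_Nw$ is small in $L^2$; for this we need $b_j\ne0$ for $j\le N_0$ in order to invert $b$ on $P_N L^2$. Pinsker's inequality then bounds the total variation distance by $C\|w_0\|_{H^1}$. Iterating gives the standard recurrence/squeezing structure. A hitting-time estimate for a ball in $H^2$, with polynomial moments coming from the Lyapunov function, combined with geometric success probabilities, yields $\mathbb E\,\tau^q<\infty$ with the bound $C(1+\|u_0\|_{H^2}^{20}+\|u_0'\|_{H^2}^{20})$. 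Finally we use
\[
|\PPPP_tf(u_0)-\PPPP_tf(u_0')|\le \|f\|_L\big(\mathbb P(\tau>t/2)+Ce^{-\alpha t/4}\big)\le c_q(\ldots)t^{-q}.
\]
Integrating this against $\lambda\otimes\nu$ gives \eqref{05050801}, and applying the bound to two stationary measures in $\mathcal P(H^2)$ yields uniqueness.
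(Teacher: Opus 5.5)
Your outline has the right overall architecture: moment bounds, growth stopping times with polynomial tails, an $H^1$ Foia\c{s}--Prodi estimate built on the linearized energy, Girsanov, maximal coupling, and recurrence. But the Foia\c{s}--Prodi step, which is the heart of the proof, would not close as you set it up.

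With the nudging feedback $\lambda P_N(u-v)$, the equation for $w$ keeps the full nonlinearity and dispersion. The term $\langle iuv\overline{w},P_Nw\rangle$ in the $L^2$ balance is not made small by the truncated Poincar\'e inequality, so it forces $\lambda$ to be large. Meanwhile the $H^1$ part of your functional picks up terms such as $\lambda\langle w,\partial_x^2P_Nw\rangle$ and $\lambda\int|w|^2\,\mathrm{Re}(\overline{v}P_Nw)\,\dd x$, whose constants grow with $\lambda$ and $N$; for a general basis, $P_N$ does not commute with $\partial_x^2$. These terms must be compensated by a multiple of the $L^2$ estimate, whose error is only $\varepsilon\|w\|_{H^1}^2\mathcal{L}$ with $N\ge N_\varepsilon$. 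So the choice of $\varepsilon,N,\lambda$ becomes circular.

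The paper's feedback $i\mathsf{P}_N(|v|^2v-|u|^2u-\partial_x^2(v-u))$ is designed to avoid exactly this. It yields $\partial_tw+\alpha w-i\mathsf{Q}_N\partial_x^2w+i\mathsf{Q}_N[(|u|^2+|v|^2)w+uv\overline{w}]=0$. Hence $\mathsf{P}_Nw_t=e^{-\alpha(t-s)}\mathsf{P}_Nw_s$, every nonlinear term carries $\mathsf{Q}_N$, and $N$ enters only through $C_{N,\varepsilon}\|w_T\|^2$.

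The decay must also have the form $e^{\alpha t-\varepsilon\int\mathcal{L}}$ with $\varepsilon\to0$ as $N\to\infty$. On the good event, $\int_0^s\mathcal{L}\,\dd r\le\mathcal{C}_1(\rho+Ks+\dots)$ with $K$ independent of $N$. Your bound $e^{-\alpha t+C\int_0^t(\cdots)}$ with a fixed $C$ therefore decays only if $CK<\alpha$, which is precisely the largeness condition on $\alpha$ that the theorem avoids.

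Finally, the estimate cannot be pathwise. The cross terms $\langle|u|^2+|v|^2,|w|^2\rangle$ and $\langle uv,w^2\rangle$ produce It\^o martingales, so one only gets the conditional-expectation bound of Proposition~\ref{fpestimate}. Consequently Novikov's condition is not automatic on the growth event. The paper truncates the Girsanov drift at an extra time $\tau_0^{u,v}$, when $\int(1+\mathcal{H}(u)+\mathcal{H}(v))\widetilde{\mathcal{M}}$ reaches a deterministic level, paying $e^{-a}$ via Chebyshev and Proposition~\ref{03100846}. The resulting total-variation bound is not $C\|w_0\|_{H^1}$ but $\mathcal{C}_4\rho^{-1}+e^{-a}+\frac12\big(\exp\{\mathcal{C}_4C^*(d_0)e^{\mathcal{C}_4\rho+a}\}-1\big)^{1/2}$. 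This is made small by choosing $\rho$ and $a$ first, then $d_0$.

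Your treatment of the far field also fails for the statement as posed. A static weight requires $\|\varphi u_0\|<\infty$. This does not follow from $u_0\in H^2$, and the flow does not create it: already $e^{-it\partial_x^2}\,x\,e^{it\partial_x^2}=x-2it\partial_x$. Yet the theorem covers every $\lambda\in\mathcal{P}(H^2)$, with a constant depending only on $\int\|u\|_{H^2}^{20}\lambda(\dd u)$. This is why the paper uses $\psi(t,x)=\varphi(x)(1-e^{-t/\varphi(x)})$. It vanishes at $t=0$, satisfies $\psi\le\varphi$ and $\psi\le t$, and has bounded derivatives. So $\|\psi(t)u_t\|$ is finite for every $u_0\in L^2$, and it grows at most linearly outside an event of polynomially small probability (the stopping time $\tau_\psi^u$).

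The weight must also sit on a factor $u$ or $v$ of the nonlinear product, as in $\|(1-\chi_A)uv\overline{w}\|\le\varepsilon\|\psi(t)u\|\,\|v\|_{L^\infty}\|w\|_{L^\infty}$ for large $t$. Your interpolation puts it on $w$. Since $\|\varphi w\|$ is only controlled by $\|\varphi u\|+\|\varphi v\|$, this destroys the quadratic homogeneity in $w$ that a Foia\c{s}--Prodi estimate needs.

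Lastly, the $q$-independent exponent $20$ comes from \emph{exponential} moments of the return time to $B_{H^2}(d_0)$, with the fixed Lyapunov function $\widetilde{F}=\|u\|^{10}+\mathcal{H}^5+\mathcal{G}$ (Lemma~\ref{03160229}). It also uses the fact that every coupling attempt starts in $B_{H^2}(d_0)$, so its failure probabilities do not depend on the initial data. Polynomial moments of the hitting time would in general give an exponent that grows with $q$.
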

Several comments on this result are in order. First, no restriction is
imposed on the size of the damping parameter $\alpha$, nor on its relation
to the intensity of the noise. Second, the integer $N_0$ does not depend on
the rate $q$ in~\eqref{05050801}, so the same finitely many non-degenerate
modes ensure polynomial mixing of every order. Third, the convergence holds
in the dual-Lipschitz metric over $H^1$, whereas the quantity appearing on
the right-hand side is the $20$th moment of the initial measure in $H^2$.
This gap between the two spaces is intrinsic to the method: as explained
below, the stabilization and Girsanov-type arguments require control of the
$H^2$-norm of the trajectories. For the same reason, uniqueness is
established in the class $\mathcal P(H^2)$; since the equation has no
smoothing mechanism, whether a stationary measure charging~$H^1\setminus H^2$ can exist remains an open question. Finally, the exponent~$20$ originates from the Lyapunov-type functionals used in the recurrence
estimates and is unlikely to be optimal.

Since the mid-1990s, considerable progress has been made in understanding the ergodic properties of PDEs perturbed by white-in-time noise. The first results were obtained in~\cite{FM95,KS02,EMS01,Ha02,HM06} for the Navier--Stokes system and other parabolic equations on bounded domains; we refer the reader to the book~\cite{KS12} and the surveys~\cite{Deb13,KS17} for a detailed account of the theory. Two features of the underlying dynamics play an essential role in these works: the regularizing effect and strong dissipation provided by the parabolic structure of the equations, and the compactness properties available on bounded domains, such as the compactness of Sobolev embeddings and the discreteness of the spectrum of the Laplacian. Both features fail for problem~\eqref{equation-1}. The equation is
dispersive, so that the linear evolution preserves every $H^k$-norm
and the only dissipation is the zeroth-order damping term~$\alpha u$;
moreover, the equation is posed on the whole real line, so that all
compactness properties are lost. The Main Theorem provides the first uniqueness and mixing result for the stochastic NLS equation in an unbounded domain \emph{without} any largeness assumption on the damping.

Let us recall some previous results on the ergodicity of the stochastic NLS
equation. In the weakly damped case, the existence of a stationary measure
was established in~\cite{Kim06} on a bounded domain and in~\cite{EKZ17} on
the whole space. Much less is known about uniqueness and mixing. The first result in this
direction was obtained in~\cite{DO05}, where the weakly damped NLS equation
was considered on a bounded interval and the uniqueness of the stationary
measure was proved, together with a polynomial rate of convergence to it.
The polynomial rather than exponential rate is closely related to weak damping: in contrast with parabolic problems, the energy-type functionals of the NLS equation admit moments that grow linearly in time, but no exponential moment bounds; this rules out the supermartingale-type arguments on which the proofs of exponential mixing are based. The~same obstruction is present in our setting, which is why our result gives
a polynomial and not an exponential rate; exponential mixing for the weakly
damped equation remains open even on the torus.

Let us also mention~\cite{CHW17}, which proves that a fully discrete scheme
for the weakly damped stochastic NLS equation on a bounded interval is
uniquely ergodic and approximates the stationary measure of the underlying
continuous equation; see also the book~\cite{HW19} for a systematic account. 
 Finally, in the case of a
bounded noise localized in space or in Fourier modes, exponential mixing for
the NLS equation on the torus has recently been obtained
in~\cite{CXZZ25,CXZZ26} via a controllability approach; see also~\cite{CX25}
for related large deviations results.

All the uniqueness results mentioned above concern equations on bounded domains. For randomly forced PDEs in unbounded domains, only a few
works address the uniqueness of the stationary measure and mixing. Most of
them are devoted to Burgers-type equations perturbed by a space-time
homogeneous noise (see~\cite{Bak13,BCK14,BaL19,DGR21}); the methods of these papers exploit structural features specific to
the Burgers equation (the Hopf--Cole transform, variational
representations of solutions, and order-preservation properties) and
do not apply to Schr\"odinger-type equations. For the Navier--Stokes system
driven by a non-homogeneous bounded noise, uniqueness and exponential
mixing in unbounded domains are established in~\cite{Ner22} via a
controllability approach combined with the asymptotic compactness of the
dynamics. In the case of a white-in-time noise, the uniqueness and mixing for
the complex Ginzburg--Landau equation on the real line are proved
in~\cite{NZ24} by introducing a space-time weight function that
quantifies the spatial decay of solutions and compensates for the loss
of compactness; these ideas have been extended to the Navier--Stokes system in the
whole space in~\cite{NZ24b}, to the damped nonlinear wave equation
in~\cite{G24}, and to viscous conservation laws on the line
in~\cite{GL26}. All
these equations possess either a parabolic regularization or a strong
dissipation mechanism that is not available for the weakly damped NLS
equation considered here. Let us also mention the papers~\cite{BFZ23,NS25}, where the ergodicity
of the stochastic NLS equation is established under the assumption that
the damping is sufficiently large.

The proof of the Main Theorem is based on a coupling argument; see
Section~3.1.2 of~\cite{KS12} for a systematic presentation of this method in
the context of randomly forced dissipative PDEs. Such arguments were applied in~\cite{DO05} to the weakly
damped NLS equation on a bounded interval, in~\cite{Mar14} to the damped
nonlinear wave equation, and in~\cite{NZ24,NZ24b} to equations on unbounded
domains. Each of these works, however, relies on a structural feature that is absent in our setting: a spectral gap and compact embeddings on a bounded domain, exponential moment bounds for the energy, or a parabolic regularization supporting the weighted estimates. Equation~\eqref{equation-1} has none of these properties: being posed on the
whole real line, it enjoys neither a spectral gap nor compact embeddings; its
only dissipation is the zeroth-order damping; and no exponential moment bounds
are available for its energy functionals. The main ingredients of the scheme therefore require a different approach.

A central component of the method is a Foia\c{s}--Prodi-type estimate,
which expresses a stabilization property by an additive finite-dimensional
feedback; in the deterministic setting such estimates go back
to~\cite{FP67}, and they were first combined with the Girsanov theorem in
the study of mixing for SPDEs in~\cite{EMS01,KS02,Ha02}. 
 In our setting, the derivation of this estimate
faces two difficulties. The first one comes from the unboundedness of
the domain: as in~\cite{NZ24,NZ24b}, we use a truncated Poincar\'e inequality together
with the space-time weight function
\begin{equation}\label{E:psi-def}
\psi(t,x):=\varphi(x)\Big(1-\exp\Big\{-\frac{t}{\varphi(x)}\Big\}\Big),
\qquad \varphi(x):=\sqrt{1+x^2},
\end{equation}
whose spatial growth controls the behavior of solutions at infinity,
while the vanishing of $\psi$ at $t=0$ keeps the weighted norm
$\|\psi(t)u_t\|$ finite without any weighted integrability assumption
on the initial data. The second
difficulty is specific to the NLS equation. Recall that in the coupling
method one introduces, along with a solution~$u$
of~\eqref{equation-1}, an auxiliary process~$v$ solving the same
equation with the same noise, supplemented by a finite-dimensional
feedback term that drives $v$ toward~$u$ (see~\eqref{auxiliary}). Because the equation has no smoothing, the cubic
nonlinearity cannot be controlled by the $L^2$-norm of the difference
$w=u-v$ alone, and the Foia\c{s}--Prodi estimate must instead be
established in the $H^1$-norm. To this end, we introduce a Lyapunov-type functional, obtained by
adding to the $H^1$-energy of $w$ the cross terms~$\langle |u|^2+|v|^2,|w|^2\rangle$ and $\langle uv,w^2\rangle$ arising
from the energy of the linearized equation. Its It\^o evolution is
controlled by energy-type functionals of $u$ and $v$, including a
second-order functional governing the~$H^2$-norm, together with the weighted
norms $\|\psi(t)u\|$ and $\|\psi(t)v\|$; this yields an averaged
Foia\c{s}--Prodi-type estimate (see Proposition~\ref{fpestimate}).

As explained above, the energy functionals admit no exponential moment bounds; accordingly, the stopping times controlling their growth have only polynomially decaying tails (see the Appendix). Combined with the Foia\c{s}--Prodi-type estimate and the Girsanov theorem,
these bounds produce, for any two initial points in a ball in $H^2$, a
coupling of the corresponding trajectories whose probability of breaking down
decays polynomially in time. Together with an exponential
recurrence property for the $H^2$-norm of solutions, obtained from a
stabilization argument for the unforced equation and Lyapunov-type
estimates, this gives polynomial moment bounds of arbitrary order for
the last decoupling time, from which the polynomial
mixing~\eqref{05050801} follows. This is where the $H^2$-regularity of the initial data enters: as noted above,
controlling the It\^o evolution of the Lyapunov-type functional requires the
second-order energy, and both the Girsanov-type estimates and the recurrence
argument rely on it; correspondingly, the stationary measure is shown to be
concentrated on~$H^2$.

The paper is organized as follows. Section~\ref{S:2} is devoted to the Foia\c{s}--Prodi-type estimates in
$H^1$ and their consequences. In Section~\ref{S:3}, we compare, by means of the Girsanov
theorem, the law of the auxiliary process with that of a solution of
the original equation, and estimate the probability that its energy
functionals grow abnormally. The coupling construction and the proof of
the Main Theorem are carried out in Section~\ref{S:4}. Finally, the
Appendix collects the proofs of several auxiliary results used
throughout the paper.

 \subsection*{Notation}

\emph{Function spaces.}
For $p\in[1,\infty]$, we set $L^p:=L^p(\mathbb{R};\mathbb{C})$, with norm
$\|\cdot\|_{L^p}$. Throughout the paper $L^2$ is regarded as a {\it real}
Hilbert space, with inner product
\begin{align*}
    \langle u,v\rangle:=\mathrm{Re}\int_{\mathbb{R}} u(x)\overline{v}(x)\,\dd x
\end{align*}
and the corresponding norm $\|\cdot\|$.
For an integer $k\ge1$, $H^k:=H^k(\mathbb{R};\mathbb{C})$ is the Sobolev space
of order~$k$, similarly regarded as a real Hilbert space, with norm denoted by
$\|\cdot\|_{H^k}$.

\smallskip
\noindent
\emph{Measures and distances.} For a Banach space~$X$, we denote by $B_X(R)$ the closed ball in~$X$ of
radius $R>0$ centered at the origin, by $\mathcal{B}(X)$ the Borel
$\sigma$-algebra of~$X$, and by $\mathcal{P}(X)$ the set of Borel
probability measures on~$X$.
 We denote
by $L_b(X)$ the space of bounded real-valued Lipschitz functions
$f:X\to\mathbb{R}$, endowed with the norm
\begin{align*}
    \|f\|_{L(X)}:=\sup_{u\in X}|f(u)|
    +\sup_{\substack{u,v\in X\\ u\neq v}}
    \frac{|f(u)-f(v)|}{\|u-v\|_X}.
\end{align*}
For $\mu_1,\mu_2\in\mathcal{P}(X)$, we define the dual-Lipschitz and total
variation distances by
\begin{gather}
    \|\mu_1-\mu_2\|^{*}_{L(X)}:=\sup\Big\{\Big|\int_X f\,\dd\mu_1
    -\int_X f\,\dd\mu_2\Big|:
     \|f\|_{L(X)}\leq 1\Big\},\label{E:DL}
    \\
    \|\mu_1-\mu_2\|_{\mathrm{var}}
    :=\sup_{\Gamma\in\mathcal{B}(X)}|\mu_1(\Gamma)-\mu_2(\Gamma)|.\nonumber
\end{gather}
The set $\mathcal{P}(X)$ is endowed with the dual-Lipschitz metric; when the space~$X$ is clear from the context,
we abbreviate $\|\cdot\|_{L(X)}$ and~$\|\cdot\|^{*}_{L(X)}$ to $\|\cdot\|_{L}$
and~$\|\cdot\|^{*}_{L}$. For $T>0$, we denote by $C([0,T];X)$ the space of continuous
functions from $[0,T]$ to~$X$, and by $C_0([0,T];X)$ the subspace of
functions vanishing at $t=0$; the spaces $C([0,\infty);X)$ and
$C_0([0,\infty);X)$ are defined similarly.

\smallskip
\noindent
\emph{Miscellaneous.}
 For a random variable $\xi$ and an event~$A$, we denote
by~$\mathcal{D}(\xi)$ the law of~$\xi$ and write
$\mathbb{E}[\xi;A]:=\mathbb{E}[\xi\mathbf{1}_A]$, $\mathbb{E}[\xi;A,B]:=\mathbb{E}[\xi\mathbf{1}_{A\cap B}]$ where~$\mathbf{1}_A$ denotes
 the indicator function of~$A$. We write $a\wedge b:=\min\{a,b\}$ and~$a\vee b:=\max\{a,b\}$. Constants denoted $C,C_N,C_{p,q},\ldots$ are positive and inessential, and may
change from line to line, their subscripts indicating the parameters they
depend on. All other constants (for example $\mathcal{C}_i$, $\mathcal{C}_p^\sharp$,
$K$, $K_{i,p}$, $\mathscr{C}$,  and $\rho_*$) are fixed once and for all
at the point where they are introduced.

\section{Foia\c{s}--Prodi estimate}\label{S:2}

The coupling method used in the proof of the Main Theorem relies on a
stabilization mechanism: given two initial conditions $u_0$ and $u_0^\prime$,
one adds to the second equation a finite-dimensional feedback force that drives
the corresponding solution toward the solution issued from~$u_0$. This is implemented through an auxiliary process $v$, defined as the
solution of
 	\begin{align}
	\left\{
	\begin{aligned}
		&\partial_t v+\alpha v -i\partial_{x}^2 {v}+i|v|^2v=i\mathsf{P}_N(|v|^2v-|u|^2u-\partial_x^2(v-u))+b\partial_t W,\\
		&	{v}(0,x)={u}^\prime_0(x),\label{auxiliary}
	\end{aligned}	
	\right.
\end{align}
where $u$ is the solution of \eqref{equation-1} starting from $u_0$, and
$\mathsf{P}_N$ is the orthogonal projection onto the 
low modes:
\begin{align*}
    \mathsf{P}_N\xi:=\sum_{j=1}^N\langle ie_j,\xi\rangle ie_j+\sum_{j=1}^N\langle e_j,\xi\rangle e_j.
\end{align*}
 The projection $\mathsf{P}_N$ commutes with multiplication by $i$ and with complex conjugation; namely $\mathsf{P}_N(i\xi)=i\mathsf{P}_N\xi$ and $\overline{\mathsf{P}_N\xi}=\mathsf{P}_N\overline{\xi}$ for any $\xi\in L^2$. In~particular, 
 \begin{align}\label{08301619}
 \langle i\mathsf{P}_N\xi,\xi\rangle=\langle i\mathsf{Q}_N\xi,\xi\rangle=0,
 \end{align}where $\mathsf{Q}_N:=I-\mathsf{P}_N$.
 
  The well-posedness of~\eqref{auxiliary} in
$H^1$ is established in Appendix~\ref{WPA}. The feedback term
$$
i\mathsf{P}_N(|v|^2v-|u|^2u-\partial_x^2(v-u))
$$
is designed to serve two purposes that pull in opposite directions. It must be
strong enough to stabilize $v$ toward $u$: the difference $w:=u-v$ then decays
in the $H^1$-norm, in the averaged sense made precise in Proposition~\ref{fpestimate}, provided that suitable energy functionals of $u$
and $v$ grow at most linearly in time. At the
same time, it must be supported on finitely many modes, so that the Girsanov
theorem can be used to compare the laws of $v$ and~$u^\prime$, the solution to \eqref{equation-1} starting from $u_0^\prime$, provided that
$b_1,\ldots,b_N\neq0$; the corresponding estimates are established in
Section~\ref{S:3} and exploited in the coupling construction of
Section~\ref{S:4}.
 Throughout
this section, $N\geq1$ is arbitrary, and $u_0,u_0^\prime\in H^2$.

The difference $w=u-v$ solves the equation
\begin{align}\label{02230853-1}
\left\{
	\begin{aligned}
    &\partial_tw+\alpha w-i\mathsf{Q}_N(\partial_x^2w)+i\mathsf{Q}_N\big[\big(|u|^2+|v|^2\big)w+uv\overline{w}\big]=0,\\
    &w(0)=u_0-u^\prime_0,
    \end{aligned}
\right.
\end{align}which implies that
\begin{align}\label{03190251}
    \mathsf{P}_N w_t=e^{-\alpha(t-s)}\mathsf{P}_Nw_s,
    \qquad t\geq s\geq 0.
\end{align}

To compensate for the loss of compactness on the unbounded domain, we shall combine two tools. The first is the following truncated
Poincar\'e inequality from~\cite{NZ24}. For any $A>0$, let $\rchi_A:\mathbb{R}\rightarrow[0,1]$ be a smooth cutoff function satisfying
\begin{equation}
\rchi_A(x)=\left\{ 
\begin{aligned}
    &1,\quad |x|\leq A,\\&
    0,\quad |x|\geq 2A.\label{03180955}
\end{aligned}
\right.
\end{equation}
\begin{lemma}[\cite{NZ24}, Lemma~3.1]\label{03011437}
     For any $\varepsilon>0$ and $A>0$, there exists an integer
     $N_{\varepsilon,A}\geq 1$ such that
    \begin{align*}
        \|\mathsf{Q}_N\rchi_A f\|\leq \varepsilon\|f\|_{H^{1}}
    \end{align*}
    for any  $N\geq N_{\varepsilon,A}$  and $f\in H^1$.
\end{lemma}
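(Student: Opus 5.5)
The plan is to prove the bound by contradiction, using the Rellich compactness theorem on a bounded interval together with the fact that $\mathsf{Q}_N\to0$ strongly on $L^2$. We first reduce to a single normalized sequence. Suppose the statement fails for some $\varepsilon>0$ and $A>0$. Then for every $N$ there is $f_N\in H^1$ with $\|f_N\|_{H^1}=1$ and $\|\mathsf{Q}_N\rchi_A f_N\|>\varepsilon$. Since the two families of inequalities are nested ($\|\mathsf{Q}_{N'}\xi\|\le\|\mathsf{Q}_N\xi\|$ for $N'\ge N$), it suffices to produce a contradiction along a subsequence $N_k\to\infty$.

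The compactness step comes next. Set $g_N:=\rchi_A f_N$. Since $\rchi_A$ is smooth with bounded derivative, $\|g_N\|_{H^1}\le C_A$, and every $g_N$ is supported in $[-2A,2A]$. By the Rellich--Kondrachov theorem on the bounded interval $(-2A,2A)$, together with the uniform support, a subsequence $g_{N_k}$ converges in $L^2(\mathbb R;\mathbb C)$ to some $g$. This is the key point: the cutoff restores the compactness that is lost on the whole line.

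We then conclude using strong convergence of the projections. Because $\{e_j\}$ is an orthonormal basis of $L^2(\mathbb R;\mathbb R)$, the family $\{e_j,ie_j\}$ is an orthonormal basis of $L^2$, viewed as a real Hilbert space with the inner product $\langle\cdot,\cdot\rangle$. Hence $\|\mathsf{Q}_N g\|\to0$ for each fixed $g$. Since $\|\mathsf{Q}_N\|\le1$,
\[
\|\mathsf{Q}_{N_k}g_{N_k}\|\le\|g_{N_k}-g\|+\|\mathsf{Q}_{N_k}g\|\to0,
\]
which contradicts $\|\mathsf{Q}_{N_k}g_{N_k}\|>\varepsilon$.

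An alternative, constructive route would be to note that the set $K=\{\rchi_A f:\|f\|_{H^1}\le1\}$ is precompact in $L^2$. One would cover $K$ by finitely many $\varepsilon/2$-balls and choose $N_{\varepsilon,A}$ so that $\|\mathsf{Q}_N\|\le\varepsilon/2$ at each centre. Homogeneity then gives the bound for all $f$.

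The only genuinely nontrivial step is the compactness of the cutoff image. It rests on the uniform support in $[-2A,2A]$, which is exactly what the cutoff $\rchi_A$ supplies, and no step is expected to be a real obstacle.
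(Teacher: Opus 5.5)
Your proof is correct. Rellich compactness on $[-2A,2A]$ makes $\{\rchi_A f:\|f\|_{H^1}\le 1\}$ precompact in $L^2$, and since $\{e_j,ie_j\}_{j\ge1}$ is an orthonormal basis of $L^2$ as a real Hilbert space, the contractions $\mathsf{Q}_N$ converge to zero strongly, hence uniformly on that precompact set. The paper gives no proof of its own and simply imports the lemma from \cite{NZ24}; your compactness argument, in either the contradiction form or the finite-net form, is the standard way to prove it for an arbitrary orthonormal basis $\{e_j\}$.
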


 The second tool, in the spirit of the papers~\cite{NZ24,NZ24b}, is the
space-time weight function $\psi$ defined in~\eqref{E:psi-def}. We begin with the following estimate~for~$\|w\|$.
\begin{lemma}\label{03011027}
For any $\varepsilon>0$, there exist $N^0_\varepsilon\ge 1$ and $T^0_\varepsilon>0$ such that for any $N\geq N^0_\varepsilon$,  $T\geq T^0_\varepsilon$, and $t\geq 0$, we have
\begin{align*}
    \frac{1}{2}\|w_{t+T}\|^2&+\frac{3\alpha}{4}\int_{ T}^{t+T}\|w_r\|^2\dd r\leq  \frac{1}{2}\|w_{ T}\|^2 +C_{N,\varepsilon}\int_{ T}^{t+T}\|\mathsf{P}_Nw_r\|^2\dd r\nonumber\\&+\varepsilon\int_{ T}^{t+T}\|w_r\|_{H^1}^2\big(\|u_r\|^2_{H^1}+\|v_r\|^2_{H^1}+\|\psi(r)u_r\|^2+\|\psi(r)v_r\|^2\big)\dd r. 
\end{align*}
\end{lemma}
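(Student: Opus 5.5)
The plan is to take the $L^2$ inner product of~\eqref{02230853-1} with $w$ and then control the nonlinear term. By~\eqref{08301619} the linear dispersive term drops out: $\langle i\mathsf{Q}_N\partial_x^2 w,w\rangle = \langle i\partial_x^2 w,w\rangle-\langle i\mathsf{P}_N\partial_x^2w,w\rangle$. The first piece vanishes by integration by parts. For the second, use the symmetry of $\mathsf{P}_N$ to write it as $\langle i\partial_x^2 w,\mathsf{P}_Nw\rangle$, and move the derivatives onto the smooth basis functions $e_j\in H^4$. This gives a bound $C_N\|w\|\|\mathsf{P}_Nw\|$. The same argument handles $\langle i\mathsf{Q}_N F,w\rangle = \langle iF,w\rangle-\langle iF,\mathsf{P}_Nw\rangle$, with $F=(|u|^2+|v|^2)w+uv\overline w$. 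Here $\langle i(|u|^2+|v|^2)w,w\rangle=0$, and the remaining term is $\langle iuv\overline w,w\rangle$. Rather than expanding this, I would keep the whole term in the form $\langle iF,\mathsf{Q}_Nw\rangle$, using that $\langle iF,w\rangle-\langle iF,\mathsf{P}_Nw\rangle=\langle iF,\mathsf Q_N w\rangle$. We thus get
\begin{align*}
\frac12\frac{\dd}{\dd t}\|w\|^2+\alpha\|w\|^2 = \langle i\mathsf{Q}_N\partial_x^2 w,w\rangle - \langle iF,\mathsf{Q}_N w\rangle,
\end{align*}
and the first term on the right is bounded by $\frac{\alpha}{16}\|w\|^2+C_N\|\mathsf{P}_Nw\|^2$.

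The main step is the bound on $|\langle F,\mathsf{Q}_Nw\rangle|$, and this is where the lack of compactness must be overcome. Write $\mathsf{Q}_Nw=\mathsf{Q}_N(\rchi_A w)+\mathsf{Q}_N((1-\rchi_A)w)$. For the near part, Lemma~\ref{03011437} with $f=w$ gives $\|\mathsf{Q}_N\rchi_Aw\|\le\delta\|w\|_{H^1}$ for $N\ge N_{\delta,A}$. Combining this with $\|F\|\le C\|w\|_{L^\infty}(\|u\|_{L^4}^2+\|v\|_{L^4}^2)\le C\|w\|_{H^1}(\|u\|_{H^1}^2+\|v\|_{H^1}^2)$ is too crude, since it produces quartic powers of the norms of $u,v$. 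I would instead estimate $\|F\|\le C\|w\|_{L^\infty}\|u\|_{L^\infty}\|u\|$-type products, bounding $\|w\|_{L^\infty}\le\|w\|_{H^1}$ and using Cauchy--Schwarz so that only $\|w\|_{H^1}^2\|u\|_{H^1}^2$ and harmless terms appear. If a genuine mismatch of powers remains, I would use Young's inequality to absorb it into $\frac{\alpha}{16}\|w\|^2$. The near contribution is then bounded by $\varepsilon\|w\|_{H^1}^2(\|u\|_{H^1}^2+\|v\|_{H^1}^2)+\frac{\alpha}{16}\|w\|^2$ once $\delta$ is small.

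For the far part, on the support of $1-\rchi_A$ we have $|x|\ge A$. For $r\ge T$ we also have $\psi(r,x)\ge \varphi(x)(1-e^{-T/\varphi(x)})$. The obstacle is that $\psi(r,x)$ saturates at roughly $T$ for $\varphi\gg T$; still $\psi(r,x)\ge c\min(\varphi(x),T)\ge c\min(A,T)$ there. Hence $\|(1-\rchi_A)u\|_{L^\infty}$-type weights can be traded: $|u|\le |\psi u|/(c\min(A,T))$ pointwise on $|x|\ge A$. Bounding $|\langle F,\mathsf{Q}_N((1-\rchi_A)w)\rangle|$ requires care, because $\mathsf{Q}_N$ is nonlocal. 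I would write it as $|\langle \mathsf{Q}_NF,(1-\rchi_A)w\rangle|$, and then keep $\mathsf{Q}_N$ on $F$ bounded in $L^2$, with the cutoff placed on $u$. This is the delicate point: a cleaner route is to split $F$ itself, $F=\rchi_A F+(1-\rchi_A)F$, applying Lemma~\ref{03011437} to $f=\rchi_A F$ after checking that $\|F\|_{H^1}$ is controlled, and bounding $\|(1-\rchi_A)F\|\le C(\min(A,T))^{-1}\|w\|_{L^\infty}(\|u\|_{L^\infty}\|\psi u\|+\|v\|_{L^\infty}\|\psi v\|)$. Choosing $A$ and then $T^0_\varepsilon$ large, and then $N^0_\varepsilon=N_{\delta,A}$, Young's inequality gives the bound $\varepsilon\|w\|_{H^1}^2(\|u\|_{H^1}^2+\|v\|_{H^1}^2+\|\psi u\|^2+\|\psi v\|^2)+\frac{\alpha}{16}\|w\|^2$.

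Finally, integrate over $[T,t+T]$. The accumulated $\frac{\alpha}{16}$ terms leave at least $\frac{3\alpha}{4}\int_T^{t+T}\|w_r\|^2\,\dd r$ on the left, which gives the claimed inequality. I expect the far-field splitting to be the main obstacle, specifically reconciling the nonlocal projection with the spatial cutoff while keeping only quadratic dependence on $u$ and $v$.
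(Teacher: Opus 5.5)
Your ``cleaner route'' is exactly the paper's argument, and it is correct: split the nonlinearity $F=\rchi_A F+(1-\rchi_A)F$. Bound the localized piece by Lemma~\ref{03011437} together with the algebra estimate $\|F\|_{H^1}\le C\|w\|_{H^1}(\|u\|_{H^1}^2+\|v\|_{H^1}^2)$, and bound the far piece using $\psi(r,x)\ge c\min(A,T)$ for $|x|\ge A$ and $r\ge T$. Two points to tidy up: your worry about ``quartic powers'' is unfounded, since the crude bound already gives $\delta\|w\|_{H^1}^2(\|u\|_{H^1}^2+\|v\|_{H^1}^2)$, which is the required form. Also, the preliminary splitting of $\mathsf{Q}_N w$ is an unnecessary detour, because its far part only closes once you split $F$ anyway; commit to splitting $F$ from the start, including the cross term $\|v\|_{L^\infty}\|\psi u\|$ coming from $uv\overline{w}$.
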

\begin{proof}
Taking the $L^2$-inner product of~\eqref{02230853-1} with $w$ gives
\begin{align}\label{03070318}
    \nonumber\frac{1}{2}\|w_{t+T}\|^2&-\frac{1}{2}\|w_{ T}\|^2+\alpha\int_{ T}^{t+T}\|w_r\|^2\dd r\\\nonumber&=\int_{ T}^{t+T}\langle i\mathsf{Q}_N(\partial_x^2w_r),w_r\rangle\dd r-\int_{ T}^{t+T}\langle i\mathsf{Q}_N\big[u_rv_r\overline{w}_r\big],w_r\rangle\dd r\\&\quad-\int_{ T}^{t+T}\langle i\mathsf{Q}_N\big[(|u_r|^2+|v_r|^2)w_r\big],w_r\rangle\dd r=: \mathrm{I}+\mathrm{II}+\mathrm{III}.
\end{align}
Since $\langle i\partial_x^2 w,w\rangle = 0$, we have
$$
\langle i\mathsf{Q}_N(\partial_x^2 w),w\rangle
= -\langle i\mathsf{P}_N(\partial_x^2 w),w\rangle=\langle   w,i\partial_x^2 \mathsf{P}_N w\rangle.
$$
Therefore,
\begin{align}\label{04060303}
    \mathrm{I}\notag & = \int_{ T}^{t+T}\langle  w_r,i\partial_x^2\mathsf{P}_N w_r\rangle\dd r\le \int_{ T}^{t+T}\|\partial_x^2\mathsf{P}_N w_r\|\|w_r\|\dd r\\&   \leq \frac{\alpha}{4}\int_{ T}^{t+T}\|w_r\|^2\dd r+C_N\int_{ T}^{t+T}\|\mathsf{P}_Nw_r\|^2\dd r,
\end{align}where the last step uses $\|\partial_x^2\mathsf{P}_N w \|\leq C_N\|\mathsf{P}_Nw\|$ and Young's inequality.

Since $\psi(t,x)\to\infty$ as $t$ and $|x|$ both tend to $\infty$, we choose
$T^\prime_\varepsilon>0$ and $A_\varepsilon>0$ so large that
\begin{align}\label{E:psi}
    \psi(t,x)\geq \frac{1}{\varepsilon}, \qquad  t\geq T^\prime_\varepsilon,\ |x|\geq A_\varepsilon.
\end{align}
Let $\rchi_{A_\varepsilon}$ be the cutoff function defined in~\eqref{03180955}
with $A=A_\varepsilon$. We split $\mathrm{II}$ into a part supported far from
the origin and a localized one:
\begin{align*}
    \mathrm{II}&\le\int_{T}^{t+T}\|w_r\|\|\mathsf{Q}_N(u_rv_r\overline{w}_r)\|\dd r \leq  \int_{ T}^{t+T}\|w_r\|\|\mathsf{Q}_N((1-\rchi_{A_\varepsilon})u_rv_r\overline{w}_r)\|\dd r\\&\quad+\int_{ T}^{t+T}\|w_r\|\|\mathsf{Q}_N(\rchi_{A_\varepsilon} u_rv_r\overline{w}_r)\|\dd r=: \mathrm{II}_1+\mathrm{II}_2.
\end{align*}
Taking $T\geq T^\prime_\varepsilon$ and using \eqref{E:psi}, we get
\begin{align}
    \nonumber \mathrm{II}_1&  \leq  \int_{ T}^{t+T} \|w_r\|\big( \int_{|x|\geq {A_\varepsilon}} |u_r(x)v_r(x)\overline{w}_r(x)|^2\dd x\big)^{\frac{1}{2}}\dd r\nonumber\\&\nonumber\leq {\varepsilon} \int_{ T}^{t+T}\hspace{-1mm}\|w_r\|\|\psi(r)u_rv_r\overline{w}_r\|\dd r\\&\nonumber  \leq{\varepsilon}\int_{ T}^{t+T}\|w_r\|\|w_r\|_{L^\infty}\|\psi(r)u_r\|\|v_r\|_{L^\infty}\dd r.
\end{align}
By the Gagliardo--Nirenberg inequality,  we have
\begin{align}\label{07200244}
    \|u\|_{L^\infty}\leq C\|u\|^{\frac{1}{2}}\|u\|_{H^1}^{\frac{1}{2}}. 
\end{align}
 Thus
\begin{align}\label{07202136}
    \mathrm{II}_1\leq C\varepsilon\int_{ T}^{t+T}\|w_r\|_{H^1}^2\|\psi(r)u_r\|\|v_r\|_{H^1}\dd r.
\end{align}
By Lemma~\ref{03011437}, there is $N_{\varepsilon, A_\varepsilon}\ge1$ such that, for any $N\ge N_{\varepsilon, A_\varepsilon}$,
\begin{align}\label{03290745}
      \nonumber \mathrm{II}_2& \leq \varepsilon\int_{ T}^{t+T}\|w_r\|\|u_rv_r\overline{w}_r\|_{H^1}\dd r\\&\leq\nonumber   C\varepsilon\int_{ T}^{t+T}\|w_r\|\|w_r\|_{L^\infty}\big(\|u_r\|_{H^1}\|v_r\|_{L^\infty}+\|v_r\|_{H^1}\|u_r\|_{L^\infty}\big)\dd r\\\nonumber &\quad+C\varepsilon\int_{ T}^{t+T}\|w_r\|\|w_r\|_{H^1}\|u_r\|_{L^\infty}\|v_r\|_{L^\infty}\dd r\\&\leq C \varepsilon\int_{ T}^{t+T}\|u_r\|_{H^1}\|v_r\|_{H^1}\|w_r\|_{H^1}^2\dd r.
\end{align}
Combining \eqref{07202136} and \eqref{03290745}, we obtain
\begin{align}\label{04060259}
    \mathrm{II}\leq C\varepsilon\int_{ T}^{t+T}(\|u_r\|_{H^1}+\|\psi(r)u_r\|)\|v_r\|_{H^1}\|w_r\|_{H^1}^2\dd r.
\end{align}
The same argument, applied to the terms $|u|^2 w$ and $|v|^2 w$, yields
\begin{align}\label{03290746}
\nonumber \mathrm{III}&\leq\int_{ T}^{t+T}\|\mathsf{Q}_N[(|u_r|^2+|v_r|^2)w_r]\|\|w_r\|\dd r\\&\leq  C \varepsilon\int_{ T}^{t+T}(\|u_r\|_{H^1}+\|\psi(r)u_r\|)\|u_r\|_{H^1}\|w_r\|_{H^1}^2\dd r\nonumber \\&\quad +C \varepsilon\int_{ T}^{t+T}(\|v_r\|_{H^1}+\|\psi(r)v_r\|)\|v_r\|_{H^1}\|w_r\|_{H^1}^2\dd r.
\end{align}
Combining \eqref{03070318}, \eqref{04060303}, \eqref{04060259}
and~\eqref{03290746} and applying Young's inequality, we obtain the required
estimate with $C\varepsilon$ in place of~$\varepsilon$; since $\varepsilon>0$
is arbitrary, this proves the lemma.
\end{proof} 
To formulate the Foia\c{s}--Prodi estimate in the $H^1$-norm, we first introduce
the energy-type functionals $\mathcal{H}:H^1\rightarrow\mathbb{R}$ and
$\mathcal{G}:H^2\rightarrow\mathbb{R}$,
\begin{align}\label{03090247-1}
\mathcal{H}(u)&:=\frac{1}{2}\|\partial_xu\|^2+\frac{1}{4}\|u\|_{L^4}^4,
\\ \mathcal{G}(u)&:=\frac{1}{2}\|\partial_x^2u\|^2+\frac{1}{4}\|\partial_x(|u|^2)\|^2.
\label{03090247-2}
\end{align}
Together with the mass $\|u\|^2$, they control the $H^1$- and $H^2$-norms:
\begin{align}\label{E:HGnorms}
\|u\|_{H^1}^2\leq \|u\|^2+2\mathcal{H}(u),\qquad
\|u\|_{H^2}^2\leq \|u\|^2+2\mathcal{H}(u)+2\mathcal{G}(u).
\end{align}
Next, we introduce two random Lyapunov-type functionals
$$
\mathcal{M},\ \widetilde{\mathcal{M}}:\mathbb{R}_+\times H^1
\rightarrow\mathbb{R},
$$
defined in terms of the solutions $u_s$ and $v_s$ of~\eqref{equation-1}
and~\eqref{auxiliary} by
\begin{align*} 
    \mathcal{M}(s,w)&:=\frac{1}{2}\|\partial_x w\|^2
    +\frac{1}{2}\langle |u_s|^2+|v_s|^2,|w|^2\rangle
    +\frac{1}{2}\langle u_sv_s,{w}^2\rangle,\\
    \widetilde{\mathcal{M}}(s,w)&:=\frac{1}{2}\|w\|^2+\mathcal{M}(s,w).
\end{align*}
The two cross terms arise from the energy of the linearized equation; they are
what make the It\^o evolution of $\mathcal{M}$ tractable, as will become clear
in Steps~1--3 of the proof of Proposition~\ref{fpestimate} below. Since
\begin{align}\label{07200228}
    |\langle u_sv_s,w^2\rangle|\leq
    \frac{1}{2}\langle|u_s|^2+|v_s|^2,|w|^2\rangle,
\end{align}
 we have
\begin{align*}
\mathcal{M}(s,w)\geq\frac{1}{2}\|\partial_xw\|^2,
\end{align*}
and therefore
\begin{align}\label{06190211}
    \widetilde{\mathcal{M}}(s,w)\geq\frac{1}{2}\|w\|_{H^1}^2,
\end{align}
so that $\widetilde{\mathcal{M}}$ controls the $H^1$-norm of $w$. Finally, the functionals of $u$ and $v$ that will appear as coefficients in
the It\^o evolution of $\mathcal{M}$ are collected in the functional
$\mathcal{L}:\mathbb{R}_+\times H^2\times H^2\rightarrow\mathbb{R}$ defined by 
\begin{align*}
\mathcal{L}(r,u,v)&:= \|u\|^{12}+\|v\|^{12}+{\mathcal{H}}^2(u)+{\mathcal{H}}^2(v)
+\mathcal{G}^2(u)+\mathcal{G}^2(v)\\
&\quad+\|\psi(r)u\|^2+\|\psi(r)v\|^2+1.
\end{align*}
 With these notations at hand, we can state the main result of this section.
\begin{proposition}\label{fpestimate}
    For any $\varepsilon>0$, there are~$N_\varepsilon\ge1$ and $T_\varepsilon>0$ such that for any $N\ge N_\varepsilon$, $T\ge T_\varepsilon$, and finite stopping time $\tau$, we have
    \begin{align}\label{07210127}
    e^{\alpha (\tau\vee T-T)-\varepsilon\int_{T}^{\tau\vee T}\mathcal{L}(s,u_s,v_s)\dd s}\widetilde{\mathcal{M}}(\tau\vee T,w_{\tau\vee T})\in L^1(\Omega)
    \end{align}
    and 
    \begin{align}\label{04020327}
    \nonumber\mathbb{E}&\big[e^{\alpha (\tau-T)-\varepsilon\int_{T}^{\tau}\mathcal{L}(s,u_s,v_s)\dd s}\widetilde{\mathcal{M}}(\tau,w_{\tau})\big|\mathcal{F}_{T}\big]\mathbf{1}_{\tau\geq T}\\&\quad\quad\quad\quad\quad\quad\quad\quad\quad\leq \big[\widetilde{\mathcal{M}}(T,w_{T})+C_{N,\varepsilon}\|w_{T}\|^2\big]\mathbf{1}_{\tau\geq T}.
\end{align}
\end{proposition}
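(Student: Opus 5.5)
We compute the Itô differential of $\mathcal{M}(s,w_s)$ along the coupled pair $(u,v)$. Since $w$ satisfies the random PDE~\eqref{02230853-1} with no noise term, only the coefficients $|u_s|^2+|v_s|^2$ and $u_sv_s$ in $\mathcal{M}$ are stochastic, and Itô's formula applies to them. Combined with the Lemma~\ref{03011027} bound for $\frac12\|w\|^2$, this will give a pathwise differential inequality of the form
\begin{align*}
\dd\widetilde{\mathcal{M}}(s,w_s)\le\big(-\alpha\widetilde{\mathcal{M}}(s,w_s)+\varepsilon\mathcal{L}(s,u_s,v_s)\widetilde{\mathcal{M}}(s,w_s)+C_{N,\varepsilon}\|\mathsf{P}_Nw_s\|^2\big)\dd s+\dd\mathfrak{M}_s
\end{align*}
for $s\ge T$, where $\mathfrak{M}$ is a local martingale. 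Multiplying by the integrating factor $\exp\{\alpha(s-T)-\varepsilon\int_T^s\mathcal{L}\}$ removes the first two terms. The projected part decays by~\eqref{03190251}: $\|\mathsf{P}_Nw_s\|^2\le e^{-2\alpha(s-T)}\|w_T\|^2$. Its integral against $e^{\alpha(s-T)}$ is therefore bounded by $C_{N,\varepsilon}\|w_T\|^2$. Stopping at $\tau\wedge\sigma_n$, taking conditional expectation given $\mathcal{F}_T$ on $\{\tau\ge T\}$, and applying Fatou's lemma as $n\to\infty$ then yields~\eqref{04020327}. Applying the same argument to $\tau\vee T$ and taking full expectation gives the integrability~\eqref{07210127}, because $\widetilde{\mathcal{M}}(T,w_T)$ and $\|w_T\|^2$ are integrable by the moment bounds for $u,v$ in $H^1$.

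\textbf{Step 1 (deterministic part).} We differentiate $\frac12\|\partial_xw\|^2$ using~\eqref{02230853-1}. The dissipative contribution is $-\alpha\|\partial_xw\|^2$. The dispersive term $\langle i\mathsf{Q}_N\partial_x^2w,-\partial_x^2w\rangle$ reduces, via~\eqref{08301619}, to a $\mathsf{P}_N$ term bounded by $C_N\|\mathsf{P}_Nw\|\,\|w\|$. The nonlinear term contributes $\langle i\mathsf{Q}_N[(|u|^2+|v|^2)w+uv\bar w],\partial_x^2w\rangle$. Writing $\mathsf{Q}_N=I-\mathsf{P}_N$, the $I$-part contains $\partial_x^2w$ at top order. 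The cross terms in $\mathcal{M}$ are designed to cancel exactly this part: the time derivative of $\frac12\langle|u|^2+|v|^2,|w|^2\rangle+\frac12\langle uv,w^2\rangle$, taken through the $-i\partial_x^2w$ part of $\partial_tw$, produces the same expression with opposite sign. This is the energy identity for the linearized NLS. Everything left over is of lower order and falls into three groups.

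First, there are terms in which the time derivative falls on $u,v$. These are computed from the equations and involve $\partial_x^2u$, $\partial_x^2v$ and the cubic terms, so they are controlled by $\mathcal{G}$ and $\mathcal{H}$.

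Second, there are terms carrying the nonlinearity with $\mathsf{Q}_N$ replaced by $\mathsf{P}_N$, together with the feedback applied to the cross terms. Each of these contains a $\mathsf{P}_N$ factor. We handle them by moving $\mathsf{P}_N$ onto a smooth function, using $\|\mathsf{P}_N f\|_{H^2}\le C_N\|f\|$. We then need to turn $\|\mathsf{P}_N(\cdot)\|$ into small multiples of $\|w\|_{H^1}^2$ plus $\|\mathsf{P}_Nw\|^2$. This is done as in Lemma~\ref{03011027}: we split with $\rchi_{A_\varepsilon}$, use $\psi\ge1/\varepsilon$ for $|x|\ge A_\varepsilon$ and $s\ge T_\varepsilon$, and apply Lemma~\ref{03011437} to $\mathsf{Q}_N\rchi_A$.

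Third, there are terms of the form $\langle \partial_x(|u|^2)w,\partial_xw\rangle$. These are not small, and they appear to be the real difficulty.

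\textbf{Main obstacle.} A term like $\langle \partial_x(|u|^2)w,\partial_x w\rangle$ is only bounded by $C(\|u\|_{H^2}^2+\cdots)\|w\|\,\|w\|_{H^1}$. It is not small, so it cannot be absorbed into $\varepsilon\mathcal{L}\widetilde{\mathcal{M}}$ directly. My plan is to use Young's inequality in the form $\varepsilon\|w\|_{H^1}^2\mathcal{G}^2+C_\varepsilon\|w\|^2$, then absorb the $\|w\|^2$ piece using Lemma~\ref{03011027}, whose left-hand side contains the gain $\frac{3\alpha}{4}\int\|w\|^2$. This forces us to weight $\frac12\|w\|^2$ by a large constant $K_\varepsilon$ inside the functional. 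That is harmless for the final statement, since $\widetilde{\mathcal{M}}\le K_\varepsilon\|w\|^2+\mathcal{M}$ lets $C_{N,\varepsilon}\|w_T\|^2$ absorb the difference. If the damping generated this way is insufficient, the fallback is to localize this term with $\rchi_A$ as well. In that case we use the weighted norms $\|\psi u\|$ together with $L^\infty$ bounds in $H^1$ on $u$ and $w$, via~\eqref{07200244}, to make the far-field part small. The near-field part then needs $\|\rchi_A w\|\le\|\mathsf{P}_Nw\|+\|\mathsf{Q}_N\rchi_A w\|$, which is small by Lemma~\ref{03011437}. The powers $\|u\|^{12}$, $\mathcal{H}^2$ and $\mathcal{G}^2$ in $\mathcal{L}$ are exactly what these Young/Gagliardo--Nirenberg bookkeepings produce.

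\textbf{Step 2 (Itô part).} The Itô correction comes from $\dd\langle|u|^2,|w|^2\rangle$ and $\dd\langle uv,w^2\rangle$. The quadratic variation contributes $\sum_j b_j^2\langle |e_j|^2,|w|^2\rangle\le C\mathcal{B}_4\|w\|^2$, which is again absorbed via Lemma~\ref{03011027}. The martingale part has integrand linear in $b\,\dd W$ and is handled by localization only.
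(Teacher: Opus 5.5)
Your overall architecture matches the paper: It\^o's formula for $\mathcal{M}$, Young's inequality plus the $\frac{3\alpha}{4}$-gain of Lemma~\ref{03011027} to absorb $\|w\|^2$, the integrating factor, \eqref{03190251} for the $\mathsf{P}_N$-part, and localization with Fatou. But the central step is misidentified, and carried out as you propose it would not close.

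\textbf{The missing cancellation.} The whole principal part cancels \emph{exactly}, with $\mathsf{Q}_N$ intact. Write $\nabla\mathcal{M}=-\partial_x^2w+(|u|^2+|v|^2)w+uv\overline{w}$ for the gradient of $\mathcal{M}(s,\cdot)$ at frozen $(u_s,v_s)$. Then \eqref{02230853-1} reads $\partial_tw=-\alpha w-i\mathsf{Q}_N\nabla\mathcal{M}$. Hence the contribution of $\partial_tw$ to $\dd\mathcal{M}$ is $-2\alpha\mathcal{M}-\langle\nabla\mathcal{M},i\mathsf{Q}_N\nabla\mathcal{M}\rangle=-2\alpha\mathcal{M}$, by \eqref{08301619}.
\begin{itemize}
\item In the paper's Steps~1--3 this is visible term by term. $\langle i\mathsf{Q}_N[(|u|^2+|v|^2)w],\partial_x^2w\rangle$ cancels $\mathrm{Re}\int i(|u|^2+|v|^2)\overline{w}\,\mathsf{Q}_N\partial_x^2w\,\dd x$. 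The $uv\overline{w}$ terms cancel between Steps~1 and~3, and the mixed nonlinear terms cancel between Steps~2 and~3.
\item Even $\langle i\mathsf{Q}_N\partial_x^2w,\partial_x^2w\rangle$ is zero, not a $C_N\|\mathsf{P}_Nw\|\,\|w\|$ term.
\item So your groups two and three do not exist. If you split $\mathsf{Q}_N=I-\mathsf{P}_N$, the $\mathsf{P}_N$-residuals cancel among themselves by the same skew-adjointness. Terms of the type $\langle\partial_x(|u|^2)w,\partial_xw\rangle$ are exactly what the cross terms remove.
\end{itemize}
This matters because your treatment of the $\mathsf{P}_N$-residuals fails. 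Lemma~\ref{03011437} gives smallness for $\mathsf{Q}_N\rchi_A$, not for $\mathsf{P}_N$. A term such as $\langle i\mathsf{P}_N((|u|^2+|v|^2)w),\partial_x^2w\rangle$ only admits a bound like $C_N(\|u\|_{H^1}^2+\|v\|_{H^1}^2)\|w\|^2$, which is neither small nor of the form $\|\mathsf{P}_Nw\|^2$.

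\textbf{The dependence on $N$.} This is a second, related gap.
\begin{itemize}
\item The only genuinely non-small terms are those where the time derivative falls on $u,v$ in the cross terms, plus the It\^o correction $C\|w\|^2$. An example is $\mathrm{Re}\int\overline{u}|w|^2\,i\partial_x^2u\,\dd x\le\|w\|_{L^\infty}^2\mathcal{L}^{1/2}$.
\item Your Young device handles these via \eqref{07200244}: they are at most $\frac{\varepsilon}{6}\|w\|_{H^1}^2\mathcal{L}+C_\varepsilon\|w\|^2$.
\item The term $C_\varepsilon\|w\|^2$ is then absorbed by Lemma~\ref{03011027} with $\varepsilon_1=\frac{\varepsilon\alpha}{24C_\varepsilon}\wedge\frac{\varepsilon}{6}$. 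Only afterwards is $N_\varepsilon:=N^0_{\varepsilon_1}$ fixed.
\item This order is consistent only if $C_\varepsilon$ is independent of $N$. The paper secures this by bounding the feedback contributions in $\dd|v|^2$ and $\dd(uv)$ with the contractions $\|\mathsf{P}_N\xi\|,\|\mathsf{Q}_N\xi\|\le\|\xi\|$. That gives $\|w\|_{L^\infty}^2\|v\|(\|\partial_x^2u\|+\|\partial_x^2v\|+\|u\|_{L^6}^3+\|v\|_{L^6}^3)$.
\end{itemize}
Your plan of moving $\mathsf{P}_N$ onto smooth functions via $\|\mathsf{P}_Nf\|_{H^2}\le C_N\|f\|$ puts $C_N$ into the coefficient of $\|w\|^2$ or $\|w\|_{H^1}^2$. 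Then $\varepsilon_1$ depends on $N$, and the requirement $N\ge N^0_{\varepsilon_1(N)}$ becomes circular.

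If you use the exact cancellation and keep every constant $N$-free except the one multiplying $\|\mathsf{P}_Nw\|^2$, the rest of your argument goes through. Your weighted functional $K_\varepsilon\frac12\|w\|^2+\mathcal{M}$ with $K_\varepsilon\sim C_\varepsilon/\alpha$ is then equivalent to the paper's Step~4.
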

\begin{proof}
Let us fix any $\varepsilon>0$ and estimate each term of $\mathcal{M}(t,w_t)$ separately.  

\noindent{\it Step 1: first term}.
To estimate $\|\partial_x w\|^2$, we use \eqref{02230853-1} and integration by~parts:
\begin{align}\label{03291134}
    \frac12\|\partial_x w_{t+T}\|^2-\frac12\|\partial_x w_{T}\|^2&+\alpha\int_{T}^{t+T} \|\partial_xw_r\|^2\dd r\nonumber\\\nonumber&=\int_{T}^{t+T}\langle i\mathsf{Q}_N\big[\big(|u_r|^2+|v_r|^2\big)w_r\big],\partial_x^2 w_r\rangle\dd r \\&\quad+\int_{T}^{t+T} \langle i\mathsf{Q}_N(u_rv_r\overline{w}_r),\partial_x^2w_r\rangle \dd r.
\end{align}
\noindent{\it Step 2: second term}. It follows from the product rule that
\begin{align*}
\frac{1}{2}\langle(|u_{t+T}|^2+|v_{t+T}|^2)&,|w_{t+T}|^2\rangle-\frac{1}{2}\langle(|u_{T}|^2+|v_{T}|^2),|w_{T}|^2\rangle\\&=\frac{1}{2}\int_{ T}^{t+T}\langle |u_r|^2+|v_r|^2, \dd |w_r|^2\rangle+\frac{1}{2}\int_{ T}^{t+T}\langle |w_r|^2, \dd |u_r|^2\rangle\\&\quad+\frac{1}{2}\int_{ T}^{t+T}\langle |w_r|^2, \dd |v_r|^2\rangle=:\mathrm{I+II+III}.
\end{align*}
To estimate $\mathrm{I}$, we use \eqref{02230853-1} and \eqref{08301619} to deduce 
\begin{align*} 
    \nonumber\mathrm{I}=&\ \mathrm{Re}\int_{ T}^{t+T}\int_{\mathbb{R}}\big[(|u_r|^2+|v_r|^2)\overline{w}_r\big(-\alpha w_r+i\mathsf{Q}_N(\partial_x^2w_r)\big)
\big]\dd x\dd r\\\nonumber& -\mathrm{Re}\int_{ T}^{t+T}\int_{\mathbb{R}}i(|u_r|^2+|v_r|^2)\overline{w}_r\mathsf{Q}_N((|u_r|^2+|v_r|^2)w_r)\dd x\dd r\\&-\mathrm{Re}\int_{ T}^{t+T}\int_{\mathbb{R}} (|u_r|^2+|v_r|^2)\overline{w}_ri\mathsf{Q}_N(u_rv_r\overline{w}_r)\dd x\dd r.
\end{align*}
Note that the second term on the right-hand side of this equality vanishes. 
The terms $\mathrm{II}$ and $\mathrm{III}$ are treated using It\^o's formula together with \eqref{equation-1} and~\eqref{auxiliary} and assumption \eqref{03300242b}:
\begin{align}\label{07200057}
    \nonumber\mathrm{II}=&\ \mathrm{Re}\int_{ T}^{t+T}\int_{\mathbb{R}} \overline{u}_r|w_r|^2(-\alpha u_r+i\partial_x^2u_r-i|u_r|^2u_r)\dd x\dd r\\&\nonumber+\mathrm{Re}\sum_{j=1}^\infty b_j\int_{ T}^{t+T}\int_{\mathbb{R}} \overline{u}_r|w_r|^2e_j\dd x\big(\dd \beta^1_j(r)+i\dd \beta^2_j(r)\big)\\&\nonumber+\sum_{j=1}^\infty b^2_j\int_{ T}^{t+T}\int_{\mathbb{R}} |w_r|^2e_j^2\dd x\dd r\nonumber\\\leq&\nonumber-\alpha\int_{ T}^{t+T}\int_{\mathbb{R}}|{u}_r|^2|w_r|^2\dd x\dd r+C\int_{ T}^{t+T}\|w_r\|_{L^\infty}^2\|\partial_x^2u_r\|\|u_r\|\dd r\nonumber\\& +\mathrm{Re}\sum_{j=1}^\infty b_j\int_{ T}^{t+T}\hspace{-0.5mm}\int_{\mathbb{R}} \overline{u}_r|w_r|^2e_j\dd x\big(\dd \beta^1_j(r)\hspace{-0.5mm}+i\dd \beta^2_j(r)\big)+C\int_{ T}^{t+T}\|w_r\|^2\dd r
\end{align}
and
\begin{align}\label{07200054}
    \nonumber\mathrm{III}= &\ \mathrm{Re}\int_{T}^{t+T}\int_{\mathbb{R}} \overline{v}_r|w_r|^2\big(-\alpha v_r+i\mathsf{Q}_N(\partial_x^2v_r)-i\mathsf{Q}_N(|v_r|^2v_r)\\&\nonumber-i\mathsf{P}_N(|u_r|^2u_r)+i\mathsf{P}_N(\partial_x^2u_r)\big)\dd x\dd r\\&\nonumber+ \mathrm{Re}\sum_{j=1}^\infty b_j\int_{ T}^{t+T} \int_{\mathbb{R}} \overline{v}_r|w_r|^2e_j\dd x\big(\dd \beta^1_j(r)+i\dd \beta^2_j(r)\big)\\&+\sum_{j=1}^\infty b^2_j\int_{ T}^{t+T} \int_{\mathbb{R}} |w_r|^2e_j^2\dd x\dd r\nonumber\\\nonumber\leq&-\alpha\int_{ T}^{t+T} \int_{\mathbb{R}}|v_r|^2|w_r|^2\dd x\dd r \\&+ \int_{ T}^{t+T} \|w_r\|_{L^\infty}^2\|v_r\|(\|\partial_x^2u_r\|+ \|\partial_x^2v_r\|  + \|u_r\|^3_{L^6}+\|v_r\|_{L^6}^3)\dd r\nonumber\\&+ \mathrm{Re}\sum_{j=1}^\infty b_j\int_{ T}^{t+T}\!\! \int_{\mathbb{R}} \overline{v}_r|w_r|^2e_j\dd x\big(\dd \beta^1_j(r) + i\dd \beta^2_j(r)\big)\!+\!C\int_{ T}^{t+T}\!\!\|w_r\|^2\dd r.
\end{align}
For the second term on the right-hand side of \eqref{07200057}, we have
\begin{align*}
    \|w_r\|_{L^\infty}^2\|\partial_x^2u_r\|\,\|u_r\|
    &\leq \|w_r\|_{L^\infty}^2\big(\|u_r\|^2+\mathcal{G}(u_r)\big)\\
    &\leq C\|w_r\|_{L^\infty}^2\mathcal{L}^{\frac{1}{2}}(r,u_r,v_r).
\end{align*}
Similarly, for the counterpart in \eqref{07200054}, the Gagliardo--Nirenberg
and Young inequalities give
\begin{align*}
\|v_r\|\,\|u_r\|_{L^6}^3
&\leq C\|v_r\|\,\|u_r\|^2\|u_r\|_{H^1}
 \leq C\|v_r\|\big(\|u_r\|^3+\|u_r\|^2\mathcal{H}^{\frac{1}{2}}(u_r)\big)\\
&\leq C\big(\|u_r\|^4+\|v_r\|^4+\|u_r\|^6+\|v_r\|^6+\mathcal{H}(u_r)\big)
 \leq C\mathcal{L}^{\frac{1}{2}}(r,u_r,v_r).
\end{align*}
Applying similar estimates to the terms $\|v\|\|\partial_x^2u\|$ and $\|v\|\|\partial_x^2v\|$, we have
\begin{gather}
    \nonumber\int_{ T}^{t+T} \|w_r\|_{L^\infty}^2\|v_r\| (\|\partial_x^2u_r\|+ \|\partial_x^2v_r\|  + \|u_r\|^3_{L^6}+\|v_r\|_{L^6}^3)\dd r\\\leq   C\int_T^{t+T}\|w_r\|_{L^\infty}^2\mathcal{L}^{\frac{1}{2}}(r,u_r,v_r)\dd r.\label{07200146}
\end{gather}
Combining the above estimates for $\mathrm{I}$--$\mathrm{III}$, we obtain
\begin{align}\label{03291137}
    \nonumber\frac{1}{2}\langle(|&u_{t+T}|^2+|v_{t+T}|^2),|w_{t+T}|^2\rangle-\frac{1}{2}\langle(|u_{ T}|^2+|v_{ T}|^2),|w_{ T}|^2\rangle\\\le &\nonumber-2\alpha\int_{ T}^{t+T}\int_{\mathbb{R}} |w_r|^2(|u_r|^2+|v_r|^2)\dd x\dd r \\&+\nonumber \mathrm{Re}\int_{ T}^{t+T}\int_{\mathbb{R}}i (|u_r|^2+|v_r|^2)\overline{w}_r   \mathsf{Q}_N(\partial_x^2 w_r)\dd x\dd r\\&-\nonumber \mathrm{Re}\int_{ T}^{t+T}\hspace{-1.5mm}\int_{\mathbb{R}} (|u_r|^2+|v_r|^2)\overline{w}_r i\mathsf{Q}_N(u_rv_r\overline{w}_r)\dd x\dd r\\&+\mathrm{Re}\sum_{j=1}^\infty b_j\int_{ T}^{t+T}\hspace{-1.5mm}\int_{\mathbb{R}} (\overline{u}_r+\overline{v}_r)\nonumber |w_r|^2e_j\dd x\big(\dd \beta^1_j(r)+i\dd \beta^2_j(r)\big)\\&+C\int_T^{t+T}\|w_r\|_{L^\infty}^2\mathcal{L}^{\frac{1}{2}}(r,u_r,v_r)\dd r+C\int_{ T}^{t+T}\!\!\!\|w_r\|^2\dd r.
\end{align}
\noindent{\it Step 3: third term}. By the product rule,
\begin{align}\label{03291100}
    \nonumber\frac{1}{2}\langle u_{t+T}&v_{t+T},w_{t+T}^2\rangle-\frac{1}{2}\langle u_{ T}v_{ T},w_{ T}^2\rangle\\=&\ \nonumber\int_{ T}^{t+T}\langle u_rv_r, w_r\dd w_r\rangle+\frac{1}{2}\int_{ T}^{t+T}\langle w_r^2, v_r\dd u_r\rangle+\frac{1}{2}\int_{ T}^{t+T}\langle w_r^2, u_r\dd v_r\rangle\\&\quad +\frac{1}{2}\int_{ T}^{t+T}\langle w_r^2, \dd \langle\!\langle u,v\rangle\!\rangle_r\rangle=:\mathrm{I^\prime+II^\prime+III^\prime+IV^\prime},
\end{align}
where $\langle\!\langle u,v\rangle\!\rangle_\cdot:\mathbb{R}_+\times\Omega\rightarrow L^2$ is the cross-variation of $u$ and $v$; since $u$ and $v$ are driven by the same noise $bW$,
\begin{align*}
    \langle\!\langle u,v\rangle\!\rangle_t=\langle\!\langle bW\rangle\!\rangle_t=t\sum_{j=1}^\infty b_j^2e_j^2+t\sum_{j=1}^\infty b_j^2(ie_j)^2=0.
\end{align*}
The latter implies that
\begin{align*}
\mathrm{IV}^\prime=0.
\end{align*}We treat the terms $\mathrm{I}^\prime$--$\mathrm{III}^\prime$ as in Step~2.
For the term $\mathrm{I}^\prime$, we substitute \eqref{02230853-1}
 and use \eqref{08301619}:
\begin{align*} 
    \nonumber\mathrm{I}^\prime=&\ \mathrm{Re}\int_{ T}^{t+T}\hspace{-1mm}\int_{\mathbb{R}} u_rv_r\overline{w}_r\big(-\alpha \overline{w}_r-i\mathsf{Q}_N(\partial_x^2\overline{w}_r)+i\mathsf{Q}_N\big((|u_r|^2+|v_r|^2)\overline{w}_r\big)\\&+i\mathsf{Q}_N(\overline{u}_r\overline{v}_r{w}_r)\big)\dd x\dd r\nonumber\\=& -\alpha\mathrm{Re}\int_{ T}^{t+T}\hspace{-2mm}\int_{\mathbb{R}}u_rv_r\overline{w}_r^2\dd x\dd r-\mathrm{Re}\int_{ T}^{t+T}\langle i\mathsf{Q}_N(u_rv_r\overline{w}_r),\partial_x^2{w}_r\rangle\dd r\nonumber\\&+ \mathrm{Re}\int_{ T}^{t+T}\hspace{-2mm}\int_{\mathbb{R}}(|u_r|^2+|v_r|^2)\overline{w}_ri\mathsf{Q}_N(u_rv_r\overline{w}_r)\dd x\dd r.\nonumber
\end{align*}
Applying It\^o's formula and using \eqref{07200146}, we obtain
\begin{align*} 
    \nonumber\mathrm{II}^\prime&=\frac{1}{2}\mathrm{Re}\int_{ T}^{t+T}\hspace{-1mm}\int_{\mathbb{R}} v_r\overline{w}_r^2\big(-\alpha u_r+i\partial_x^2u_r-i|u_r|^2u_r\big)\dd x\dd r\\&\quad +\frac{1}{2}\mathrm{Re}\sum_{j=1}^\infty b_j\int_{ T}^{t+T}\hspace{-1mm}\int_{\mathbb{R}}
    v_r\overline{w}_r^2e_j\dd x(\dd \beta_j^1(r)+i\dd \beta_j^2(r))\nonumber\\&\leq-\frac{\alpha}{2}\mathrm{Re}\int_{ T}^{t+T}\hspace{-2mm}\int_{\mathbb{R}}u_rv_r\overline{w}_r^2\dd x\dd r+\frac{1}{2}\int_{ T}^{t+T}\|w_r\|_{L^\infty}^2\|v_r\|(\|\partial_x^2u_r\|+\|u_r\|_{L^6}^3)\dd r\nonumber\\&\quad +\frac{1}{2}\mathrm{Re}\sum_{j=1}^\infty b_j\int_{ T}^{t+T}\hspace{-1mm}\int_{\mathbb{R}}
    v_r\overline{w}_r^2e_j\dd x(\dd \beta_j^1(r)+i\dd \beta_j^2(r))\nonumber\\&\leq -\frac{\alpha}{2}\mathrm{Re}\int_{ T}^{t+T}\hspace{-2mm}\int_{\mathbb{R}}u_rv_r\overline{w}_r^2\dd x\dd r+C\int_{ T}^{t+T}\|w_r\|_{L^\infty}^2\mathcal{L}^{\frac{1}{2}}(r,u_r,v_r)\dd r\nonumber\\&\quad +\frac{1}{2}\mathrm{Re}\sum_{j=1}^\infty b_j\int_{ T}^{t+T}\hspace{-1mm}\int_{\mathbb{R}}
    v_r\overline{w}_r^2e_j\dd x(\dd \beta_j^1(r)+i\dd \beta_j^2(r)).
\end{align*}
Similarly,  
\begin{align*} 
 \nonumber\mathrm{III}^\prime&=\frac{1}{2}\mathrm{Re}\sum_{j=1}^\infty b_j\int_{ T}^{t+T}\int_{\mathbb{R}}
    u_r\overline{w}_r^2e_j\dd x(\dd \beta_j^1(r)+i\dd \beta_j^2(r))\\&\quad+\frac{1}{2}\mathrm{Re}\int_{ T}^{t+T}\int_{\mathbb{R}} u_r\overline{w}_r^2\nonumber  \big(-\alpha v_r+i\mathsf{Q}_N(\partial_x^2 v_r)-i\mathsf{Q}_N(|v_r|^2v_r)\\&\quad-i\mathsf{P}_N(|u_r|^2u_r)+i\mathsf{P}_N(\partial_x^2u_r)\big)\dd x\dd r\nonumber\\&\leq \frac{1}{2}\mathrm{Re}\sum_{j=1}^\infty b_j\int_{ T}^{t+T}\int_{\mathbb{R}}
    u_r\overline{w}_r^2e_j\dd x(\dd \beta_j^1(r)+i\dd \beta_j^2(r))\nonumber\\&\quad-\frac{\alpha}{2}\mathrm{Re}\int_{ T}^{t+T}\int_{\mathbb{R}}u_rv_r\overline{w}_r^2\dd x\dd r\nonumber \\&\quad+ C\int_{ T}^{t+T}\|w_r\|_{L^\infty}^2\|u_r\|(\|\partial_x^2u_r\|+\|\partial_x^2v_r\|+\|u_r\|_{L^6}^3+\|v_r\|_{L^6}^3)\dd r\nonumber\\&\leq \frac{1}{2}\mathrm{Re}\sum_{j=1}^\infty b_j\int_{ T}^{t+T}\int_{\mathbb{R}}
    u_r\overline{w}_r^2e_j\dd x(\dd \beta_j^1(r)+i\dd \beta_j^2(r))\nonumber\\&\quad-\frac{\alpha}{2}\mathrm{Re}\int_{ T}^{t+T}\int_{\mathbb{R}}u_rv_r\overline{w}_r^2\dd x\dd r+ C\int_{ T}^{t+T}\|w_r\|_{L^\infty}^2\mathcal{L}^{\frac{1}{2}}(r,u_r,v_r)\dd r.\nonumber
\end{align*}
Substituting the estimates for $\mathrm{I}^\prime$--$\mathrm{IV}^\prime$  into \eqref{03291100}, we derive
\begin{align}\label{03291138}
  &  \nonumber\frac{1}{2}\langle u_{t+T}v_{t+T},w_{t+T}^2\rangle-\frac{1}{2}\langle u_{ T}v_{ T},w_{ T}^2\rangle\\\nonumber&\leq-2\alpha\mathrm{Re}\int_{T}^{t+T} \int_{\mathbb{R}} u_r v_r\overline{w}_r^2\dd x\dd r-\mathrm{Re}\int_{T}^{t+T}\langle i\mathsf{Q}_N(u_r v_r\overline{w}_r),\partial_x^2{w}_r\rangle \dd r\\&\quad +\nonumber \mathrm{Re} \int_{T}^{t+T} \int_{\mathbb{R}} (|u_r|^2 + |v_r|^2)\overline{w}_ri\mathsf{Q}_N(u_r v_r\overline{w}_r)\dd x\dd r\\& \quad+\nonumber\frac{1}{2}\mathrm{Re}\sum_{j=1}^\infty b_j\int_{T}^{t+T}\int_{\mathbb{R}}
    (u_r+v_r)\overline{w}_r^2e_j\dd x(\dd \beta_j^1(r)+i\dd \beta_j^2(r))\\&\quad +C\int_{T}^{t+T}\|w_r\|_{L^\infty}^2\mathcal{L}^{\frac{1}{2}}(r,u_r,v_r)\dd r.
\end{align}
\noindent{\it Step 4: a pathwise estimate}.
In this step we prove that there exist $N_\varepsilon\ge 1$ and $T_\varepsilon>0$ such that, almost surely, for all~$t\geq0$,
\begin{align}\label{07210028}
    \nonumber  e^{\alpha t-\varepsilon\int_T^{t+T}\mathcal{L}(s,u_s,v_s)\dd s}&\widetilde{\mathcal{M}}(t+T,w_{t+T})-\widetilde{\mathcal{M}}(T,w_T)\\&\leq\nonumber  C_\varepsilon\|w_T\|^2+C_{N,\varepsilon}\|\mathsf{P}_Nw_T\|^2\\&\quad+\int_T^{t+T}e^{\alpha(r-T)-\varepsilon\int_T^r\mathcal{L}(s,u_s,v_s)\dd s}\dd M_r^T,
\end{align}
where  
\begin{align*}
    \nonumber M^T_{t}=&\ \frac{1}{2}\mathrm{Re}\sum_{j=1}^\infty b_j\int_{T}^{t}\hspace{-1mm}\int_\mathbb{R}
    (u_r+v_r)\overline{w}_r^2e_j\dd x\big(\dd \beta_j^1(r)+i\dd \beta_j^2(r)\big) \\&+\mathrm{Re}\sum_{j=1}^\infty b_j\int_{T}^{t}\hspace{-1.5mm}\int_\mathbb{R}(\overline{u}_r+\overline{v}_r)|w_r|^2e_j\dd x\big(\dd \beta^1_j(r)+i\dd \beta^2_j(r)\big).
\end{align*}
We first combine \eqref{03291134},  \eqref{03291137}, and \eqref{03291138} to obtain
\begin{align}\label{03300316}
    \nonumber &\mathcal{M}(t+T,w_{t+T})-\mathcal{M}( T,w_{ T})\leq -2\alpha \int_{ T}^{t+T}\mathcal{M}(r,w_r)\dd r\\&\nonumber -\alpha \int_{ T}^{t+T}\hspace{-1mm}\int_{\mathbb{R}} |w_r|^2(|u_r|^2+|v_r|^2)\dd x\dd r-\alpha\mathrm{Re}\int_{ T}^{t+T}\hspace{-1mm}\int_{\mathbb{R}} u_r v_r\overline{w}_r^2\dd x\dd r\\&+C\int_{T}^{t+T}\|w_r\|_{L^\infty}^2\mathcal{L}^{\frac{1}{2}}(r,u_r,v_r)\dd r+C\int_T^{t+T}\|w_r\|^2\dd r+M^{T}_{t+T}.
\end{align}
By \eqref{07200228}, the sum of the second and third terms on the
right-hand side is non-positive; namely,
\begin{align*}
    -\alpha\int_T^{t+T}\int_\mathbb{R}|w_r|^2(|u_r|^2+|v_r|^2)\dd x\dd r-\alpha\mathrm{Re}\int_T^{t+T}\int_{\mathbb{R}}u_rv_r\overline{w}_r^2\dd x\dd r\leq 0.
\end{align*}
Combining this with \eqref{03300316}, \eqref{07200244} and Young's inequality, we obtain that, for any~$\varepsilon>0$, there exists $C_\varepsilon>0$ such that
\begin{align}\label{03300349}
\nonumber \mathcal{M}(t+T,w_{t+T})-\mathcal{M}( T,w_{ T}) &\leq -2\alpha\int_{ T}^{t+T}\mathcal{M}(r,w_r)\dd 
r\\&\quad\nonumber+\frac{\varepsilon}{6}\int_{ T}^{t+T} \|w_r\|_{H^1}^2\mathcal{L}(r,u_r,v_r)\dd r\\&\quad+C_\varepsilon\int_{ T}^{t+T}\|w_r\|^2\dd r+M^T_{t+T}.
\end{align}
Applying the product rule to
\begin{align*}
    e^{\alpha t-\varepsilon\int_T^{t+T}\hspace{-0.5mm}\mathcal{L}(s,u_s,v_s)\dd s}\mathcal{M}(t+T,w_{t+T}),
\end{align*}
we get 
\begin{align}\label{06250504}
    \nonumber & e^{\alpha t-\varepsilon\int_{T}^{t+T} \mathcal{L}(s,u_s,v_s)\dd s}\mathcal{M}(t+T,w_{t+T})-\mathcal{M}(T,w_T)\\\nonumber&\leq  C_\varepsilon\int_T^{t+T} e^{\alpha (r-T)-\varepsilon\int_T^{r} \mathcal{L}(s,u_s,v_s)\dd s}\|w_r\|^2\dd r \nonumber\\&\nonumber\quad+\int_T^{t+T}  e^{\alpha (r-T)-\varepsilon\int_T^{r}\mathcal{L}(s,u_s,v_s)\dd s}\dd M_{r}^T\\\nonumber&\quad+\frac{\varepsilon}{3}\int_T^{t+T}e^{\alpha (r-T)-\varepsilon\int_T^{r} \mathcal{L}(s,u_s,v_s)\dd s}\widetilde{\mathcal{M}}(r,w_r)\mathcal{L}(r,u_r,v_r)\dd r\\&\quad-\varepsilon \int_T^{t+T}e^{\alpha (r-T)-\varepsilon\int_T^{r} \mathcal{L}(s,u_s,v_s)\dd s}{\mathcal{M}}(r,w_r)\mathcal{L}(r,u_r,v_r)\dd r.
\end{align}
To estimate the first term on the right-hand side, we obtain from
Lemma~\ref{03011027} that for any $\varepsilon_1>0$, $T\geq T^0_{\varepsilon_1}$,
and $N\geq N_{\varepsilon_1}^0$,\begin{align*}
   \nonumber \frac{1}{2}\|w_{t+T}\|^2+ \frac{3\alpha}{4} \int_{ T}^{t+T}\|w_r\|^2\dd r & \leq \frac{1}{2}\|w_{ T}\|^2 +  C_{N,\varepsilon_1} \int_{ T}^{t+T}\|\mathsf{P}_Nw_r\|^2\dd r\\&\quad+  \varepsilon_1\int_{ T}^{t+T}\|w_r\|_{H^1}^2 \mathcal{L}(r,u_r,v_r)\dd r.
\end{align*}
  Applying the product rule to
   \begin{align*}
     \frac{1}{2}e^{\alpha t-\varepsilon\int_T^{t+T}\mathcal{L}(s,u_s,v_s)\dd s}\|w_{t+
T}\|^2,
   \end{align*}
  we obtain
\begin{align}\label{06250608}
    \nonumber \frac{1}{2}&e^{\alpha t-\varepsilon\int_T^{t+T} \mathcal{L}(s,u_s,v_s)\dd s}\|w_{t+T}\|^2+\frac{\alpha}{4}\int_T^{t+T} e^{\alpha (r-T)-\varepsilon\int_{T}^{r}\mathcal{L}(s,u_s,v_s)\dd s}\|w_{r}\|^2\dd r\\&\leq \nonumber \frac{1}{2}\|w_T\|^2+C_{N,\varepsilon_1}\int_{T}^{t+T} e^{\alpha (r-T)}\|\mathsf{P}_Nw_r\|^2\dd r\\&\quad\nonumber+2\varepsilon_1\int_T^{t+T} e^{\alpha (r-T)-\varepsilon\int_{T}^{r}\mathcal{L}(s,u_s,v_s)\dd s}\widetilde{\mathcal{M}}(r,w_r)\mathcal{L}(r,u_r,v_r)\dd r\\&\quad-\frac{\varepsilon}{2}\int_T^{t+T}e^{\alpha(r-T)-\varepsilon\int_T^r\mathcal{L}(s,u_s,v_s)\dd s}\|w_r\|^2\mathcal{L}(r,u_r,v_r)\dd r.
\end{align}
In particular, this inequality implies that  
\begin{align*}
     \int_{T}^{t+T}&e^{\alpha (r-T)-\varepsilon\int_{T}^{r} \mathcal{L}(s,u_s,v_s)\dd s}\|w_r\|^2\dd r\\&\leq  \frac{2 }{\alpha}\|w_T\|^2+C_{N,\varepsilon_1}\int_T^{t+T} e^{\alpha (r-T)}\|\mathsf{P}_Nw_r\|^2\dd r\\&\quad+\frac{8\varepsilon_1}{\alpha}\int_{T}^{t+T}e^{\alpha (r-T)-\varepsilon\int_{T}^{r}\mathcal{L}(s,u_s,v_s)\dd s}\widetilde{\mathcal{M}}(r,w_r)\mathcal{L}(r,u_r,v_r)\dd r.
\end{align*}
Combining this with \eqref{06250504}, we get
\begin{align*}
    & e^{\alpha t-\varepsilon\int_T^{t+T}\mathcal{L}(s,u_s,v_s)\dd s}\mathcal{M}(t+T,w_{t+T})-\mathcal{M}(T,w_T)\\&\leq  \frac{2C_\varepsilon}{\alpha}\|w_T\|^2+C_{N,\varepsilon_1,\varepsilon}\int_T^{t+T}e^{\alpha (r-T)}\|\mathsf{P}_Nw_r\|^2\dd r\\&\quad+\int_T^{t+T}  e^{\alpha (r-T)-\varepsilon\int_T^{r}\mathcal{L}(s,u_s,v_s)\dd s}\dd M_{r}^T\\&\quad+\big(\frac{\varepsilon}{3}+\frac{8C_\varepsilon\varepsilon_1}{\alpha}\big)\int_{T}^{t+T} e^{\alpha (r-T)-\varepsilon\int_{T}^{r}\mathcal{L}(s,u_s,v_s)\dd s}\widetilde{\mathcal{M}}(r,w_r)\mathcal{L}(r,u_r,v_r)\dd r\\&\quad-\varepsilon\int_{T}^{t+T}e^{\alpha (r-T)-\varepsilon\int_{T}^{r}\mathcal{L}(s,u_s,v_s)\dd s}{\mathcal{M}}(r,w_r)\mathcal{L}(r,u_r,v_r)\dd r.
\end{align*}
Summing this and \eqref{06250608}, we obtain
\begin{align*}
    &\nonumber  e^{\alpha t-\varepsilon\int_T^{t+T}\mathcal{L}(s,u_s,v_s)\dd s}\widetilde{\mathcal{M}}(t+T,w_{t+T})-\widetilde{\mathcal{M}}(T,w_T)\\&\leq\nonumber  (\frac{2C_\varepsilon}{\alpha}+\frac{1}{2})\|w_T\|^2+C_{N,\varepsilon_1,\varepsilon}\int_T^{t+T}e^{\alpha (r-T)}\|\mathsf{P}_Nw_r\|^2\dd r\\&\quad\nonumber+\hspace{-0.5mm}\big(\frac{8C_\varepsilon\varepsilon_1}{\alpha}\hspace{-0.5mm}+\hspace{-0.5mm}2\varepsilon_1\hspace{-0.5mm}-\hspace{-0.5mm}\frac{2\varepsilon}{3}\big)\int_{T}^{t+T}\hspace{-1.5mm}e^{\alpha (r-T)-\varepsilon\int_{T}^{r}\mathcal{L}(s,u_s,v_s)\dd s}\widetilde{\mathcal{M}}(r,w_r)\mathcal{L}(r,u_r,v_r)\dd r\\&\quad+\int_T^{t+T}e^{\alpha(r-T)-\varepsilon\int_T^r\mathcal{L}(s,u_s,v_s)\dd s}\dd M_r^T.
\end{align*}
Choosing
\begin{align*}
    \varepsilon_1:=\frac{\varepsilon\alpha}{24C_\varepsilon}\wedge \frac{\varepsilon}{6},
\end{align*}
 defining $T_\varepsilon:=T^0_{\varepsilon_1}$ and $N_\varepsilon:=N_{\varepsilon_1}^0$, and using \eqref{03190251}, we arrive at \eqref{07210028}.

\noindent{\it Step 5: conclusion}.
 For any $n\in{\mathbb{N}}$, by taking $t:=(\tau\wedge n-T)_+$ in \eqref{07210028}, we obtain that
\begin{align}\label{07210028-1}
    \nonumber  &e^{\alpha (\tau\wedge n-T)_+-\varepsilon\int_T^{T\vee(\tau\wedge n)}\mathcal{L}(s,u_s,v_s)\dd s}\widetilde{\mathcal{M}}(T\vee(\tau\wedge n),w_{T\vee(\tau\wedge n)})\\&\hspace{3cm}\leq\nonumber  \widetilde{\mathcal{M}}(T,w_T)+C_\varepsilon\|w_T\|^2+C_{N,\varepsilon}\|\mathsf{P}_Nw_T\|^2\\&\hspace{3.5cm}+\int_T^{T\vee(\tau\wedge n)}e^{\alpha(r-T)-\varepsilon\int_T^r\mathcal{L}(s,u_s,v_s)\dd s}\dd M_r^T.
\end{align}
The martingale term on the right-hand side of  this inequality is integrable. Indeed, Lemmas \ref{02280441}(ii), \ref{02280436}(ii), \ref{WPA2} and It\^o's isometry imply that
\begin{align}\label{07210154}
    &\nonumber\mathbb{E}\big[\big|\int_T^{T\vee(\tau\wedge n)}\hspace{-1mm}e^{\alpha(r-T)-\varepsilon\int_T^r\mathcal{L}(s,u_s,v_s)\dd s}\dd M_r^T\big|^2\big]\leq\hspace{1mm} e^{2\alpha(T\vee n-T)}\mathbb{E}\big[\langle M^T\rangle_{T\vee n}\big]\\&\qquad\leq\hspace{1mm} Ce^{2\alpha n}\sum_{j=1}^\infty b_j^2\|e_j\|_{H^2}^2\mathbb{E}\big[\int_T^{T\vee n}(\|u_r\|_{H^1}^6+\|v_r\|_{H^1}^6)\dd r\big]<\infty.
\end{align}
Taking expectations in \eqref{07210028-1}, we obtain from Lemmas \ref{02280441}(ii), \ref{02280436}(ii), and~\ref{WPA2} that
\begin{align*}
    &\mathbb{E}\big[e^{\alpha(\tau\wedge n-T)_+-\varepsilon\int_T^{T\vee(\tau\wedge n)}\mathcal{L}(s,u_s,v_s)\dd s}\widetilde{\mathcal{M}}(T\vee(\tau\wedge n),w_{T\vee(\tau\wedge n)})\big]\\&\qquad\leq \hspace{1mm}C_\varepsilon\mathbb{E}\big[\|w_T\|^2\big]+C_{N,\varepsilon}\mathbb{E}\big[\|\mathsf{P}_Nw_T\|^2\big]+\mathbb{E}\big[\widetilde{\mathcal{M}}(T,w_T)\big]<\infty.
\end{align*}
Since $\tau$ is a finite stopping time, by letting $n\rightarrow\infty$ and using Fatou's lemma, we finish proving \eqref{07210127}.\par  Notice that $\big\{\tau\wedge n\geq T\big\}\in\mathcal{F}_T$. Based on \eqref{07210127} and \eqref{07210154}, we take the conditional expectation $\mathbb{E}[\hspace{0.5mm}\cdot\hspace{0.5mm}|\mathcal{F}_T]$ on both sides of \eqref{07210028-1} and restrict it to the event $\big\{\tau\wedge n\geq T\big\}$. Then we arrive at
\begin{align*}
     \mathbb{E} \big[&e^{\alpha (\tau\wedge n-T) -\varepsilon\int_{ T}^{\tau\wedge n}\mathcal{L}(s,u_s,v_s)\dd s}\widetilde{\mathcal{M}}(\tau\wedge n,w_{\tau\wedge n})\big|\mathcal{F}_{ T}\big]\mathbf{1}_{\tau
     \wedge n\geq  T}\\&\qquad\leq \big(C_{\varepsilon}\|w_T\|^2+C_{N,\varepsilon}\|\mathsf{P}_Nw_T\|^2+\widetilde{\mathcal{M}}(T,w_{T})\big)\mathbf{1}_{\tau\wedge n\geq T}\\&\qquad\leq \big(C_{N,\varepsilon}\|w_T\|^2+\widetilde{\mathcal{M}}(T,w_{T})\big)\mathbf{1}_{\tau\wedge n\geq T}.
\end{align*}
Letting $n\to\infty$ and applying Fatou's lemma proves \eqref{04020327}.
\end{proof}
The following estimate complements Proposition~\ref{fpestimate}: it is
valid for every~$N\ge1$ and from time $0$, at the price of a fixed
constant in place of the small parameter $\varepsilon$. Both estimates will be applied in the proof of Proposition~\ref{03100846}.
\begin{proposition}\label{03100330}
    There exists a constant $\mathcal{C}_2>0$ such that for any $N\ge1$ and $t\geq 0$, we have
    \begin{align*}
    &\mathbb{E}\big[e^{\alpha t-\mathcal{C}_2\int_{0}^{t}\mathcal{L}(s,u_s,v_s)\dd s}\widetilde{\mathcal{M}}(t,w_{t})\big]\leq  {\widetilde{\mathcal{M}}}(0,w_{0})+C_N\|w_0\|^2.
\end{align*}
\end{proposition}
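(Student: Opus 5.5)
The plan is to repeat the proof of Proposition~\ref{fpestimate} with $T=0$, $\tau=t$ deterministic, and the small parameter $\varepsilon$ replaced by a fixed large constant $\mathcal{C}_2$. Then neither the truncated Poincar\'e inequality (Lemma~\ref{03011437}) nor the largeness of $\psi$ is needed, which is why $N\ge1$ is arbitrary and the estimate starts from time~$0$.

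\emph{Pathwise inequality for $\mathcal{M}$ from time $0$.} Steps~1--3 of the proof of Proposition~\ref{fpestimate} used only the equations and It\^o's formula, not the size of $N$ or $T$. So \eqref{03300316} holds with $T=0$. The terms $C\int\|w_r\|_{L^\infty}^2\mathcal{L}^{1/2}\dd r$ are bounded via \eqref{07200244} by $C\int\|w_r\|_{H^1}^2\mathcal{L}\,\dd r$, using $\mathcal{L}\ge1$. The Step~1 terms $\langle i\mathsf{Q}_N[(|u|^2+|v|^2)w+uv\overline w],\partial_x^2w\rangle$ are cancelled by the cross terms from Steps~2--3, exactly as in \eqref{03300316}. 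This gives
$$
\mathcal{M}(t,w_t)-\mathcal{M}(0,w_0)\le-2\alpha\int_0^t\mathcal{M}\,\dd r+C_*\int_0^t\widetilde{\mathcal{M}}(r,w_r)\mathcal{L}(r,u_r,v_r)\,\dd r+M^0_t,
$$
with a fixed constant $C_*$ that does not depend on $N$.

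\emph{Crude $L^2$ bound.} I will not invoke Lemma~\ref{03011027}. Instead I take the inner product of \eqref{02230853-1} with $w$. The term I is bounded by \eqref{04060303}. For II and III I use the trivial bound $\|\mathsf{Q}_N f\|\le\|f\|$, the estimate \eqref{07200244}, and Young's inequality, which gives
$$
\tfrac12\|w_t\|^2+\tfrac{3\alpha}{4}\int_0^t\|w_r\|^2\,\dd r\le\tfrac12\|w_0\|^2+C_N\int_0^t\|\mathsf{P}_Nw_r\|^2\,\dd r+C_*\int_0^t\|w_r\|_{H^1}^2\mathcal{L}\,\dd r .
$$
Adding the two inequalities, with \eqref{06190211} and $\mathcal{M}\ge0$ to absorb $-2\alpha\int\mathcal M$ into $-\alpha\int\widetilde{\mathcal M}$ up to $C\int\|w\|^2$, controls $\widetilde{\mathcal{M}}$ by a drift $-\alpha\widetilde{\mathcal{M}}+C\|w\|^2+C_N\|\mathsf P_Nw\|^2+C_*'\widetilde{\mathcal M}\mathcal L$ plus a martingale.

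\emph{Exponential weight.} Applying the product rule to $e^{\alpha t-\mathcal{C}_2\int_0^t\mathcal{L}\,\dd s}\widetilde{\mathcal M}(t,w_t)$ and choosing $\mathcal C_2\ge C_*'+C$ (using $\mathcal L\ge1$), the term $-\mathcal C_2\widetilde{\mathcal M}\mathcal L$ absorbs both $C_*'\widetilde{\mathcal M}\mathcal L$ and $C\|w\|^2\le 2C\widetilde{\mathcal M}\mathcal L$. What remains is $C_N\int_0^te^{\alpha r}\|\mathsf P_Nw_r\|^2\dd r$. By \eqref{03190251} this is at most $C_N t\|\mathsf P_N w_0\|^2$, which still grows in $t$, so it must be handled more carefully. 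For this term I keep the factor $e^{-\mathcal C_2\int\mathcal L}$ (since $\mathcal L\ge1$, it gives $e^{(\alpha-\mathcal C_2)r}$) and take $\mathcal C_2>\alpha$. Then the integral is bounded by $C_N\|w_0\|^2\int_0^\infty e^{(\alpha-\mathcal C_2)r}\dd r\le C_N\|w_0\|^2$.

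\emph{Expectation.} Finally I take expectations. The stochastic integral has zero mean, since it is a square-integrable martingale by the moment bounds of Lemmas~\ref{02280441}, \ref{02280436}, \ref{WPA2}, as in \eqref{07210154}.

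The main obstacle I expect is the bookkeeping that makes $\mathcal C_2$ independent of $N$ while every $N$-dependence goes into $C_N\|w_0\|^2$. This requires checking that no $N$-dependent constant multiplies $\widetilde{\mathcal M}\mathcal L$. It holds because the only $N$-dependent term, I, involves $\mathsf P_N w$ alone.
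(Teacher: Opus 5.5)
Your proposal is correct and follows the paper's proof essentially step by step: \eqref{03300349} with $\varepsilon=1$, $T=0$ and $\mathcal{L}\ge1$, a direct bound on the $L^2$ terms $\mathrm{II}$, $\mathrm{III}$ that does not use the truncated Poincar\'e inequality, and the product rule with the weight $e^{\alpha t-\mathcal{C}_2\int_0^t\mathcal{L}\,\dd s}$. The only slip is your claim that the low-mode term grows in $t$: by \eqref{03190251}, $e^{\alpha r}\|\mathsf{P}_Nw_r\|^2=e^{-\alpha r}\|\mathsf{P}_Nw_0\|^2$, so $C_N\int_0^t e^{\alpha r}\|\mathsf{P}_Nw_r\|^2\,\dd r\le \alpha^{-1}C_N\|\mathsf{P}_Nw_0\|^2$ directly, which is how the paper closes the argument, and your extra condition $\mathcal{C}_2>\alpha$ is therefore harmless but unnecessary.
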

\begin{proof}
Since the derivation of~\eqref{03300349} depends neither on the lower bound
on~$N$ nor on that on~$T$, we may set $\varepsilon=1$ and $T=0$ in it; using
that $\mathcal{L}\geq1$, we obtain   \begin{align}\label{05041049}
       \nonumber {\mathcal{M}}(t,w_{t})& - {\mathcal{M}}(0,w_{0})\\\leq &-2\alpha\int_0^t{\mathcal{M}}(r,w_r)\dd r+C\int_0^t\|w_r\|_{H^1}^2\mathcal{L}(r,u_r,v_r)\dd r+M_t^0\nonumber\\\leq & -2\alpha\int_0^t\mathcal{M}(r,w_r)\dd r+C\int_0^t\widetilde{\mathcal{M}}(r,w_r)\mathcal{L}(r,u_r,v_r)\dd r+M_t^0,
   \end{align}
    where the last inequality follows from \eqref{06190211}. To estimate
the $L^2$-norm of $w$, we return to the decomposition \eqref{03070318}
(with $T=0$) and bound the terms~$\mathrm{II}$ and $\mathrm{III}$
directly, without splitting off the low frequencies: by the Sobolev
embedding $H^1\hookrightarrow L^\infty$ and H\"older's inequality,
   \begin{align*}
       \mathrm{II}+\mathrm{III}\leq C\int_0^t(\|u_r\|_{H^1}^2+\|v_r\|_{H^1}^2)\|w_r\|^2\dd r\leq C\int_0^t\|w_r\|^2\mathcal{L}(r,u_r,v_r)\dd r.
   \end{align*}
   Combining this with \eqref{03070318} and \eqref{04060303}, we obtain that
   \begin{align*}
       \frac{1}{2}{\|w_t\|^2}-\frac{1}{2}{\|w_0\|^2}\leq& -\frac{\alpha}{2}\int_{0}^t\|w_r\|^2\dd r+C_N\int_0^t\|\mathsf{P}_Nw_r\|^2\dd r\\&+ C\int_0^t\mathcal{L}(r,u_r,v_r)\|w_r\|^2\dd r.
   \end{align*}
 The latter and \eqref{05041049} imply that 
 \begin{align*}
     \widetilde{\mathcal{M}}(t,w_t)-\widetilde{\mathcal{M}}(0,w_0)\leq& -\alpha \int_0^t\widetilde{\mathcal{M}}(s,w_s)\dd s+\mathcal{C}_2\int_0^t\widetilde{\mathcal{M}}(r,w_r)\mathcal{L}(r,u_r,v_r)\dd r\\&+C_N\int_0^t\|\mathsf{P}_Nw_r\|^2\dd r+M_t^0,
 \end{align*}
 for some $\mathcal{C}_2>0$. Applying the product rule to
   \begin{align*}
       e^{\alpha t-\mathcal{C}_2\int_0^t\mathcal{L}(s,u_s,v_s)\dd s}\widetilde{\mathcal{M}}(t,w_t),
   \end{align*}
   we find that for any $t\geq 0$,
 \begin{align}\label{06250355}
     \nonumber e^{\alpha t-\mathcal{C}_2\int_0^t\mathcal{L}(s,u_s,v_s)\dd s}&\widetilde{\mathcal{M}}(t,w_t)-\widetilde{\mathcal{M}}(0,w_0)\\\leq&\nonumber \hspace{1mm} C_N\int_0^te^{\alpha r}\|\mathsf{P}_Nw_r\|^2\dd r+\int_0^t e^{\alpha r-\mathcal{C}_2\int_0^r\mathcal{L}(s,u_s,v_s)\dd s}\dd M_r^0\\\leq&\hspace{1mm} C_N\|\mathsf{P}_Nw_0\|^2+\int_0^t e^{\alpha r-\mathcal{C}_2\int_0^r\mathcal{L}(s,u_s,v_s)\dd s}\dd M_r^0,
 \end{align}
 where the last inequality follows from \eqref{03190251}. Thus taking the expectation in \eqref{06250355}, we complete the proof.   
\end{proof}
The following estimate for the process $w$ plays a central role in
controlling the probability that the coupling fails (Section~\ref{S:3}). To formulate it, we introduce the functional
$$
F(u):=\|u\|^{20}+\mathcal{H}^{10}(u)+\mathcal{G}^{2}(u).
$$
We shall use the stopping time $\tau_1^u$ defined in \eqref{04080050} below,
with the constants $K$ and $\mathscr{C}$ fixed in
\eqref{04271005-1} and \eqref{04271005-2} and with a parameter $\rho\ge\rho_*$
as in~\eqref{04271005-3}; the constants in the estimates below do not depend
on~$\rho$, unless this is indicated by a subscript. We denote by $\tau^v_1$
the analogue of $\tau_1^u$ for the auxiliary process $v$, with the same
constants and the same parameter~$\rho$, and we set
\begin{align}\label{06230835-1}
    \tau_1^{u,v}:=\tau_1^u\wedge\tau_1^v.
\end{align}
\begin{proposition}\label{03100846}
There exist $\mathcal{C}_3>0$ and an integer $N_0\ge1$ such that, for
$N=N_0$ in \eqref{auxiliary}, any $\rho\ge\rho_*$, and any
$u_0,u_0^\prime\in H^2$, we have
$$
    \mathbb{E}\Big[\!\int_0^{\tau_1^{u,v}}\!\!\big(1+\mathcal{H}(u_s)
    +\mathcal{H}(v_s)\big)\widetilde{\mathcal{M}}(s,w_s)\dd s\Big]
    \leq \mathcal{C}_3\widetilde{\mathcal{M}}(0,w_0)\,
    e^{\mathcal{C}_3\rho+\mathcal{C}_3\mathscr{C}(F(u_0)+F(u^\prime_0))}.
$$
\end{proposition}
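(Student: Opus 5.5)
We will combine the two Foiaş--Prodi estimates, Propositions~\ref{03100330} and~\ref{fpestimate}, with the growth control that the stopping time $\tau_1^{u,v}$ gives on the energy functionals. By definition (see~\eqref{04080050}), for $s\le\tau_1^u$ the functionals of $u$ entering $\mathcal{L}$ and $\mathcal{H}$ stay below a bound of the form
\[
\rho+K s+\mathscr{C}F(u_0).
\]
The analogous bound holds for $v$ up to $\tau_1^v$. The weighted norm $\|\psi(s)u_s\|^2$ is also controlled there, in the same linear-in-time way. Consequently, for $s\le\tau_1^{u,v}$ we expect
\[
\int_T^s\mathcal{L}(r,u_r,v_r)\,\dd r\le \mathcal{L}_*(s-T)\qquad\text{and}\qquad 1+\mathcal{H}(u_s)+\mathcal{H}(v_s)\le C(\rho+Ks+\mathscr{C}(F(u_0)+F(u_0'))),
\]
where $\mathcal{L}_*$ is of order $(1+s)\,(\rho+\mathscr{C}(F(u_0)+F(u_0')))$ up to constants. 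The exponents $20$, $10$, $2$ in $F$ are presumably arranged exactly so that the powers appearing in $\mathcal{L}$ are dominated by the quantities that $\tau_1$ controls.

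The issue is that $\varepsilon\int\mathcal{L}$ grows linearly in time with a slope proportional to $\varepsilon$ times $K$ plus $\rho$-type terms. The part of the slope coming from $Ks$ can be beaten by $\alpha$ once $\varepsilon$ is small. The $\rho$ and $F$ parts, however, are large constants multiplying $s$ if the bound is only of the form $\rho+Ks$. I will therefore assume that the stopping time bounds the \emph{time integral}:
\[
\int_0^{t}\mathcal{L}\,\dd s\le Kt+\rho+\mathscr{C}(F(u_0)+F(u_0'))\qquad\text{for } t\le\tau_1^{u,v}.
\]
This is the natural form for energy functionals whose moments grow linearly. I also take $\mathcal{H}(u_s)\le K s+\rho+\mathscr{C}F(u_0)$ pointwise.

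The argument then runs as follows. Fix $\varepsilon:=\alpha/(4K)$ and let $N_0:=N_\varepsilon$ and $T:=T_\varepsilon$ be given by Proposition~\ref{fpestimate}.

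\emph{Time interval $[0,T]$.} Apply Proposition~\ref{03100330} at $t\wedge\tau_1^{u,v}$. Since the statement there is only for deterministic $t$, redo its final step with optional stopping, which is legitimate by the integrability in~\eqref{07210154}. This gives
\[
\mathbb{E}\,\widetilde{\mathcal{M}}(t\wedge\tau_1^{u,v},w_{t\wedge\tau_1^{u,v}})\le e^{\mathcal{C}_2(KT+\rho+\mathscr{C}(F(u_0)+F(u_0')))}\big(\widetilde{\mathcal{M}}(0,w_0)+C_N\|w_0\|^2\big).
\]
The last factor is at most $C\widetilde{\mathcal{M}}(0,w_0)$ by~\eqref{06190211}. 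In particular this controls $\mathbb{E}[\widetilde{\mathcal{M}}(T\wedge\tau,w_{T\wedge\tau})]$ and $\mathbb{E}\|w_{T\wedge\tau}\|^2$, and, after multiplying by the pointwise bound on $\mathcal{H}$, the part of the integral over $s\le T$.

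\emph{Times $s\ge T$.} Apply \eqref{04020327} with the stopping time $\tau:=s\wedge\tau_1^{u,v}$, then take expectations, using $\{\tau\ge T\}\in\mathcal{F}_T$. On $\{s\le\tau_1^{u,v}\}$,
\[
\varepsilon\int_T^s\mathcal{L}\,\dd r\le \tfrac{\alpha}{4}(s-T)+\varepsilon(\rho+\mathscr{C}(F(u_0)+F(u_0'))).
\]
Hence
\[
\mathbb{E}\big[\widetilde{\mathcal{M}}(s,w_s)\mathbf{1}_{s\le\tau_1^{u,v}}\big]\le e^{-\frac{3\alpha}{4}(s-T)}e^{C(\rho+\mathscr{C}F)}\,\mathbb{E}\big[\widetilde{\mathcal{M}}(T,w_T)+C_N\|w_T\|^2\big],
\]
and the last expectation is bounded by the first step. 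Multiplying by the pointwise bound $1+\mathcal{H}\le C(1+s)e^{C(\rho+\mathscr{C}F)}$ and integrating in $s$ via Fubini, the exponential decay absorbs the polynomial factor. This yields the claim with $\mathcal{C}_3$ depending on $K$, $T$ and $\alpha$.

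\emph{Main obstacle.} The delicate point is the precise form of $\tau_1$. We need that up to $\tau_1^{u,v}$ the time integral of $\mathcal{L}$, including the weighted norms $\|\psi u\|^2$ and $\|\psi v\|^2$ and the powers $\|u\|^{12}$, $\mathcal{H}^2$, $\mathcal{G}^2$, is bounded by $K t$ plus a constant that is linear in $\rho$ and $\mathscr{C}F$. We also need $K$ to be independent of $N$, so that $\varepsilon$, and hence $N_0$, can be chosen from $\alpha$ and $K$ alone without circularity. If $\tau_1$ bounds pointwise values rather than integrals, I would instead use $\varepsilon$ small relative to $K$ and accept a factor $e^{\varepsilon(\rho+\mathscr{C}F)s}$. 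That factor is problematic, so the integral form must be extracted from \eqref{04080050}.
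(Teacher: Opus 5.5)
Your proposal is correct and follows essentially the paper's proof. The paper likewise bounds $1+\mathcal{H}(u_s)+\mathcal{H}(v_s)$ pointwise up to $\tau_1^{u,v}$ and applies Tonelli, handles $[0,T_0]$ with Proposition~\ref{03100330}, and uses \eqref{04020327} with $\varepsilon$ proportional to $\alpha/K$ to obtain the decay $e^{-3\alpha s/4}$ on $\{\tau_1^{u,v}\ge s\}$; the only difference is that it works at deterministic times with the $\mathcal{F}_{T_0}$-measurable indicator $\mathbf{1}_{\tau_1^{u,v}\ge T_0}$ rather than your stopped times, which spares you redoing optional stopping.

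Your hedged assumption is exactly what \eqref{04080050} provides: each $\mathcal{E}$-functional contains its time integral, so $\int_0^s\mathcal{L}\,\dd r\le\mathcal{C}_1(\rho+Ks+\mathscr{C}(F(u_0)+F(u_0')))$ with $K$ independent of $N$, and you should take $\varepsilon=\alpha/(4\mathcal{C}_1K)$. One small fix: keep the indicator $\mathbf{1}_{\tau_1^{u,v}\ge T}$ on $\widetilde{\mathcal{M}}(T,w_T)+C_N\|w_T\|^2$ when you invoke your first step, since that step only controls the stopped quantity.
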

\begin{proof}
 \noindent{\it Step 1: Reduction}.
By the definitions \eqref{06230835-1} and
\eqref{04080050} of the stopping times, we have
\begin{align*}
    \mathcal{H}^2(u_s)\leq\rho+Ks+\mathscr{C}\mathcal{H}^2(u_0),
    \qquad 0\le s\leq\tau_1^{u,v},
\end{align*}
and similarly for $v$, with $u_0^\prime$ in place of $u_0$. Since
$\mathcal{H}^2\leq C(1+F)$ and $\rho\geq1$, it follows that
\begin{align}\label{E:R-bound}
    1+\mathcal{H}(u_s)+\mathcal{H}(v_s)\leq Cg(s),
    \qquad 0\le s\leq\tau_1^{u,v},
\end{align}  where 
   $$ 
   g(s):=\big(\rho+Ks+\mathscr{C}(F(u_0)+F(u_0^\prime))\big)^{\frac12}.
   $$
Writing $\int_0^{\tau_1^{u,v}}=\int_0^\infty\mathbf{1}_{s\le\tau_1^{u,v}}
\dd s$ and applying Tonelli's theorem on $\Omega\times\mathbb{R}_+$ to the
non-negative function
$(\omega,s)\mapsto\mathbf{1}_{s\le\tau_1^{u,v}(\omega)}\,g(s)\,
\widetilde{\mathcal{M}}(s,w_s(\omega))$, we
obtain from \eqref{E:R-bound} that
\begin{align}\label{07312348}
\nonumber\mathbb{E}\Big[\!\int_0^{\tau_1^{u,v}}\!\!\!\!\!\big(1\!+\!\mathcal{H}(u_s)
    \!+\!\mathcal{H}(v_s)\big)\widetilde{\mathcal{M}}(s,w_s)\dd s\Big]
   & \leq C\mathbb{E}\Big[\int_0^{\infty}\!\!\!\!
    \mathbf{1}_{s\le\tau_1^{u,v}}g(s)
    \widetilde{\mathcal{M}}(s,w_s)\dd s\Big]\\
    &= C\!\int_0^\infty\!\!\! g(s)
    \mathbb{E}\big[\widetilde{\mathcal{M}}(s,w_s);
    \tau_1^{u,v}\geq s\big]\dd s.
\end{align}
 Thus it suffices to establish a
moment estimate for $\widetilde{\mathcal{M}}(s,w_s)$ on
$\{\tau_1^{u,v}\geq s\}$, for any $s\geq0$.

\noindent{\it Step 2: Moment estimate for $\widetilde{\mathcal{M}}(s,w_s)$ on $\{{\tau_1^{u,v}}\geq s\}$}. In this step, we prove the existence of $\widetilde{\mathcal{C}}_3>0$ such that
\begin{equation}\label{07312337}
    \mathbb{E}\big[\widetilde{\mathcal{M}}(s,w_s);{\tau_1^{u,v}}\geq s\big]\leq C e^{-\frac{3\alpha s}{4}+\widetilde{\mathcal{C}}_3(\rho+\mathscr{C}(F(u_0)+F(u_0^\prime)))}\widetilde{\mathcal{M}}(0,w_0),\qquad   s\geq0.
\end{equation}
We first note that, by the definitions \eqref{06230835-1} and \eqref{04080050} of the stopping times, there exists $\mathcal{C}_1>0$ such that
\begin{align}\label{04261050}
    \int_0^s\mathcal{L}(r,u_r,v_r)\dd r\leq \mathcal{C}_1\big(\rho+Ks+\mathscr{C}(F(u_0)+F(u^\prime_0)) \big)
\end{align}for  $0\le s\leq \tau_1^{u,v}$.
   Recall that $w=u-v$. Choosing
\begin{align}\label{07312311}
    \varepsilon=\frac{\alpha}{4\mathcal{C}_1K},
\end{align}
we obtain from Proposition~\ref{fpestimate} an integer $N_0\geq1$ and a time
$T_0:=T_\varepsilon>0$ such that \eqref{07210127} and \eqref{04020327} hold
with $N=N_0$ and $T=T_0$. By \eqref{04261050} and \eqref{07312311}, this
choice of $\varepsilon$ gives
\begin{align}\label{E:eps-L}
    \varepsilon\int_0^s\mathcal{L}(r,u_r,v_r)\dd r
    \leq\frac{\alpha}{4}s+\frac{\alpha}{4K}
    \big(\rho+\mathscr{C}(F(u_0)+F(u_0^\prime))\big)
\end{align}for  $0\le s\leq \tau_1^{u,v}$,
so that this term is strictly dominated by the factor $\alpha(s-T_0)$
appearing in the exponent in \eqref{04020327}; the effective decay rate
$\tfrac{3\alpha}{4}$ in \eqref{05200459} below is what remains after this
compensation.

Let $s\geq T_0$. We apply \eqref{04020327} with $N=N_0$ and $T=T_0$ at the
deterministic time~$s$, multiply both sides by the indicator of the
$\mathcal{F}_{T_0}$-measurable event $\{\tau_1^{u,v}\geq T_0\}$, and take
expectations. Enlarging the event of integration by means of
$\{\tau_1^{u,v}\geq s\}\subseteq\{\tau_1^{u,v}\geq T_0\}$ and bounding the
resulting deterministic factor by \eqref{E:eps-L}, we obtain
\begin{align}\label{04110235-1}
    \nonumber\mathbb{E}\big[\widetilde{\mathcal{M}}(s,w_s);
    \tau_1^{u,v}\geq s\big]
    \leq&\ e^{-\alpha(s-T_0)+\frac{\alpha}{4}s+\frac{\alpha}{4K}
(\rho+\mathscr{C}(F(u_0)+F(u_0^\prime)))}\\
    &\times\mathbb{E}\big[\widetilde{\mathcal{M}}(T_0,w_{T_0})
    +C_{N_0}\|w_{T_0}\|^2;\tau_1^{u,v}\geq T_0\big],
\end{align}where $C_{N_0}$ is the constant $C_{N,\varepsilon}$ from \eqref{04020327},
with $N=N_0$ and $\varepsilon$ as in \eqref{07312311}.
By \eqref{06190211}, \eqref{04261050}, and Proposition~\ref{03100330}
applied at time $T_0$, and dropping the event $\{\tau_1^{u,v}\geq T_0\}$,
the integrand being non-negative, it follows that
\begin{align}\label{05200456}
    \nonumber\mathbb{E}\big[\widetilde{\mathcal{M}}(T_0,w_{T_0})
    +&C_{N_0}\|w_{T_0}\|^2;\tau_1^{u,v}\geq T_0\big]
    \leq C_{N_0}\mathbb{E}\big[\widetilde{\mathcal{M}}(T_0,w_{T_0});
    \tau_1^{u,v}\geq T_0\big]\\
    \nonumber\leq&\ Ce^{-\alpha T_0+\mathcal{C}_2\mathcal{C}_1
    (\rho+KT_0+\mathscr{C}(F(u_0)+F(u^\prime_0)))}\\
    \nonumber&\times\mathbb{E}\big[e^{\alpha T_0-\mathcal{C}_2
    \int_0^{T_0}\mathcal{L}(r,u_r,v_r)\dd r}
    \widetilde{\mathcal{M}}(T_0,w_{T_0})\big]\\
    \leq&\ Ce^{-\alpha T_0+\mathcal{C}_2\mathcal{C}_1
    (\rho+KT_0+\mathscr{C}(F(u_0)+F(u^\prime_0)))}\,
    \widetilde{\mathcal{M}}(0,w_0).
\end{align}
Combining this with \eqref{04110235-1}, and absorbing
into $C$ the factors depending only on $T_0$, we arrive at
\begin{align}\label{05200459}
    \mathbb{E}\big[\widetilde{\mathcal{M}}(s,w_s);\tau_1^{u,v}\geq s\big]
    \leq C e^{-\frac{3\alpha s}{4}
    +(\mathcal{C}_2\mathcal{C}_1+\frac{\alpha}{4K})
    (\rho+\mathscr{C}(F(u_0)+F(u_0^\prime)))}\,
    \widetilde{\mathcal{M}}(0,w_0)
\end{align}for $s\geq T_0$.
Now we turn to the case $s\leq T_0$. Arguing as in \eqref{05200456}, with
$T_0$ replaced by $s$, and using $s\le T_0$ to absorb the term
$\mathcal{C}_2\mathcal{C}_1Ks$ into the constant, we obtain
\begin{align}\label{07260616}
    \mathbb{E}\big[\widetilde{\mathcal{M}}(s,w_s);\tau_1^{u,v}\geq s\big]
    \leq Ce^{-\alpha s+\mathcal{C}_2\mathcal{C}_1
    (\rho+\mathscr{C}(F(u_0)+F(u^\prime_0)))}\,
    \widetilde{\mathcal{M}}(0,w_0)
\end{align}for $0\le s\leq T_0$.
Inequalities
\eqref{05200459} and \eqref{07260616} imply that \eqref{07312337} holds with
$\widetilde{\mathcal{C}}_3:=\mathcal{C}_2\mathcal{C}_1
+\frac{\alpha}{4K}$.

\noindent{\it Step 3: Conclusion}.
By the inequality
\begin{equation*}
    \sqrt{x}\leq Ce^{\frac{\alpha}{4K}x},\qquad x\geq0,
\end{equation*}
applied with $x=\rho+Ks+\mathscr{C}(F(u_0)+F(u_0^\prime))$, the function
$g$ from Step~1 satisfies
\begin{align*}
    g(s)\leq Ce^{\frac{\alpha}{4}s+\frac{\alpha}{4K}
    (\rho+\mathscr{C}(F(u_0)+F(u_0^\prime)))},\qquad s\ge0.
\end{align*}
Combining this with \eqref{07312348} and \eqref{07312337}, we obtain
\begin{align}\label{08010003}
    \nonumber\mathbb{E}\Big[\!\int_0^{\tau_1^{u,v}}\!\!\!\!\!
    \big(1\!+\!\mathcal{H}(u_s)\!+\!\mathcal{H}(v_s)\big)
    &\widetilde{\mathcal{M}}(s,w_s)\dd s\Big]
    \leq  C e^{(\widetilde{\mathcal{C}}_3+\frac{\alpha}{4K})
    (\rho+\mathscr{C}(F(u_0)+F(u_0^\prime)))}
    \\
    \nonumber&\qquad\qquad\qquad\quad\times \widetilde{\mathcal{M}}(0,w_0)\int_0^\infty
    e^{\frac{\alpha}{4}s-\frac{3\alpha}{4}s}\dd s\\
    \leq&\ C e^{(\widetilde{\mathcal{C}}_3+\frac{\alpha}{4K})
    (\rho+\mathscr{C}(F(u_0)+F(u_0^\prime)))}\,
    \widetilde{\mathcal{M}}(0,w_0).
\end{align}
 Setting
$\mathcal{C}_3:=\big(\widetilde{\mathcal{C}}_3+\frac{\alpha}{4K}\big)
\vee C$, with $C>0$ from the last line of \eqref{08010003}, we complete the
proof.
\end{proof}
 
From now on we fix $N=N_0$, where $N_0\ge1$ is as in Proposition~\ref{03100846}, and assume that \eqref{07070547} holds.

\section{Growth estimate for the auxiliary process}\label{S:3}

In this section, we use the Girsanov theorem to estimate the probability
$\mathbb{P}(\tau_1^v<T)$ that the energy functionals of the auxiliary
process~$v$ grow abnormally before time~$T$. The parameter $a$ is fixed later, in the proof of Lemma~\ref{03151059}.
\begin{proposition}\label{03150137}
    There exists $\mathcal{C}_4\geq\mathcal{C}_3$ such that for any $d_0\in(0,1)$, $u_0,u^\prime_0\in B_{H^2}(d_0)$, $a>0$, $\rho\ge\rho_*$, and $T>0$, we have
   $$
        \mathbb{P}(\tau^{{v}}_1<T)\leq \mathcal{C}_4\rho^{-1}+e^{-a}+ \frac{1}{2}\sqrt{\mathrm{exp}\big\{\mathcal{C}_4C^{*}(d_0)\mathrm{exp}\{\mathcal{C}_4\rho+a\}\big\}-1},
$$
where
    $C^{*}(d_0):=(d_0^2+d_0^4)e^{\mathcal{C}_4\mathscr{C}d_0}$.
\end{proposition}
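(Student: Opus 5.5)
The plan is to compare the law of the auxiliary process~$v$ with that of the true solution~$u^\prime$ issued from~$u_0^\prime$ by means of the Girsanov theorem. The probability $\mathbb{P}(\tau_1^v<T)$ is then controlled by three pieces. The first is $\mathbb{P}(\tau_1^{u^\prime}<T)$, which the Appendix bounds by $\mathcal{C}\rho^{-1}$ via the polynomial-tail estimates for the stopping times. The second is the total variation distance between the laws of $v$ and $u^\prime$ on the interval where the feedback is kept small. The third is the probability that the feedback energy exceeds $e^{a}$ times its expected size, which produces the term $e^{-a}$.

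First rewrite \eqref{auxiliary} as $\partial_t v+\alpha v-i\partial_x^2v+i|v|^2v=b\partial_t W+\mathsf{P}_{N_0}\zeta$, with feedback $\zeta:=i\mathsf{P}_{N_0}(|v|^2v-|u|^2u-\partial_x^2(v-u))$. Since $b_j\neq0$ for $j\le N_0$, the drift $\zeta$ can be absorbed into a shifted Brownian motion $\widetilde W=W+\int_0^t b^{-1}\zeta\,\dd s$ on the first $N_0$ modes. Because $\mathsf{P}_{N_0}$ has finite rank and $\|\partial_x^2\mathsf{P}_{N_0}w\|\le C_{N_0}\|w\|$, we get
\[
\|\zeta_s\|^2\le C\big(1+\mathcal{H}(u_s)+\mathcal{H}(v_s)\big)\|w_s\|_{H^1}^2\le C\big(1+\mathcal{H}(u_s)+\mathcal{H}(v_s)\big)\widetilde{\mathcal{M}}(s,w_s),
\]
using the estimate $\||v|^2v-|u|^2u\|\le C(\|u\|_{L^\infty}^2+\|v\|_{L^\infty}^2)\|w\|$ together with \eqref{06190211}. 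Next localize the drift. Let $\sigma:=\inf\{t:\int_0^t\|\zeta_s\|^2\dd s>e^{\mathcal{C}_4\rho+a}C^*(d_0)\}$, and set $\hat\zeta:=\zeta\mathbf{1}_{t\le\sigma\wedge\tau_1^{u,v}}$. Define $\hat v$ as the solution driven by $\hat\zeta$.

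On the event $\{\sigma\wedge\tau_1^{u,v}\ge T\}$, the processes $v$ and $\hat v$ coincide on $[0,T]$. The Novikov condition holds trivially for $\hat\zeta$. The standard estimate of the total variation distance between the laws of $W$ and of the shifted process $W+\int b^{-1}\hat\zeta$ (Pinsker or Hellinger bound for Girsanov densities) then gives
\[
\|\mathcal{D}(\hat v)-\mathcal{D}(u^\prime)\|_{\mathrm{var}}\le\tfrac12\sqrt{\exp\{C e^{\mathcal{C}_4\rho+a}C^*(d_0)\}-1},
\]
which is exactly the last term of the statement. Consequently,
\[
\mathbb{P}(\tau_1^v<T)\le \mathbb{P}(\tau_1^{u^\prime}<T)+\mathbb{P}(\tau_1^u<T)+\mathbb{P}(\sigma<\tau_1^{u,v}\wedge T)+\tfrac12\sqrt{\cdots}.
\]
The first two probabilities are bounded by $C\rho^{-1}$ from the Appendix tail lemma, since $\tau_1^{u^\prime}$ is a functional of the path and $\hat v$, $v$ agree on the relevant event.

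The step I expect to be hardest is bounding $\mathbb{P}(\sigma<\tau_1^{u,v})$. By Chebyshev's inequality it is at most
\[
e^{-\mathcal{C}_4\rho-a}C^*(d_0)^{-1}\,\mathbb{E}\int_0^{\tau_1^{u,v}}\|\zeta_s\|^2\dd s,
\]
and Proposition~\ref{03100846} bounds the expectation by $C\mathcal{C}_3\widetilde{\mathcal{M}}(0,w_0)e^{\mathcal{C}_3\rho+\mathcal{C}_3\mathscr{C}(F(u_0)+F(u_0^\prime))}$. For $u_0,u_0^\prime\in B_{H^2}(d_0)$ with $d_0<1$, we have $\widetilde{\mathcal{M}}(0,w_0)\le C(d_0^2+d_0^4)$ and $F\le Cd_0$; this is the reason for the exponent $\mathscr{C}d_0$ in $C^*(d_0)$. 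Choosing $\mathcal{C}_4\ge\mathcal{C}_3$ large enough, the ratio is at most $e^{-a}$. The delicate point is the bookkeeping of which process's stopping time controls the bound. Proposition~\ref{03100846} is stated with $\tau_1^{u,v}$, so $\tau_1^v$ must be handled by the decoupling argument above rather than assumed in advance; the fact that $v=\hat v$ up to $\sigma\wedge\tau_1^{u,v}$ makes the events consistent. Collecting the four terms with $\mathcal{C}_4$ enlarged yields the claim.
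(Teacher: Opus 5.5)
Your proposal is correct and follows the paper's proof. The same four contributions appear: the tail bounds for $\tau_1^{u^\prime}$ and $\tau_1^u$, Chebyshev plus Proposition~\ref{03100846} for the localization error (producing $e^{-a}$), and the Girsanov total-variation bound for the localized feedback. The differences are cosmetic: the paper truncates the equations and compares $\Phi^{u_0,u_0^\prime}_*\mathbb{P}$, $\widetilde{\Phi}^{u_0,u_0^\prime}_*\mathbb{P}$ and $\mathbb{P}$ through the identity $\hat v=\hat u^\prime\circ\Phi^{u_0,u_0^\prime}$, and it localizes with $\tau_0^{u,v}$ rather than with a deterministic threshold on $\int\|\zeta_s\|^2\dd s$.

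Your ``consequently'' step should be justified by the inclusion $\{\tau_1^v<T\}\subseteq\{\tau_1^u<T\}\cup\{\sigma<\tau_1^{u,v}\wedge T\}\cup\{\tau_1^{\hat v}<T\}$, not by coincidence of $v$ and $\hat v$ on the event $\{\sigma\wedge\tau_1^{u,v}\ge T\}$ (where $\tau_1^v\ge T$ holds trivially). The inclusion holds because $v=\hat v$ on $[0,\sigma\wedge\tau_1^{u,v}]$ and $\tau_1$ is determined by the trajectory up to its hitting time.
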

\begin{proof}
For the purposes of this proof we assume, without loss of generality, that
$\Omega=C_0([0,T];H^4)$ and that $\omega_\cdot$ is the coordinate process
under $\mathbb{P}=\mathcal{D}(bW)$. The proof is divided into five steps.
 
\noindent{\it Step 1: reduction}.
If $u_0=u^\prime_0$, then $u$ solves \eqref{auxiliary}, so $v=u=u^\prime$ by
the uniqueness in Lemma~\ref{WPA2}. Taking $q=4$ and $l=0$ in Lemma~\ref{03120055} and using
$\|u_0\|_{H^2}\le d_0<1$, we obtain a constant $\mathcal{C}_4^0>0$, independent of $d_0$ and~$\rho$, such that 
\begin{align}\label{08021543}
    \mathbb{P}\big(\tau_1^{v}<T\big)=\mathbb{P}\big(\tau_1^{u}<T\big)
    \leq\mathbb{P}\big(\tau_1^{u}<\infty\big)\leq\mathcal{C}_4^0\rho^{-1},
\end{align}
which implies the desired result. 

When $u_0\neq u_0^\prime$, we reduce the estimate for $\tau_1^v$ to that of a truncated problem. Namely, let us consider the truncated
processes  $\hat{u}^\prime_\cdot=\hat{u}^\prime(\hspace{0.5mm}\cdot\hspace{0.5mm},\omega)$ and $\hat{v}_\cdot=\hat{v}(\hspace{0.5mm}\cdot\hspace{0.5mm},\omega)$ solving
	\begin{align}
	\left\{
	\begin{aligned}
		&\partial_t \hat{u}^\prime+\alpha \hat{u}^\prime-\big[i\partial_x^2 {\hat{u}}^\prime-i|\hat{u}^\prime|^2\hat{u}^\prime\big]\mathbf{1}_{t\leq\tau^{{u}^\prime}_1}=\mathbf{1}_{t\leq\tau^{u^\prime}_1}\partial_t \omega,\\
		&	\hat{u}^\prime(0,x)={u}^\prime_0(x)\label{trunc eqn 2}
	\end{aligned}	
	\right.
\end{align}
and
	\begin{align}
	\left\{
	\begin{aligned}
		&\partial_t \hat{v}+\alpha \hat{v} -\big[i\partial_{x}^2 \hat{v}-i|\hat{v}|^2\hat{v}\big]\mathbf{1}_{t\leq\tau_1^{v}}=  \big[\mathsf{P}_N(i|\hat{v}|^2\hat{v}-i|{u}|^2{u}\\& \qquad\qquad\qquad\qquad\qquad\qquad-i\partial_x^2(\hat{v}-{u}))+\partial_t \omega\big]\mathbf{1}_{t\leq\tau_1^{v}},\\
		&	\hat{v}(0,x)={u}^\prime_0(x).\label{auxiliary 1} 
	\end{aligned}	
	\right.
\end{align}
Since the truncation in \eqref{auxiliary 1} is inactive on $[0,\tau_1^{v}]$,
we have $\tau_1^{\hat v}=\tau_1^{v}$, and it suffices to estimate
$\mathbb{P}\big(\tau_1^{\hat{v}}<T\big)$. To this end, we introduce the stopping time
\begin{align}\label{04160408}
    \nonumber\tau_0^{u,v}:=& \inf\big\{t\ge 0:\int_0^{t}\big(1+\mathcal{H}(u_s)+\mathcal{H}(v_s)\big)\widetilde{\mathcal{M}}(s,w_s)\dd s\\&\qquad\qquad\quad \geq \mathcal{C}_3\widetilde{\mathcal{M}}(0,w_0)e^{\mathcal{C}_3\rho+\mathcal{C}_3\mathscr{C}(F(u_0)+F(u_0^\prime))+a}\big\},
\end{align}with the usual convention $\inf\emptyset=\infty$, 
and define measurable maps $\Phi^{u_0,u_0^\prime}, \widetilde{\Phi}^{u_0,u_0^\prime}:\Omega\rightarrow \Omega$ by 
\begin{align*}
    &\Phi^{u_0,u_0^\prime}(\omega)_t=\omega_t+\int_0^t\mathbf{1}_{s\leq \tau_1^v}\mathsf{P}_N\big[i|\hat{v}|^2\hat{v}-i|{u}|^2{u}-i\partial_x^2(\hat{v}-{u})\big]\dd s,\\
    &\widetilde{\Phi}^{u_0,u_0^\prime}(\omega)_t=\omega_t+\int_0^t\mathbf{1}_{s\leq \tau_1^{u,v}\wedge\tau_0^{u,v}}\mathsf{P}_N\big[i|\hat{v}|^2\hat{v}-i|{u}|^2{u}-i\partial_x^2(\hat{v}-{u})\big]\dd s.
\end{align*} The first reproduces the feedback term in \eqref{auxiliary 1}, so that
\eqref{04240259} below holds; the second truncates it at the earlier time
$\tau_1^{u,v}\wedge\tau_0^{u,v}$, which makes its drift bounded by a
deterministic constant and the Girsanov theorem applicable in Step~4, at the
price of the error term estimated in Step~3.

  Notice that
\begin{align}\label{04240259}
\hat{v}(\hspace{0.5mm}\cdot\hspace{0.5mm},\omega)
=\hat{u}^\prime\big(\hspace{0.5mm}\cdot\hspace{0.5mm},
\Phi^{u_0,u_0^\prime}(\omega)\big)
\qquad\text{for $\mathbb{P}$-a.e.\ }\omega.
\end{align}
 Indeed, let us set $\sigma:=\tau_1^{v}(\omega)\wedge
\tau_1^{u^\prime}\big(\Phi^{u_0,u_0^\prime}(\omega)\big)$. On $[0,\sigma]$
the indicators in~\eqref{trunc eqn 2} and \eqref{auxiliary 1} both
equal~$1$, so the two processes solve the same equation with the same
initial datum, and pathwise uniqueness gives their coincidence there. If
$\sigma\ge T$, this already gives \eqref{04240259}; otherwise, as the
stopping time~\eqref{04080050} is a functional of the trajectory alone, it
is attained at $\sigma$ by both, so that after $\sigma$ both indicators
vanish and the two processes solve $\partial_t\zeta+\alpha\zeta=0$ from the
same value, whence \eqref{04240259}.
As a result
\begin{align}\label{03140952}
    \mathbb{P}(\tau_1^{\hat{v}}<T)&= \Phi^{u_0,u_0^\prime}_*\mathbb{P}(\tau_1^{\hat{u}^\prime}<T)\leq \nonumber\mathbb{P}(\tau_1^{\hat{u}^\prime}<T)+\|\Phi^{u_0,u_0^\prime}_*\mathbb{P}-\mathbb{P}\|_{\mathrm{var}}\\&\leq \mathbb{P}(\tau_1^{\hat{u}^\prime}<T)+\|\Phi^{u_0,u_0^\prime}_*\mathbb{P}-\widetilde{\Phi}^{u_0,u_0^\prime}_*\mathbb{P}\|_{\mathrm{var}}\nonumber\\&\quad+\|\widetilde{\Phi}^{u_0,u_0^\prime}_*\mathbb{P}-\mathbb{P}\|_{\mathrm{var}}.
\end{align}
The three terms on the right-hand side of this inequality are estimated in Steps~2--4,
respectively.
 
\noindent {\it Step 2: estimate for $\mathbb{P}(\tau_1^{\hat{u}^\prime}<T)$}.
Since the truncation in \eqref{trunc eqn 2} is inactive on~$[0,\tau_1^{u^\prime}]$, the process $\hat u^\prime$ coincides there with
$u^\prime$, and therefore $\tau_1^{\hat u^\prime}=\tau_1^{u^\prime}$. 
Thus, as in \eqref{08021543}, we have
\begin{align}\label{04230821}
    \mathbb{P}\big(\tau_1^{\hat{u}^\prime}<T\big)\leq \mathbb{P}\big(\tau_1^{u^\prime}<\infty\big)\leq \mathcal{C}_4^0\rho^{-1}.
\end{align}

\noindent
{\it Step 3: estimate for $\|\Phi^{u_0,u_0^\prime}_*\mathbb{P}-\widetilde{\Phi}^{u_0,u_0^\prime}_*\mathbb{P}\|_{\mathrm{var}}$.} 
According to the definitions of $\Phi^{u_0,u_0^\prime}$ and $\widetilde{\Phi}^{u_0,u_0^\prime}$, we have
\begin{align*}
   \big\{\omega\in\Omega:\Phi^{u_0,u_0^\prime}(\omega)&\neq \widetilde{\Phi }^{u_0,u_0^\prime}(\omega)\big\}\\&\subseteq\big\{\tau_1^u\wedge\tau_0^{u,v}<\tau_1^v\wedge T\big\}\\&=\big\{\tau_1^u<\tau_1^v\wedge T,\tau_1^u\leq \tau_0^{u,v}\big\}{\scalebox{1.3}{$\cup$}} \big\{\tau_0^{u,v}<\tau_1^v\wedge T,\tau_0^{u,v}<\tau_1^u \big\}\\&\subseteq \big\{\tau_1^u<T\big\}{\scalebox{1.3}{$\cup$}}\big\{\tau_0^{u,v}<\tau_1^{u,v}\wedge T\big\}.
\end{align*}
Consequently, 
\begin{align}\label{04270250}
    \nonumber\|\Phi^{u_0,u_0^\prime}_*\mathbb{P}-\widetilde{\Phi}^{u_0,u_0^\prime}_*\mathbb{P}\|_{\mathrm{var}}&\leq\mathbb{P}\big(\Phi^{u_0,u_0^\prime}(\omega)\neq \widetilde{\Phi}^{u_0,u_0^\prime}(\omega)\big)\\&\leq \mathbb{P}\big(\tau_1^{u}< T\big)+\mathbb{P}\big(\tau_0^{u,v}<\tau_1^{u,v}\wedge T\big).
\end{align}
The first term on the right-hand side is estimated as in \eqref{08021543}: 
$$\mathbb{P}(\tau_1^u<T)\leq\mathbb{P}(\tau_1^u<\infty)\leq \mathcal{C}_4^0\rho^{-1}.
$$ 
For the second term, note that, by the
definition \eqref{04160408} of $\tau_0^{u,v}$, on the event
$\{\tau_0^{u,v}<\tau_1^{u,v}\wedge T\}$ one has
\begin{align*}
    \int_0^{\tau_1^{u,v}}\!\!\!\!\big(1+\mathcal{H}(u_t)+\mathcal{H}(v_t)\big)
    \widetilde{\mathcal{M}}(t,w_t)\dd t\geq  \mathcal{C}_3\widetilde{\mathcal{M}}(0,w_0)\,
    e^{\mathcal{C}_3\rho+\mathcal{C}_3\mathscr{C}(F(u_0)+F(u_0^\prime))+a}.
\end{align*}
By Chebyshev's inequality and Proposition~\ref{03100846},
\begin{align}\label{07310407}
    \mathbb{P}\big(\tau_0^{u,v}<\tau_1^{u,v}\wedge T\big)\leq e^{-a}.
\end{align}
Thus,
\begin{align}\label{04230822}
    \|\Phi^{u_0,u_0^\prime}_*\mathbb{P}-\widetilde{\Phi}^{u_0,u_0^\prime}_*\mathbb{P}\|_{\mathrm{var}}\leq \mathcal{C}^0_4\rho^{-1}+e^{-a}.
\end{align}

\noindent{\it Step 4: estimate for $\|\widetilde{\Phi}^{u_0,u_0^\prime}_*\mathbb{P}-\mathbb{P}\|_{\mathrm{var}}$}.
Let us write $\Omega=\Omega_N\oplus\Omega_N^\perp$, where
\[
\Omega_N:=C_0([0,T];\mathsf{P}_NH^4),\qquad
\Omega_N^\perp:=C_0([0,T];\mathsf{Q}_NH^4).
\]
Setting $\tau_2^{u,v}:=\tau_1^{u,v}\wedge\tau_0^{u,v}\wedge T$ and
\begin{align*}
    \mathcal{A}\big(s,(\omega^{(1)},\omega^{(2)})\big):=
    \mathbf{1}_{s\leq\tau_2^{u,v}}\,\mathsf{P}_N
    \big[i|v|^2v-i|u|^2u-i\partial_x^2(v-u)\big],
\end{align*}
we introduce the map $\Psi:\Omega\to\Omega_N$ given by
\begin{align*}
    \Psi(\omega^{(1)},\omega^{(2)})_t:=\omega_t^{(1)}
    +\int_0^{t}\mathcal{A}\big(s,(\omega^{(1)},\omega^{(2)})\big)\dd s.
\end{align*}
Since $\hat v_t=v_t$ for $t\le\tau_1^{v}$ and
$\tau_2^{u,v}\le\tau_1^{u,v}\le\tau_1^{v}$, the drift of $\Psi$ coincides
with that of $\widetilde\Phi^{u_0,u_0^\prime}$, so that
\[
\widetilde{\Phi}^{u_0,u_0^\prime}(\omega^{(1)},\omega^{(2)})
=\big(\Psi(\omega^{(1)},\omega^{(2)}),\omega^{(2)}\big).
\]
Note also that $u$ and $v$ are adapted and $\tau_2^{u,v}$ is a stopping time,
so~$\mathcal{A}$ is adapted. 
 Lemma~3.3.13 in~\cite{KS12} implies that 
\begin{align*}
\|\widetilde{\Phi}^{u_0,u_0^\prime}_*\mathbb{P}-\mathbb{P}\|_{\mathrm{var}}\nonumber\leq \int_{\Omega_N^\perp}\|\Psi_*(\hspace{0.5mm}\cdot\hspace{0.5mm},\omega^{(2)})\mathbb{P}_N-\mathbb{P}_N\|_{\mathrm{var}}\,\mathbb{P}_N^\perp(\dd \omega^{(2)}),
\end{align*}
where $\mathbb{P}_N:=(\mathsf{P}_N)_*\mathbb{P}\in\mathcal{P}(\Omega_N)$ and $\mathbb{P}_N^{\perp}:=(\mathsf{Q}_N)_*\mathbb{P}\in\mathcal{P}(\Omega^\perp_N)$. By  Theorem~A.10.1 in~\cite{KS12}, we have
\begin{align}\label{06140347}
      \nonumber\|\Psi_*(\hspace{0.5mm}\cdot\hspace{0.5mm},&\hspace{0.5mm}\omega^{(2)})\mathbb{P}_N-\mathbb{P}_N\|_{\mathrm{var}}\\\leq&\  \frac{1}{2}\sqrt{\mathbb{E}_N\Big[\mathrm{exp}\big\{6\sup_{1\leq j\leq N} b_j^{-2}\int_0^T \|\mathcal{A}(t,(\hspace{0.5mm}\cdot\hspace{0.5mm},\omega^{(2)}))\|^2\dd t\big\}\Big]^{\frac{1}{2}}-1},
\end{align}where $\mathbb{E}_N$ is the expectation corresponding to $\mathbb{P}_N$, provided Novikov's condition
\begin{align}\label{06140343}
    \mathbb{E}_N\mathrm{exp}\Big\{C\int_0^T \|\mathcal{A}(t,(\hspace{0.5mm}\cdot\hspace{0.5mm},\omega^{(2)}))\|^2\dd t\Big\}<\infty
\end{align}
is satisfied for any $C>0$ and $\omega^{(2)}\in\Omega_N^{\perp}$, which we verify below.
Notice that, by \eqref{07080243-1},
\begin{align*}
\nonumber\|\mathcal{A}(t,\omega)\|^2&\leq\mathbf{1}_{t\leq\tau_2^{u,v}}\|\mathsf{P}_N\big(|v_t|^2v_t-|u_t|^2u_t-\partial_x^2(v_t-u_t)\big)\|^2\\&\leq \nonumber C\mathbf{1}_{t\leq\tau_2^{u,v}}(\|u_t\|_{L^4}^4+\|v_t\|_{L^4}^4)\|u_t-v_t\|^2_{H^1}\\&\quad\nonumber+C\mathbf{1}_{t\leq\tau_2^{u,v}}\|u_t-v_t\|^2\\&\leq   C\mathbf{1}_{t\leq\tau_2^{u,v}}\big(1+\mathcal{H}(u_t)+\mathcal{H}(v_t)\big)\widetilde{\mathcal{M}}(t,u_t-v_t),
\end{align*}
where the last inequality follows from \eqref{06190211}. Thus, since $\tau_2^{u,v}\le\tau_0^{u,v}$, the definition \eqref{04160408}
of $\tau_0^{u,v}$ gives
\begin{align}\label{03151055}
     \int_0^T\|\mathcal{A}(t,\omega)\|^2\dd t
     &\leq C\int_0^{\tau^{u,v}_2}\big(1+\mathcal{H}(u_t)+\mathcal{H}(v_t)\big)
     \widetilde{\mathcal{M}}(t,u_t-v_t)\dd t\nonumber\\
     &\leq C\mathcal{C}_{3}\widetilde{\mathcal{M}}(0,u_0-u^\prime_0)
     e^{\mathcal{C}_3\rho+\mathcal{C}_3\mathscr{C}(F(u_0)+F(u_0^\prime))+a}
\end{align}
for any $\omega=(\omega^{(1)},\omega^{(2)})\in\Omega$.
Since the last estimate in \eqref{03151055} is a deterministic constant not
depending on $\omega^{(2)}$, it satisfies Novikov's condition \eqref{06140343}
for every $C>0$, justifying \eqref{06140347}, and the same bound on the
exponent also holds with $\mathbb{E}_N$ replaced by $\mathbb{E}$; integrating
\eqref{06140347} against $\mathbb{P}_N^\perp(\dd\omega^{(2)})$ therefore~gives
\begin{align*}
     \|\widetilde{\Phi}^{u_0,u_0^\prime}_*\mathbb{P}-\mathbb{P}\|_{\mathrm{var}}\leq   \frac{1}{2}\sqrt{\mathbb{E}\Big[\mathrm{exp}\big\{6\sup_{1\leq j\leq N} b_j^{-2}\int_0^T \|\mathcal{A}(t,\hspace{0.5mm}\cdot\hspace{0.5mm})\|^2\dd t\big\}\Big]^{\frac{1}{2}}-1}.
\end{align*}
Combining this with \eqref{03151055} and using that $u_0,u_0^\prime\in B_{H^2}(d_0)$ with $d_0\leq 1$,  we obtain the existence of   $\widetilde{\mathcal{C}}_4>0$ such that
\begin{align}\label{03200856}
    \|\widetilde{\Phi}^{u_0,u_0^\prime}_*\mathbb{P}-\mathbb{P}\|_{\mathrm{var}}\leq&\ \frac{1}{2}\sqrt{\mathrm{exp}\big\{C \mathcal{C}_3(d_0^2+d_0^4)e^{\widetilde{\mathcal{C}}_4\mathscr{C}d_0+\mathcal{C}_3\rho+a}\big\}-1}.
\end{align}

\noindent{\it Step 5: conclusion.} Combining \eqref{03140952}, \eqref{04230821}, \eqref{04230822}, and \eqref{03200856},  we complete the proof of the proposition with $\mathcal{C}_4:=(2\mathcal{C}_4^0)\vee \widetilde{\mathcal{C}}_4\vee \mathcal{C}_3\vee(C\mathcal{C}_3)$, where
$C>0$ is the constant in \eqref{03200856}.
\end{proof}

\section{Proof of the Main Theorem}\label{S:4}

This section is devoted to the proof of the Main Theorem. In
Subsection~\ref{S:4.1}, we construct a coupling of two solutions
of~\eqref{equation-1} and define the corresponding coupling events. In
Subsection~\ref{S:4.2}, we estimate the probability that a coupling
attempt breaks down at a given step. The proof of the Main Theorem is
completed in Subsection~\ref{S:4.3}.

\subsection{Construction of the coupling process}\label{S:4.1}

We fix a time step $T>0$, to be chosen later, and denote by
\begin{gather*}
 u:H^2\times C_0([0,T];H^4)\rightarrow C([0,T];H^2),\\
 v:H^2\times H^2\times C_0([0,T];H^4)\rightarrow C([0,T];H^2)
\end{gather*}
the solution maps of \eqref{equation-1} and \eqref{auxiliary}: for any $u_0,u_0^\prime\in H^2$ and any $C_0([0,T];H^4)$-valued random
variable $\eta$ with the same law as~$bW,$ the trajectories $u(u_0,\eta)$
and $v(u_0,u_0^\prime,\eta)$ are the unique solutions of \eqref{equation-1} and
\eqref{auxiliary} driven by~$\eta$ and issued from $u_0$ and~$u_0^\prime$
(see Proposition~3.5 in~\cite{DD03} and Lemma~\ref{07101918}); their values at
time~$t$ are written $u(t;u_0,\eta)$ and $v(t;u_0,u_0^\prime,\eta)$. The same
letters denote the solution maps on the half-line, defined on
$H^2\times C_0([0,\infty);H^4)$ and $H^2\times H^2\times C_0([0,\infty);H^4)$
and taking values in $C([0,\infty);H^2)$; this creates no ambiguity, since the time interval will always be clear
from the context.

To construct the maximally coupled noises that will drive the
coupling process, we apply Lemma~\ref{L:max-coupling} with
$$
X=C_0([0,T];H^4),\qquad Y=C([0,T];H^2),\qquad Z=H^2\times H^2,
$$
where $\mu=\mathcal{D}(bW)$ is the law of the noise on~$X$ and, for
$z=(u_0,u_0^\prime)\in Z$ and $\eta\in X$,
$$
f_1(z,\eta):=v(\hspace{0.5mm}\cdot\hspace{0.5mm}\,;u_0,u_0^\prime,\eta),
\qquad
f_2(z,\eta):=u(\hspace{0.5mm}\cdot\hspace{0.5mm}\,;u_0^\prime,\eta),
$$
which are measurable in $(z,\eta)$; the corresponding measures are
$$
\nu_1(z,\hspace{0.5mm}\cdot\hspace{0.5mm})=\mathcal{D}\big(v(\hspace{0.5mm}\cdot\hspace{0.5mm}\,;u_0,u_0^\prime,bW)\big),
\qquad
\nu_2(z,\hspace{0.5mm}\cdot\hspace{0.5mm})=\mathcal{D}\big(u(\hspace{0.5mm}\cdot\hspace{0.5mm}\,;u_0^\prime,bW)\big).
$$
The lemma provides a probability space, which we again denote by
$(\Omega,\mathcal{F},\mathbb{P})$, and a family of
$C_0([0,T];H^4)$-valued random variables
$b\widetilde{W}^i(u_0,u_0^\prime)$, $i=1,2$, depending measurably on
$(u_0,u_0^\prime)\in H^2\times H^2$, with the following two
properties. First, each $b\widetilde{W}^i(u_0,u_0^\prime)$ has the same law as $bW$; in
particular, $v(\hspace{0.5mm}\cdot\hspace{0.5mm}\,;u_0,u_0^\prime,
b\widetilde{W}^1(u_0,u_0^\prime))$ and
$u(\hspace{0.5mm}\cdot\hspace{0.5mm}\,;u_0^\prime,
b\widetilde{W}^2(u_0,u_0^\prime))$ solve \eqref{auxiliary} and
\eqref{equation-1}, respectively. Second, for
any $(u_0,u_0^\prime)\in Z$, these two processes form a maximal
coupling of $\nu_1(z,\hspace{0.5mm}\cdot\hspace{0.5mm})$ and
$\nu_2(z,\hspace{0.5mm}\cdot\hspace{0.5mm})$; in particular,
\begin{align}\label{04280357}
\mathbb{P}\Big(
u\big(t;u_0^\prime,&b\widetilde{W}^2(u_0,u_0^\prime)\big)
\neq
v\big(t;u_0,u_0^\prime,b\widetilde{W}^1(u_0,u_0^\prime)\big)
\ \text{ for some }t\in[0,T]\Big)\nonumber\\
&=\big\|\mathcal{D}\big(u(\hspace{0.5mm}\cdot\hspace{0.5mm}\,;u_0^\prime,bW)\big)
-\mathcal{D}\big(v(\hspace{0.5mm}\cdot\hspace{0.5mm}\,;u_0,u_0^\prime,bW)\big)\big\|_{\mathrm{var}}.
\end{align}
 Thus the two solutions coincide on $[0,T]$ with the maximal probability allowed
by the distance between their laws. 

Without loss of generality, we assume that the underlying probability space
supports a family
\begin{align}\label{04112204}
\big\{\big(b\widetilde{W}^{1n},b\widetilde{W}^{2n}\big)\big\}_{n\geq0}
\end{align}
of independent copies of the pair of maps
$\big(b\widetilde{W}^{1},b\widetilde{W}^{2}\big)$, together with random
variables $bW^{in}$, $i=1,2$, $n\geq0$, with values in $C_0([0,T];H^4)$, each
having the law of $bW,$ such that the family consisting of the pairs in \eqref{04112204} and
the variables $bW^{in}$, $i=1,2$, $n\geq0$, is mutually independent.

We now turn to the construction of the coupling process, beginning with the
coupling event. Given a triple $(\widetilde{u},\widetilde{u}^\prime,
\widetilde{v})$ of $H^2$-valued processes and integers $0\leq l\leq k$, we define the event
 \begin{equation*}
    (P_{l,k}):\quad\left\{
    \begin{aligned}
        &\widetilde{u}^\prime_t=\widetilde{v}_t
          \qquad\text{for all } t\in[lT,kT],\\
        &\big(\widetilde{u}_{lT},\widetilde{u}^\prime_{lT}\big)
          \in B_{H^2}(d_0)\times B_{H^2}(d_0),\\
        &{\tau}_1^{\widetilde{u},\widetilde{u}^\prime}\circ\theta_{lT}
          \geq (k-l)T,
    \end{aligned}
    \right.
\end{equation*}
where $d_0>0$ is specified later in Corollary~\ref{07311420},
$\tau_1^{\widetilde{u},\widetilde{u}^\prime}
:=\tau_1^{\widetilde{u}}\wedge\tau_1^{\widetilde{u}^\prime}$ is as in~\eqref{06230835-1}, and $\theta_t$ denotes the time shift, acting on trajectory functionals by
\begin{align}\label{08120353}
f(\widetilde{u},\widetilde{u}^\prime)\circ\theta_t
:=f\big(\widetilde{u}(t+\cdot),\widetilde{u}^\prime(t+\cdot)\big)
\end{align}
for any measurable $f:C([0,\infty);H^2)^2\rightarrow\mathbb{R}$ and $t\geq0$,
and similarly for a finite stopping time in place of~$t$.  In words, $(P_{l,k})$ says that a coupling attempt launched at time~$lT$ is
still in progress at step~$k$: both data are small at the launch time, the
processes $\widetilde{u}^\prime$ and $\widetilde{v}$ have not separated since,
and the energy functionals of $\widetilde{u}$ and $\widetilde{u}^\prime$ have
not grown abnormally. For $k=l$ the event reduces to the ball condition.

Accordingly, we introduce an integer-valued process $l_0(\cdot)$, defined
recursively by $l_0(-1):=\infty$ and, for $k\ge0$,
\begin{align*}
 l_0(k):=
 \begin{cases}
 l_0(k-1) & \text{on }\{l_0(k-1)<\infty\}\cap(P_{l_0(k-1),k}),\\[1mm]
 \infty   & \text{on }\{l_0(k-1)<\infty\}\cap(P_{l_0(k-1),k})^c,\\[1mm]
 k        & \text{on }\{l_0(k-1)=\infty\}\cap(P_{k,k}),\\[1mm]
 \infty   & \text{on }\{l_0(k-1)=\infty\}\cap(P_{k,k})^c.
 \end{cases}
\end{align*}
In particular, $l_0(0)=0$ if $u_0,u_0^\prime\in B_{H^2}(d_0)$, and
$l_0(0)=\infty$ otherwise.

  Thus an attempt launched at step~$l$ has $l_0(l)=l$, and $l_0(k)=l$ for~$k>l$ provided that $(P_{l,j})$ holds for every $j=l,\dots,k$; $l_0$ is reset to~$\infty$ at the first step at
which it fails, and $l_0(k)=\infty$ means that no attempt is in progress at
step~$k$. In particular, $l_0$ is not monotone. Our definition of $l_0$ differs from the one in~\cite{DO05}, where $l_0(k)$ is taken to be the
smallest $l\le k$ for which~$(P_{l,k})$ holds; with that convention a new
attempt may be launched at the same step at which the previous one failed.
Our recursive definition depends only on~$l_0(k-1)$ and on one of the events
$(P_{l_0(k-1),k})$ and $(P_{k,k})$; this is what makes the Markov-type property
of Lemma~\ref{L1:l0-markov} available and yields property~(i) below.

The triple $(\widetilde{u},\widetilde{u}^\prime,\widetilde{v})$ is defined
recursively, starting from
$(\widetilde{u}_0,\widetilde{u}^\prime_0,\widetilde{v}_0)
=(u_0,u_0^\prime,u_0^\prime)$. Given its values at time $nT$, $n\geq0$, we set
\begin{equation}\label{E:recursion}
\left\{
\begin{aligned}
    &\widetilde{u}_t=u\big(t-nT;\widetilde{u}_{nT},\eta^{1n}\big),\\
    &\widetilde{u}^\prime_t=u\big(t-nT;\widetilde{u}^\prime_{nT},\eta^{2n}\big),\\
    &\widetilde{v}_t=v\big(t-nT;\widetilde{u}_{nT},\widetilde{u}^\prime_{nT},
      \eta^{1n}\big),
\end{aligned}
\right.
\end{equation} where the first two relations hold for $t\in[nT,(n+1)T]$ and the third
for $t\in[nT,(n+1)T)$. The driving noises are
\begin{align}\label{08070318}
\big(\eta^{1n},\eta^{2n}\big):=
\begin{cases}
\big(b\widetilde{W}^{1n}(\widetilde{u}_{nT},\widetilde{u}^\prime_{nT}),\,
 b\widetilde{W}^{2n}(\widetilde{u}_{nT},\widetilde{u}^\prime_{nT})\big),
 & \text{if } l_0(n)<\infty,\\[1mm]
\big(bW^{1n},\,bW^{2n}\big), & \text{otherwise}.
\end{cases}
\end{align}
Thus $\widetilde{u}$ and $\widetilde{v}$ are always driven by the same noise
$\{\eta^{1n}\}_{n\geq0}$, while $\widetilde{u}^\prime$ is driven by
$\{\eta^{2n}\}_{n\geq0}$: when $l_0(n)<\infty$, the pair
$(\eta^{1n},\eta^{2n})$ is the maximally coupled one, and otherwise
$\eta^{1n}$ and $\eta^{2n}$ are independent.
 The following two properties follow from this construction:
\begin{itemize}
 \item[(i)] 
  $l_0(\vartheta_k)=\vartheta_k$ on $\{\vartheta_k<\infty\}$, for the
recurrence times $\vartheta_k$ defined in Subsection~\ref{S:4.3}.  
\item[(ii)]   On each interval $[nT,(n+1)T)$, the difference
$w:=\widetilde{u}-\widetilde{v}$ solves the pathwise equation
\eqref{02230853-1} with initial condition
$\widetilde{u}_{nT}-\widetilde{u}^\prime_{nT}$. Substituting
$\widetilde v=\widetilde u-w$ there, we obtain an equation for $w$ in which
$\widetilde{u}|_{[nT,(n+1)T]}$ is the only external datum. By the same
fixed-point argument as in Lemma~\ref{WPA2}, that equation is well posed and
its solution depends measurably on
$\big(\widetilde{u}^\prime_{nT},\widetilde{u}|_{[nT,(n+1)T]}\big)$; hence so
does $\widetilde{v}|_{[nT,(n+1)T)}$, and $\widetilde{v}$ is a measurable
functional of $(\widetilde{u},\widetilde{u}^\prime)$.  
\end{itemize}

Concatenating the increments $\eta^{in}$ over successive intervals, we
define the noise maps
$\widetilde{\eta}^i:H^2\times H^2\times\Omega\rightarrow
C_0([0,\infty);H^4)$, $i=1,2$, recursively in~$n$ by
$\widetilde{\eta}^i_0:=0$ and
\begin{equation}\label{E:noise-concat}
\widetilde{\eta}_t^i(u_0,u_0^\prime)
:=\widetilde{\eta}^i_{nT}(u_0,u_0^\prime)+\eta^{in}_{t-nT},
\qquad t\in[nT,(n+1)T]
\end{equation}
for any $u_0,u_0^\prime\in H^2$.   Since $\widetilde{\eta}^{1}$ and $\widetilde{\eta}^{2}$ are
$C_0([0,\infty);H^4)$-valued, by \eqref{E:recursion},
\eqref{E:noise-concat}, and pathwise uniqueness, we have
\begin{align}\label{08130433}
\widetilde{u}=u(\hspace{0.5mm}\cdot\hspace{0.5mm}\,;u_0,\widetilde{\eta}^{1})
\quad\text{and}\quad
\widetilde{u}^\prime=u(\hspace{0.5mm}\cdot\hspace{0.5mm}\,;u_0^\prime,\widetilde{\eta}^{2}).
\end{align}
Let $\widetilde{\mathcal{F}}_{nT}$ be
the $\sigma$-algebra generated by
$\big\{\widetilde{\eta}^i_t:\,   t\leq nT,\, i=1,2\big\}$. From \eqref{E:recursion} and \eqref{08130433} it follows that the restriction
$(\widetilde{u},\widetilde{u}^\prime,\widetilde{v})|_{[0,nT]}$ is $\widetilde{\mathcal{F}}_{nT}$-measurable, and so are the events
$(P_{l,k})$, $l\leq k\leq n$, and the random variables $l_0(k)$,
$k\leq n$.

To prove that $(\widetilde{u},\widetilde{u}^\prime)$ is a coupling of $(u,u^\prime)$, it suffices to show that each of $\widetilde{\eta}^{1}$ and $\widetilde{\eta}^{2}$ has the same distribution as~$bW$. For the noise processes $\widetilde{\eta}^i$, $i=1,2$, we write $\theta_t\widetilde{\eta}^{i}$ for the shifted process $\widetilde{\eta}^i_{t+\cdot}-\widetilde{\eta}^i_t$, which differs from the shift introduced in~\eqref{08120353}.
\begin{lemma}\label{08080341}
For any $u_0,u_0^\prime\in H^2$ and $i=1,2$, the process
$\widetilde{\eta}^{i}(u_0,u_0^\prime)$ has the same distribution as~$bW$. Moreover, for any $k\geq1$, the increment
$\theta_{kT}\widetilde{\eta}^{i}(u_0,u_0^\prime)$ is
independent of $\widetilde{\mathcal{F}}_{kT}$.
\end{lemma}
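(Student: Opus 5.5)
The plan is to prove, by induction on $n\ge0$, a statement about the block increments. For each $n$, the increment $\eta^{in}$ is independent of $\widetilde{\mathcal F}_{nT}$ and has the law $\mathcal D(bW)$ on $C_0([0,T];H^4)$. Once this is in hand, the increments $\eta^{i0},\eta^{i1},\dots$ are i.i.d.\ with the law of $bW|_{[0,T]}$. The concatenation \eqref{E:noise-concat} then has the law of the process obtained by gluing i.i.d.\ Brownian blocks. By the independent and stationary increments of the $H^4$-valued Wiener process $bW$, this is exactly $\mathcal D(bW)$ on $C_0([0,\infty);H^4)$. The independence of $\theta_{kT}\widetilde\eta^i=(\eta^{ik},\eta^{i,k+1},\dots)$ from $\widetilde{\mathcal F}_{kT}$ follows from the same block statement, iterated over $k,k+1,\dots$.

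\emph{Key step: the conditional law of $\eta^{in}$ given $\widetilde{\mathcal F}_{nT}$.} By \eqref{08070318}, $\eta^{in}=G^{in}(\xi_n,\zeta_n)$. Here $\xi_n:=(\widetilde u_{nT},\widetilde u'_{nT},\mathbf 1_{l_0(n)<\infty})$ is $\widetilde{\mathcal F}_{nT}$-measurable, and $\zeta_n:=(b\widetilde W^{1n},b\widetilde W^{2n},bW^{1n},bW^{2n})$ is the $n$th family of fresh randomness. The map is $G^{in}((x,y,\delta),\zeta)=b\widetilde W^{in}(x,y)$ if $\delta=1$, and $bW^{in}$ otherwise; it is jointly measurable because $b\widetilde W^{i}$ depends measurably on $(u_0,u_0')$.

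\emph{Independence of $\zeta_n$ from $\widetilde{\mathcal F}_{nT}$.} Since $\widetilde{\mathcal F}_{nT}$ is generated by $\eta^{im}$, $m<n$, each such $\eta^{im}$ is a measurable function of $\zeta_0,\dots,\zeta_{m}$. Together with $\xi_n$, which is a function of $\zeta_0,\dots,\zeta_{n-1}$ through \eqref{E:recursion}, this gives
\[
\widetilde{\mathcal F}_{nT}\subset\sigma(\zeta_0,\dots,\zeta_{n-1}).
\]
The mutual independence assumed after \eqref{04112204} makes $\zeta_n$ independent of this $\sigma$-algebra. For $m\ge1$, we use that $\widetilde u_{mT}$ and $\widetilde u'_{mT}$ are functions of $(u_0,u_0')$ and of earlier increments (\eqref{08130433}), and that $l_0(m)$ is $\widetilde{\mathcal F}_{mT}$-measurable, as noted before the lemma.

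\emph{Conclusion of the key step.} By the freezing lemma, for any bounded measurable $f$,
\[
\mathbb E[f(\eta^{in})\mid\widetilde{\mathcal F}_{nT}]=h(\xi_n),\qquad h(x,y,\delta):=\mathbb E f(G^{in}((x,y,\delta),\zeta_n)).
\]
For $\delta=1$ the function $h$ equals $\mathbb E f(b\widetilde W^{i}(x,y))$. By the first property from Lemma~\ref{L:max-coupling}, this is $\mathbb E f(bW)$ for every fixed $(x,y)$. For $\delta=0$ it is again $\mathbb E f(bW)$. Hence $h\equiv\mathbb E f(bW)$ is constant, which gives both the independence and the law of $\eta^{in}$.

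\emph{Main obstacle: measurability, not the probability.} The delicate point is that $\widetilde{\mathcal F}_{nT}$ must not contain information about $\zeta_n$. It must also contain enough information that $\xi_n$ is measurable with respect to it, including $l_0(n)$, which involves the event $(P_{n,n})$ at time $nT$ only. The difficulty is subtle for $(P_{l,n})$ with $l<n$. This event involves $\widetilde v$ on $[lT,nT]$, and $\widetilde v$ on $[(n-1)T,nT)$ is driven by $\eta^{1,n-1}$. It also involves $\widetilde u'_{nT}$, which is continuous at $nT$, and the stopping-time condition up to $nT$. All of these are functions of $\eta^{\cdot m}$, $m\le n-1$, and of the initial data, by property (ii) and \eqref{08130433}, so they are $\widetilde{\mathcal F}_{nT}$-measurable. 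I would verify this carefully, using the measurable dependence of $\widetilde v$ on $(\widetilde u,\widetilde u')$ from property (ii), and then run the induction in the order described above.
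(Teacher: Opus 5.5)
Your proposal is correct and follows essentially the same route as the paper. The core step in both is that $\eta^{in}$ is independent of $\widetilde{\mathcal F}_{nT}$ with law $\mathcal D(bW|_{[0,T]})$: you freeze the $\widetilde{\mathcal F}_{nT}$-measurable data $(\widetilde u_{nT},\widetilde u'_{nT},l_0(n))$ and apply property~(i) of Lemma~\ref{L:max-coupling}, using a single selection map $G^{in}$ where the paper splits into the cases $l_0(n)<\infty$ and $l_0(n)=\infty$, and then the law of the concatenation and the independence of $\theta_{kT}\widetilde\eta^i$ follow by successive conditioning over blocks, as in the paper; your inclusion $\widetilde{\mathcal F}_{nT}\subset\sigma(\zeta_0,\dots,\zeta_{n-1})$ also makes explicit why the fresh randomness is independent of $\widetilde{\mathcal F}_{nT}$, which the paper leaves implicit.
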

 \begin{proof}
  It suffices to consider the case $i=1$. We prove by induction on $n\geq1$ that
$\widetilde{\eta}^{1}(u_0,u_0^\prime)|_{[0,nT]}$ has the distribution
$\mathcal{D}(bW|_{[0,nT]})$, which gives the first assertion. In the rest of the proof
we drop the fixed initial data from the notation. For the base case $n=1$, note that the noise selection in
\eqref{08070318} is decided by the initial data alone:
$\eta^{10}=b\widetilde{W}^{10}(u_0,u_0^\prime)$ if
$u_0,u_0^\prime\in B_{H^2}(d_0)$, and $\eta^{10}=bW^{10}$ otherwise. In
both cases $\widetilde{\eta}^{1}|_{[0,T]}=\eta^{10}$ has the
distribution $\mathcal{D}(bW|_{[0,T]})$ and is independent of the trivial $\sigma$-algebra $\widetilde{\mathcal{F}}_0$.

Let us assume that $\widetilde{\eta}^{1}|_{[0,nT]}$ has the distribution
$\mathcal{D}(bW|_{[0,nT]})$ for some $n\geq1$, and prove it for $n+1$. We claim that the noise $\eta^{1n}$ in \eqref{08070318} is
independent of $\widetilde{\mathcal{F}}_{nT}$.
 Indeed, for any $B\in\mathcal{B}(C_0([0,nT];H^4)^2)$ and bounded measurable function $h:C_0([0,T];H^4)\rightarrow\mathbb{R}$, we have
     \begin{align*}
         \mathbb{E}\big[h(&\eta^{1n});(\widetilde{\eta}^1,\widetilde{\eta}^2)\big|_{[0,nT]}\in B\big]\\&= \mathbb{E}\big[h(b\widetilde{W}^{1n}(\widetilde{u}_{nT},\widetilde{u}^\prime_{nT}));(\widetilde{\eta}^1,\widetilde{\eta}^2)\big|_{[0,nT]} \in  B,\ l_0(n)<\infty\big]\\&\quad+\mathbb{E}\big[h(b{W}^{1n});(\widetilde{\eta}^1,\widetilde{\eta}^2)\big|_{[0,nT]}
\in B,\ l_0(n)=\infty\big]=:\mathrm{I}+\mathrm{II}.
     \end{align*}
     From the construction it follows that the event
$\{(\widetilde{\eta}^1,\widetilde{\eta}^2)|_{[0,nT]}\in B\}$ and the
random variable $l_0(n)$ are both $\widetilde{\mathcal{F}}_{nT}$-measurable. Moreover, the map
$b\widetilde{W}^{1n}$ and the variable $bW^{1n}$ are independent of
$\widetilde{\mathcal{F}}_{nT}$. Thus we obtain that
     \begin{align}\label{08080233-1}
         \nonumber\mathrm{I}&=\mathbb{E}\big[\mathbb{E}\big[h(b\widetilde{W}^{1n}(w,w^\prime))\big]\big|_{(w,w^\prime)=(\widetilde{u}_{nT},\widetilde{u}^\prime_{nT})};(\widetilde{\eta}^1,\widetilde{\eta}^2)\big|_{[0,nT]}\in B, \ l_0(n)<\infty\big]\\& =\mathbb{E}\big[h(bW)\big]\mathbb{P}\big((\widetilde{\eta}^1,\widetilde{\eta}^2)\big|_{[0,nT]}\in B, \ l_0(n)<\infty\big).
     \end{align}
     Similarly, we have
     \begin{align}\label{08080233-2}
         \mathrm{II}=\mathbb{E}\big[h(bW)\big]\mathbb{P}\big((\widetilde{\eta}^1,\widetilde{\eta}^2)\big|_{[0,nT]} \in  B, \ l_0(n)= \infty\big).
     \end{align}
Summing \eqref{08080233-1} and \eqref{08080233-2}, we get
     \begin{align*}
         \mathbb{E}\big[h(\eta^{1n});(\widetilde{\eta}^1,\widetilde{\eta}^2)\big|_{[0,nT]}\in B\big]=\mathbb{E}\big[h(bW)\big]\mathbb{P}\big((\widetilde{\eta}^1,\widetilde{\eta}^2)\big|_{[0,nT]}\in B\big).
     \end{align*}
Taking $B$ to be the full space gives
$\mathcal{D}(\eta^{1n})=\mathcal{D}(bW|_{[0,T]})$; the arbitrariness of
$h$ and $B$ then proves that $\eta^{1n}$ is independent of
$\widetilde{\mathcal{F}}_{nT}$. Thus, by \eqref{E:noise-concat} and the
induction assumption, the process $\widetilde{\eta}^{1}|_{[0,(n+1)T]}$
is the extension of a path with distribution
$\mathcal{D}(bW|_{[0,nT]})$ by an independent increment with
distribution $\mathcal{D}(bW|_{[0,T]})$, and therefore has the
distribution $\mathcal{D}(bW|_{[0,(n+1)T]})$.

  For the second assertion, let $k\geq1$. By the claim established above,
for any $M\geq0$, any bounded measurable
$h_0,\ldots,h_M:C_0([0,T];H^4)\rightarrow\mathbb{R}$ and any
$A\in\widetilde{\mathcal{F}}_{kT}$, conditioning successively on
$\widetilde{\mathcal{F}}_{(k+M)T},\ldots,\widetilde{\mathcal{F}}_{(k+1)T}$
gives
$$
\mathbb{E}\Big[\prod_{m=0}^{M}h_m\big(\eta^{1,k+m}\big);A\Big]
=\prod_{m=0}^{M}\mathbb{E}\big[h_m(bW)\big]\,\mathbb{P}(A).
$$
Since, by \eqref{E:noise-concat}, the noises $\eta^{1,k+m}$ are the increments
of $\theta_{kT}\widetilde{\eta}^1$ over the intervals $[mT,(m+1)T]$, and $M$
is arbitrary, the above identity proves the second assertion.
 \end{proof}
 
By Lemma~\ref{08080341} and the uniqueness of solutions
to~\eqref{equation-1}, the pair $(\widetilde{u},\widetilde{u}^\prime)$
is a coupling of $(u,u^\prime)$. However, this pair is not a Markov
process: by \eqref{08070318}, the noise driving the $n$-th interval
depends on whether a coupling attempt is in progress, and this is not
determined by the position at time~$nT$. We will establish two
Markov-type properties (Lemmas~\ref{local markov}
and~\ref{L1:l0-markov}) that substitute for the Markov property in the
proof of polynomial mixing.

\subsection{Decoupling estimates}\label{S:4.2}

For $w,w^\prime\in H^2$, we denote by $\mathbf{P}_{w,w^\prime}$ the
distribution in $C([0,T];H^2)^2$ of the pair
$$
\big(u(\cdot\,;w,b\widetilde{W}^{1}(w,w^\prime)),\
u(\cdot\,;w^\prime,b\widetilde{W}^{2}(w,w^\prime))\big),
$$
that is, of a pair of solutions on one time block driven by the
maximally coupled noises, and by $\mathbf{E}_{w,w^\prime}$ the
corresponding expectation. Then we have the following Markov-type
property.
\begin{lemma}\label{local markov}
For any non-negative measurable function
$g:C([0,T];H^2)^2\rightarrow\mathbb{R}$ and $k\ge0$, we have
\begin{align}\label{E:local-markov}
\mathbb{E}\big[g(\widetilde{u},\widetilde{u}^\prime)\circ\theta_{k T}
\,\big|\,\widetilde{\mathcal{F}}_{k T}\big]
=\mathbf{E}_{\widetilde{u}_{k T},\widetilde{u}^\prime_{k T}}
\big[g(\widetilde{u},\widetilde{u}^\prime)\big]
\qquad\text{on }\{l_0(k)<\infty\}.
\end{align}
\end{lemma}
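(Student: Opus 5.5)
The plan is to reduce the conditional expectation to a computation over the freshly sampled noise pair on the block $[kT,(k+1)T]$. By \eqref{E:recursion} and \eqref{08070318}, on the event $\{l_0(k)<\infty\}$ the shifted pair restricted to one block satisfies
\[
(\widetilde u,\widetilde u^\prime)\circ\theta_{kT}\big|_{[0,T]}=\Big(u\big(\cdot\,;\widetilde u_{kT},b\widetilde W^{1k}(\widetilde u_{kT},\widetilde u^\prime_{kT})\big),\,u\big(\cdot\,;\widetilde u^\prime_{kT},b\widetilde W^{2k}(\widetilde u_{kT},\widetilde u^\prime_{kT})\big)\Big).
\]
So $g(\widetilde u,\widetilde u^\prime)\circ\theta_{kT}\,\mathbf 1_{\{l_0(k)<\infty\}}$ equals $G\big(\widetilde u_{kT},\widetilde u^\prime_{kT},(b\widetilde W^{1k},b\widetilde W^{2k})\big)\mathbf 1_{\{l_0(k)<\infty\}}$. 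Here $G(w,w^\prime,\cdot):=g\big(u(\cdot;w,b\widetilde W^{1k}(w,w^\prime)),u(\cdot;w^\prime,b\widetilde W^{2k}(w,w^\prime))\big)$ is jointly measurable, because the solution maps and $(w,w^\prime)\mapsto b\widetilde W^{ik}(w,w^\prime)$ are measurable (Lemma~\ref{L:max-coupling} and the measurability of the solution maps).

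Next I will use the independence structure. The pair of maps $(b\widetilde W^{1k},b\widetilde W^{2k})$, viewed as a random element of the space of measurable maps (or, concretely, the random field indexed by $(w,w^\prime)$), is independent of $\widetilde{\mathcal F}_{kT}$. The reason is that $\widetilde{\mathcal F}_{kT}$ is generated by $\widetilde\eta^i|_{[0,kT]}$, which by \eqref{E:noise-concat} and \eqref{08070318} is a measurable function of the families $\{(b\widetilde W^{1n},b\widetilde W^{2n}),bW^{1n},bW^{2n}\}_{n<k}$ only. That whole family is independent of the $k$-th copy by the mutual independence assumed after \eqref{04112204}. In addition, $\widetilde u_{kT},\widetilde u^\prime_{kT}$ and $\mathbf 1_{\{l_0(k)<\infty\}}$ are $\widetilde{\mathcal F}_{kT}$-measurable, as noted before Lemma~\ref{08080341}.

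The freezing lemma for conditional expectations (the same device used for the term $\mathrm I$ in \eqref{08080233-1}) then gives
\[
\mathbb E[G(\widetilde u_{kT},\widetilde u^\prime_{kT},\cdot)\mid\widetilde{\mathcal F}_{kT}]\mathbf 1_{\{l_0(k)<\infty\}}=\Big(\mathbb E\,G(w,w^\prime,(b\widetilde W^{1k},b\widetilde W^{2k}))\Big)\Big|_{(w,w^\prime)=(\widetilde u_{kT},\widetilde u^\prime_{kT})}\mathbf 1_{\{l_0(k)<\infty\}}.
\]
Because the $k$-th pair is an identically distributed copy of $(b\widetilde W^1,b\widetilde W^2)$, the inner expectation is exactly $\mathbf E_{w,w^\prime}[g(\widetilde u,\widetilde u^\prime)]$ by definition. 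This yields \eqref{E:local-markov}. For nonnegative unbounded $g$, I will first prove the identity for $g\wedge m$ and then pass to the limit by monotone convergence.

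The main obstacle is making the freezing lemma rigorous. It needs $(b\widetilde W^{1k},b\widetilde W^{2k})$ to be a random variable in a measurable space on which the evaluation $(w,w^\prime,\omega)\mapsto b\widetilde W^{ik}(w,w^\prime)(\omega)$ is jointly measurable. My first attempt is to take the underlying copies to be built as $\Xi(w,w^\prime,\zeta_k)$ for a fixed measurable $\Xi$ and i.i.d.\ random variables $\zeta_k$, which is how Lemma~\ref{L:max-coupling} should provide them. Independence is then that of $\zeta_k$ from $\widetilde{\mathcal F}_{kT}$, and the standard substitution lemma applies to $\zeta_k$ directly.
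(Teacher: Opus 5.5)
Your proposal is correct and follows essentially the paper's argument: on $\{l_0(k)<\infty\}$ you identify the shifted block with the solutions driven by $b\widetilde W^{ik}(\widetilde u_{kT},\widetilde u^\prime_{kT})$. You then combine the independence of the $k$-th copy from $\widetilde{\mathcal F}_{kT}$ with the $\widetilde{\mathcal F}_{kT}$-measurability of $(\widetilde u_{kT},\widetilde u^\prime_{kT})$ and $\{l_0(k)<\infty\}$, and freeze the arguments. The only difference is packaging: the paper verifies the freezing identity by hand in \eqref{E:freeze-k}, testing against the generating events $\{(\widetilde\eta^1,\widetilde\eta^2)|_{[0,kT]}\in B\}$, whereas you invoke the standard substitution lemma and make explicit (via $\Xi(w,w^\prime,\zeta_k)$ with i.i.d.\ $\zeta_k$) the joint measurability of the noise maps that the paper takes for granted.
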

\begin{proof}
    On $\{l_0(k)<\infty\}$, for any $t\in[0,T]$,
    \begin{align*}(\widetilde{u},\widetilde{u}^\prime)\circ\theta_{kT}(t)=\big(u(t;\widetilde{u}_{kT},b\widetilde{W}^{1k}(\widetilde{u}_{kT},\widetilde{u}_{kT}^\prime)),u(t;\widetilde{u}^\prime_{kT},b\widetilde{W}^{2k}(\widetilde{u}_{kT},\widetilde{u}_{kT}^\prime))\big).
    \end{align*}
Notice that the pair of maps
$\big(b\widetilde{W}^{1k},b\widetilde{W}^{2k}\big)$ is independent of
$\widetilde{\mathcal{F}}_{kT}$, while, for any
$B\in\mathcal{B}(C_0([0,kT];H^4)^2)$, the event
$\{(\widetilde{\eta}^1,\widetilde{\eta}^2)|_{[0,kT]}\in B\}$ and the
pair $(\widetilde{u}_{kT},\widetilde{u}^\prime_{kT})$ are
$\widetilde{\mathcal{F}}_{kT}$-measurable. Moreover, $l_0(k)$ being
$\widetilde{\mathcal{F}}_{kT}$-measurable, there exists
$B_k\in\mathcal{B}(C_0([0,kT];H^4)^2)$ such that
$$
\{l_0(k)<\infty\}=\{(\widetilde{\eta}^1,\widetilde{\eta}^2)|_{[0,kT]}
\in B_k\}.
$$
Hence, by the same argument as in the proof of
Lemma~\ref{08080341}, for any non-negative measurable
$h:H^2\times H^2\times C_0([0,T];H^4)^2\rightarrow\mathbb{R}$,
    \begin{align}\label{E:freeze-k}
        & \nonumber\mathbb{E}\big[h\big(\widetilde{u}_{kT},\widetilde{u}^\prime_{kT},b\widetilde{W}^{1k}(\widetilde{u}_{kT},\widetilde{u}_{kT}^\prime),\hspace{-0.5mm}b\widetilde{W}^{2k}(\widetilde{u}_{kT},\widetilde{u}_{kT}^\prime)\big);\hspace{-0.5mm}(\widetilde{\eta}^1,\widetilde{\eta}^2)|_{[0,kT]}\!\in\!\hspace{-0.5mm} B, l_0(k)\!<\!\infty\big]\\& =\int_{(B\cap B_k)\times H^2\times H^2}\!\!\!\!\!\!\!\!\!\mathbb{E}\big[h\big(w,w^\prime,b\widetilde{W}^{1k}(w,w^\prime),b\widetilde{W}^{2k}(w,w^\prime)\big)\big] \Lambda_k(\dd \zeta_1,\dd \zeta_2,\dd w,\dd w^\prime),
    \end{align}where $\Lambda_k:=\mathcal{D}(\widetilde{\eta}^1|_{[0,kT]},\widetilde{\eta}^2|_{[0,kT]},\widetilde{u}_{kT},\widetilde{u}_{kT}^\prime).$
We apply this equality to the function
\begin{align*}
h(w,w^\prime,\zeta_1,\zeta_2)
:=g\big(u(\cdot\,;w,\zeta_1),u(\cdot\,;w^\prime,\zeta_2)\big),
\end{align*} where $w,w^\prime\in H^2$, $ \zeta_1,\zeta_2\in C_0([0,T];H^4)$.
With this choice, on $\{l_0(k)<\infty\}$,
$$
    h\big(\widetilde{u}_{kT},\widetilde{u}^\prime_{kT},b\widetilde{W}^{1k}(\widetilde{u}_{kT},\widetilde{u}_{kT}^\prime),b\widetilde{W}^{2k}(\widetilde{u}_{kT},\widetilde{u}_{kT}^\prime)\big)=g(\widetilde{u},\widetilde{u}^\prime)\circ\theta_{kT},
$$   
and 
\begin{align*}
\mathbb{E}\big[&h\big(w,w^\prime,b\widetilde{W}^{1k}(w,w^\prime),b\widetilde{W}^{2k}(w,w^\prime)\big)\big]\\&=\mathbb{E}\big[g\big(u(\cdot;w,b\widetilde{W}^1(w,w^\prime)),u(\cdot;w^\prime,b\widetilde{W}^2(w,w^\prime))\big)\big]=\mathbf{E}_{w,w^\prime}\big[g(\widetilde{u},\widetilde{u}^\prime)\big].
\end{align*}
Substituting these two identities into \eqref{E:freeze-k}, we obtain
\begin{align*}
\mathbb{E}\big[g(\widetilde{u}&,\widetilde{u}^\prime)\circ\theta_{k T};
(\widetilde{\eta}^1,\widetilde{\eta}^2)|_{[0,kT]}\in B,\ l_0(k)<\infty \big]\\
&=\int_{(B\cap B_k)\times H^2\times H^2}
\mathbf{E}_{w,w^\prime}\big[g(\widetilde{u},\widetilde{u}^\prime)\big]\,
\Lambda_k(\dd \zeta_1,\dd \zeta_2,\dd w,\dd w^\prime)\\
&=\mathbb{E}\big[\mathbf{E}_{\widetilde{u}_{kT},\widetilde{u}_{kT}^\prime}
[g(\widetilde{u},\widetilde{u}^\prime)];
(\widetilde{\eta}^1,\widetilde{\eta}^2)|_{[0,kT]}\in B,\ l_0(k)<\infty \big].
\end{align*}
The arbitrariness of $B$ and the definition of
$\widetilde{\mathcal{F}}_{kT}$ then give \eqref{E:local-markov}.
\end{proof}
The next lemma shows that, for a suitable choice of $d_0=d_0(\rho)$, the two
trajectories remain coupled on $[0,T]$ with probability at least $1/2$, provided
both initial conditions lie in $B_{H^2}(d_0)$. Here $\rho$ is the truncation level entering the
stopping time $\tau_1$, see \eqref{06230835-1}, and $\rho_*$ is 
fixed in \eqref{04271005-3}.
\begin{lemma}\label{03151059}
There is $\rho_0\geq\rho_*$ such that, for any $\rho\geq\rho_0$, one can find
$d_0=d_0(\rho)\in(0,1)$ with the following property: for any $T>0$ and any
$u_0,u_0^\prime\in B_{H^2}(d_0)$,
\begin{align*}
    \mathbb{P}\big(l_0(1)\neq 0\big)\leq \frac{1}{2}.
\end{align*}
\end{lemma}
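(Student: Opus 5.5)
Since $u_0,u_0'\in B_{H^2}(d_0)$, we have $l_0(0)=0$, and $l_0(1)=0$ exactly when $(P_{0,1})$ holds. So $\{l_0(1)\neq 0\}$ is the complement of $(P_{0,1})$, and
\[
\mathbb{P}\big(l_0(1)\neq0\big)\le \mathbb{P}\big(\widetilde u'_t\neq\widetilde v_t \text{ for some } t\in[0,T]\big)+\mathbb{P}\big(\tau_1^{\widetilde u}<T\big)+\mathbb{P}\big(\tau_1^{\widetilde u'}<T\big).
\]
On $[0,T]$ the noises are the maximally coupled pair $b\widetilde W^{i}(u_0,u_0')$. Hence $\widetilde u=u(\cdot;u_0,b\widetilde W^1)$ and $\widetilde v=v(\cdot;u_0,u_0',b\widetilde W^1)$ solve \eqref{equation-1} and \eqref{auxiliary} driven by a copy of $bW$, and $\widetilde u'=u(\cdot;u_0',b\widetilde W^2)$ solves \eqref{equation-1}. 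The third ball condition in $(P_{0,1})$ holds automatically, so only these three terms need to be bounded.

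By \eqref{08021543} (Lemma~\ref{03120055} with $\|u_0\|_{H^2}<1$), the second and third terms are each at most $\mathcal C_4^0\rho^{-1}\le \mathcal C_4\rho^{-1}$. For the first term, \eqref{04280357} identifies it with $\|\mathcal D(u(\cdot;u_0',bW))-\mathcal D(v(\cdot;u_0,u_0',bW))\|_{\mathrm{var}}$ on $C([0,T];H^2)$. Steps~2--4 of the proof of Proposition~\ref{03150137} already bound this distance. By \eqref{04240259}, $\hat v=\hat u'\circ\Phi^{u_0,u_0'}$, so the truncated processes have laws that differ by at most $\|\Phi_*\mathbb P-\mathbb P\|_{\mathrm{var}}$. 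That is bounded by \eqref{04230822} plus \eqref{03200856}:
\[
\mathcal C_4\rho^{-1}+e^{-a}+\tfrac12\sqrt{\exp\{\mathcal C_4C^*(d_0)e^{\mathcal C_4\rho+a}\}-1}.
\]
To pass from the truncated processes back to $u'$ and $v$, I add $\mathbb P(\tau_1^{u'}<T)+\mathbb P(\tau_1^v<T)$. The first is at most $\mathcal C_4^0\rho^{-1}$, and Proposition~\ref{03150137} bounds the second by the same expression as above. This gives
\[
\mathbb P\big(l_0(1)\neq0\big)\le C\rho^{-1}+2e^{-a}+\sqrt{\exp\{\mathcal C_4C^*(d_0)e^{\mathcal C_4\rho+a}\}-1},
\]
with $C$ independent of $T$, $d_0$, $\rho$ and $a$.

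Now I choose the parameters in order. First fix $a$ with $2e^{-a}\le 1/8$. Next choose $\rho_0\ge\rho_*$ so that $C\rho^{-1}\le1/8$ for all $\rho\ge\rho_0$. Finally, given $\rho$, use $C^*(d_0)=(d_0^2+d_0^4)e^{\mathcal C_4\mathscr C d_0}\to0$ as $d_0\to0$ to pick $d_0(\rho)\in(0,1)$ with $\sqrt{\exp\{\mathcal C_4C^*(d_0)e^{\mathcal C_4\rho+a}\}-1}\le1/4$. The total is then at most $1/2$, and no bound depends on $T$.

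The main obstacle is the bookkeeping in the first term: making sure the total-variation estimate for the truncated processes transfers to the untruncated $u'$ and $v$ on $[0,T]$, including the case $u_0=u_0'$. I would handle this by noting that on $\{\tau_1^v\ge T\}\cap\{\tau_1^{u'}\ge T\}$ the truncated and untruncated processes coincide.
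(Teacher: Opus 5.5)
Your proposal is correct and follows essentially the same route as the paper: you split off the events $\{\tau_1^{\widetilde u}<T\}$ and $\{\tau_1^{\widetilde u^\prime}<T\}$, identify the decoupling probability with a total-variation distance via \eqref{04280357}, and pass to the truncated processes $\hat u^\prime,\hat v$ at the cost of $\mathbb{P}(\tau_1^{u^\prime}<T)+\mathbb{P}(\tau_1^{v}<T)$. You then bound $\|\Phi^{u_0,u_0^\prime}_*\mathbb{P}-\mathbb{P}\|_{\mathrm{var}}$ and $\mathbb{P}(\tau_1^v<T)$ by Steps~2--4 and Proposition~\ref{03150137}, and fix $a$, $\rho_0$, $d_0(\rho)$ in that order, exactly as the paper does. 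The only cosmetic difference is the transfer from truncated to untruncated laws: you use a triangle inequality for the total-variation distance, whereas the paper intersects the test sets with $\{\tau_1\geq T\}$; both give the same bound $5\mathcal{C}_4\rho^{-1}+2e^{-a}+\sqrt{\cdots}$.
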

\begin{proof}
The proof is divided into four steps.

\noindent{\it Step 1: Reduction}.
We introduce the stopping time
\begin{align*}
    \widetilde{\sigma}=\inf\{t\geq0:\widetilde{u}^\prime_t\neq\widetilde{v}_t\}.
\end{align*}
By the definition of $l_0(1)$,
\begin{align}\label{03150320}
  \mathbb{P}\big(l_0(1)\neq 0\big)
  &=\mathbb{P}\big(\widetilde{\sigma}\wedge
  \tau_1^{\widetilde{u},\widetilde{u}^\prime}<T\big)
  \leq \mathbb{P}\big(\tau_1^{\widetilde{u},\widetilde{u}^\prime}<T\big)
  +\mathbb{P}\big(\widetilde{\sigma}<T\big),
\end{align}
and the first term on the right-hand side is at most $2\mathcal{C}_4\rho^{-1}$,
by \eqref{04230821} applied to $\widetilde{u}$ and $\widetilde{u}^\prime$
together with $\mathcal{C}_4^0\leq\mathcal{C}_4$. It remains to estimate
$\mathbb{P}(\widetilde{\sigma}<T)$.

\noindent{\it Step 2: Truncation}.
Since $u_0,u_0^\prime\in B_{H^2}(d_0)$, the event $(P_{0,0})$ holds, so that~$l_0(0)=0$ and the first interval is driven by the maximally coupled pair;
hence \eqref{04280357} applies:
\begin{align}\label{04250421}
   \nonumber\mathbb{P}\big(\widetilde{\sigma}<T\big)
   &=\|\mathcal{D}(\widetilde{u}^\prime)-\mathcal{D}(\widetilde{v})\|_{\mathrm{var}}
   =\sup_{\Lambda}\big|\mathbb{P}(\widetilde{u}^\prime\in\Lambda)
   -\mathbb{P}(\widetilde{v}\in\Lambda)\big|\\
   &\leq\sup_{\Lambda}\big|\mathbb{P}\big(\widetilde{u}^\prime\in\Lambda,
   \tau_1^{\widetilde{u}^\prime}\geq T\big)
   -\mathbb{P}\big(\widetilde{v}\in\Lambda,\tau_1^{\widetilde{v}}\geq T\big)\big|
   \nonumber\\&\quad+\mathbb{P}\big(\tau_1^{\widetilde{u}^\prime}<T\big)
   +\mathbb{P}\big(\tau_1^{\widetilde{v}}<T\big),
\end{align}
the supremum being taken over $\Lambda\in\mathcal{B}(C([0,T];H^2))$.

Let $\hat{u}^\prime$ and $\hat{v}$ denote the solutions of \eqref{trunc eqn 2}
and \eqref{auxiliary 1} with $bW$ replaced by~$b\widetilde{W}^2$ and
$b\widetilde{W}^1$, respectively, as in the proof of
Proposition~\ref{03150137}. Then~$\hat{u}^\prime=\widetilde{u}^\prime$ on
$[0,T]$ on the event $\{\tau_1^{\widetilde{u}^\prime}\geq T\}$, and likewise for
$\hat{v}$ and $\widetilde{v}$. Moreover, $\tau_1$ is a measurable functional of the trajectory, so there is
$A\in\mathcal{B}(C([0,T];H^2))$ such that $\{\tau_1^w\geq T\}=\{w\in A\}$ for
any $C([0,T];H^2)$-valued process $w$. Taking~$\hat{\Lambda}:=\Lambda\cap A$, we
obtain
\begin{align*}
\big\{\hat{u}^\prime\in\Lambda,\tau_1^{\hat{u}^\prime}\geq T\big\}
=\big\{\hat{u}^\prime\in\hat{\Lambda}\big\},
\qquad
\big\{\hat{v}\in\Lambda,\tau_1^{\hat{v}}\geq T\big\}
=\big\{\hat{v}\in\hat{\Lambda}\big\},
\end{align*}
whence the first term on the right-hand side of the inequality
\eqref{04250421} is bounded by
$\|\mathcal{D}(\hat{u}^\prime)-\mathcal{D}(\hat{v})\|_{\mathrm{var}}$.

\noindent{\it Step 3: Estimate of the three terms}.
Since $b\widetilde{W}^1$ and $b\widetilde{W}^2$ have the law of~$bW,$
we deduce from \eqref{04240259}, \eqref{04230822}, and \eqref{03200856}
that
\begin{align}\label{04270423}
 \nonumber\|\mathcal{D}(\hat{u}^\prime)-\mathcal{D}(\hat{v})\|_{\mathrm{var}}
 &\leq\|\Phi^{u_0,u_0^\prime}_*\mathcal{D}(bW)-\mathcal{D}(bW)\|_{\mathrm{var}}\\
 &\leq\mathcal{C}_4\rho^{-1}+e^{-a}
 +\frac{1}{2}\sqrt{\exp\big\{\mathcal{C}_4C^{*}(d_0)
 \exp\{\mathcal{C}_4\rho+a\}\big\}-1},
\end{align}
while \eqref{04230821}, the inequality $\mathcal{C}_4^0\leq\mathcal{C}_4$, and
Proposition~\ref{03150137} give
\begin{align}\label{05200947}
 \nonumber\mathbb{P}\big(\tau_1^{\widetilde{u}^\prime}<T\big)
 +\mathbb{P}\big(\tau_1^{\widetilde{v}}<T\big)
 &\leq 2\mathcal{C}_4\rho^{-1}+e^{-a}\\
 &\quad+\frac{1}{2}\sqrt{\exp\big\{\mathcal{C}_{4}C^{*}(d_0)
 \exp\{\mathcal{C}_4\rho+a\}\big\}-1}.
\end{align}

\noindent{\it Step 4: Choice of the parameters}.
Substituting \eqref{04270423} and \eqref{05200947} into \eqref{04250421}, and
adding the bound obtained in Step~1, we arrive at
\begin{align}\label{08091111}
    \mathbb{P}\big(l_0(1)\neq0\big)\leq 5\mathcal{C}_4\rho^{-1}+2e^{-a}
    +\sqrt{\exp\big\{\mathcal{C}_{4}C^{*}(d_0)
    \exp\{\mathcal{C}_4\rho+a\}\big\}-1}.
\end{align}
  We first choose $a>0$
and $\rho_0\ge\rho_*$ so large that
\begin{align}\label{08091114}
    5\mathcal{C}_4\rho_0^{-1}\leq\frac{1}{6},\qquad 2e^{-a}\leq\frac{1}{6},
\end{align}
and then, for any $\rho\geq\rho_0$, we choose $d_0=d_0(\rho)\in(0,1)$ so small
that
\begin{align}\label{04160442}
    \sqrt{\exp\big\{\mathcal{C}_4C^*(d_0)\exp\{\mathcal{C}_4\rho+a\}\big\}-1}
    \leq\frac{1}{6},
\end{align}
which is possible since $C^{*}(d_0)\to0$ as $d_0\to0$. Combining \eqref{08091111}--\eqref{04160442}, we arrive at the required inequality.
\end{proof}

From now on, $d_0=d_0(\rho)$ denotes the function constructed in
Lemma~\ref{03151059}. The next lemma provides an estimate for the probability that the coupling attempt
launched at time~$0$ survives up to step~$k$ and fails at
step~$k+1$.
\begin{lemma}\label{03160423}
For any $q\geq5$, $\rho\geq\rho_0$, $T>0$,  $k\geq1$, and
$u_0,u_0^\prime\in B_{H^2}(d_0)$ with $d_0=d_0(\rho)$, we have
\begin{align*}
\mathbb{P}\big(l_0(k+1)\neq0,\ l_0(k)=0\big)
\leq C_\rho\exp\Big\{-\frac{\alpha kT}{8}\Big\}
+C_q\big(\rho+kT\big)^{-\frac{q}{2}+1},
\end{align*}
with some positive constants $C_\rho$ and $C_q$. 
\end{lemma}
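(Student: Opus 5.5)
On the event $\{l_0(k)=0\}$ the attempt launched at time $0$ is still running: $(P_{0,k})$ holds, so $\widetilde u^\prime=\widetilde v$ on $[0,kT]$ and $\tau_1^{\widetilde u,\widetilde u^\prime}\ge kT$. The event $\{l_0(k+1)\ne0\}$ then means $(P_{0,k+1})$ fails. Since the ball condition at time $0$ is unchanged, this can happen in only two ways. Either $\tau_1^{\widetilde u,\widetilde u^\prime}\in[kT,(k+1)T)$, or $\widetilde u^\prime$ and $\widetilde v$ separate on $[kT,(k+1)T]$ while $\tau_1^{\widetilde u,\widetilde u^\prime}\ge(k+1)T$.

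\emph{First event.} The plan is to bound it by $\mathbb P(kT\le\tau_1^{\widetilde u}<\infty)+\mathbb P(kT\le\tau_1^{\widetilde u^\prime}<\infty)$ and then use the polynomial tail estimate for $\tau_1$ from the Appendix, i.e.\ Lemma~\ref{03120055} with a general $q$ rather than $q=4$. Since $\widetilde u,\widetilde u^\prime$ are solutions of~\eqref{equation-1} by Lemma~\ref{08080341}, this should give a bound of the form $C_q(\rho+kT)^{-q/2+1}$. I expect the statement to be exactly of that shape, given that $\tau_1$ is defined through the linear-in-time envelope $\rho+Ks+\mathscr C\mathcal H^2(u_0)$ and $\|u_0\|_{H^2}\le d_0<1$. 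The restriction $q\ge5$ presumably comes from that lemma.

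\emph{Second event.} On $\{l_0(k)=0\}$ the block $[kT,(k+1)T]$ is driven by the maximally coupled noises. By Lemma~\ref{local markov}, conditionally on $\widetilde{\mathcal F}_{kT}$ the probability of separation is at most the total variation distance between $\mathcal D(u(\cdot;\widetilde u^\prime_{kT},bW))$ and $\mathcal D(v(\cdot;\widetilde u_{kT},\widetilde u^\prime_{kT},bW))$ on one block. This distance is bounded exactly as in Steps~2--4 of the proof of Lemma~\ref{03151059}: the Girsanov estimate of Proposition~\ref{03150137} with drift $\mathsf P_N[\ldots]$, whose $L^2$-in-time norm is controlled by $\int(1+\mathcal H(u)+\mathcal H(v))\widetilde{\mathcal M}(s,w_s)\dd s$. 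However, a bound of order $\sqrt{\exp\{\cdot\}-1}$ is not small uniformly. It is better to use Pinsker/Theorem~A.10.1 in the form
\[
\|\cdot\|_{\mathrm{var}}\le C\Big(\mathbb E\int_{kT}^{(k+1)T}\|\mathcal A\|^2\dd t\Big)^{1/2}
\]
with the drift stopped at $\tau_1$. Unconditioning, the task reduces to bounding
\[
\mathbb E\Big[\int_{kT}^{(k+1)T}\big(1+\mathcal H(\widetilde u_s)+\mathcal H(\widetilde v_s)\big)\widetilde{\mathcal M}(s,w_s)\,\mathbf 1_{s\le\tau_1}\dd s\Big].
\]
Here $w=\widetilde u-\widetilde v$ solves~\eqref{02230853-1} on all of $[0,(k+1)T]$, because on $\{l_0(k)=0\}$ we have $\widetilde v=\widetilde u^\prime$ and the auxiliary process continues without restart.

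\emph{Decay of $\widetilde{\mathcal M}$.} This is where exponential decay enters, via Step~2 of Proposition~\ref{03100846}, namely estimate~\eqref{07312337}:
\[
\mathbb E[\widetilde{\mathcal M}(s,w_s);\tau_1\ge s]\le C e^{-3\alpha s/4+\widetilde{\mathcal C}_3(\rho+\mathscr C(\cdots))}\widetilde{\mathcal M}(0,w_0).
\]
The initial data lie in $B_{H^2}(d_0)$, so the $F$-terms and $\widetilde{\mathcal M}(0,w_0)$ are bounded and the factor is some $C_\rho$. Multiplying by $g(s)\le Ce^{\alpha s/4+\cdots}$ and integrating over $[kT,(k+1)T]$ gives $C_\rho e^{-\alpha kT/2}$. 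The square root then yields $C_\rho e^{-\alpha kT/4}$, which is better than the claimed $e^{-\alpha kT/8}$. The extra room can absorb a Cauchy--Schwarz loss if the pointwise truncation needs it, for instance if one must split $g$ or use $\tfrac12$ of the decay rate.

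\emph{Main obstacle.} The delicate step is this Girsanov/total-variation estimate on the $k$-th block. Two points need care. First, the shift in Lemma~\ref{local markov} restarts $v$ from $(\widetilde u_{kT},\widetilde u^\prime_{kT})$, and one must check that this coincides with the continued auxiliary process; it does, since the feedback depends only on the current states. Second, the stopping by $\tau_1^{\widetilde u,\widetilde u^\prime}\circ$ must be arranged so that the drift bound is available. Following Proposition~\ref{03150137}, I would truncate the drift at $\tau_1$ and at the level set of the integral, and use the Pinsker-type bound $\tfrac12\sqrt{e^{x}-1}\le Cx^{1/2}$ for $x\le1$. The remaining event, where the integral exceeds $1$, is handled by Chebyshev, which again gives $C_\rho e^{-\alpha kT/2}$. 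Adding the two events yields the claim.
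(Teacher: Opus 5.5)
Your decomposition into the two events and the bound on the first one via Lemma~\ref{03120055} are correct. So are the conditioning at time $kT$ through Lemma~\ref{local markov} and your remark that on $\{l_0(k)=0\}$ the process $\widetilde v$ simply continues $v(\cdot\,;u_0,u_0^\prime,\widetilde\eta^1)$. (The restriction $q\ge5$ is not needed by Lemma~\ref{03120055}; it only serves summability in Corollary~\ref{07311420}.)

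The gap is the sentence ``unconditioning, the task reduces to bounding'' the block integral. Bounding the separation probability by a truncated Girsanov argument, as in Steps~2--4 of Lemma~\ref{03151059}, also produces truncation-failure terms. These are $\mathbf P(\tau_1^{u^\prime}<T)$ and $\mathbf P(\tau_1^{v}<T)$ from \eqref{04250421}, and $\mathbf P(\tau_1^{u}<T)$ from passing to a drift stopped at $\tau_1^{u,v}$, which you need for the drift bound. All of them must decay in $k$. That is precisely why the paper introduces the block stopping times $\tau_1^{u,(k)}$ with $\rho$ replaced by $\rho+\epsilon kT$. If you restart $\tau_1$ at $kT$ with parameter $\rho$, Lemma~\ref{03120055} only gives $\rho^{-q/2+1}$ times moments of $\widetilde u_{kT}$, with no decay in $k$.

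In your framework the fix is to stop the block drift at the global, history-dependent $\tau_1$, which you implicitly use anyway when invoking \eqref{07312337}.
\begin{itemize}
\item After unconditioning, the $u$- and $u^\prime$-terms become $\mathbb P(\tau_1^{\widetilde u,\widetilde u^\prime}\in[kT,(k+1)T))$, already controlled in the first event.
\item The $v$-term needs a second comparison $\mathbf P(\tau_1^{\hat v}<T)\le\mathbf P(\tau_1^{\hat u^\prime}<T)+\|\Phi_*\mathbb P-\mathbb P\|_{\mathrm{var}}$, as in \eqref{03140952}. This is legitimate because on $\{l_0(k)=0\}$ the paths of $\widetilde v$ and $\widetilde u^\prime$ agree on $[0,kT]$, so their global stopping times are the same functional of the block path.
\item Alternatively, a maximal coupling $(\xi_1,\xi_2)$ of $(\mu_1,\mu_2)$ satisfies $\mathbb P(\xi_1\neq\xi_2,\ \xi_2\in A)=(\mu_2-\mu_1)^+(A)$. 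Keeping the condition $\tau_1^{\widetilde u^\prime}\ge(k+1)T$, which the paper drops in \eqref{03150923}, therefore bounds the separation directly by $\|\mathcal D(\hat u^\prime)-\mathcal D(\hat v)\|_{\mathrm{var}}$.
\item You also need Lemma~\ref{local markov} for functionals carrying an extra $\widetilde{\mathcal F}_{kT}$-measurable argument. The proof via \eqref{E:freeze-k} gives this verbatim.
\end{itemize}

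With these additions your route works, and it differs genuinely from the paper's. The paper restarts everything at $kT$ and uses the bound $\frac12\sqrt{\exp\{\cdot\}-1}$, which is useful only when the exponent is small. This forces three things:
\begin{itemize}
\item the level $e^{\alpha kT/8}$ in $\tau_0^{u,v,(k)}$;
\item control of the factor $e^{\mathcal C_4\mathscr C(F(\widetilde u_{kT})+F(\widetilde u^\prime_{kT}))}$ through $F_*$ and the choice of $\epsilon$;
\item a Chebyshev split of $\widetilde{\mathcal M}(kT,\widetilde u_{kT}-\widetilde u^\prime_{kT})$ at level $e^{-\alpha kT/2}$.
\end{itemize}
Your bound is instead linear in the mean drift energy. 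You can get it from Pinsker together with $\mathrm{KL}=\frac12\mathbb E\int\|b^{-1}\mathcal A\|^2$; Novikov holds automatically, since $\|\mathcal A\|^2$ is deterministically bounded before $\tau_1^{u,v}$. Or you can get it from Theorem~A.10.1 after stopping the energy at level $1$ and using convexity of $x\mapsto e^{cx}$ on $[0,1]$. The scalar inequality you quote does not suffice for this, because the expectation sits inside the exponential. After Jensen you can then feed \eqref{07312337} directly into $\int_{kT}^{(k+1)T}$. This removes the $\tau_0$- and $\epsilon$-bookkeeping and yields the better rate $C_\rho e^{-\alpha kT/4}$. The price is working with history-dependent stopping times and the extended Markov property.
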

\begin{proof}
    {\it Step 1: Reduction.} For any $u_0,u_0^\prime\in B_{H^2}(d_0)$, we have
    \begin{gather*}
          \big\{l_0(k+1)\neq 0,l_0(k)=0\big\}=  \big\{{\tau}_1^{\widetilde{u},\widetilde{u}^\prime}\in[kT,(k+1)T),\widetilde{u}^\prime|_{[0,kT]}=\widetilde{v}|_{[0,kT]}\big\}\\{\scalebox{1.3}{$\cup$}}\big\{\widetilde{u}^\prime_t\neq\widetilde{v}_t \   \exists\hspace{1mm} t\in[kT,(k+1)T),\widetilde{u}^\prime|_{[0,kT]}=\widetilde{v}|_{[0,kT]},\tau_1^{\widetilde{u},\widetilde{u}^\prime}\geq (k+1)T\big\}.
    \end{gather*}Therefore,
     \begin{gather*}
         \mathbb{P}\Big(l_0(k+1)\neq 0,l_0(k)=0\Big)  \leq\mathbb{P}\Big({\tau}_1^{\widetilde{u},\widetilde{u}^\prime}\in[kT,(k+1)T)\Big)\\+\,\mathbb{P}\Big(kT\leq\widetilde{\sigma}<(k+1)T,{\tau}_1^{\widetilde{u},\widetilde{u}^\prime}\geq (k+1)T\Big).
    \end{gather*}
For the first term on the right-hand side, since $u_0,u_0^\prime\in B_{H^2}(d_0)$ with $d_0\leq1$, Lemma~\ref{03120055} implies that    \begin{align}\label{04271129}
        \mathbb{P}\Big({\tau}_1^{\widetilde{u},\widetilde{u}^\prime}\in[kT,(k+1)T)\Big)\leq C_q(\rho+kT)^{-\frac{q}{2}+1}.
    \end{align}
To estimate the second term, we define, as in \eqref{04080050} and
\eqref{06230835-1}, the stopping times $\tau_1^{u,(k)}$, $\tau_1^{v,(k)},$ and
$\tau_1^{u,v,(k)}$ associated with the processes $u$ and $v$ issued from
$w,w^\prime\in H^2$:
\begin{align*}
    \tau_{1}^{u,(k)}&:=\min\{ \tau_{0,6}^{u,(k)}, \tau_{0,15}^{u,(k)}, \tau_{1,2}^{u,(k)}, \tau_{1,15}^{u,(k)}, \tau_{2,2}^{u,(k)}, \tau_{2,3}^{u,(k)}, \tau_{\psi}^{u,(k)}\},\\\ \tau_{1}^{u,v,(k)}&:=\tau_{1}^{u,(k)}\wedge\tau_{1}^{v,(k)},
\end{align*}    
   where $\tau_{0,6}^{u,(k)}$ is defined by
  \begin{align}\label{03190353-k}
\tau_{0,6}^{u,(k)}:=\inf\big\{t\geq 0:\mathcal{E}_{0,6}^u(t)\geq
\rho+\epsilon kT+Kt+\mathscr{C}\|w\|^{12}\big\},\end{align}
    with a constant $\epsilon\in(0,1]$ to be chosen, and
    $\tau_{0,15}^{u,(k)}$, $\tau_{1,2}^{u,(k)}$, $\tau_{1,15}^{u,(k)}$,
    $\tau_{2,2}^{u,(k)}$, $\tau_{2,3}^{u,(k)}$, and $\tau^{u,(k)}_{\psi}$ are
    obtained from \eqref{03190353-1}, \eqref{03190353-2}, \eqref{03190353-3},
    and \eqref{03190353-4} by replacing~$\rho$ with $\rho+\epsilon kT$.
Moreover, as in \eqref{04160408}, we define
    \begin{gather*}
 {\tau}_{0}^{{u},{v},(k)}:=\inf\big\{0\leq t\leq T :\int_{0}^{t\wedge{\tau}^{{u},{v},(k)}_{1}}\big(1+\mathcal{H}({u}_s)+\mathcal{H}({v}_s)\big)\widetilde{\mathcal{M}}(s,{u}_s-{v}_s)\dd s\\\geq  \mathcal{C}_3\widetilde{\mathcal{M}}(0,w-w^\prime)e^{\mathcal{C}_3(\rho+\epsilon { kT})+\mathcal{C}_3\mathscr{C}(F(w)+F(w^\prime))+ \frac{\alpha kT}{8}}\big\}\wedge T.\end{gather*}
Since $\widetilde{v}$ is a measurable functional of
$(\widetilde{u},\widetilde{u}^\prime)$ by property~(ii) of
Subsection~\ref{S:4.1}, the indicator $\mathbf{1}_{\widetilde{\sigma}<T}$ is of the form
$\widetilde{g}(\widetilde{u},\widetilde{u}^\prime)$ for some non-negative
measurable
$\widetilde{g}:C([0,T];H^2)^2\rightarrow\mathbb{R}$. Moreover,
$$
\{\widetilde{\sigma}\wedge\tau_1^{\widetilde{u},\widetilde{u}^\prime}
\geq kT\}=\{l_0(k)=0\}\subseteq\{l_0(k)<\infty\},
$$
so
Lemma~\ref{local markov} applies to $\widetilde{g}$ on this event, and
we obtain   
    \begin{align}\label{03150923}
          \nonumber\mathbb{P}\Big(kT\leq\widetilde{\sigma}<(k+1)T; &\hspace{1mm}{\tau}_1^{\widetilde{u},\widetilde{u}^\prime}\geq (k+1)T\Big)\\\leq&\ \mathbb{E}\big[\mathbb{P}\big(\widetilde{\sigma}\circ\theta_{kT}\in[0,T)\big|\widetilde{\mathcal{F}}_{kT}\big);\widetilde{\sigma}\wedge\tau_1^{\widetilde{u},\widetilde{u}^\prime}\geq kT\big]\nonumber\\=&\hspace{1mm}\mathbb{E}\big[\textbf{P}_{\widetilde{u}_{kT},\widetilde{u}^\prime_{kT}}\big(\widetilde{\sigma}\in[0,T)\big);\widetilde{\sigma}\wedge\tau_1^{\widetilde{u},\widetilde{u}^\prime}\geq kT\big].
    \end{align}
 To estimate $\mathbf{P}_{\widetilde{u}_{kT},\widetilde{u}^\prime_{kT}}
\big(\widetilde{\sigma}\in[0,T)\big)$, we argue as in
Proposition~\ref{03150137}. We denote 
    $$
    \tau_2^{u,v,(k)}:=\tau_1^{u,v,(k)}\wedge\tau_0^{u,v,(k)}
    $$ 
and define the transformations
       $$
    {\Phi}_{(k)}^{w,w^\prime},   \widetilde{\Phi}_{(k)}^{w,w^\prime}: C_0([0,T];H^4) \rightarrow C_0([0,T];H^4),
    $$  by
\begin{align}
    &\nonumber{\Phi}_{(k)}^{w,w^\prime}(\omega)_t:= \omega_t+\int_0^t\mathbf{1}_{s\leq\tau_1^{v,(k)}}\mathsf{P}_N\big[i|\hat{v}|^2\hat{v}-i|{u}|^2{u}-i\partial_x^2(\hat{v}-{u})\big]\dd s,\\
    &\widetilde{\Phi}_{(k)}^{w,w^\prime}(\omega)_t:= \omega_t+\int_0^t\mathbf{1}_{s\leq\tau^{u,v,(k)}_2}\mathsf{P}_N\big[i|\hat{v}|^2\hat{v}-i|{u}|^2{u}-i\partial_x^2(\hat{v}-{u})\big]\dd s,\nonumber
\end{align}
where $u$ is the solution to \eqref{equation-1} and $\hat{v}$ is the one for \eqref{auxiliary 1} but with $\tau_1^v$ replaced by $\tau_1^{v,(k)}$. Both of them are driven by $bW:=\omega$.

 We also denote by $\hat{u}^{k,\prime}$ and $\hat{v}^{k}$ the solutions to
\eqref{trunc eqn 2} and \eqref{auxiliary 1}, both issued from
$\widetilde{u}^\prime_{kT}$ and driven by
$b\widetilde{W}^{2}(\widetilde{u}_{kT},\widetilde{u}^\prime_{kT})$ and
$b\widetilde{W}^{1}(\widetilde{u}_{kT},\widetilde{u}^\prime_{kT})$,
respectively. In~\eqref{auxiliary 1} the feedback term is determined by the solution issued
from $\widetilde{u}_{kT}$ and driven by the same noise, and $\tau_1^{u^\prime}$,
$\tau_1^{v}$ are replaced by $\tau_1^{u^\prime,(k)}$, $\tau_1^{v,(k)}$. As in \eqref{04250421},
we have
\begin{align}\label{04270418}
\nonumber\mathbf{P}_{\widetilde{u}_{kT},\widetilde{u}^\prime_{kT}}\big(\widetilde{\sigma}\in[0,T)\big)& \leq  \mathbf{P}_{\widetilde{u}_{kT},\widetilde{u}^\prime_{kT}}\big(\tau_1^{u^\prime,(k)}<T\big)+\mathbf{P}_{\widetilde{u}_{kT},\widetilde{u}^\prime_{kT}}\big(\tau_1^{v,(k)}<T\big)\\&\quad+\|\mathcal{D}(\hat{u}^{k,\prime})-\mathcal{D}(\hat{v}^{k})\|_{\mathrm{var}}=:\mathrm{I+II+III}.
\end{align}
Thus it suffices to estimate the expectations of $\mathrm{I}$,
$\mathrm{II}$ and $\mathrm{III}$ over the event
$\{\widetilde{\sigma}\wedge\tau_1^{\widetilde{u},\widetilde{u}^\prime}
\geq kT\}$.

\noindent{\it Step 2: Estimates for $\mathrm{I}$--$\mathrm{III}$.}
We first consider the event $\{\widetilde{u}_{kT}=\widetilde{u}^\prime_{kT}\}$,
on which the maximal coupling is diagonal and
$\widetilde{u}_t=\widetilde{u}^\prime_t=\widetilde{v}_t$ for any
$t\in[kT,(k+1)T]$, as in Step~1 of the proof of
Proposition~\ref{03150137}. Hence $\mathrm{I}=\mathrm{II}$.
Lemma~\ref{03120055} gives
\begin{align}\label{08101219}
\nonumber\mathrm{I}=\mathrm{II}&=\mathbf{P}_{\widetilde{u}_{kT},\widetilde{u}^\prime_{kT}}\big(\tau_1^{u^\prime,(k)}<T\big)\\&\leq  C_q\big(1+\|\widetilde{u}_{kT}^\prime\|^{36q}+\mathcal{H}^{18q}(\widetilde{u}^\prime_{kT})+\mathcal{G}^{\frac{7q}{2}}(\widetilde{u}_{kT}^\prime)\big)\big(\rho+\epsilon kT\big)^{-\frac{q}{2}+1}.
\end{align}
Moreover, on this event the feedback term in \eqref{auxiliary 1} vanishes
identically, so that $\hat{u}^{k,\prime}$ and $\hat{v}^{k}$ are the solutions
of the same equation \eqref{trunc eqn 2}, with the same initial condition
$\widetilde{u}^\prime_{kT}$, driven respectively by $b\widetilde{W}^{2}$ and
$b\widetilde{W}^{1}$. Since these two noises have the same law, so do
$\hat{u}^{k,\prime}$ and $\hat{v}^{k}$, and therefore $\mathrm{III}=0$. Combining this with \eqref{08101219}, we obtain, on~$\{\widetilde{u}_{kT}= \widetilde{u}^\prime_{kT}\}$,
\begin{align}\label{08101219-1}
\nonumber\mathbf{P}_{\widetilde{u}_{kT},\widetilde{u}^\prime_{kT}}\big(\widetilde{\sigma}\in[0,T)\big)\leq&\hspace{1mm}  C_q\big(1+\|\widetilde{u}_{kT}^\prime\|^{36q}+\mathcal{H}^{18q}(\widetilde{u}^\prime_{kT})+\mathcal{G}^{\frac{7q}{2}}(\widetilde{u}_{kT}^\prime)\big)\\&\times\big(\rho+\epsilon kT\big)^{-\frac{q}{2}+1}.
\end{align}
We now turn to the event
$\{\widetilde{u}_{kT}\neq\widetilde{u}^\prime_{kT}\}$. As in
\eqref{08101219}, Lemma~\ref{03120055} gives \begin{align}\label{04270310}
\mathrm{I}&\leq  C_q\big(1+\|\widetilde{u}_{kT}^\prime\|^{36q}+\mathcal{H}^{18q}(\widetilde{u}^\prime_{kT})+\mathcal{G}^{\frac{7q}{2}}(\widetilde{u}_{kT}^\prime)\big)\big(\rho+\epsilon kT\big)^{-\frac{q}{2}+1}.
    \end{align}
For the term $\mathrm{II}$, the argument leading to \eqref{03140952} gives
\begin{align*}
\mathbf{P}_{\widetilde{u}_{kT},\widetilde{u}^\prime_{kT}}\big(\tau_1^{{v},(k)}< T\big)&\hspace{-0.5mm}\leq \hspace{-0.5mm} \mathbf{P}_{\widetilde{u}_{kT},\widetilde{u}^\prime_{kT}}(\tau_1^{{u}^\prime,(k)}\hspace{-0.5mm}<\hspace{-0.5mm}T)\hspace{-0.5mm}+\hspace{-0.5mm}\|\widetilde{\Phi}_{(k),*}^{\widetilde{u}_{kT},\widetilde{u}^\prime_{kT}}\mathcal{D}(bW)-\mathcal{D}(bW)\|_{\mathrm{var}}\\&\quad\hspace{-0.5mm}+\hspace{-0.5mm}\|\widetilde{\Phi}_{(k),*}^{\widetilde{u}_{kT},\widetilde{u}^\prime_{kT}}\mathcal{D}(bW)\hspace{-0.5mm}-\hspace{-0.5mm}{\Phi}_{(k),*}^{\widetilde{u}_{kT},\widetilde{u}^\prime_{kT}}\mathcal{D}(bW)\|_{\mathrm{var}}\\&=: \mathrm{II_1+II_2+II_3}.
\end{align*}
Notice that $\mathrm{II}_1=\mathrm{I}$. For $\mathrm{II}_2$, arguing as
in the derivation of \eqref{03151055} and \eqref{03200856}, we obtain
\begin{align*}
    \mathrm{II}_2\leq &\hspace{1mm}\frac{1}{2}\sqrt{\mathrm{exp}\big\{\mathcal{C}_4\widetilde{\mathcal{M}}(kT,\widetilde{u}_{kT}\hspace{-0.5mm}-\hspace{-0.5mm}\widetilde{u}^\prime_{kT})e^{\frac{\alpha kT}{8}+\mathcal{C}_4(\rho+\epsilon kT)+\mathcal{C}_4\mathscr{C}(F(\widetilde{u}_{kT})+F(\widetilde{u}_{kT}^\prime))}\big\}-1}.
\end{align*}
Define $F_*:H^2\rightarrow\mathbb{R}$ by
\begin{align*}
F_*(u):=\|u\|^{30}+\mathcal{H}^{15}(u)+\mathcal{G}^{3}(u).
\end{align*}
Then, with $\epsilon\in(0,1]$ the constant from \eqref{03190353-k},
still to be chosen, $\mathrm{II}_2$ can be bounded by
\begin{align*}
 \Big\{\frac{1}{2}\sqrt{\mathrm{exp}\big\{\hspace{-0.5mm}C_{\epsilon}\widetilde{\mathcal{M}}(kT,\widetilde{u}_{kT}\hspace{-0.5mm}-\hspace{-0.5mm}\widetilde{u}^\prime_{kT})e^{\frac{\alpha kT}{8}+\mathcal{C}_4(\rho+\epsilon kT)+\epsilon\mathscr{C}({F}_*(\widetilde{u}_{kT})+{F}_*(\widetilde{u}_{kT}^\prime))}\big\}\hspace{-0.5mm}-\hspace{-0.5mm}1}\Big\}\wedge 1
\end{align*}
for some $C_\epsilon>0$. Notice that, on the event $\{\widetilde{\sigma}\wedge\tau_1^{\widetilde{u},\widetilde{u}^\prime}\geq kT\}$, we have
\begin{align*}
   {F}_*(\widetilde{u}_{kT})+{F}_*(\widetilde{u}_{kT}^\prime)\leq C\big(\rho+KkT+d_0\big),
\end{align*}
where $C>0$ does not depend on $\rho$, $k$, $T$ or $\epsilon$.
Choosing
$\epsilon:=\frac{\alpha}{8[\mathcal{C}_4+C\mathscr{C}K]}\wedge1$, we obtain, on
$\{\widetilde{\sigma}\wedge\tau_1^{\widetilde{u},\widetilde{u}^\prime}
\geq kT\}$, 
\begin{align}\label{08101227}
    \mathrm{II}_2\leq \Big\{\frac{1}{2}\sqrt{\mathrm{exp}\big\{C_\rho\widetilde{\mathcal{M}}(kT,\widetilde{u}_{kT}-\widetilde{u}^\prime_{kT})e^{\frac{\alpha kT}{4}}\big\}-1}\Big\}\wedge 1.
\end{align}
To estimate $\mathrm{II}_3$, we argue as in the derivation of \eqref{04270250}
and obtain
\begin{align*}
   \mathrm{II}_3&\leq   \mathbf{P}_{w,w^\prime}\big({\Phi}_{(k)}^{w,w^\prime}(bW)\neq\widetilde{\Phi}_{(k)}^{w,w^\prime}(bW) \big)\big|_{(w,w^\prime)=(\widetilde{u}_{kT},\widetilde{u}_{kT}^\prime)}\\&\leq \mathbf{P}_{\widetilde{u}_{kT},\widetilde{u}^\prime_{kT}}\big(\tau_1^{{u},(k)}< T\big)+\mathbf{P}_{\widetilde{u}_{kT},\widetilde{u}^\prime_{kT}}\big(\tau_{0}^{{u},{v},(k)}<\tau_{1}^{{u},{v},(k)}\wedge T\big).
\end{align*}
Applying Lemma~\ref{03120055} to the first term, and Proposition~\ref{03100846} together with Chebyshev's inequality to the second, we obtain
\begin{align*}
    \mathrm{II}_3\leq \hspace{0.5mm} C_q\big(1+\|\widetilde{u}_{kT}\|^{36q}+\mathcal{H}^{18q}(\widetilde{u}_{kT})+\mathcal{G}^{\frac{7q}{2}}(\widetilde{u}_{kT})\big)\big(\rho+\epsilon kT\big)^{-\frac{q}{2}+1}+e^{-\frac{\alpha kT}{8}}.
\end{align*}
Combining this with \eqref{04270310}, \eqref{08101227}, and the identity
$\mathrm{II}_1=\mathrm{I}$, we obtain
\begin{align}\label{08101117}
    \mathrm{II}\leq&\nonumber\hspace{1mm} C_q\big(1+\|\widetilde{u}_{kT}\|^{36q}+\|\widetilde{u}^\prime_{kT}\|^{36q}+\mathcal{H}^{18q}(\widetilde{u}_{kT})+\mathcal{H}^{18q}(\widetilde{u}^\prime_{kT})\\&\hspace{0.9cm}+\mathcal{G}^{\frac{7q}{2}}(\widetilde{u}_{kT})+\mathcal{G}^{\frac{7q}{2}}(\widetilde{u}_{kT}^\prime)\big)\big(\rho+\epsilon kT\big)^{-\frac{q}{2}+1}+e^{-\frac{\alpha kT}{8}}\nonumber\\&\hspace{0.9cm}+\Big\{\frac{1}{2}\sqrt{\mathrm{exp}\big\{C_\rho\widetilde{\mathcal{M}}(kT,\widetilde{u}_{kT}-\widetilde{u}^\prime_{kT})e^{\frac{\alpha kT}{4}}\big\}-1}\Big\}\wedge 1.
\end{align}
For the term $\mathrm{III}$, arguing as in the derivation
of \eqref{04270423}, we find
\begin{align}\label{08100126}
    \nonumber\mathrm{III}&=\|\mathcal{D}(\hat{u}^{k,\prime})-\mathcal{D}(\hat{v}^{k})\|_{\mathrm{var}}\leq \|\Phi_{(k),*}^{\widetilde{u}_{kT},\widetilde{u}_{kT}^\prime}\mathcal{D}(bW)-\mathcal{D}(bW)\|_{\mathrm{var}}\\&\nonumber\le \|\widetilde{\Phi}_{(k),*}^{\widetilde{u}_{kT},\widetilde{u}_{kT}^\prime}\mathcal{D}(bW)\hspace{-0.5mm}-\hspace{-0.5mm}\mathcal{D}(bW)\|_{\mathrm{var}}\hspace{-0.5mm}+\hspace{-0.5mm}\|\widetilde{\Phi}_{(k),*}^{\widetilde{u}_{kT},\widetilde{u}_{kT}^\prime}\mathcal{D}(bW)\hspace{-0.5mm}-\hspace{-0.5mm}{\Phi}_{(k),*}^{\widetilde{u}_{kT},\widetilde{u}_{kT}^\prime}\mathcal{D}(bW)\|_{\mathrm{var}}\\&=\mathrm{II}_2+\mathrm{II}_3,
\end{align}
so that $\mathrm{III}$ is bounded by the sum of \eqref{08101227} and the estimate for $\mathrm{II}_3$; adding these bounds to \eqref{04270310} and \eqref{08101117}, we find on the event~$\{\widetilde{u}_{kT}\neq\widetilde{u}^\prime_{kT}\}$,
\begin{align}\label{08100149}
    \nonumber \mathbf{P}_{\widetilde{u}_{kT},\widetilde{u}^\prime_{kT}}\big(\widetilde{\sigma}\in[0,T)\big)\leq&\nonumber\hspace{1mm} C_q\big(1\hspace{-0.5mm}+\hspace{-0.5mm}\|\widetilde{u}_{kT}\|^{36q}\hspace{-0.5mm}+\hspace{-0.5mm}\|\widetilde{u}^\prime_{kT}\|^{36q}\hspace{-0.5mm}+\hspace{-0.5mm}\mathcal{H}^{18q}(\widetilde{u}_{kT})\hspace{-0.5mm}+\hspace{-0.5mm}\mathcal{H}^{18q}(\widetilde{u}^\prime_{kT})\\&+\mathcal{G}^{\frac{7q}{2}}(\widetilde{u}_{kT})+\mathcal{G}^{\frac{7q}{2}}(\widetilde{u}_{kT}^\prime)\big)\big(\rho+\epsilon kT\big)^{-\frac{q}{2}+1}+2e^{-\frac{\alpha kT}{8}}\nonumber\\&+\Big\{\sqrt{\mathrm{exp}\big\{C_\rho\widetilde{\mathcal{M}}(kT,\widetilde{u}_{kT}-\widetilde{u}^\prime_{kT})e^{\frac{\alpha kT}{4}}\big\}-1}\Big\}\wedge 2.
\end{align}  Together with \eqref{08101219-1}, this shows that \eqref{08100149}
holds on the whole sample space~$\Omega$.

\noindent{\it Step 3: Conclusion.}
We denote the first and third terms on the right-hand side of \eqref{08100149}
by $\mathrm{I}^*$ and $\mathrm{III}^*$. By \eqref{03150923}, it remains to
estimate their expectations over the event
$\{\widetilde{\sigma}\wedge\tau_1^{\widetilde{u},\widetilde{u}^\prime}\geq kT\}$.
It follows from Lemmas~\ref{02280441}(i),~\ref{02280436}(i),
and~\ref{02231236}(i), together with the fact that
$u_0,u_0^\prime\in B_{H^2}(d_0)$ with $d_0\leq1$, that
\begin{align}\label{08100238}
\mathbb{E}\big[\mathrm{I^*};\widetilde{\sigma}\wedge\tau_1^{\widetilde{u},\widetilde{u}^\prime}\geq kT\big]\leq \mathbb{E}\big[\mathrm{I^*}\big]\leq C_q(\rho+\epsilon kT)^{-\frac{q}{2}+1}.
\end{align}
To estimate $\mathrm{III}^*$, notice that on the event
$\{\widetilde{\sigma}\wedge\tau_1^{\widetilde{u},\widetilde{u}^\prime}\geq kT\}$
we have $\widetilde{u}^\prime=\widetilde{v}$ throughout $[0,kT]$, and that this
event is contained in $\{\tau_1^{\widetilde{u},\widetilde{v}}\geq kT\}$. Hence,  
\begin{align*}
    \mathbb{E}\big[\widetilde{\mathcal{M}}(kT,\widetilde{u}_{kT}-\widetilde{u}^\prime_{kT});\widetilde{\sigma}\hspace{-0.5mm}\wedge\hspace{-0.5mm}\tau^{\widetilde{u},\widetilde{u}^\prime}_1\geq kT\big]&\hspace{-0.5mm}\leq\hspace{-0.5mm}\mathbb{E}\big[\widetilde{\mathcal{M}}(kT,\widetilde{u}_{kT}-\widetilde{v}_{kT});{\widetilde{\sigma}}\wedge\tau^{\widetilde{u},\widetilde{v}}_1\geq kT\big].
\end{align*}
Since on $\{\widetilde{\sigma}\wedge\tau_1^{\widetilde{u},\widetilde{v}}\geq kT\}$, we have for any $t\in[0,kT]$,
\begin{align*}
    &\widetilde{u}_t=u(t;u_0,\widetilde{\eta}^1)=:u^1_t,\ \widetilde{v}_t=v(t;u_0,u_0^\prime,\widetilde{\eta}^1)=:v^1_t,\text{ and } \widetilde{u}^\prime_t=v_t^1.
\end{align*}
Thus we have by \eqref{07312337} that
\begin{align*}
\mathbb{E}\big[\widetilde{\mathcal{M}}(kT,\widetilde{u}_{kT}-& \widetilde{u}^\prime_{kT});\widetilde{\sigma}\hspace{-0.5mm}\wedge\hspace{-0.5mm}\tau^{\widetilde{u},\widetilde{u}^\prime}_1\geq kT\big]\\\leq&\hspace{1mm}\mathbb{E}\big[\widetilde{\mathcal{M}}(kT,u^1_{kT}-v^1_{kT});\tau_1^{u^1,v^1}\geq kT\big]\\\leq&  C_\rho \widetilde{\mathcal{M}}(0,u_0-u_0^\prime)e^{-\frac{3\alpha kT}{4}+\mathcal{C}_3\mathscr{C}(F(u_0)+F(u_0^\prime))}\leq C_\rho e^{-\frac{3\alpha kT}{4}}C^{*}(d_0).
\end{align*}
An application of Chebyshev's inequality implies that for $d_0=d_0(\rho)$,
\begin{align*}
    \mathbb{P}\big(\widetilde{\mathcal{M}}(kT,\widetilde{u}_{kT}-\widetilde{u}^\prime_{kT})\geq e^{-\frac{\alpha kT}{2}};\widetilde{\sigma}\wedge\tau_1^{\widetilde{u},\widetilde{v}}\geq kT\big)\leq C_\rho e^{-\frac{\alpha kT}{4}}C^*(d_0).
\end{align*}
Splitting the event $\{\widetilde{\sigma}\wedge\tau_1^{\widetilde{u},\widetilde{v}}\geq kT\}$
according to whether
$\widetilde{\mathcal{M}}(kT,\widetilde{u}_{kT}-\widetilde{u}^\prime_{kT})$
is larger or smaller than $e^{-\frac{\alpha kT}{2}}$, we obtain
\begin{align}\label{04270358}
    \nonumber \mathbb{E}\big[\mathrm{III}^*;\widetilde{\sigma}\wedge\tau_1^{\widetilde{u},\widetilde{u}^\prime}\geq kT]\leq&\hspace{1mm}  \sqrt{\mathrm{exp}\{C_\rho e^{-\frac{\alpha kT}{4}}\}-1}+C_\rho e^{-\frac{\alpha kT}{4}}C^*(d_0)\\\leq& \ C_\rho e^{-\frac{\alpha kT}{8}}.
\end{align}
Combining \eqref{08100149}--\eqref{04270358}, we
arrive at
\begin{align*}
    \mathbb{P}\Big(kT\leq\widetilde{\sigma}<(k+1)T;\hspace{1mm}{\tau}_1^{\widetilde{u},\widetilde{u}^\prime}\geq (k+1)T\Big)\leq C_{q}(\rho+ \epsilon kT)^{-\frac{q}{2}+1}+C_\rho e^{-\frac{\alpha k T}{8}}.
\end{align*}
This, together with \eqref{04271129}, completes the proof of the lemma.
\end{proof}
As a consequence of Lemmas~\ref{03151059} and~\ref{03160423}, for a suitable
choice of $\rho$ and $T$ the trajectories $\widetilde{u}^\prime$ and
$\widetilde{v}$ coincide for all times with probability at least~$1/4$. To
formulate this precisely, we set
\begin{align*}
    l_0(\infty):=\limsup_{k\rightarrow\infty} l_0(k).
\end{align*}
\begin{corollary}\label{07311420}
There exist $\rho^\sharp\ge\rho_0$ and $T^\sharp\geq1$ such that, for
$\rho= \rho^\sharp$, any $T\geq T^\sharp$, and any
$u_0,u_0^\prime\in B_{H^2}(d_0)$ with $d_0=d_0(\rho^\sharp)$, we have
\begin{align*}
\mathbb{P}\big(l_0(\infty)\neq 0\big)\leq \frac{3}{4}.
\end{align*}
\end{corollary}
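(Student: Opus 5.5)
The event $\{l_0(\infty)\neq0\}$ is the event that the attempt launched at time $0$ fails at some finite step. For $u_0,u_0^\prime\in B_{H^2}(d_0)$ we have $l_0(0)=0$. By the recursive definition, once $l_0$ leaves the value $0$ it never returns to $0$: a new attempt launched at step $k$ gets the value $k\ge1$. Hence $l_0(\infty)=0$ holds if and only if $l_0(k)=0$ for all $k\ge0$. Consequently
\begin{align*}
\{l_0(\infty)\neq0\}=\{l_0(1)\neq0\}\cup\bigcup_{k\ge1}\{l_0(k+1)\neq0,\ l_0(k)=0\},
\end{align*}
and this union is disjoint, indexed by the first failure step. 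By subadditivity,
\begin{align*}
\mathbb{P}\big(l_0(\infty)\neq0\big)\le \mathbb{P}\big(l_0(1)\neq0\big)+\sum_{k\ge1}\mathbb{P}\big(l_0(k+1)\neq0,\ l_0(k)=0\big).
\end{align*}

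The first term is at most $\tfrac12$ by Lemma~\ref{03151059}, for any $\rho\ge\rho_0$, $T>0$ and $d_0=d_0(\rho)$. For the series I apply Lemma~\ref{03160423} with a fixed $q$, say $q=6$, so that $-\tfrac q2+1=-2$. This gives
\begin{align*}
\sum_{k\ge1}\mathbb{P}\big(l_0(k+1)\neq0,\ l_0(k)=0\big)\le C_\rho\sum_{k\ge1}e^{-\frac{\alpha kT}{8}}+C_6\sum_{k\ge1}(\rho+kT)^{-2}.
\end{align*}
The second sum is at most $\sum_{k\ge1}(\rho+k)^{-2}\le \rho^{-1}$ whenever $T\ge1$. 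Its constant $C_6$ is independent of $\rho$ and $T$.

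The order of the parameter choices is the key point, since $C_\rho$ may blow up with $\rho$. First choose $\rho^\sharp\ge\rho_0$ so large that $C_6/\rho^\sharp\le\tfrac18$. This fixes $d_0=d_0(\rho^\sharp)$ and the constant $C_{\rho^\sharp}$. Then choose $T^\sharp\ge1$ so large that
\begin{align*}
C_{\rho^\sharp}\sum_{k\ge1}e^{-\frac{\alpha kT}{8}}=C_{\rho^\sharp}\frac{e^{-\alpha T/8}}{1-e^{-\alpha T/8}}\le\frac18
\end{align*}
for all $T\ge T^\sharp$. Adding the three contributions gives $\tfrac12+\tfrac18+\tfrac18=\tfrac34$.

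The only delicate points are the following two.
\begin{itemize}
\item The constants in Lemma~\ref{03160423} must not depend on $T$. Here $C_\rho$ depends only on $\rho$, and $C_q$ depends on neither $\rho$ nor $T$, which is what makes the two-stage choice of $\rho^\sharp$ and then $T^\sharp$ legitimate.
\item The monotonicity of $l_0$ away from $0$ must hold. This follows directly from the recursive definition, since a reset produces either $\infty$ or a new launch index $k\ge1$, and never the value $0$.
\end{itemize}
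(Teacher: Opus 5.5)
Your proof is correct and follows the paper's argument: you use the same first-failure decomposition of $\{l_0(\infty)\neq0\}$, bound the first term by Lemma~\ref{03151059} and the series by Lemma~\ref{03160423}, and choose $\rho^\sharp$ before $T^\sharp$ in the same order. The only difference is that you take $q=6$ where the paper takes $q=5$, which is immaterial.
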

\begin{proof}
Since the events $\{l_0(k)=0\}=(P_{0,k})$ are non-increasing in~$k$,
decomposing $\{l_0(\infty)\neq0\}$ according to the first step at which
the attempt that was launched at time~$0$ fails, we obtain
$$
\big\{l_0(\infty)\neq0\big\}
=\big\{l_0(1)\neq0\big\}{\scalebox{1.3}{$\cup$}}
\bigcup_{k=1}^{\infty}\big\{l_0(k+1)\neq 0,\ l_0(k)=0\big\}.
$$
Applying Lemma \ref{03151059} and taking $q=5$ and $T\ge1$ in Lemma \ref{03160423}, we obtain
\begin{align}\label{03160235}
    \nonumber\mathbb{P}\Big(l_0(\infty)\neq 0\Big)&= \mathbb{P}\big(l_0(1)\neq 0\big)+\sum_{k=1}^\infty\mathbb{P}\big(l_0(k+1)\neq 0,l_0(k)=0\big)\\&\leq \frac{1}{2}+C_\rho\sum_{k=1}^\infty\mathrm{exp}\big\{-\frac{\alpha kT}{8}\big\}+C\sum_{k=1}^\infty(\rho+k)^{-\frac{3}{2}}.
\end{align}
We first choose $\rho=\rho^\sharp\ge \rho_0$ so large that
\begin{align}\label{03160324}
    C\sum_{k=1}^\infty(\rho^\sharp+k)^{-\frac{3}{2}}<\frac{1}{8}.
\end{align}
We then choose $T^\sharp\geq1$ so large that
\begin{align}\label{03160325}
C_{\rho^\sharp}\sum_{k=1}^\infty
\exp\big\{-\frac{\alpha kT^\sharp}{8}\big\}<\frac{1}{8},
\end{align}
where $C_{\rho^\sharp}$ denotes the constant in \eqref{03160235}.  
Combining \eqref{03160235}--\eqref{03160325}, we complete the proof.
\end{proof}

The parameters $N$, $K$, $\mathscr{C}$, $\rho=\rho^\sharp$ and
$d_0=d_0(\rho^\sharp)$ are now fixed; the time step $T$ will be chosen
larger than $T^\sharp$ in the proof of the Main Theorem, which we give in the next subsection.

\subsection{Conclusion of the proof}\label{S:4.3}

In this subsection, we prove the Main Theorem. The strategy is to
follow the coupling process through successive coupling attempts, to
handle each attempt by the estimates of Subsection~\ref{S:4.2}, and to
combine the resulting bounds by means of the Markov-type property
established below.

We denote by $\mathbb{P}_{w,w^\prime}$ the distribution on
$C([0,\infty);H^2)^2$ of the coupling process constructed in
Subsection~\ref{S:4.1} and issued from $(w,w^\prime)$, and by
$\mathbb{E}_{w,w^\prime}$ the corresponding expectation. Note that, for $w,w^\prime\in B_{H^2}(d_0)$, the restriction of
$\mathbb{P}_{w,w^\prime}$ to $[0,T]$ coincides with the law
$\mathbf{P}_{w,w^\prime}$ introduced in Subsection~\ref{S:4.2}.
\begin{lemma}\label{L1:l0-markov} 
Let $\varsigma$ be a stopping time with respect to
$(\widetilde{\mathcal{F}}_{nT})_{n\geq0}$ satisfying $\{\varsigma<\infty\}\subseteq\{l_0(\varsigma)=\varsigma\}$. Then, for any non-negative measurable function
$g:C([0,\infty);H^2)^2\rightarrow\mathbb{R}$, we have
\begin{align}\label{E:l0-markov}
\mathbb{E}\big[g(\widetilde{u},\widetilde{u}^\prime)\circ\theta_{\varsigma T}
\,\big|\,\widetilde{\mathcal{F}}_{\varsigma T}\big]
=\mathbb{E}_{\widetilde{u}_{\varsigma T},\widetilde{u}^\prime_{\varsigma T}}
\big[g(\widetilde{u},\widetilde{u}^\prime)\big]
\text{ on }\{\varsigma<\infty\}.
\end{align}
\end{lemma}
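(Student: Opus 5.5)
We first reduce to deterministic times by decomposing over the values of $\varsigma$. Since both sides of \eqref{E:l0-markov} are supported on $\{\varsigma<\infty\}=\bigcup_{k\ge0}\{\varsigma=k\}$, and $\{\varsigma=k\}\in\widetilde{\mathcal{F}}_{kT}$, it suffices to show that for every $k\ge0$ and every $A\in\widetilde{\mathcal F}_{kT}$ with $A\subseteq\{\varsigma=k\}\subseteq\{l_0(k)=k\}$,
\begin{align*}
\mathbb{E}\big[g(\widetilde{u},\widetilde{u}^\prime)\circ\theta_{kT};A\big]
=\mathbb{E}\big[\mathbb{E}_{\widetilde{u}_{kT},\widetilde{u}^\prime_{kT}}[g(\widetilde{u},\widetilde{u}^\prime)];A\big].
\end{align*}
By the usual characterization of $\widetilde{\mathcal F}_{\varsigma T}$ (events $A$ with $A\cap\{\varsigma=k\}\in\widetilde{\mathcal F}_{kT}$), summing over $k$ then yields the claim.

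The key observation is that on $\{l_0(k)=k\}$ the whole future of the coupling construction after time $kT$ is a fixed measurable functional of $(\widetilde u_{kT},\widetilde u'_{kT})$ and of the innovation family $\{(b\widetilde W^{1n},b\widetilde W^{2n}),bW^{1n},bW^{2n}\}_{n\ge k}$, and this functional coincides with the one that builds the coupling process from time $0$ with initial data $(w,w')=(\widetilde u_{kT},\widetilde u'_{kT})$. To check this, I would argue by induction on $n\ge k$, using the recursive definition of $l_0$. At step $k$ we have $l_0(k)=k$, which is exactly the situation of a fresh start: at time $0$ with data in $B_{H^2}(d_0)$ one has $l_0(0)=0$, and the ball condition $(P_{k,k})$ holds. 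The value of $l_0(n+1)$ depends only on $l_0(n)$ and on the events $(P_{l_0(n),n+1})$ or $(P_{n+1,n+1})$. When $l_0(n)\ge k$, these events involve only the trajectory on $[kT,\infty)$, since the stopping times are composed with $\theta_{l_0(n)T}$ and $l_0(n)\ge k$. When $l_0(n)=\infty$, no dependence on the past before $kT$ survives either. Thus $l_0(k+m)-k$ (with $\infty-k=\infty$) evolves exactly like the process $l_0(m)$ of the restarted construction. The choice of noise in \eqref{08070318} and the recursion \eqref{E:recursion} then match as well, and so does $\widetilde v$ by property (ii). This recursive structure, depending only on $l_0(k-1)$, is precisely what the remark before property (i) is pointing to, and I expect the careful bookkeeping here to be the main (though technical rather than deep) obstacle. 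In particular, one has to ensure that no event $(P_{l,n})$ with $l<k$ can re-enter, which is where $l_0(k)=k$ (rather than merely $l_0(k)<\infty$) is essential.

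Given this representation, write $g(\widetilde u,\widetilde u')\circ\theta_{kT}=\Gamma(\widetilde u_{kT},\widetilde u'_{kT},\xi_k)$ on $\{l_0(k)=k\}$, where $\xi_k$ denotes the innovation family indexed by $n\ge k$ and $\Gamma$ is measurable. The family $\xi_k$ is independent of $\widetilde{\mathcal F}_{kT}$, because $\widetilde{\mathcal F}_{kT}$ is generated by variables built from the innovations with index $n<k$ alone, and all innovations are mutually independent. Moreover, $\xi_k$ has the same law as $\xi_0$. Freezing the $\widetilde{\mathcal F}_{kT}$-measurable pair $(\widetilde u_{kT},\widetilde u'_{kT})$ exactly as in \eqref{E:freeze-k} in the proof of Lemma~\ref{local markov}, we obtain
\begin{align*}
\mathbb{E}\big[\Gamma(\widetilde u_{kT},\widetilde u'_{kT},\xi_k);A\big]=\int \mathbb{E}[\Gamma(w,w',\xi_0)]\,\mathcal{D}\big(\mathbf 1_A,\widetilde u_{kT},\widetilde u'_{kT}\big)(\dd a,\dd w,\dd w')\cdot a .
\end{align*}
Finally, we identify $\mathbb{E}[\Gamma(w,w',\xi_0)]=\mathbb{E}_{w,w'}[g(\widetilde u,\widetilde u')]$, which is the definition of $\mathbb P_{w,w'}$. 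This gives the displayed identity and hence \eqref{E:l0-markov}.
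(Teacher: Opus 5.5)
Your proposal is correct and follows the paper's proof. Both reduce to the events $\{\varsigma=n\}\in\widetilde{\mathcal F}_{nT}$, observe that on $\{l_0(n)=n\}$ the shifted construction is a fresh copy started from $(\widetilde u_{nT},\widetilde u'_{nT})$ and driven by innovations independent of $\widetilde{\mathcal F}_{nT}$, and conclude by the freezing argument of Lemma~\ref{local markov}. Your inductive check that $l_0(k+m)-k$ evolves like the restarted index process, which is where $l_0(k)=k$ rather than merely $l_0(k)<\infty$ is needed, supplies the bookkeeping the paper only asserts when it identifies the conditional law of $(\theta_{nT}\widetilde\eta^1,\theta_{nT}\widetilde\eta^2)$ with that of an independent copy of the noise maps.
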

\begin{proof}
By the monotone convergence theorem, it suffices to prove
\eqref{E:l0-markov} for bounded~$g$.  Since $\{\varsigma=n\}\in\widetilde{\mathcal{F}}_{\varsigma T}$, we have
\begin{align*}
   \mathbb{E}\big[g(\widetilde{u},\widetilde{u}^\prime)\circ\theta_{\varsigma T}
\,\big|\,\widetilde{\mathcal{F}}_{\varsigma T}\big]=\mathbb{E}\big[g(\widetilde{u},\widetilde{u}^\prime)\circ\theta_{n T}
\,\big|\,\widetilde{\mathcal{F}}_{nT}\big]\text{ on \{$\varsigma=n$\}}.
\end{align*}
Thus it suffices to prove that, for each $n\geq0$,
\begin{align}\label{08070655}
    \mathbb{E}\big[g(\widetilde{u},\widetilde{u}^\prime)\circ\theta_{n T}
\,\big|\,\widetilde{\mathcal{F}}_{nT}\big]=\mathbb{E}_{\widetilde{u}_{n T},\widetilde{u}^\prime_{n T}}
\big[g(\widetilde{u},\widetilde{u}^\prime)\big]\text{ on $\{\varsigma=n\}$}.
\end{align}
Recall from \eqref{08130433} that
$\widetilde{u}=u(\cdot\,;u_0,\widetilde{\eta}^1)$ and
$\widetilde{u}^\prime=u(\cdot\,;u_0^\prime,\widetilde{\eta}^2)$, where
$\widetilde{\eta}^1,\widetilde{\eta}^2$ are the noise maps defined in
\eqref{E:noise-concat}. Consequently, for any $t\geq 0$,
\begin{align}\label{08151052}
(\widetilde{u},\widetilde{u}^\prime)\circ\theta_{nT}(t)=\big({u}(t;\widetilde{u}_{nT},\theta_{nT}\widetilde{\eta}^1(u_0,u_0^\prime)),{u}(t;\widetilde{u}^\prime_{nT},\theta_{nT}\widetilde{\eta}^2(u_0,u_0^\prime))\big).
\end{align}On $\{\varsigma=n\}$, which belongs to
$\widetilde{\mathcal{F}}_{nT}$, we have $\widetilde{u}_{nT}$ and $\widetilde{u}^\prime_{nT}$ belong to $B_{H^2}(d_0)$. Thus the recursive construction \eqref{E:noise-concat} implies that, the conditional
distribution of $(\theta_{nT}\widetilde{\eta}^1(u_0,u_0^\prime),\theta_{nT}\widetilde{\eta}^2(u_0,u_0^\prime))$ given
$\widetilde{\mathcal{F}}_{nT}$ equals that of 
\begin{align*}
(\widetilde{\eta}^3(w,w^\prime),\widetilde{\eta}^4(w,w^\prime))|_{(w,w^\prime)=(\widetilde{u}_{nT},\widetilde{u}^\prime_{nT})},
\end{align*}
where $(\widetilde{\eta}^3,\widetilde{\eta}^4)$ is an independent copy of
the pair of noise maps $(\widetilde{\eta}^1,\widetilde{\eta}^2)$. The justification of this correspondence, and the derivation of
\eqref{08070655} from it, repeat the argument in the proof of
Lemma~\ref{local markov}, and we omit them.
\end{proof}

\begin{proof}[Proof of Main Theorem]  The existence of a stationary measure $\nu\in\mathcal{P}(H^2)$ is
established in Appendix~\ref{S:A4} (Lemma~\ref{05050844}), where it is
also shown that every stationary measure in $\mathcal{P}(H^2)$ has finite
moments of all orders in $H^2$ (Lemma~\ref{05050848}). It therefore
remains to prove the uniqueness
of~$\nu$ and the mixing estimate~\eqref{05050801}; both follow, as we show below, from a
polynomial moment bound on $l_0(\infty)$ for initial data in $H^2$.

\noindent\textit{Step 1: Recurrence and decoupling times}.
 For $d_0$ fixed, we introduce the following stopping times:
\begin{align*}
    \vartheta&:=\min\big\{n\ge0:\|\widetilde{u}_{nT}^\prime\|_{H^2}\vee\|\widetilde{u}_{nT}\|_{H^2}\leq d_0\big\},\\\sigma&:=\min\{n\ge 1:l_0(n)>0\},
\end{align*}
with $\min\emptyset=\infty$. We then define two families of stopping times $\{\vartheta_k\}_{k\ge0}$ and~$\{\sigma_k\}_{k\ge0}$ recursively by
\begin{align*}
    &\sigma_0:=-1,\ \vartheta_0:=\vartheta,\quad \sigma_{k+1}:=\vartheta_k+\sigma\circ\theta_{\vartheta_k T},\\  &\vartheta_{k+1}:=\sigma_{k+1}+1+\vartheta\circ\theta_{(\sigma_{k+1}+1) T}.
\end{align*}
It follows that 
\begin{align*}
    \sigma_{k+1}&=\infty\text{ if and only if $\vartheta_k=\infty$ or $l_0(\infty)=\vartheta_k$},\\ \vartheta_{k+1}&=\infty\text{ if  $\sigma_{k+1}=\infty$.}
\end{align*}
The times $\vartheta_k$ and $\sigma_{k+1}$ are, respectively, the $(k+1)$-th
recurrence time and the $(k+1)$-th decoupling time. The recurrence time is made strictly larger than the last decoupling time to ensure that $l_0(\vartheta_k)=\vartheta_k$ when $\vartheta_k<\infty$. In~particular, when $\vartheta_k<\infty$ and $\sigma_{k+1}=\infty$, we have
$l_0(\infty)=\vartheta_k$.

We shall also need the following fact. Let $k\ge0$ and let $\eta^1,\eta^2$ be
two mutually independent Wiener processes of the form~\eqref{04130915},
independent of~$\widetilde{\mathcal{F}}_{(\sigma_{k}+1)T}$. Then, on the event
$\{\sigma_{k}<\infty\}$, the conditional distribution of~$\vartheta\circ\theta_{(\sigma_{k}+1)T}$ given
$\widetilde{\mathcal{F}}_{(\sigma_{k}+1)T}$ coincides with that of
\begin{align*}
    \min\big\{j\ge0:\|u(jT;\widetilde{u}_{(\sigma_{k}+1)T},\eta^1)\|_{H^2}
    \vee\|u(jT;\widetilde{u}^\prime_{(\sigma_{k}+1)T},\eta^2)\|_{H^2}\leq d_0\big\}.
\end{align*}
 To prove this, it suffices, as in the proof of Lemma~\ref{L1:l0-markov}, to
establish the equality on $\{\sigma_{k}=n\}$ for any $n\geq -1$. On $\{\sigma_{k}=n\}$, the construction \eqref{08151052} implies that for any $M\ge0$, the event $\big\{\vartheta\circ \theta_{(\sigma_{k}+1) T}\leq M\big\}$ equals
\begin{align*}
    &\mathop{\scalebox{1.3}{$\cup$}}_{j=0}^M\big\{\|u(jT;\widetilde{u}_{(n+1)T},{\eta}^{1,n+1})\|_{H^2}\vee\|u(jT;\widetilde{u}^\prime_{(n+1)T},{\eta}^{2,n+1})\|_{H^2}\leq d_0\big\},
\end{align*}
where $\eta^{i,n+1}$, $i=1,2$, are the Wiener processes whose increments on
\mbox{$[lT,(l+1)T]$} are given by $bW^{i(l+n+1)}$, $l\ge0$. By construction,
they are mutually independent, have the same distribution as $bW$ in
\eqref{04130915}, and are independent of $\widetilde{\mathcal{F}}_{(n+1)T}$. The claim then follows by the argument used in the proof of
Lemma~\ref{local markov}. In particular, $\vartheta\circ \theta_{(\sigma_{k}+1)T}$ has the same distribution as $\vartheta_{d_0}$ defined in \eqref{08030032} (with initial conditions $\widetilde{u}_{(\sigma_{k}+1)T},\widetilde{u}^\prime_{(\sigma_{k}+1)T}$). Thus by choosing $T=T^\sharp\vee T_{d_0}$, Lemma \ref{03160229} implies the existence of $c_0>0$ such that for any $k\geq 0$,
\begin{align}\label{08151958}
    \mathbb{E}\big[\mathrm{exp}\{c_0\vartheta\}\circ\theta_{(\sigma_{k}+1)T}|\widetilde{\mathcal{F}}_{(\sigma_{k}+1)T}\big]\hspace{-0.5mm}\leq\hspace{-0.5mm} C\big(1\hspace{-0.5mm}+\hspace{-0.5mm}\widetilde{F}(\widetilde{u}_{(\sigma_{k}+1)T})\hspace{-0.5mm}+\hspace{-0.5mm}\widetilde{F}(\widetilde{u}^\prime_{(\sigma_{k}+1)T})\big),
\end{align}
with $\widetilde{F}$ defined in \eqref{03120847}.

\noindent\textit{Step 2: Polynomial convergence}.
Here we show that for any $q\geq 1$, $f\in L_b(H^1)$, and~$t> 0$,
\begin{align}\label{05050909}
    \mathbb{E}\big[|f(\widetilde{u}_t)-f(\widetilde{u}_t^\prime)|\big]\leq C_{q}\|f\|_{L}(1+\widetilde{F}(u_0)+\widetilde{F}(u^\prime_0)) t^{-q}.
\end{align}
 A crucial ingredient in proving this is to establish a moment estimate for~$l_0(\infty)$. We first show that $$\mathbb{P}\big(l_0(\infty)=\infty\big)=0.$$ To this end, we define a random time
\begin{align*}
    \kappa_0=\min\{k\ge0:\sigma_{k+1}=\infty\},\quad \min\emptyset:=\infty.
\end{align*}
We claim that $\kappa_0$ is $\mathbb{P}$-a.s. finite. Indeed, for any $k\ge1$, we have
\begin{align*}
\nonumber\mathbb{P}\big(\kappa_0\geq k\big)&=\mathbb{P}\big(\sigma_i<\infty,i=1,\ldots,k\big)=\mathbb{P}\big(\sigma_k<\infty,\vartheta_{k-1}<\infty\big)\\&=\mathbb{E}\big[\mathbb{P}\big({\sigma_k<\infty}\,|\,\widetilde{\mathcal{F}}_{\vartheta_{k-1} T}\big);{\vartheta_{k-1}<\infty}\big].
\end{align*}
Notice that \eqref{08151958} implies
\begin{align}\label{08161435}
    \mathbb{P}\big(\sigma_{k-1}<\infty,\vartheta_{k-1}=\infty\big)=0,
\end{align}
and hence
\begin{align*}
    \mathbb{P}(\kappa_0\geq k)=\mathbb{E}\big[\mathbb{P}\big({\sigma_k<\infty}\,|\,\widetilde{\mathcal{F}}_{\vartheta_{k-1} T}\big);{\sigma_{k-1}<\infty}\big].
\end{align*}
Since $l_0(\vartheta_{k-1})=\vartheta_{k-1}$ on
$\{\vartheta_{k-1}<\infty\}$, applying Lemma~\ref{L1:l0-markov} with
$\varsigma=\vartheta_{k-1}$ and Corollary~\ref{07311420}, we deduce
that
\begin{align*}
    \mathbb{P}\big({\sigma_k<\infty}\,|\,\widetilde{\mathcal{F}}_{\vartheta_{{k-1} }T}\big)=\mathbb{P}_{\widetilde{u}_{\vartheta_{k-1}T},\widetilde{u}^\prime_{{\vartheta_{k-1}}T}}\big(l_0(\infty)\neq 0\big)\leq\frac{3}{4}.
\end{align*}
As $\{\sigma_{k-1}<\infty\}=\{\kappa_0\geq k-1\}$, this gives
$$
\mathbb{P}(\kappa_0\geq k)\leq\frac{3}{4}\,
\mathbb{P}(\kappa_0\geq k-1)
$$ for every $k\geq2$, and, together with the bound
$\mathbb{P}(\kappa_0\geq 1)\leq\frac{3}{4}$, iterating yields
\begin{align}\label{03160319}
\mathbb{P}\big(\kappa_0\geq k\big)\leq\left(\frac{3}{4}\right)^{k}.
\end{align}
Consequently,
\begin{align*}
  \mathbb{P}\big(\kappa_0=\infty\big)= \lim_{k\rightarrow\infty}\mathbb{P}\big(\kappa_0\geq k\big)=0.
\end{align*}
By \eqref{08161435}, we have $\vartheta_{\kappa_0}<\infty$ and
$\sigma_{\kappa_0+1}=\infty$ $\mathbb{P}$-a.s., whence
$l_0(\infty)=\vartheta_{\kappa_0}$ $\mathbb{P}$-a.s. Thus for any $q\geq 1$, \eqref{03160319} implies that
\begin{align}\label{03160954}
    \nonumber\mathbb{E}\big[l_0^q(\infty)\big]&=\mathbb{E}\big[\vartheta_{\kappa_0}^{q}\big]=\sum_{k=0}^\infty\mathbb{E}\big[\vartheta_{k}^{q};{\kappa_0=k},{\vartheta_k<\infty}\big]\\&\leq  \sum_{k=0}^\infty\mathbb{E}\big[\vartheta_{k}^{2q};{\vartheta_k<\infty}\big]^{\frac{1}{2}}\mathbb{P}\big(\kappa_0=k\big)^{\frac{1}{2}}\nonumber\\&\leq \sum_{k=0}^\infty\left(\frac{3}{4}\right)^{\frac{k}{2}}\mathbb{E}\big[\vartheta_{k}^{2q};{\vartheta_k<\infty}\big]^{\frac{1}{2}}.
\end{align}
Moreover, by denoting $\varrho=\sigma+1+\vartheta\circ\theta_{(\sigma+1) T}$, we have
\begin{align*}
    \mathbb{E}\big[\vartheta_{k}^{2{q}};&\hspace{1mm}{\vartheta_k<\infty}\big]= \mathbb{E}\big[(\vartheta+\sum_{l=0}^{k-1} \varrho\circ\theta_{\vartheta_l T})^{2q};{\vartheta_k<\infty}\big]\\&\leq\ C_{q}(k+1)^{2q-1}\Big(\mathbb{E}\big[\vartheta^{2q}\big]+\sum_{l=0}^{k-1} \mathbb{E}\big[\big(\varrho\circ\theta_{\vartheta_l T})^{2q};{\vartheta_k<\infty}\big]\Big)\\&\leq \ C_{q}(k\hspace{-0.5mm}+\hspace{-0.5mm}1)^{2q-1}\big(\mathbb{E}\big[\vartheta^{2q}\big]+1\big)\hspace{-0.5mm}+\hspace{-0.5mm}C_{q}(k\hspace{-0.5mm}+\hspace{-0.5mm}1)^{2q-1}\sum_{l=0}^{k-1} \mathbb{E}\big[\big(\sigma\circ\theta_{\vartheta_lT})^{2q};\\&\ \ \quad{\sigma\circ\theta_{\vartheta_lT}<\infty}\big]\hspace{-0.5mm}+\hspace{-0.5mm}C_{{q}}(k\hspace{-0.5mm}+\hspace{-0.5mm}1)^{2q-1}\sum_{l=1}^{k}\mathbb{E}\big[\big(\vartheta\circ\theta_{(\sigma_{l}+1)T}\big)^{2q};{\sigma_{l}<\infty}\big]\\&=:\  \mathrm{I+II+III}.
\end{align*}
For $\mathrm{I}$, since $T\geq T_{d_0}$, \eqref{08151958} implies that 
\begin{align}\label{03220306}
    \mathrm{I}\leq C_{{q}}(k+1)^{2q-1}\big(1+\widetilde{F}(u_0)+\widetilde{F}(u_0^\prime)\big).
\end{align}
To estimate $\mathrm{II}$, notice that there exists a non-negative measurable function $g_\sigma:C([0,\infty);H^2)^2\rightarrow\mathbb{R}$ such that $\sigma \mathbf{1}_{\sigma<\infty}=g_\sigma(\widetilde{u},\widetilde{u}^\prime)$. Thus, taking $\varsigma=\vartheta_l$ and $g=g^{2q}_\sigma$ in
Lemma~\ref{L1:l0-markov}, then applying Lemma~\ref{03151059} and
Lemma~\ref{03160423} with $q$ replaced by~$10q$, we obtain $c_{1}>0$
and $C_{q}>0$ such that, for any $0\leq l\leq k-1$,
\begin{align}\label{03160945}
     \nonumber\mathbb{E}\big[(\sigma&\circ\theta_{\vartheta_l T})^{2q};{\sigma\circ\theta_{\vartheta_lT}<\infty}\big]=\mathbb{E}\big[\mathbb{E}_{\widetilde{{u}}_{\vartheta_l T},\widetilde{u}^\prime_{\vartheta_l T}}\big[\sigma^{2q};{\sigma<\infty}\big];{\vartheta_l<\infty}\big]\\\hspace{-1.5mm}\leq&\nonumber\hspace{1mm}\mathbb{E}\big[\mathbb{P}_{\widetilde{{u}}_{\vartheta_l T},\widetilde{u}^\prime_{\vartheta_l T}}(\sigma=1)+\sum_{m=2}^\infty m^{2q}\mathbb{P}_{\widetilde{{u}}_{\vartheta_l T},\widetilde{u}^\prime_{\vartheta_l T}}(\sigma=m);{\vartheta_l<\infty}\big]\\\leq&\hspace{0.5mm}\frac{1}{2}+C_{{q}}\sum_{m=2}^\infty m^{2q}\big((1+m)^{-5q+1}+e^{-c_{1} m}\big)\leq C_{{q}},
\end{align}
where the first inequality follows since $\mathbb{P}_{w,w^\prime}(\sigma=1)=\mathbb{P}_{w,w^\prime}(l_0(1)\neq 0)$ for $w,w^\prime\in B_{H^2}(d_0)$. Thus we have
\begin{align}\label{03220306-1}
    \mathrm{II}\leq C_q(k+1)^{2q}.
\end{align}
Finally, for $\mathrm{III}$, \eqref{08151958} implies that
   \begin{align*}  \mathbb{E}\big[(\vartheta\circ\theta_{(\sigma_l+1) T})^{2q};{\sigma_l<\infty}\big]=&\hspace{1mm}\mathbb{E}\big[\mathbb{E}\big[(\vartheta\circ\theta_{(\sigma_l+1) T})^{2q}\big|\widetilde{\mathcal{F}}_{(\sigma_l+1) T}\big];{\sigma_l<\infty}\big]\\\leq&\ C_{q}\hspace{0.5mm}\mathbb{E}\big[1+\widetilde{F}(\widetilde{u}_{(\sigma_l+1) T})+\widetilde{F}(\widetilde{u}^\prime_{(\sigma_l+1) T});{\sigma_l<\infty}\big].
   \end{align*}
   Recall that $\sigma_l=\vartheta_{l-1}+\sigma\circ\theta_{\vartheta_{l-1}T}$, which implies that $$\mathbf{1}_{\sigma_l<\infty}=\mathbf{1}_{\sigma\circ\theta_{\vartheta_{l-1}T}<\infty}\mathbf{1}_{\vartheta_{l-1}<\infty}.$$ Since $\mathbf{1}_{\sigma\circ \theta_{\vartheta_{l-1}T}<\infty}=\mathbf{1}_{g_\sigma(\widetilde{u},\widetilde{u}^\prime)>0}\circ\theta_{\vartheta_{l-1}T}$, by taking the measurable function $$g(u^1,u^2)=[\widetilde{F}(u^1_{(g_\sigma(u^1,u^2) +1)T})+\widetilde{F}(u^2_{(g_{\sigma}(u^1,u^2)+1) T})]\mathbf{1}_{g_\sigma(u^1,u^2)>0}$$ and $\varsigma=\vartheta_{l-1}$ in Lemma \ref{L1:l0-markov}, we have
   \begin{align*}
&\mathbb{E}\big[(\vartheta\circ\theta_{(\sigma_l+1) T})^{2q};{\sigma_l<\infty}\big]\\\leq&\ C_{{q}}\hspace{-0.5mm}+\hspace{-0.5mm}C_{{q}}\mathbb{E}\big[\mathbb{E}\big[\big[\big(\widetilde{F}(\widetilde{u}_{(\sigma+1) T})\hspace{-0.5mm}+\hspace{-0.5mm}\widetilde{F}(\widetilde{u}^\prime_{(\sigma+1) T})\big)\mathbf{1}_{\sigma<\infty}\big]\hspace{-0.5mm}\circ\hspace{-0.5mm}\theta_{\vartheta_{l-1}T}|\widetilde{\mathcal{F}}_{\vartheta_{l-1}T}\big];{\vartheta_{l-1}\hspace{-0.5mm}<\hspace{-0.5mm}\infty}\big]\\=&\ C_{{q}}+C_{{q}}\mathbb{E}\big[\mathbb{E}_{\widetilde{u}_{\vartheta_{l-1} T},\widetilde{u}^\prime_{\vartheta_{l-1} T}}\big[{{\widetilde{F}(\widetilde{u}_{(\sigma+1) T})+\widetilde{F}(\widetilde{u}^\prime_{(\sigma+1) T})}};{\sigma<\infty}\big];{\vartheta_{l-1}<\infty}\big].
\end{align*}
 Since \eqref{03160945} and H\"older's inequality imply that for any $1\leq l\leq k$,
\begin{align*}
    &\ \mathbb{E}_{\widetilde{u}_{\vartheta_{l-1} T},\widetilde{u}^\prime_{\vartheta_{l-1} T}}\big[(\sigma+1)^2;\sigma<\infty\big]\leq C_q,
\end{align*}
we apply Lemmas \ref{02280441}(iii), \ref{02280436}(iii), and \ref{02231236}(iii) at stopping time $(\sigma_l+1)T$ to deduce that
\begin{align*}
\mathbb{E}\big[(\vartheta\circ\theta_{(\sigma_l+1) T})^{2q};{\sigma_l\!<\!\infty}\big]\leq &\hspace{1mm}C_{q}\mathbb{E}\big[\mathbb{E}_{\widetilde{u}_{\vartheta_{l-1} T},\widetilde{u}^\prime_{\vartheta_{l-1} T}}\big[(1+\sigma T)^2;{\sigma\!<\!\infty}\big]^{\frac{1}{2}};\\&\hspace{1mm}\qquad\hspace{1mm}{\vartheta_{l-1}\!<\!\infty}\big]+C_{q}\leq C_{q}.
\end{align*}
As a result,
\begin{align}\label{03220306-2}
    \mathrm{III}\leq C_q(k+1)^{2q}.
\end{align}
Combining \eqref{03220306}, \eqref{03220306-1}, and \eqref{03220306-2}, we obtain
\begin{align}\label{03160956}
    \mathbb{E}\big[\vartheta_k^{2q};{\vartheta_k<\infty}\big]\leq C_{q}(k+1)^{2q}\big(1+\widetilde{F}(u_0)+\widetilde{F}(u^\prime_0)\big).
\end{align}
Together, \eqref{03160954} and \eqref{03160956} give \begin{align*}
    \mathbb{E}\big[l_0^{q}(\infty)\big]&\leq C_{q}\sum_{k=0}^\infty \big(\frac{3}{4}\big)^{\frac{k}{2}}(k+1)^{q}\big(1+\widetilde{F}(u_0)+\widetilde{F}(u_0^\prime)\big)^{\frac{1}{2}}\\&\leq C_{q}\big(1+\widetilde{F}(u_0)+\widetilde{F}(u_0^\prime)\big).
\end{align*}
Thus for any $f\in L_b(H^1)$, $t\geq 0$, and $l\ge 1$, Chebyshev's inequality implies that
\begin{align}\label{03161059}
    \nonumber\mathbb{E}\big[|f(\widetilde{u}_t)-f(\widetilde{u}^\prime_t)|;l_0(\infty)\geq l\big]&\leq 2\|f\|_{L^\infty}\mathbb{P}\big(l_0(\infty)\geq l\big)\\&\leq   C_{q}\|f\|_{L}(1+\widetilde{F}(u_0)+\widetilde{F}(u_0^\prime))l^{-q}.
\end{align}
On the other hand, for any $t\geq lT$, 
\begin{align*}
     \mathbb{E}\big[|f(\widetilde{u}_t)-f(\widetilde{u}^\prime_t)|;l_0(\infty)<l\big]&\leq\|f\|_{L}\mathbb{E}\big[\|\widetilde{u}_t-\widetilde{u}^\prime_t\|_{H^1};l_0(\infty)<l\big].
\end{align*}
On the event $\{l_0(\infty)=k\}$, we have $\widetilde{u}^\prime_t=\widetilde{v}_t$
for all $t\geq kT$, and the process~$\widetilde{v}_{kT+\cdot}$ solves
\eqref{auxiliary} with initial condition $\widetilde{u}^\prime_{kT}$, driven by
the noise~$\theta_{kT}\widetilde{\eta}^1$. Combining this with \eqref{06190211}
and \eqref{08130433}, we obtain
\begin{align*}
    \mathbb{E}\big[\|\widetilde{u}_t-\widetilde{u}^\prime_t\|_{H^1}&;
    l_0(\infty)<l\big]
    \leq\sum_{k=0}^{l-1}\mathbb{E}\big[\|\widetilde{u}_t
    -\widetilde{v}_t\|^2_{H^1};l_0(\infty)=k\big]^{\frac{1}{2}}\\
    &\leq\sqrt{2}\sum_{k=0}^{l-1}\mathbb{E}\big[\widetilde{\mathcal{M}}
    \big(t-kT,\,u^{1,k}-v^{1,k}\big);
    \widetilde{u}_{kT},\widetilde{u}_{kT}^\prime\in B_{H^2}(d_0),\\
    &\hspace{2.1cm}\
    \tau_1^{u^{1,k},{v}^{1,k}}>t-kT
    \big]^{\frac{1}{2}},
\end{align*}
where $u^{1,k}:=u(\cdot;\widetilde{u}_{kT},\theta_{kT}\widetilde{\eta}^1)$ and $v^{1,k}:=v(\cdot;\widetilde{u}_{kT},\widetilde{u}^\prime_{kT},\theta_{kT}\widetilde{\eta}^1)$. Notice that~$\tau_1^{{u}^{1,k},{v}^{1,k}}$
 is measurably determined by $\theta_{kT}\widetilde{\eta}^1$, $\widetilde{u}_{kT}$, and $\widetilde{u}^\prime_{kT}$. Since~$\theta_{kT}\widetilde{\eta}^1$ is independent of $\widetilde{\mathcal{F}}_{kT}$ and has the same distribution as $bW$ (see Lemma \ref{08080341}), we deduce from \eqref{07312337} that, for any $t\geq lT$, 
\begin{align}
    \mathbb{E}\big[|f(\widetilde{u}_t)&-f(\widetilde{u}^\prime_t)|;
    l_0(\infty)<l\big]\nonumber\\
    \leq&\ \sqrt{2}\|f\|_{L}\sum_{k=0}^{l-1}\mathbb{E}\big[\mathbb{E}\big[
    \widetilde{\mathcal{M}}\big(t-kT,u(t-kT;w,bW)\nonumber\\
    &\hspace{1.2cm}
    -v(t-kT;w,w^\prime,bW)\big);
    t-kT<\tau^{u,v}_1\big]\big|_{(w,w^\prime)
    =(\widetilde{u}_{kT},\widetilde{u}^\prime_{kT})};\nonumber\\
    &\hspace{1.3cm}
    \widetilde{u}_{kT},\widetilde{u}^\prime_{kT}\in B_{H^2}(d_0)
    \big]^{\frac{1}{2}}\nonumber\\
    \leq&\ C\|f\|_{L}\,l\,e^{-\frac{3\alpha}{8}(t-lT)}.\label{03220438}
\end{align}
Taking $l=\lfloor \frac{t}{2 T}\rfloor\vee 1$ and adding \eqref{03161059} and \eqref{03220438}, we arrive at \eqref{05050909} for~$t\geq T$; for $t\in(0,T)$, the estimate \eqref{05050909} follows from the trivial bound $$
|f(\widetilde{u}_t)-f(\widetilde{u}_t^\prime)|\leq 2\|f\|_{L}\leq 2T^{q}\|f\|_{L}\,t^{-q}.
$$

\noindent\textit{Step 3: Proof of uniqueness and \eqref{05050801}}.
Let $\nu\in\mathcal{P}(H^2)$ be the stationary measure constructed in
Lemma~\ref{05050844}. Since $(\widetilde{u},\widetilde{u}^\prime)$ is a coupling of $(u,u^\prime)$,
\eqref{05050909} gives
$$
\big|\PPPP_tf(u_0)-\PPPP_tf(u_0^\prime)\big|\leq C_q\|f\|_{L}
\big(1+\widetilde{F}(u_0)+\widetilde{F}(u_0^\prime)\big)t^{-q}
$$
for any $u_0,u_0^\prime\in H^2$ and $f\in L_b(H^1)$. Integrating this with
respect to $\nu(\dd u_0^\prime)$ and using the stationarity of~$\nu$, we obtain
\begin{align*}
    \big|\PPPP_tf(u_0)-\int_{H^2}f(v)\nu(\dd v)\big|&=\big|\PPPP_tf(u_0)-\int_{H^2} f(v)\PPPP_t^*\nu(\dd v)\big|\\&\leq \nonumber C_q\|f\|_{L}\big(1+\widetilde{F}(u_0)+\int_{H^2} \widetilde{F}(v)\nu(\dd v)\big)t^{-q}\\&\leq  C_q\|f\|_L\big(1+\|u_0\|_{H^2}^{20}\big)t^{-q},
\end{align*}  where on the last line we use $\widetilde{F}(u)\leq C\big(1+\|u\|_{H^2}^{20}\big)$ and Lemma~\ref{05050848}.
Integrating this inequality with respect to~$\lambda(\dd u_0)$, for arbitrary
$\lambda\in\mathcal{P}(H^2)$, we get 
\eqref{05050801}.

It remains to prove the uniqueness. Let $\widetilde{\nu}\in\mathcal{P}(H^2)$ be
another stationary measure. Applying \eqref{05050801} with
$\lambda=\widetilde{\nu}$ and using $\PPPP_t^*\widetilde{\nu}=\widetilde{\nu}$,
we obtain
\begin{align*}
    \|\widetilde{\nu}-\nu\|^*_{L}
    \leq C_q\Big(1+\int_{H^2}\|u\|_{H^2}^{20}\,\widetilde{\nu}(\dd u)\Big)t^{-q}.
\end{align*}
 Letting
$t\rightarrow\infty$ gives $\widetilde{\nu}=\nu$.
\end{proof}

\appendix

\setcounter{section}{0}
\refstepcounter{section}   

\section*{Appendix}
\phantomsection
\addcontentsline{toc}{section}{Appendix}

\setcounter{subsection}{0}

\subsection{A priori estimates}

We begin with the following a priori estimates for solutions to \eqref{equation-1}.
\begin{lemma}\label{02280441}
    For any $p\geq 1$, $t\geq 0$, $\gamma>0$, and any stopping time $\tau$ such that $\mathbb{E}[\tau^2;\tau<\infty]<\infty$, the solution $u$ to \eqref{equation-1} satisfies
    \begin{align*}
    &\mathrm{(i)} \quad \mathbb{E}\big[\|u_t\|^{2p}+\frac{\alpha p}{2}\int_0^te^{-\gamma(t-s)}\|u_s\|^{2p}\dd s\big]\leq e^{-(\gamma\wedge \frac{\alpha p}{2})t}\|u_0\|^{2p}+C_{p,\gamma},\\
      &\mathrm{(ii)} \quad \mathbb{E}\big[\sup_{s\in[0,t]}\big\{\|u_s\|^{2p}+{\alpha p}\int_0^s\|u_r\|^{2p}\dd r\big\}\big]\leq 2\|u_0\|^{2p}+C_pt,\\
       &\mathrm{(iii)} \quad \mathbb{E}\big[\|u_\tau\|^{2p};\tau<\infty\big]\leq C_{p}(1+\|u_0\|^{2p})\mathbb{E}\big[(1+\tau)^2;\tau<\infty\big]^{\frac{1}{2}}.
    \end{align*}
\end{lemma}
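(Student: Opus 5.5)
The plan is to apply It\^o's formula to the mass $\|u\|^2$ and then to its powers $\|u\|^{2p}$. Since $\langle i\partial_x^2u,u\rangle=0$ and $\langle i|u|^2u,u\rangle=0$, the Hamiltonian part drops out and only damping, noise and It\^o correction survive:
\begin{align*}
\dd\|u_t\|^2=\big(-2\alpha\|u_t\|^2+2\mathcal{B}_0\big)\dd t+2\sum_{j\ge1} b_j\langle e_j,u_t\rangle\,\dd\beta^1_j+2\sum_{j\ge1} b_j\langle ie_j,u_t\rangle\,\dd\beta^2_j ,
\end{align*}
where $\mathcal{B}_0:=\sum_j b_j^2\le \mathcal{B}_4<\infty$. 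To make the stochastic integrals genuine martingales, I first localize with the exit times from $H^1$-balls and remove the localization at the end by Fatou.

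\emph{Part (i).} Take $f(x)=x^p$ with $x=\|u\|^2$. The drift of $\|u\|^{2p}$ is at most
$-2\alpha p\|u\|^{2p}+C_p\|u\|^{2p-2}$, the second term coming from the It\^o correction, which is bounded by $C\|u\|^{2p-2}\mathcal{B}_0$. By Young's inequality this is $\le -\tfrac{3\alpha p}{2}\|u\|^{2p}+C_p$. Next apply the product rule to $e^{\gamma' t}\|u_t\|^{2p}$ with $\gamma'=\gamma\wedge\frac{\alpha p}{2}$ and take expectations. This yields
\[
\mathbb E\|u_t\|^{2p}\le e^{-\gamma' t}\|u_0\|^{2p}+C_{p,\gamma}.
\]
For the integral term $\frac{\alpha p}{2}\int_0^te^{-\gamma(t-s)}\|u_s\|^{2p}\dd s$, I plan to split the drift as $-\tfrac{\alpha p}{2}\|u\|^{2p}-\alpha p\|u\|^{2p}+C_p$. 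I then multiply by $e^{\gamma s}$ and use $-\alpha p+\gamma'\le 0$ to keep the $\frac{\alpha p}{2}$ portion as a dissipative integral. After dividing by $e^{\gamma t}$, this gives the weight $e^{-\gamma(t-s)}$. One has to check that the bookkeeping of $\gamma$ versus $\gamma'$ actually yields the displayed exponents; at worst, $C_{p,\gamma}$ absorbs a factor $1/\gamma$.

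\emph{Part (ii).} Integrate the It\^o identity for $\|u\|^{2p}$. Since the dissipation is $-2\alpha p\|u\|^{2p}$, the term $\alpha p\int_0^s\|u_r\|^{2p}\dd r$ can be kept on the left, and the remainder of the drift is bounded by $C_p s$. Take the supremum over $s\le t$ and apply the Burkholder--Davis--Gundy inequality to the martingale, whose quadratic variation is at most $C\int_0^t\|u\|^{4p-2}\dd r$. This gives
\[
\mathbb E\sup_{s\le t}|M_s|\le C\,\mathbb E\Big(\int_0^t\|u\|^{4p-2}\Big)^{1/2}\le \tfrac12\mathbb E\sup_{s\le t}\|u_s\|^{2p}+\tfrac{\alpha p}{2}\,\mathbb E\int_0^t\|u\|^{2p}\dd r+C_pt,
\]
where the last step uses $\|u\|^{4p-2}\le \sup\|u\|^{2p}\cdot\|u\|^{2p-2}$ and Young's inequality, with $\|u\|^{2p-2}\le \epsilon\|u\|^{2p}+C$. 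Absorbing the first two terms into the left-hand side produces the factor $2\|u_0\|^{2p}$. \textbf{This absorption is the main delicate step}: the supremum has to be finite a priori, which is why the localization is needed before absorbing.

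\emph{Part (iii).} Write
\[
\mathbb E[\|u_\tau\|^{2p};\tau<\infty]=\sum_{n\ge0}\mathbb E\big[\|u_\tau\|^{2p};\tau\in[n,n+1)\big]\le\sum_n \mathbb E\Big[\sup_{s\le n+1}\|u_s\|^{4p}\Big]^{1/2}\mathbb P(\tau\in[n,n+1))^{1/2}.
\]
By (ii) applied with $2p$, each supremum moment is at most $C_p(1+\|u_0\|^{4p})(n+2)$, so its square root is $C_p(1+\|u_0\|^{2p})(n+2)^{1/2}$. Then apply Cauchy--Schwarz over $n$ with weights $(n+2)^{\pm1}$:
\[
\sum_n (n+2)^{1/2}\mathbb P_n^{1/2}\le\Big(\sum_n (n+2)^{-2}\Big)^{1/2}\Big(\sum_n (n+2)^3\mathbb P_n\Big)^{1/2},
\]
where $\mathbb P_n:=\mathbb P(\tau\in[n,n+1))$. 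This bounds the sum by $C\,\mathbb E[(2+\tau)^3;\tau<\infty]^{1/2}$, which carries an exponent $3$ rather than the $2$ appearing in (iii).

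To reach the exponent $2$, I would instead avoid the supremum. Since $\tau$ is a stopping time, optional stopping in the It\^o formula for $e^{\alpha p s}\|u_s\|^{2p}$ gives a bound linear in $\tau$. Concretely, (i) implies that $\|u_t\|^{2p}-C_p t$ is dominated by a supermartingale-type quantity, so optional stopping yields
\[
\mathbb E[\|u_{\tau\wedge n}\|^{2p}]\le \|u_0\|^{2p}+C_p\,\mathbb E[\tau\wedge n],
\]
and Fatou's lemma then gives the bound with $\mathbb E[1+\tau]\le \mathbb E[(1+\tau)^2]^{1/2}$. Restricting to $\{\tau<\infty\}$ is handled by the same computation with $\tau\wedge n$, noting that the left side only increases when $\tau=\infty$ is excluded. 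This route is simpler than the dyadic decomposition, and it is the one I would use.
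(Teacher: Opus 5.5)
Parts (i) and (ii) are the standard computation the paper has in mind, and they are fine up to bookkeeping. In (i), multiply by $e^{\gamma' s}$ with $\gamma'=\gamma\wedge\frac{\alpha p}{2}$ rather than by $e^{\gamma s}$, which breaks down for large $\gamma$. Then use $e^{-\gamma(t-s)}\le e^{-\gamma'(t-s)}$. In (ii), take the Young constants in the BDG step small enough that the absorbed fraction is at most $\frac12$.

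\textbf{The gap is in (iii).} The route you finally adopt, optional stopping, gives
\[
\mathbb{E}\|u_{\tau\wedge n}\|^{2p}\le\|u_0\|^{2p}+C_p\,\mathbb{E}[\tau\wedge n],
\qquad
\mathbb{E}[\tau\wedge n]=\mathbb{E}[\tau\wedge n;\tau<\infty]+n\,\mathbb{P}(\tau=\infty).
\]
This bound is useful only if $\tau<\infty$ almost surely, and in that case your argument is correct. The hypothesis $\mathbb{E}[\tau^2;\tau<\infty]<\infty$ deliberately allows $\mathbb{P}(\tau=\infty)>0$. This is exactly how the lemma is used in Step~2 of the proof of the Main Theorem, at $(\sigma_l+1)T$, where $\sigma_l=\infty$ with positive probability.

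Excluding $\{\tau=\infty\}$ on the left does nothing to the term $n\,\mathbb{P}(\tau=\infty)$ on the right. The event $\{\tau<\infty\}$ is not determined at time $\tau\wedge n$, so on $\{\tau>n\}$ you must stop at $n$ and pay the accumulated drift $C_pn$. Restricting instead to the event $\{\tau\le n\}$, which is determined at time $\tau\wedge n$, destroys the vanishing of the martingale term. The same obstruction arises for the weight $e^{\alpha p s}$, or for any time weight whose integral over $[0,\infty)$ diverges.

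The missing idea is an integrable time weight; this is what the paper uses, via the argument of Lemma~\ref{02231236}(iii).
\begin{itemize}
\item Apply It\^o's formula to $(1+t)^{-2}\|u_t\|^{2p}$. Its drift is at most $C_p(1+t)^{-2}$, so $\mathbb{E}[(1+\tau\wedge n)^{-2}\|u_{\tau\wedge n}\|^{2p}]\le\|u_0\|^{2p}+C_p$ uniformly in $n$.
\item Fatou's lemma on $\{\tau<\infty\}$ then gives $\mathbb{E}[(1+\tau)^{-2}\|u_\tau\|^{2p};\tau<\infty]\le\|u_0\|^{2p}+C_p$.
\item Replace $p$ by $2p$ and apply Cauchy--Schwarz:
\[
\mathbb{E}[\|u_\tau\|^{2p};\tau<\infty]\le\mathbb{E}\big[(1+\tau)^{-2}\|u_\tau\|^{4p};\tau<\infty\big]^{\frac12}\,\mathbb{E}\big[(1+\tau)^2;\tau<\infty\big]^{\frac12}.
\]
This yields (iii).
\end{itemize}

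Alternatively, the decomposition route you discarded already handles $\{\tau=\infty\}$ correctly and needs only one change. Instead of using $\sup_{s\le n+1}$, bound $\mathbb{E}\sup_{s\in[n,n+1]}\|u_s\|^{4p}\le C_p(1+\|u_0\|^{4p})$ uniformly in $n$, using the Markov property at time $n$ followed by (ii) and (i). With $\mathbb{P}_n:=\mathbb{P}(\tau\in[n,n+1))$, the sum is then at most
\[
C_p(1+\|u_0\|^{2p})\sum_n\mathbb{P}_n^{1/2}\le C_p(1+\|u_0\|^{2p})\Big(\sum_n(n+1)^{-2}\Big)^{1/2}\,\mathbb{E}\big[(1+\tau)^2;\tau<\infty\big]^{1/2},
\]
which has exactly the exponent $2$.
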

Estimates (i) and (ii) are standard. Estimate (iii) follows by the argument used to prove Lemma~\ref{02231236}(iii) below, which is carried out there in a more involved setting; we therefore omit the details here to avoid repetition.

For any $p\ge 1$, we define
 \begin{align*}
     \mathcal{E}^u_{0,p}(t):=\|u_t\|^{2p}+\alpha p\int_0^t\|u_s\|^{2p}\dd s.
 \end{align*} 
\begin{lemma}\label{03050812-1}
 For any $p\ge1$, there exists $K_{0,p}>0$ such that, for any $q\ge1$ and $\rho\geq 1$, we have
\begin{align*}
 \mathbb{P}\Big\{\sup\limits_{t\geq 0}\big\{ \mathcal{E}_{0,p}^u(t)-K_{0,p}t-\|u_0\|^{2p}\big\}\geq\rho\Big\}\leq  C_{p,q}(1+\|u_0\|^{(2p-1)q})\rho^{-\frac{q}{2}+1}.
\end{align*}
\end{lemma}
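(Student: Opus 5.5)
Apply It\^o's formula to $\|u_t\|^{2p}$, producing a dissipative drift, a bounded correction, and a martingale $M_t$ whose tail is handled by a Burkholder--Davis--Gundy plus a dyadic-in-time argument.

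\textbf{The It\^o identity.} Since $\langle i\partial_x^2u,u\rangle=0$ and $\langle i|u|^2u,u\rangle=0$, we get
\[
\dd\|u\|^2=\big(-2\alpha\|u\|^2+2\mathcal{B}_0\big)\dd t+2\sum_j b_j\langle u,e_j\rangle\dd\beta^1_j+2\sum_j b_j\langle u,ie_j\rangle\dd\beta^2_j,
\]
where $\mathcal{B}_0=\sum_j b_j^2$ is finite by \eqref{03300242}. Composing with $x\mapsto x^p$ gives
\[
\dd\|u\|^{2p}=p\|u\|^{2p-2}\big(-2\alpha\|u\|^2+2\mathcal{B}_0\big)\dd t+C_p\|u\|^{2p-2}\dd t+\dd M_t,
\]
where the middle term bounds the It\^o correction by $C_p\mathcal{B}_0\|u\|^{2p-2}$. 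Young's inequality gives $C\|u\|^{2p-2}\le \alpha p\|u\|^{2p}+C'_p$. Hence
\[
\mathcal{E}^u_{0,p}(t)\le \|u_0\|^{2p}+K_{0,p}t-\tfrac{\alpha p}{2}\int_0^t\|u_s\|^{2p}\dd s+M_t,
\]
with $K_{0,p}:=C'_p$ and quadratic variation $\dd\langle M\rangle_t\le C_p\|u_t\|^{4p-2}\dd t$.

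\textbf{Reduction to a martingale tail.} It suffices to show
\[
\mathbb{P}\Big(\sup_t\big(M_t-\tfrac{\alpha p}{2}\int_0^t\|u_s\|^{2p}\dd s\big)\ge\rho\Big)\le C(1+\|u_0\|^{(2p-1)q})\rho^{-q/2+1}.
\]
Split time into blocks $[n,n+1]$ and use the slack $K t$: if needed, enlarge $K_{0,p}$ so that the drift also carries $-t$ ($K_{0,p}$ is ours to choose). On each block the event requires $\sup_{[0,n+1]}M\ge\rho+n$. By Chebyshev and BDG,
\[
\mathbb{P}\Big(\sup_{s\le n+1}|M_s|\ge \rho+n\Big)\le (\rho+n)^{-q}\,C_q\,\mathbb{E}\Big(\int_0^{n+1}\|u_s\|^{4p-2}\dd s\Big)^{q/2}.
\]
By H\"older, $\mathbb{E}\big(\int_0^{n+1}\|u\|^{4p-2}\big)^{q/2}\le (n+1)^{q/2-1}\int_0^{n+1}\mathbb{E}\|u_s\|^{(2p-1)q}\dd s$. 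Lemma~\ref{02280441}(i) bounds this by $C(n+1)^{q/2}(1+\|u_0\|^{(2p-1)q})$. Summing over $n$,
\[
\sum_n (\rho+n)^{-q}(n+1)^{q/2}\le C\rho^{-q/2+1},
\]
which is valid for $q>4$; smaller $q$ follow trivially or by monotonicity in $q$ after adjusting constants. This gives the claim.

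\textbf{Main obstacle.} The delicate point is matching the exponent $(2p-1)q$ and the rate $\rho^{-q/2+1}$. The summation over blocks must lose only one power of $\rho$, which requires $(\rho+n)^{-q}(n+1)^{q/2}\lesssim(\rho+n)^{-q/2}$. This works using $n+1\le\rho+n$ for $\rho\ge1$, after which $\sum_n(\rho+n)^{-q/2}\le C\rho^{-q/2+1}$ for $q>2$. The case $q\le2$ is trivial since the right side is then at least a constant.
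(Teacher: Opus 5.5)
Your proposal is correct and follows the paper's proof essentially step by step. Both arguments apply It\^o's formula to $\|u_t\|^{2p}$, use Young's inequality with $K_{0,p}$ enlarged to leave a $-t$ slack, split time into unit blocks and apply Chebyshev, BDG and H\"older, bound moments via Lemma~\ref{02280441}, and sum the resulting series; your summation via $n+1\le\rho+n$ is slightly cleaner than the paper's split at $n\approx\rho$ and already works for all $q>2$, so the ``$q>4$'' remark is unnecessary. The one point you gloss over is that for $p\in[1,2)$ the map $x\mapsto x^p$ is not $C^2$ at $0$, so It\^o's formula should be applied to $(\varepsilon+\|u_t\|^2)^p$ with $\varepsilon\to0$, as the paper does by reference.
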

\begin{proof}
 In the cases when $q\in [1,2]$, since $(1+\|u_0\|^{(2p-1)q})\rho^{-\frac{q}{2}+1}\geq 1$, we obtain the required inequality by setting $C_{p,q}=1$ for any $p\geq 1$ and $q\in[1,2]$. Thus it suffices to consider the case $q>2$.\par
\noindent\textit{Case 1: $p\geq 2$}. 
Applying It\^o's formula to $\|u_t\|^{2p}$ and using Young's inequality, we obtain that
\begin{align}
\|u_t\|^{2p}
&+2\alpha p\int_0^t\|u_s\|^{2p}\dd s \notag\\
&= \|u_0\|^{2p}
+2p\int_0^t\|u_s\|^{2p-2}\langle u_s,b\dd W_s\rangle \notag+2p\sum_{j=1}^\infty b_j^2\|e_j\|^2\int_0^t\|u_s\|^{2p-2}\dd s \notag\\
&\quad+2p(p-1)\sum_{j=1}^\infty b_j^2
\int_0^t\|u_s\|^{2p-4}(\langle u_s, e_j\rangle^2+\langle u_s, ie_j\rangle^2)\dd s \notag\\
&\leq   \|u_0\|^{2p}
+2p\int_0^t\|u_s\|^{2p-2}\langle u_s,b\dd W_s\rangle
+\alpha p\int_0^t\|u_s\|^{2p}\dd s
+C_pt.\notag
\end{align}
By taking $K_{0,p}:=C_p+1$, we have
\begin{align}\label{03040403}
    \mathcal{E}_{0,p}^u(t)-K_{0,p}t-\|u_0\|^{2p}\leq 2p\int_0^t\|u_s\|^{2p-2}\langle u_s, b\dd W_s\rangle-t.
\end{align}
Thus, by Lemma~\ref{02280441}(ii),
\begin{align}\label{03070952}
    \nonumber\mathbb{P}\Big\{\sup\limits_{t\geq 0}\big\{\mathcal{E}_{0,p}^u(t)&-K_{0,p}t-\|u_0\|^{2p}\big\}\geq\rho\Big\}\\\nonumber\leq &\sum_{m=0}^\infty\mathbb{P}\Big\{2p\sup_{t\in[m,m+1]}\big|\int_0^t\|u_s\|^{2p-2}\langle u_s, b\dd W_s\rangle\big|\geq\rho+ m\Big\}\\\leq \nonumber&\sum_{m=0}^\infty\frac{C_{p,q}}{(\rho+ m)^q}\mathbb{E}\big[\sup_{t\in[0,m+1]}\big|\int_0^{t}\|u_s\|^{2p-2}\langle u_s, b\dd W_s\rangle\big|^q\big]\\\leq \nonumber&\sum_{m=0}^\infty\frac{C_{p,q}}{(\rho+ m)^q}\mathbb{E}\big[\big|\int_0^{m+1}\|u_s\|^{4p-2}\dd s \big|^\frac{q}{2}\big]\\\leq \nonumber&\sum_{m=0}^\infty\frac{C_{p,q}}{(\rho+ m)^q}(m+1)^{\frac{q}{2}-1}\mathbb{E}\big[\int_0^{m+1}\|u_s\|^{2pq-q}\dd s \big]\\\leq&\sum_{m=0}^\infty\frac{C_{p,q}}{(\rho+ m)^q}\big[(m+1)^{\frac{q}{2}}+\|u_0\|^{(2p-1)q}(m+1)^{\frac{q}{2}-1}\big].
\end{align}
To estimate the last series, note that since $\rho\geq 1$ and $q>2$, we have
\begin{align}\label{08010427}
    \nonumber\sum_{m=0}^\infty&\frac{C_{p,q}}{(\rho+ m)^q}\big[(m+1)^{\frac{q}{2}}+\|u_0\|^{(2p-1)q}(m+1)^{\frac{q}{2}-1}\big]\\&\leq \nonumber  \big(\sum_{m=0}^{\lfloor \rho\rfloor}+\sum_{m=\lceil\rho\rceil}^{\infty}\big)\frac{C_{p,q}}{(\rho+ m)^q}\big[(m+1)^{\frac{q}{2}}+\|u_0\|^{(2p-1)q}(m+1)^{\frac{q}{2}-1}\big]\\&\leq \nonumber C_{p,q}{\rho^{-q}}\big\{\rho(\rho+1)^{\frac{q}{2}}+\|u_0\|^{(2p-1)q}\rho({\rho}+1)^{\frac{q}{2}-1}\big\}\\&\quad\nonumber+C_{p,q}\sum_{m=\lceil{\rho}\rceil}^{\infty}\frac{1}{m^q}\big[(m+1)^{\frac{q}{2}}+\|u_0\|^{(2p-1)q}(m+1)^{\frac{q}{2}-1}\big]\\&\leq \nonumber   C_{p,q} {\rho^{-\frac{q}{2}+1}}(1+\|u_0\|^{(2p-1)q})+C_{p,q}\sum_{m=\lceil{\rho}\rceil}^{\infty}\frac{1}{m^{\frac{q}{2}}}(1+\|u_0\|^{(2p-1)q})\\&\leq C_{p,q}(1+\|u_0\|^{(2p-1)q})\rho^{-\frac{q}{2}+1}.
\end{align}
\noindent\textit{Case 2: $p\in[1,2)$}. It suffices to establish
\eqref{03040403} for $p\in[1,2)$; the estimates \eqref{03070952} and
\eqref{08010427} then apply without change. Since $x\mapsto x^{p}$ is not
twice differentiable at the origin for $p<2$, we apply It\^o's formula to
$(\varepsilon+\|u_t\|^2)^{p}$ and pass to the limit as $\varepsilon\to0$.
This argument is standard, and we refer the reader to Lemma~2.2
of~\cite{BDHPS03} for the details.
\end{proof}
We apply the previous lemma to derive bounds for the stopping time
\begin{align}\label{03190353-1}
\tau_{0,p}^u:=\inf\{t\geq0:\mathcal{E}_{0,p}^u(t)\geq \rho+Kt+\mathscr{C}\|u_0\|^{2p}\},  
\end{align}with the usual convention $\inf\!\emptyset=\infty$.

\begin{corollary}\label{03090533} For any $p,q\geq 1$, $\rho\geq 1$, $K\geq K_{0,p}+1$,  $\mathscr{C}\geq 1$, and $l\geq 0$, we have
\begin{align*}
    \mathbb{P}(l\leq \tau_{0,p}^u<\infty)\leq  C_{p,q}\big(\rho+l\big)^{-\frac{q}{2}+1}(1+\|u_0\|^{(2p-1)q}).
\end{align*}
\end{corollary}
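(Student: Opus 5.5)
The plan is to reduce the statement to Lemma~\ref{03050812-1}. First we check that the condition $\mathcal{E}_{0,p}^u(t)\geq \rho+Kt+\mathscr{C}\|u_0\|^{2p}$ forces a large excursion of the quantity $\mathcal{E}_{0,p}^u(t)-K_{0,p}t-\|u_0\|^{2p}$, one that grows with time. On the event $\{l\leq\tau_{0,p}^u<\infty\}$, continuity of $t\mapsto\mathcal{E}^u_{0,p}(t)$ gives, at $t=\tau:=\tau_{0,p}^u$,
\[
\mathcal{E}_{0,p}^u(\tau)-K_{0,p}\tau-\|u_0\|^{2p}\geq \rho+(K-K_{0,p})\tau+(\mathscr{C}-1)\|u_0\|^{2p}\geq \rho+\tau\geq\rho+l,
\]
where we used $K\geq K_{0,p}+1$, $\mathscr{C}\geq1$, and $\tau\geq l$. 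Hence the event is contained in
\[
\Big\{\sup_{t\ge0}\big\{\mathcal{E}_{0,p}^u(t)-K_{0,p}t-\|u_0\|^{2p}\big\}\geq \rho+l\Big\}.
\]

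Next we apply Lemma~\ref{03050812-1} with $\rho$ replaced by $\rho+l\geq1$. This gives
\[
\mathbb{P}(l\leq\tau_{0,p}^u<\infty)\leq C_{p,q}(1+\|u_0\|^{(2p-1)q})(\rho+l)^{-\frac q2+1},
\]
which is exactly the claim.

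There is no real obstacle. The only points to check are two. First, the hitting value is attained, which holds by continuity of the process since $u\in C(\mathbb R_+;L^2)$; alternatively one can use times arbitrarily close to $\tau$ together with the lower semicontinuity of the supremum. Second, the constant $C_{p,q}$ from Lemma~\ref{03050812-1} is uniform in $\rho\geq1$, which is how that lemma is stated.
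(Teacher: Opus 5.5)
Your proposal is correct and matches the paper's proof: on $\{l\le\tau_{0,p}^u<\infty\}$ the hitting identity at $\tau_{0,p}^u$, together with $K\ge K_{0,p}+1$ and $\mathscr{C}\ge1$, bounds $\rho+l$ by $\sup_{t\ge0}\{\mathcal{E}_{0,p}^u(t)-K_{0,p}t-\|u_0\|^{2p}\}$. Lemma~\ref{03050812-1} is then applied at level $\rho+l\ge1$, exactly as in the paper.
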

\begin{proof}
Notice that on the event $\{\tau_{0,p}^u<\infty\}$, we have
\begin{align*}
    \mathcal{E}_{0,p}^u(\tau_{0,p}^u)=\rho+K\tau_{0,p}^u+\mathscr{C} \|u_0\|^{2p}.
\end{align*}
Since $K\geq K_{0,p}+ 1$, on the set $\{l\leq\tau_{0,p}^u<\infty\}$, we have
\begin{align*}
   \rho+l\leq&\hspace{1mm}\rho+(K-K_{0,p})\tau_{0,p}^u+(\mathscr{C}-1)\|u_0\|^{2p}\\ =&\hspace{1mm}\mathcal{E}_{0,p}^u(\tau_{0,p}^u)-K_{0,p}\tau_{0,p}^u-\|u_0\|^{2p}\\\leq& \hspace{1mm}\sup\limits_{t\geq 0}\big\{\mathcal{E}_{0,p}^u(t)-K_{0,p}t-\|u_0\|^{2p}\big\}.
\end{align*}
Noting that $\rho\geq 1$, we deduce from Lemma \ref{03050812-1} that,
\begin{align*}
    \mathbb{P}(l\leq \tau_{0,p}^u<\infty)\leq&\ \mathbb{P}\Big\{ \sup\limits_{t\geq 0}\big\{\mathcal{E}_{0,p}^u(t)-K_{0,p}t-\|u_0\|^{2p}\big\}\geq \rho+l\Big\}
    \\\leq&\ C_{p,q}\big(\rho+l\big)^{-\frac{q}{2}+1}(1+\|u_0\|^{(2p-1)q}).
\end{align*}
\end{proof}

Recall the functional $\mathcal{H}$ defined in \eqref{03090247-1}. The
following estimates are the analogues of Lemma~\ref{02280441} for
$\mathcal{H}$, and are proved in the same way.
\begin{lemma}\label{02280436}
For any $p\geq 1$, $t\geq0$, $\gamma>0$, and any stopping time $\tau$ such that $\mathbb{E}\big[\tau^2;\tau<\infty\big]<\infty$, we have 
\begin{align*}
        \noindent &{\mathrm{(i)}}\quad
        \mathbb{E}\big[\mathcal{H}^p(u_t)+\frac{p\alpha}{2}\int_0^te^{-\gamma(t-s)}\mathcal{H}^p(u_s)\dd s\big]\leq e^{-(\gamma\wedge\frac{\alpha p}{2})t}\mathcal{H}^p(u_0)+C_p,\\
    &{\mathrm{(ii)}}\quad \mathbb{E}\big[\sup_{s\in[0,t]}\big\{\mathcal{H}^p(u_s)+{\alpha p}\int_0^s\mathcal{H}^p(u_r)\dd r\big\}\big]\leq 2\mathcal{H}^p(u_0)+C_pt,\\&
           {\mathrm{(iii)}}\quad
           \mathbb{E}\big[\mathcal{H}^p(u_\tau);\tau<\infty\big]\leq C_p\big(1+\mathcal{H}^p(u_0)\big)\mathbb{E}\big[(1+\tau)^2;\tau<\infty\big]^{\frac{1}{2}}.
\end{align*}
\end{lemma}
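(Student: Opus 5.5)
We plan to prove the three estimates of Lemma~\ref{02280436} by the same route as Lemma~\ref{02280441}, applying It\^o's formula to $\mathcal{H}(u_t)$ and then to $\mathcal{H}^p(u_t)$.

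\emph{Step 1: It\^o's formula for $\mathcal{H}(u_t)$.} The derivative of $\mathcal{H}$ is $\mathcal{H}'(u)=-\partial_x^2u+|u|^2u$. Pairing it with the conservative part $i\partial_x^2u-i|u|^2u$ of the drift gives zero, since $\langle \xi,i\xi\rangle=0$. The damping term contributes
\[
-\alpha\langle \mathcal{H}'(u),u\rangle=-\alpha\|\partial_xu\|^2-\alpha\|u\|_{L^4}^4\le -2\alpha\mathcal{H}(u).
\]
The It\^o correction is
\[
\tfrac12\sum_j b_j^2\big(\mathcal{H}''(u)[e_j,e_j]+\mathcal{H}''(u)[ie_j,ie_j]\big)\le C\sum_j b_j^2\big(\|\partial_xe_j\|^2+\|e_j\|_{L^\infty}^2\|u\|^2\big),
\]
which by \eqref{03300242b} is bounded by $C(1+\|u\|^2)$. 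Hence
\[
\dd\mathcal{H}(u)\le\big(-2\alpha\mathcal{H}(u)+C(1+\|u\|^2)\big)\dd t+\langle \mathcal{H}'(u),b\,\dd W\rangle.
\]

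\emph{Step 2: powers of $\mathcal{H}$.} Next we apply It\^o to $\mathcal{H}^p$ for $p\ge1$. For $p<2$ we regularise with $(\varepsilon+\mathcal{H})^p$, as in Case~2 of Lemma~\ref{03050812-1}. The drift is bounded by
\[
-2\alpha p\mathcal{H}^p+Cp\mathcal{H}^{p-1}(1+\|u\|^2)+Cp(p-1)\mathcal{H}^{p-2}\sum_j b_j^2\langle\mathcal{H}'(u),e_j\rangle^2.
\]
For the last term we use
\[
\langle\mathcal{H}'(u),e_j\rangle^2\le C\|e_j\|_{H^2}^2\big(\|u\|^2+\|u\|^6_{L^6}\big)\le C\|e_j\|_{H^2}^2\big(1+\mathcal{H}(u)^2+\|u\|^{c}\big),
\]
by Gagliardo--Nirenberg. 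Young's inequality then gives
\[
\text{drift}\le-\tfrac{3\alpha p}{2}\mathcal{H}^p+C_p(1+\|u\|^{m_p})
\]
for some exponent $m_p$.

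\emph{Step 3: the three estimates.}
\begin{itemize}
\item[(i)] Multiply by $e^{\gamma t}$, take expectations, and absorb the polynomial mass moments using Lemma~\ref{02280441}(i). The initial mass enters with a factor $e^{-\gamma' s}$, and this decay must be compared with $\mathcal{H}^p(u_0)$. This comparison, together with whether the constant $C_p$ is allowed to depend on $\|u_0\|$, is the point requiring care. We expect the intended statement to hide this $\|u_0\|$ dependence, or to absorb it through the later use with $d_0\le1$. If not, we would state (i) with $C_p(1+\|u_0\|^{m_p})$.
\item[(ii)] Take the supremum in the integrated inequality and control the martingale term by Burkholder--Davis--Gundy. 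Its quadratic variation is bounded by $C\int_0^t\mathcal{H}^{2p-2}(1+\mathcal{H}^2+\|u\|^{c})\,\dd s$, and Young's inequality reduces it to $\tfrac12\mathbb{E}\sup_{s\le t}\mathcal{H}^p(u_s)$ plus $C_pt$.
\item[(iii)] Integrate the inequality up to $\tau\wedge n$ and bound the martingale at the stopping time via Burkholder--Davis--Gundy. Then use Cauchy--Schwarz with the factor $\mathbb{E}[(1+\tau)^2]^{1/2}$ and the moment bounds from (i), exactly as the text indicates for Lemma~\ref{02231236}(iii), and let $n\to\infty$ with Fatou's lemma.
\end{itemize}

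The main obstacle is the exponent bookkeeping in Step~2: the quartic nonlinearity makes the It\^o correction and the martingale grow like $\mathcal{H}^{p}$ times a mass power. Closing the estimate therefore needs the higher mass moments of Lemma~\ref{02280441}, not $\mathcal{H}$ alone.
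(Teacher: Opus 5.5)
Your proposal does not prove the lemma as stated. The obstacle you identify comes from your estimates, not from the problem.

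The structure of Step~1 is right: the Hamiltonian part cancels and the damping gives $-2\alpha\mathcal{H}$. The trouble is how you bound the noise terms.
\begin{itemize}
\item You bound the It\^o correction $2\sum_j b_j^2\int_{\mathbb{R}}|u|^2e_j^2\,\dd x$ by $\|e_j\|_{L^\infty}^2\|u\|^2$.
\item In Step~2 you move both derivatives of $-\partial_x^2u$ onto $e_j$ and put $|u|^2u$ in $L^6$, which brings in the mass through Gagliardo--Nirenberg.
\end{itemize}
Carried through, this gives (i) only with an extra $C\|u_0\|^{m_p}$, as you concede. The same defect reaches (ii): the BDG step leaves $\mathbb{E}\int_0^t\|u_s\|^{c}\,\dd s$, which contributes a power of $\|u_0\|$. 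So your claimed reduction to $\frac12\mathbb{E}\sup_{s\le t}\mathcal{H}^p(u_s)+C_pt$ does not follow from your own bounds.

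The missing observation is that $\mathcal{H}$ itself controls $\|u\|_{L^4}^4\le 4\mathcal{H}(u)$, so every noise term closes in $\mathcal{H}$ alone:
\begin{gather*}
\int_{\mathbb{R}}|u|^2e_j^2\,\dd x\le\|u\|_{L^4}^2\|e_j\|_{L^4}^2\le 2\mathcal{H}^{1/2}(u)\|e_j\|_{L^4}^2,\qquad
\langle\partial_xu,\partial_xe_j\rangle^2\le 2\mathcal{H}(u)\|\partial_xe_j\|^2,\\
\langle|u|^2u,e_j\rangle^2\le\|u\|_{L^4}^6\|e_j\|_{L^4}^2\le 8\mathcal{H}^{3/2}(u)\|e_j\|_{L^4}^2.
\end{gather*}
The same bounds hold for $ie_j$, and $\sum_jb_j^2\|e_j\|_{L^4}^2\le C\mathcal{B}_4$. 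Hence:
\begin{itemize}
\item the drift of $\mathcal{H}^p(u_t)$ is at most $-2\alpha p\mathcal{H}^p+C_p(\mathcal{H}^{p-1}+\mathcal{H}^{p-1/2})\le-\alpha p\mathcal{H}^p+C_p$;
\item the quadratic-variation density of its martingale part is at most $C_p(\mathcal{H}^{2p-1}+\mathcal{H}^{2p-1/2})$.
\end{itemize}
This is exactly the computation at the start of the proof of Lemma~\ref{03050812}. It is what the paper means by proving the lemma ``in the same way'' as Lemma~\ref{02280441}.

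With these two bounds, (i)--(iii) follow with no dependence on $\|u_0\|$ and without Lemma~\ref{02280441}.
\begin{itemize}
\item For (i), multiply by $e^{\gamma't}$ with $\gamma'=\gamma\wedge\frac{\alpha p}{2}$.
\item For (ii), BDG and Young give $\frac14\mathbb{E}\sup_{s\le t}\mathcal{H}^p(u_s)+C\,\mathbb{E}\int_0^t(\mathcal{H}^{p-1}+\mathcal{H}^{p-1/2})\,\dd s$. The last term is at most $\frac{\alpha p}{4}\mathbb{E}\int_0^t\mathcal{H}^p\,\dd s+C_pt$, which the supremum on the left absorbs.
\item For (iii), follow Lemma~\ref{02231236}(iii) and apply It\^o's formula to $(1+t)^{-2}\mathcal{H}^{2p}(u_t)$. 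The drift bound gives $\dd\big[(1+t)^{-2}\mathcal{H}^{2p}(u_t)\big]\le C_p(1+t)^{-2}\,\dd t+(1+t)^{-2}\,\dd M_t$. The stochastic integral stopped at $\tau\wedge n$ has zero mean, so no BDG is needed there. Fatou's lemma gives $\mathbb{E}[(1+\tau)^{-2}\mathcal{H}^{2p}(u_\tau);\tau<\infty]\le C_p(1+\mathcal{H}^{2p}(u_0))$, and Cauchy--Schwarz against $(1+\tau)$ yields (iii).
\end{itemize}
So the statement does not hide a mass dependence. The only implicit dependence is on $\gamma$ in (i): the constant is of order $\gamma^{-1}$ as $\gamma\to0$, just like $C_{p,\gamma}$ in Lemma~\ref{02280441}(i).
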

Next, we define 
$$\mathcal{E}_{1,p}^u(t):=\mathcal{H}^{p}(u_t)+{\alpha p}\int_0^t\mathcal{H}^{p}(u_s)\dd s.
$$ 
\begin{lemma}\label{03050812} For any $p\geq 1$ there exists $K_{1,p}>0$ such that, for any $q\geq 1$ and $\rho\geq 1$, we have
\begin{align*}
\mathbb{P}\Big\{\sup_{t\geq0}\{\mathcal{E}_{1,p}^u(t) & -K_{1,p}t-\mathcal{H}^{p}(u_0)\}\geq\rho\Big\}\\\leq&\  C_{p,q}\big(1+\mathcal{H}^{q(p-\frac{1}{4})}(u_0)\big)\rho^{-\frac{q}{2}+1}.
\end{align*}
\end{lemma}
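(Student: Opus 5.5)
The argument copies Lemma~\ref{03050812-1}: write down the It\^o formula for $\mathcal{H}^p(u_t)$, show that the drift is at most $-\alpha p\mathcal{H}^p+C_p$, and then control the martingale part with Burkholder--Davis--Gundy inequalities on unit time windows. As there, when $q\in[1,2]$ the claim is trivial with $C_{p,q}=1$, so assume $q>2$.

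\emph{Step 1: It\^o formula for $\mathcal{H}(u)$.} Apply It\^o's formula to $\mathcal{H}(u_t)$. The Hamiltonian part $i\partial_x^2u-i|u|^2u$ leaves $\mathcal{H}$ invariant and so drops out. Differentiating $\mathcal{H}$ along the damping term $-\alpha u$ gives $-\alpha\|\partial_xu\|^2-\alpha\|u\|_{L^4}^4\le-2\alpha\mathcal{H}(u)$. The second-order (Itô correction) terms are
\[
\tfrac12\sum_j b_j^2\big(2\|\partial_xe_j\|^2+\langle \text{const}\cdot|u|^2,e_j^2\rangle\big),
\]
and these are bounded by $C(1+\|u\|^2)\le C+C\mathcal{H}^{1/2}(u)$. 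This uses $\|u\|^2\le C\|u\|_{L^4}^2\cdot(\dots)$? Not quite: since $\mathcal{H}$ does not control $\|u\|^2$, instead bound $\langle|u|^2,e_j^2\rangle\le\|u\|_{L^4}^2\|e_j\|_{L^4}^2\le C\mathcal{H}^{1/2}(u)\|e_j\|_{H^1}^2$. The martingale increment is $\langle -\partial_x^2u+|u|^2u,b\,\dd W\rangle$, and its quadratic variation density is bounded by
\[
C\sum_j b_j^2\big(\|\partial_xu\|^2\|\partial_xe_j\|^2+\|u\|_{L^4}^6\|e_j\|_{L^4}^2\big)\le C\big(\mathcal{H}(u)+\mathcal{H}^{3/2}(u)\big),
\]
after integrating by parts in the first term. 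With these bounds, $\dd\mathcal{H}\le(-2\alpha\mathcal{H}+C+C\mathcal{H}^{1/2})\dd t+\dd M$.

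\emph{Step 2: powers.} Apply It\^o's formula to $\mathcal{H}^p$ (for $p<2$, regularise with $(\varepsilon+\mathcal{H})^p$ as in Case~2 of Lemma~\ref{03050812-1}). The drift becomes
\[
p\mathcal{H}^{p-1}\big(-2\alpha\mathcal{H}+C+C\mathcal{H}^{1/2}\big)+\tfrac{p(p-1)}{2}\mathcal{H}^{p-2}\cdot C\big(\mathcal{H}+\mathcal{H}^{3/2}\big).
\]
Every term except the first has homogeneity strictly below $p$ in $\mathcal{H}$, so Young's inequality absorbs them into $\alpha p\mathcal{H}^p+C_p$. This yields, with $K_{1,p}:=C_p+1$,
\[
\mathcal{E}^u_{1,p}(t)-K_{1,p}t-\mathcal{H}^p(u_0)\le p\int_0^t\mathcal{H}^{p-1}(u_s)\,\dd M_s-t.
\]
The quadratic variation of this martingale has density at most $C\big(\mathcal{H}^{2p-1}+\mathcal{H}^{2p-1/2}\big)\le C\big(1+\mathcal{H}^{2p-1/2}\big)$.

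\emph{Step 3: tail bound.} Proceed as in \eqref{03070952}. Split $\sup_{t\ge0}$ into windows $[m,m+1]$ and apply Chebyshev's inequality at level $\rho+m$ together with the BDG inequality with exponent $q$. Then use H\"older in time, which gives $\big(\int_0^{m+1}\mathcal{H}^{2p-1/2}\big)^{q/2}\le(m+1)^{q/2-1}\int_0^{m+1}\mathcal{H}^{q(p-1/4)}$, and bound the last integral with Lemma~\ref{02280436}(ii) at power $q(p-\tfrac14)$. The result is
\[
\sum_m C_{p,q}(\rho+m)^{-q}\big[(m+1)^{q/2}+\mathcal{H}^{q(p-1/4)}(u_0)(m+1)^{q/2-1}\big],
\]
and the summation in \eqref{08010427} bounds this by $C_{p,q}\big(1+\mathcal{H}^{q(p-1/4)}(u_0)\big)\rho^{-q/2+1}$.

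The main obstacle is Step~1. Making It\^o's formula for $\mathcal{H}$ rigorous requires $H^2$-regularity or Galerkin approximation, which is available since $e_j\in H^4$ and $\mathcal{B}_4<\infty$. More delicate is verifying that no term carries the mass $\|u\|^2$ uncontrolled by $\mathcal{H}$: every such term must be handled through the $L^4$ norm, and the resulting quadratic-variation exponent $2p-\tfrac12$ is exactly what produces the stated power $q(p-\tfrac14)$.
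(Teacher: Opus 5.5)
Your proposal is correct and follows essentially the same route as the paper. It uses It\^o's formula for $\mathcal{H}^p(u_t)$, in which the Hamiltonian part drops out, and bounds the It\^o correction via $\langle |u|^2,e_j^2\rangle\le\|u\|_{L^4}^2\|e_j\|_{L^4}^2$. The lower powers are then absorbed by Young's inequality to get $\mathcal{E}^u_{1,p}(t)-K_{1,p}t-\mathcal{H}^p(u_0)\le M^{1,p}_t-t$. Finally, the Burkholder--Davis--Gundy inequality on unit windows, with the quadratic-variation bound $\mathcal{H}^{q(p-\frac12)}+\mathcal{H}^{q(p-\frac14)}$, Lemma~\ref{02280436}(ii), and the summation \eqref{08010427} close the argument.

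Delete the abandoned intermediate claim $C(1+\|u\|^2)\le C+C\mathcal{H}^{1/2}(u)$: it is false, since $\mathcal{H}$ does not control the mass. The $L^4$ bound you substitute for it is exactly the one the paper uses.
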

\begin{proof}For the same reason as mentioned at the beginning of the proof of Lemma \ref{03050812-1}, it suffices to consider the case $q>2$. The case $p\in[1,2)$ is similar to Case 2 of Lemma \ref{03050812-1}. Thus we only consider the case $p\geq 2$. Applying It\^o's formula to $\mathcal{H}^p(u_t)$, we obtain 
    \begin{align*}
        \mathcal{H}^{p}(u_t)&=\mathcal{H}^{p}(u_0)-p\alpha\int_0^t\mathcal{H}^{p-1}(u_s)\big({\|\partial_xu_s\|^2}+{\|u_s\|_{L^4}^4}\big)\dd s+  p\sum_{j=1}^\infty b_j^2\|\partial_x e_j\|^2\\&\quad\times\int_0^t\mathcal{H}^{p-1}(u_s)\dd s+2p\sum_{j=1}^\infty b_j^2\int_0^t\mathcal{H}^{p-1}(u_s)\hspace{-1mm}\int_\mathbb{R}|u_s|^{2}e_j^2\dd x\dd s\\&\quad+  \frac{p(p-1)}{2}\sum_{j=1}^\infty b_j^2\int_0^t\mathcal{H}^{p-2}(u_s)(\langle \partial_x u_s,\partial_x e_j\rangle^2+\langle \partial_x u_s,i\partial_x e_j\rangle^2)\dd s\\&\quad+  \frac{p(p-1)}{2}\sum_{j=1}^\infty b_j^2\int_0^t\mathcal{H}^{p-2}(u_s)(\langle |u_s|^2u_s,e_j\rangle^2+\langle |u_s|^2u_s,ie_j\rangle^2) \dd s\\&\quad +p(p-1)\sum_{j=1}^\infty b_j^2\int_0^t\mathcal{H}^{p-2}(u_s)\big(\langle \partial_xu_s,\partial_x e_j\rangle\langle |u_s|^2u_s,e_j\rangle\\&\quad +\langle \partial_xu_s,i\partial_x e_j\rangle\langle |u_s|^2u_s,ie_j\rangle\big)\dd s+  M^{1,p}_t,
    \end{align*}
    where
\begin{align}\label{06190550}
    \nonumber M^{1,p}_t:=&\hspace{0.5mm}p\sum_{j=1}^\infty b_j\mathrm{Re}\int_0^t\hspace{-0.5mm}\mathcal{H}^{p-1}(u_s)\hspace{-1mm}\int_{\mathbb{R}}\partial_x \overline{u}_s(x)\partial_x e_j(x) \dd x (\dd \beta^1_j(s)+i\dd \beta^2_j(s))\\&+p\sum_{j=1}^\infty b_j\mathrm{Re}\int_0^t\hspace{-0.5mm}\mathcal{H}^{p-1}(u_s)\hspace{-1mm}\int_{\mathbb{R}}|u_s|^2\overline{u}_s e_j \dd x(\dd \beta^1_j(s)+i\dd \beta^2_j(s)).
\end{align}
We then have
\begin{align*}
   \nonumber\mathcal{H}^p(u_t)&-\mathcal{H}^p(u_0)+2p\alpha\int_0^t\mathcal{H}^{p}(u_s)\dd s\\\leq& \nonumber\ M^{1,p}_t+\frac{p\alpha}{2}\int_0^t\mathcal{H}^p(u_s)\dd s+C_{p}(\sum_{j=1}^\infty b_j^2\|\partial_xe_j\|^{2})^pt\\&\nonumber+2p\sum_{j=1}^\infty b_j^2\int_0^t\mathcal{H}^{p-1}(u_s)\|u_s\|_{L^4}^2\|e_j\|_{L^4}^2\dd s\\&\nonumber+2p(p-1)\sum_{j=1}^\infty b_j^2\int_0^t\mathcal{H}^{p-2}(u_s)(\|\partial_xu_s\|^2\|\partial_xe_j\|^2+\|u_s\|_{L^4}^6\|e_j\|_{L^4}^2)\dd s \\\leq&\nonumber\ M^{1,p}_t+\frac{p\alpha}{2}\int_0^t\mathcal{H}^p(u_s)\dd s+C_{p}(\sum_{j=1}^\infty b_j^2\|\partial_xe_j\|^{2})^pt\\&\nonumber+{pC}\sum_{j=1}^\infty b_j^2\|e_j\|_{L^4}^2\int_0^t\mathcal{H}^{p-\frac{1}{2}}(u_s)\dd s\\&\nonumber+{p(p-1)}C\sum_{j=1}^\infty b_j^2\int_0^t\mathcal{H}^{p-1}(u_s)\|\partial_xe_j\|^2\dd s\\&\nonumber+{p(p-1)C}\sum_{j=1}^\infty b_j^2\|e_j\|_{L^4}^2\int_0^t\mathcal{H}^{p-\frac{1}{2}}(u_s)\dd s\\\leq&\ M^{1,p}_t\hspace{-0.5mm}+\hspace{-0.5mm}{p\alpha}\int_0^t\mathcal{H}^p(u_s)\dd s+C_pt.
\end{align*}
Setting ${K}_{1,p}:=C_p+1$, this inequality becomes
\begin{align*}
    &\mathcal{H}^p(u_t)-\mathcal{H}^p(u_0)+ {p\alpha}\int_0^t\mathcal{H}^p(u_s)\dd s-K_{1,p}t\leq\hspace{0.5mm} M^{1,p}_t-t.
\end{align*}
Thus Chebyshev's inequality and Burkholder--Davis--Gundy inequality imply that
\begin{align*}
   \mathbb{P}\Big(\sup_{t\geq0}&\big\{\mathcal{H}^p(u_t)-\mathcal{H}^p(u_0)+{p\alpha}\int_0^t\mathcal{H}^p(u_s)\dd s-K_{1,p} t\big\}\geq \rho  \Big)\\\leq&\ \mathbb{P}\Big(\sup_{t\geq 0}\big\{M_t^{1,p}- t\big\}\geq\rho\Big)\\\leq& \sum_{m=0}^\infty\mathbb{P}\Big(\sup_{m\leq t\leq m+1}\big\{M^{1,p}_t- t\big\}\geq\rho\Big)\\\leq&\sum_{m=0}^\infty\mathbb{P}\Big(\sup_{t\leq m+1}M_t^{1,p}\geq \rho+ m\Big)\leq C_q\sum_{m=0}^\infty\frac{\mathbb{E}\big[\langle M^{1,p}\rangle_{m+1}^{\frac{q}{2}}\big]}{(\rho+ m)^{q}}.
\end{align*}
From the definition \eqref{06190550} of $M^{1,p}$, we obtain
\begin{align}\label{03130248}
    \nonumber &\hspace{1mm}\mathbb{E}\big[\langle M^{1,p}\rangle_{m+1}^{\frac{q}{2}}\big]\\\leq&\ \nonumber C_{p,q}\mathbb{E}\big[\big(\sum_{j=1}^\infty b_j^2\int_0^{m+1}\mathcal{H}^{2(p-1)}(u_s)(\|\partial_x{u}_s\|^2\|\partial_xe_j\|^2+\|u_s\|_{L^4}^6\|e_j\|_{L^4}^2)\dd s\big)^{\frac{q}{2}}\big]\\\leq&\nonumber \ C_{p,q}\big(\sum_{j=1}^\infty b_j^2\|e_j\|_{H^1}^2\big)^{\frac{q}{2}}\mathbb{E}\big[\big(\int_0^{m+1}\mathcal{H}^{2(p-1)}(u_s)(\|\partial_x{u}_s\|^2+\|u_s\|_{L^4}^6)\dd s\big)^{\frac{q}{2}}\big]\\\leq&\nonumber \ C_{p,q}\mathbb{E}\big[\big(\int_0^{m+1}\mathcal{H}^{2(p-1)}(u_s)(\|\partial_x{u}_s\|^2+\|u_s\|_{L^4}^6)\dd s\big)^{\frac{q}{2}}\big]\\\leq&\nonumber \ C_{p,q}(m+1)^{\frac{q}{2}-1}\mathbb{E}\big[\int_0^{m+1}\mathcal{H}^{q(p-1)}(u_s)(\|\partial_x{u}_s\|^q+\|u_s\|_{L^4}^{3q})\dd s\big]\\\leq&\nonumber \ C_{p,q}(m+1)^{\frac{q}{2}-1}\mathbb{E}\big[\int_0^{m+1}\mathcal{H}^{q(p-\frac{1}{2})}(u_s)(1+\mathcal{H}^{\frac{q}{4}}(u_s))\dd s\big]\\\leq&\nonumber \ C_{p,q}(m+1)^{\frac{q}{2}-1}\Big(m+1+\mathbb{E}\big[\int_0^{m+1}\mathcal{H}^{q(p-\frac{1}{4})}(u_s)\dd s\big]\Big)\\\leq& \ C_{p,q}(m+1)^{\frac{q}{2}-1}\big(m+1+\mathcal{H}^{q(p-\frac{1}{4})}(u_0)\big).
\end{align}
Thus 
\begin{align*}
    \mathbb{P}\Big(\sup_{t\geq0}\big\{\mathcal{H}^p(u_t)-\mathcal{H}^p(u_0)+&\hspace{1mm}{p\alpha}\int_0^t\mathcal{H}^p(u_s)\dd s-K_{1,p} t\big\}\geq \rho  \Big)\\\leq&\ C_{p,q}\sum_{m=0}^\infty \frac{(m+1)^{\frac{q}{2}-1}}{(\rho+ m)^q}\big(m+1+\mathcal{H}^{q(p-\frac{1}{4})}(u_0)\big).
\end{align*}
Summing the series as in \eqref{08010427} completes the proof.
\end{proof}
Next we define the stopping time
\begin{align}\label{03190353-2}
\tau_{1,p}^u:=\inf \{t\geq0:\mathcal{E}_{1,p}^u(t)\geq \rho+Kt+\mathscr{C}\mathcal{H}^p(u_0)\}.
\end{align}
\begin{corollary}\label{03090531}
    For any $p,q\geq 1$, $\rho\geq 1$, $K\geq K_{1,p}+1$, $\mathscr{C}\geq 1$, and $l\geq 0$,  we have
    $$\mathbb{P}\big( l\leq\tau_{1,p}^u<\infty\big)\leq C_{p,q}\big[{1+\mathcal{H}^{q(p-\frac{1}{4})}(u_0)}\big]{(\rho+l)^{-\frac{q}{2}+1}}.$$
\end{corollary}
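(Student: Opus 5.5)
This follows the proof of Corollary~\ref{03090533} verbatim, with Lemma~\ref{03050812} replacing Lemma~\ref{03050812-1}. The key observation is that reaching the barrier at a late time forces a large excursion of the functional $\mathcal{E}_{1,p}^u$ above its deterministic linear envelope $K_{1,p}t+\mathcal{H}^p(u_0)$. That excursion is controlled by Lemma~\ref{03050812}.

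First, note that $t\mapsto\mathcal{E}^u_{1,p}(t)$ is continuous. This holds because $u\in C(\mathbb{R}_+;H^1)$ and $\mathcal{H}$ is continuous on $H^1$ (via $H^1\hookrightarrow L^4$). Consequently, on the event $\{\tau^u_{1,p}<\infty\}$ the infimum in~\eqref{03190353-2} is attained, and
$$
\mathcal{E}_{1,p}^u(\tau_{1,p}^u)=\rho+K\tau_{1,p}^u+\mathscr{C}\mathcal{H}^p(u_0).
$$
(If $\rho+\mathscr{C}\mathcal{H}^p(u_0)\le\mathcal{E}^u_{1,p}(0)=\mathcal{H}^p(u_0)$, the inequality "$\ge$" still suffices.)

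Second, use $K\ge K_{1,p}+1$ and $\mathscr{C}\ge1$. On $\{l\le\tau_{1,p}^u<\infty\}$ this gives
$$
\rho+l\le\rho+(K-K_{1,p})\tau^u_{1,p}+(\mathscr{C}-1)\mathcal{H}^p(u_0)\le \sup_{t\ge0}\big\{\mathcal{E}_{1,p}^u(t)-K_{1,p}t-\mathcal{H}^p(u_0)\big\}.
$$

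Third, since $\rho+l\ge1$, apply Lemma~\ref{03050812} with $\rho+l$ in place of $\rho$. This bounds $\mathbb{P}(l\le\tau^u_{1,p}<\infty)$ by $C_{p,q}\big(1+\mathcal{H}^{q(p-\frac14)}(u_0)\big)(\rho+l)^{-\frac q2+1}$, which is the claim.

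There is no real obstacle here. The only point requiring care is the pathwise continuity of $\mathcal{E}^u_{1,p}$, which is needed to evaluate it at the hitting time, or, if one prefers, the use of the inequality $\ge$ at $\tau^u_{1,p}$. All the probabilistic content is already contained in Lemma~\ref{03050812}.
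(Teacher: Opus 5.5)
Your proposal is correct and matches the paper's proof: on $\{l\le\tau_{1,p}^u<\infty\}$ you evaluate $\mathcal{E}_{1,p}^u$ at the hitting time, use $K-K_{1,p}\ge1$ and $\mathscr{C}\ge1$ to bound $\rho+l$ by the supremum of the centered excursion, and apply Lemma~\ref{03050812} with $\rho+l\ge1$ in place of $\rho$. Your continuity remark makes explicit a point the paper leaves implicit, and the edge case you mention cannot occur, since $\rho+\mathscr{C}\mathcal{H}^p(u_0)>\mathcal{H}^p(u_0)=\mathcal{E}_{1,p}^u(0)$.
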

\begin{proof}
    Since $K\geq K_{1,p}+1$, on the event $\{l\leq\tau_{1,p}^u<\infty\}$, we have
    \begin{align*}
        \rho+l\leq& \hspace{1mm}\rho+(K-K_{1,p})\tau_{1,p}^u+(\mathscr{C}-1)\mathcal{H}^p(u_0)\\=&\hspace{1mm}\mathcal{E}_{1,p}^u(\tau_{1,p}^u)-K_{1,p}\tau_{1,p}^u-\mathcal{H}^p(u_0)\\\leq&\hspace{1mm}\sup_{t\geq 0}\big\{\mathcal{E}_{1,p}^u(t)-K_{1,p}t-\mathcal{H}^p(u_0)\big\}.
    \end{align*}
    Notice that $\rho\geq 1$, then we have by Lemma \ref{03050812},
    \begin{align*}
        \mathbb{P}\big(l\leq\tau_{1,p}^u<\infty\big)\leq&\ \mathbb{P}\big(\sup_{t\geq 0}\{\mathcal{E}_{1,p}^u(t)-K_{1,p}t-\mathcal{H}^p(u_0)\}\geq \rho+l\big)\\\leq&\ C_{p,q}\big({1+\mathcal{H}^{q(p-\frac{1}{4})}(u_0)}\big){(\rho+l)^{-\frac{q}{2}+1}}.
    \end{align*}
\end{proof}
To estimate the $H^2$-norm of solutions, we consider the  functional $\mathcal{G}$ defined in \eqref{03090247-2}.
\begin{lemma}\label{02231236}For any $p\geq1$, $t\geq 0$, and any stopping time $\tau$ such that $\mathbb{E}\big[\tau^2;\tau<\infty\big]<\infty$, we have
\begin{align*}
   &{\rm{(i)}}\ \mathbb{E}\big[\mathcal{G}^p(u_t)\big]\leq e^{-{\alpha}p t}\mathcal{G}^{p}(u_0)+C_p e^{-\alpha pt}(\|u_0\|^{10p}+{\mathcal{H}}^{5p}(u_0))+C_p,\\&
   {\rm{(ii)}}\ \mathbb{E}\big[\!\!\sup_{s\in[0,t]}\!\!\big\{\mathcal{G}^p(u_s)\hspace{-0.5mm}+\hspace{-0.5mm}\alpha p\int_0^s\mathcal{G}^p(u_r)\dd r\big\}\big]\hspace{-0.5mm}\leq\hspace{-0.5mm} 2\mathcal{G}^p(u_0)\hspace{-0.5mm}+\hspace{-0.5mm}C_p(\|u_0\|^{10p}\hspace{-0.5mm}+\hspace{-0.5mm}\mathcal{H}^{5p}(u_0)\hspace{-0.5mm}+\hspace{-0.5mm}t),\\&
   {\rm{(iii)}}\ \mathbb{E}\big[\mathcal{G}^p(u_\tau);\tau<\infty\big]\leq C_p(1+\|u_0\|^{10p}+\mathcal{H}^{5p}(u_0)+\mathcal{G}^p(u_0))\\&\hspace{7cm}\times\big(\mathbb{E}\big[(1+\tau)^{2};\tau<\infty\big]\big)^{\frac{1}{2}}.
\end{align*}
\end{lemma}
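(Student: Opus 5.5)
The plan is to derive a pathwise Itô inequality for $\mathcal{G}^p(u_t)$ and then read off (i)--(iii) as in Lemmas~\ref{02280441} and~\ref{02280436}. For $u_0\in H^2$ the solution stays in $C(\mathbb{R}_+;H^2)$. First I apply Itô's formula to $\mathcal{G}(u_t)$, justified by a standard Galerkin or regularization argument using $\mathcal{B}_4<\infty$. The linear dispersive part contributes $\langle\partial_x^2 u,i\partial_x^4u\rangle=0$. The damping contributes $-2\alpha\|\partial_x^2u\|^2-\alpha\|\partial_x(|u|^2)\|^2\le -2\alpha\mathcal{G}(u)$, since $|u|^2$ is multiplied by $e^{-2\alpha t}$ under the damping. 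The quadratic correction $\frac14\|\partial_x(|u|^2)\|^2$ is there to cancel the worst part of the nonlinear contribution $-\langle \partial_x^2u,\partial_x^2(i|u|^2u)\rangle$.

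After expanding $\partial_x^2(|u|^2u)$ and integrating by parts, the top-order terms $\langle \partial_x^2u,i|u|^2\partial_x^2u\rangle$ vanish. The term involving $\partial_x^2\overline{u}\,u^2$ pairs with the time derivative of $\frac14\|\partial_x(|u|^2)\|^2$ along the dispersive flow, since $\partial_t|u|^2=-2\partial_x\mathrm{Im}(\overline u\partial_xu)$ there. I expect all remaining terms to have the form $\int |u|\,|\partial_xu|^2|\partial_x^2u|$, $\int|u|^2|\partial_x u|\,|\partial_x^3|u|^2|$-type terms after integration by parts, or $\int|u|^4|\partial_x u|^2$. By Gagliardo--Nirenberg and \eqref{E:HGnorms}, these should be bounded by $\frac{\alpha}{2}\mathcal{G}(u)+C(\|u\|^{10}+\mathcal{H}^5(u))$. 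The Itô correction is bounded by $C\mathcal{B}_4(1+\|u\|_{H^1}^4)$ plus lower order terms. This verification that the cubic terms close, with precisely these exponents, is the main obstacle. If a term with $\|\partial_x^3 u\|$ survives, I would redo the integration by parts so that it is moved onto $\partial_x|u|^2$ before estimating.

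The result is
\begin{align*}
\dd\mathcal{G}(u_t)\le\big(-\tfrac{3\alpha}{2}\mathcal{G}(u_t)+C(1+\|u_t\|^{10}+\mathcal{H}^5(u_t))\big)\dd t+\dd M^{2}_t .
\end{align*}
For $p\ge2$, applying Itô's formula to $\mathcal{G}^p$ and using Young's inequality on both the drift and the quadratic variation (which is bounded by $C\mathcal{G}^{2p-1}(1+\|u\|_{H^1}^{6})$) gives
\begin{align*}
\mathcal{G}^p(u_t)+\alpha p\int_0^t\mathcal{G}^p(u_s)\dd s\le \mathcal{G}^p(u_0)+C_p\int_0^t\big(1+\|u_s\|^{10p}+\mathcal{H}^{5p}(u_s)\big)\dd s+M^{2,p}_t .
\end{align*}
For $p\in[1,2)$ I apply the same argument to $(\varepsilon+\mathcal{G})^p$ and let $\varepsilon\to0$, as in Case~2 of Lemma~\ref{03050812-1}.

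For (i), I instead use the $-\tfrac{3\alpha}{2}p$ dissipation and apply the product rule to $e^{\alpha p t}\mathcal{G}^p(u_t)$. After taking expectations, the drift integral is $\int_0^te^{-\alpha p(t-s)}\mathbb{E}[\|u_s\|^{10p}+\mathcal{H}^{5p}(u_s)]\dd s$. By Lemmas~\ref{02280441}(i) and~\ref{02280436}(i), this is at most $Ce^{-\alpha pt}(\|u_0\|^{10p}+\mathcal{H}^{5p}(u_0))+C_p$. For (ii), I take the supremum over $s\in[0,t]$ and bound $\mathbb{E}\sup|M^{2,p}|$ by Burkholder--Davis--Gundy. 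Young's inequality then absorbs $\frac12\mathbb{E}\sup\mathcal{G}^p$, which accounts for the factor $2$. The remaining terms are controlled by Lemmas~\ref{02280441}(ii) and~\ref{02280436}(ii).

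For (iii), I evaluate the inequality at $\tau\wedge n$; optional stopping removes the martingale. The drift term I split over unit intervals:
\begin{align*}
\mathbb{E}\int_0^{\tau\wedge n}\Psi(u_s)\dd s\le\sum_{m\ge0}\mathbb{E}\Big[\int_m^{m+1}\Psi(u_s)\dd s;\tau> m\Big]\le\sum_{m\ge0}\Big(\mathbb{E}\int_m^{m+1}\Psi^2\Big)^{\frac12}\mathbb{P}(\tau>m)^{\frac12},
\end{align*}
where $\Psi$ denotes the drift integrand. The first factor is uniformly bounded by (i) and the earlier lemmas. I then bound $\sum_m\mathbb{P}(\tau>m)^{1/2}\le(\sum_m(1+m)^{-2})^{1/2}(\sum_m(1+m)^2\mathbb{P}(\tau>m))^{1/2}\le C\,\mathbb{E}[(1+\tau)^2]^{1/2}$. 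Restricting to $\{\tau<\infty\}$ via Fatou's lemma as $n\to\infty$ yields (iii).
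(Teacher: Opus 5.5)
Your derivation of the It\^o inequality for $\mathcal{G}^p$ and of (i)--(ii) is essentially the paper's. The correction term cancels the $u^2\partial_x^2\overline{u}$ contribution, and what remains is $-2\langle\partial_x^2u,i(\overline{u}(\partial_xu)^2+u|\partial_xu|^2)\rangle$, with no $\partial_x^3$ or sextic terms.

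\textbf{The gap is in (iii).} Evaluating at $\tau\wedge n$ and using optional stopping leaves you with $\mathbb{E}\int_0^{\tau\wedge n}\Psi(u_s)\,\dd s$. This brings in the tails $\mathbb{P}(\tau>m)$, which the hypothesis does not control.
\begin{itemize}
\item The lemma only assumes $\mathbb{E}[\tau^2;\tau<\infty]<\infty$, so $\mathbb{P}(\tau=\infty)>0$ is allowed. This is exactly the case used later: the lemma is applied at the time $(\sigma_l+1)T$, where $\sigma=\infty$ with probability at least $1/4$. In that case $\mathbb{P}(\tau>m)\ge\mathbb{P}(\tau=\infty)>0$ for every $m$, so your series diverges. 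Since $\Psi\ge1$, your bound on $\mathbb{E}\,\mathcal{G}^p(u_{\tau\wedge n})$ grows at least like $n\,\mathbb{P}(\tau=\infty)$, and Fatou gives nothing.
\item Even if $\tau<\infty$ a.s., your last inequality is false. The sum $\sum_m(1+m)^2\mathbb{P}(\tau>m)$ is comparable to $\mathbb{E}[\tau^3]$, not $\mathbb{E}[\tau^2]$. Under a second moment alone, $\sum_m\mathbb{P}(\tau>m)^{1/2}$ can be infinite; take $\mathbb{P}(\tau>m)\sim(m\log m)^{-2}$.
\end{itemize}

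\textbf{How to fix it.} The missing idea is a time weight that makes the drift integrable on all of $\mathbb{R}_+$, independently of $\tau$. The paper applies It\^o's formula to $(1+t)^{-2}\mathcal{G}^p(u_t)$.
\begin{itemize}
\item At $\tau\wedge n$, the drift is bounded by $C_p\int_0^\infty(1+s)^{-2}\mathbb{E}\Psi(u_s)\,\dd s\le C_p(1+\|u_0\|^{10p}+\mathcal{H}^{5p}(u_0))$, whatever $\tau$ is.
\item On $\{\tau=\infty\}$, the term $(n+1)^{-2}\mathcal{G}^p(u_n)$ tends to $0$ in $L^1$ by (i).
\item Fatou along a subsequence then gives $\mathbb{E}[(1+\tau)^{-2}\mathcal{G}^p(u_\tau);\tau<\infty]\le C_p(1+\|u_0\|^{10p}+\mathcal{H}^{5p}(u_0)+\mathcal{G}^p(u_0))$.
\item Cauchy--Schwarz, with $p$ replaced by $2p$, gives (iii).
\end{itemize}

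Alternatively, you can keep your Cauchy--Schwarz argument if you partition according to $\{m\le\tau<m+1\}$ rather than $\{\tau>m\}$. On that event, bound $\mathcal{G}^p(u_\tau)$ by $\sup_{s\in[m,m+1]}\mathcal{G}^p(u_s)$. Its $L^2(\Omega)$-norm is bounded uniformly in $m$ by the Markov property together with (i) and (ii) applied with $2p$. Then use $\sum_m(1+m)^2\,\mathbb{P}(m\le\tau<m+1)\le\mathbb{E}[(1+\tau)^2;\tau<\infty]$, which is exactly the quantity the statement allows.
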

\begin{proof}
The proof is divided into four steps.  

\noindent{\it Step 1: estimate for $\mathcal{G}(u_t)$}. We first apply It\^o's formula to $\mathcal{G}(u_t)$:
\begin{align}\label{03040446}
 \nonumber    \mathcal{G}(u_t)-\mathcal{G}(u_0)&=\int_0^t\langle D\mathcal{G}(u_s),-\alpha u_s+i\partial_x^2u_s-i|u_s|^2u_s\rangle \dd s\\&\quad+\frac{1}{2}\sum_{j=1}^\infty b_j^2\int_0^t\big[D^2\mathcal{G}(u_s)(e_j,e_j)+D^2\mathcal{G}(u_s)(ie_j,ie_j)\big]\dd s\nonumber
\\&\quad+\int_0^t\langle D\mathcal{G}(u_s),b\dd W_s\rangle,
\end{align}
where 
\begin{align}\label{03090253}
   \nonumber D\mathcal{G}(u)(v) &=  \langle \partial_x^2u,\partial_x^2v\rangle+\mathrm{Re}\int_{\mathbb{R}} (|u|^2\partial_x\overline{u}\partial_xv+\overline{u}v|\partial_xu|^2)\dd x\\&\quad +\mathrm{Re}\int_{\mathbb{R}} (u^2\partial_x\overline{u}\partial_x\overline{v}+{u}v(\partial_x\overline{u})^2)\dd x,
\end{align}
and
\begin{align}\label{03310222}
    \nonumber D^2\mathcal{G}(u)(v,w)&= \langle \partial_x^2w,\partial_x^2v\rangle+\mathrm{Re}\int_{\mathbb{R}} (|u|^2\partial_x\overline{w}\partial_xv+\overline{w}v|\partial_xu|^2)\dd x\\\nonumber&\quad +\mathrm{Re}\int_{\mathbb{R}}(u\overline{w}+w\overline{u})\partial_x\overline{u}\partial_xv\dd x\nonumber\\&\quad +\mathrm{Re}\int_{\mathbb{R}} \overline{u}v(\partial_x\overline{u} \partial_xw+\partial_x{u}\partial_x\overline{w})\dd x\nonumber\\\nonumber&\quad+\mathrm{Re}\int_{\mathbb{R}} (u^2\partial_x\overline{w}\partial_x\overline{v}+{w}v(\partial_x\overline{u})^2)\dd x\nonumber\\&\quad +2\mathrm{Re}\int_{\mathbb{R}} u{w}\partial_x\overline{u}\partial_x\overline{v}
    \dd x+2\mathrm{Re}\int_{\mathbb{R}} {u}v\partial_x\overline{u} \partial_x\overline{w}\dd x.
\end{align}
Let us compute each term in \eqref{03040446}. For the first term, note that
\begin{align}\label{03090252}
    \nonumber\langle D\mathcal{G}(u_s),-\alpha u_s +i\partial_x^2u_s-i|u_s|^2u_s&\rangle= \nonumber \langle \partial_x^2u_s,\partial_x^2(-\alpha u_s+i\partial_x^2 u_s-i|u_s|^2u_s)\rangle\\\  \nonumber&\hspace{-0.5mm}+\langle \partial_xu_s|u_s|^2,\partial_x(-\alpha u_s\hspace{-0.5mm}+\hspace{-0.5mm}i\partial_x^2 u_s\hspace{-0.5mm}-\hspace{-0.5mm}i|u_s|^2u_s) \rangle\\\nonumber&+\langle u_s|\partial_xu_s|^2,(-\alpha u_s+i\partial_x^2 u_s-i|u_s|^2u_s)\rangle\\\nonumber&+\langle u_s^2\partial_x \overline{u}_s,\partial_x(-\alpha u_s+i\partial_x^2 u_s-i|u_s|^2u_s)\rangle\\\nonumber&+\langle \overline{u}_s(\partial_xu_s)^2,(-\alpha u_s+i\partial_x^2 u_s-i|u_s|^2u_s)\rangle \\&= \mathrm{I}+\mathrm{II}+\mathrm{III}+\mathrm{IV}+\mathrm{V}.
\end{align}
We compute the five terms separately. For $\mathrm{I}$, integrating by parts,
\begin{align}\label{03300347-1}
    \nonumber  \mathrm{I}&=-\alpha\|\partial_x^2u_s\|^2-\langle \partial_x^2u_s,i\partial_x^2(|u_s|^2u_s)\rangle\\&\nonumber = \hspace{-0.5mm}-\hspace{-0.5mm}\alpha\|\partial_x^2u_s\|^2\hspace{-0.5mm}-\hspace{-0.5mm}\langle \partial_x^2u_s,i\big(2(\partial_xu_s)^2\overline{u}_s\hspace{-0.5mm}+\hspace{-0.5mm}2|u_s|^2\partial_x^2u_s\hspace{-0.5mm}+\hspace{-0.5mm}4u_s|\partial_xu_s|^2\hspace{-0.5mm}+\hspace{-0.5mm}u_s^2\partial_x^2\overline{u}_s\big)\rangle\\&= \hspace{-0.5mm}-\alpha\|\partial_x^2u_s\|^2-\langle \partial_x^2u_s,i\big(2(\partial_xu_s)^2\overline{u}_s+4u_s|\partial_xu_s|^2+u_s^2\partial_x^2\overline{u}_s\big)\rangle.
\end{align}
For term $\mathrm{II}$, we have
\begin{align}\label{03300347-2}
 \mathrm{II}=&\nonumber -\alpha\||\partial_xu_s||u_s|\|^2-\langle \partial_x(\partial_xu_s|u_s|^2),i\partial_x^2u_s\rangle-\langle \partial_xu_s|u_s|^2,i\partial_x(|u_s|^2u_s)\rangle\\=&\nonumber -\alpha\||\partial_xu_s||u_s|\|^2-\langle \partial_x^2u_s|u_s|^2+|\partial_xu_s|^2{u}_s+(\partial_xu_s)^2\overline{u}_s,i\partial_x^2u_s\rangle\\&\nonumber -\langle \partial_xu_s|u_s|^2,i(2u_s\partial_xu_s\overline{u}_s+u_s^2\partial_x\overline{u}_s)\rangle\\=&\nonumber -\alpha\||\partial_xu_s||u_s|\|^2-\langle |\partial_xu_s|^2{u}_s+(\partial_xu_s)^2\overline{u}_s,i\partial_x^2u_s\rangle\\&-\langle \partial_xu_s|u_s|^2,iu_s^2\partial_x\overline{u}_s\rangle.
 \end{align}
 Similarly, for $\mathrm{IV}$,  
 \begin{align}\label{03300347-3}
        \mathrm{IV}=&\nonumber -\alpha\langle u_s^2,(\partial_xu_s)^2\rangle-\langle \partial_x(u_s^2\partial_x \overline{u}_s),i\partial_x^2u_s\rangle\\&\nonumber -\langle u_s^2\partial_x\overline{u}_s,i\partial_x(|u_s|^2u_s)\rangle\\=&\nonumber -\alpha\langle u_s^2,(\partial_xu_s)^2\rangle-\langle u_s^2\partial^2_x \overline{u}_s,i\partial_x^2u_s\rangle-2\langle u_s|\partial_x u_s|^2,i\partial_x^2u_s\rangle\\&\nonumber -2\langle u_s^2\partial_x\overline{u}_s,i |u_s|^2\partial_x u_s\rangle-\langle u_s^2\partial_x\overline{u}_s,i u_s^2\partial_x \overline{u}_s\rangle\\=&\nonumber -\alpha\langle u_s^2,(\partial_xu_s)^2\rangle-\langle u_s^2\partial^2_x \overline{u}_s,i\partial_x^2u_s\rangle-2\langle u_s|\partial_x u_s|^2,i\partial_x^2u_s\rangle\\&-2\langle u_s^2\partial_x\overline{u}_s,i |u_s|^2\partial_x u_s\rangle.
 \end{align}
Arguing as for $\mathrm{II}$ and $\mathrm{IV}$, we obtain
\begin{align}\label{03300347-4}
     \mathrm{III}&=-\alpha\||\partial_xu_s||u_s|\|^2+\langle u_s|\partial_xu_s|^2,i\partial_x^2u_s\rangle,\\
    \mathrm{V}&=-\alpha\langle(\partial_xu_s)^2,u_s^2\rangle+\langle \overline{u}_s(\partial_x u_s)^2,i\partial_x^2u_s\rangle-\langle \overline{u}_s(\partial_xu_s)^2,i|u_s|^2u_s\rangle.\label{03300347-5}
\end{align}
Combining \eqref{03300347-1}--\eqref{03300347-5} with \eqref{03090252}, we find
\begin{align}\label{03050118}
     \mathrm{I+II+III+IV+V}&=\nonumber-\alpha\|\partial_x^2u_s\|^2-2\alpha\||\partial_xu_s||u_s|\|^2-2\alpha\langle(\partial_xu_s)^2,(u_s)^2\rangle\\&\quad \nonumber-2\langle \partial_x^2u_s,i\big((\partial_xu_s)^2\overline{u}_s+u_s|\partial_xu_s|^2\big)\rangle\\&=\nonumber-2\alpha\mathcal{G}(u_s)-\frac{\alpha}{2}\|\partial_x| u_s|^2\|^2\\&\nonumber\quad-\langle \partial_x^2u_s,i\big(2(\partial_xu_s)^2\overline{u}_s+2u_s|\partial_xu_s|^2\big)\rangle\nonumber\\&\leq\nonumber-2\alpha\mathcal{G}(u_s)+C\|\partial_x^2u_s\|\||\partial_xu_s|^2u_s\|\\&\leq\nonumber-2\alpha\mathcal{G}(u_s)+C\|\partial_x^2u_s\|\|\partial_xu_s\|_{L^4}^2\|u_s\|_{L^\infty}\\&\leq\nonumber-2\alpha\mathcal{G}(u_s)+C\|\partial_x^2u_s\|^{\frac{3}{2}}\|\partial_xu_s\|^{2}\|u_s\|^{\frac{1}{2}}\\&\leq-\frac{3\alpha}{2} \mathcal{G}(u_s)+C\|\partial_xu_s\|^8\|u_s\|^2.
\end{align}
By \eqref{03090253}, the stochastic integral in \eqref{03040446} equals
\begin{align}
    \nonumber\int_0^t\langle& D\mathcal{G}(u_s),b\dd W_s\rangle= \mathrm{Re}\sum_{j=1}^\infty b_j\int_0^t\int_\mathbb{R}\partial_x^2\overline{u}_s\partial_x^2e_j\dd x(\dd\beta^1_j(s)+i\dd \beta^2_j(s))\\&\nonumber+\mathrm{Re}\sum_{j=1}^\infty b_j\int_0^t\int_{\mathbb{R}} (|u_s|^2\partial_x\overline{u}_s\partial_xe_j+\overline{u}_se_j|\partial_xu_s|^2)\dd x(\dd \beta^1_j(s)+i\dd \beta^2_j(s))\\&\nonumber+\mathrm{Re}\sum_{j=1}^\infty b_j\int_0^t\int_{\mathbb{R}} (\overline{u}_s^2\partial_x{u}_s\partial_xe_j+{u}_se_j(\partial_x\overline{u}_s)^2)\dd x(\dd \beta^1_j(s)+i\dd \beta^2_j(s)).
\end{align}
Using \eqref{03310222}, the It\^o correction term in \eqref{03040446}
satisfies
\begin{align}\label{03300310}
      \nonumber\frac{1}{2}\sum_{j=1}^\infty &b_j^2\int_0^tD^2\mathcal{G}(u_s)(e_j,e_j)\dd s+\frac{1}{2}\sum_{j=1}^\infty b_j^2\int_0^tD^2\mathcal{G}(u_s)(ie_j,ie_j)\dd s\\=&\ \nonumber\frac{1}{2}\sum_{j=1}^\infty b_j^2\int_0^t\big[2\|\partial_x^2e_j\|^2+2\int_{\mathbb{R}} \big(|u_s|^2(\partial_x{e}_j)^2+{e}_j^2|\partial_xu_s|^2\big)\dd x\\&\nonumber+12\mathrm{Re}\int_\mathbb{R}u_s\partial_x\overline{u}_s{e}_j\partial_xe_j\dd x\big]\dd s\\\leq&\ \nonumber C\sum_{j=1}^\infty b_j^2\int_0^t\big( \|\partial_x^2e_j\|^2+\|u_s\|^2\|\partial_x e_j\|^2_{L^\infty}+\|\partial_xu_s\|^2\|e_j\|_{H^1}^2\\&\nonumber+\|u_s\|_{L^\infty}\|\partial_xu_s\|\|e_j\|_{L^\infty}\|\partial_x e_j\|\big)\dd s\\\leq &\ C\big(t+\int_0^t\|u_s\|^2\dd s+\int_0^t\|\partial_xu_s\|^2\dd s\big).
\end{align}
Combining  \eqref{03050118} and \eqref{03300310} with \eqref{03040446}, we obtain
\begin{align}
    \nonumber\mathcal{G}(u_t)&-\mathcal{G}(u_0)+\frac{3\alpha}{2}\int_0^t\mathcal{G}(u_s) \dd s\\&\leq  C\int_0^t\big(\|\partial_xu_s\|^8\|u_s\|^2+\|\partial_xu_s\|^2+\|u_s\|^2+1\big)\dd s\nonumber\\&\quad+\sum_{j=1}^\infty b_j\int_0^t\langle\partial_x^2u_s,\partial_x^2e_j\rangle\dd \beta_j^1(s)+\sum_{j=1}^\infty b_j\int_0^t\langle\partial_x^2u_s,i\partial_x^2e_j\rangle\dd \beta_j^2(s)\nonumber\\&\quad\nonumber+\sum_{j=1}^\infty b_j\mathrm{Re}\int_0^t\hspace{-0.2mm}\int_{\mathbb{R}} (|u_s|^2\partial_x\overline{u}_s\partial_xe_j+\overline{u}_se_j|\partial_xu_s|^2)\dd x(\dd \beta^1_j(s)+i\dd \beta^2_j(s))\\&\quad\nonumber+\sum_{j=1}^\infty b_j\mathrm{Re}\int_0^t\hspace{-0.2mm}\int_{\mathbb{R}} (\overline{u}_s^2\partial_x{u}_s\partial_xe_j+{u}_se_j(\partial_x\overline{u}_s)^2)\dd x(\dd \beta^1_j(s)+i\dd \beta^2_j(s)).
\end{align}
\noindent{\it Step 2: Proof of \rm{(i)}}.
 We only give a proof when $p\geq 2$; the case $p\in[1,2)$ is similar to Case 2 of Lemma \ref{03050812-1}.\par Applying It\^o's formula to $\mathcal{G}^p(u_t)$, we obtain
 \begin{align}\label{02210734-1}
     \nonumber \mathcal{G}^p(u_t)&-\mathcal{G}^p(u_0)+\frac{3\alpha p}{2}\int_0^t\mathcal{G}^p(u_s) \dd s\\&\leq Cp\int_0^t\mathcal{G}^{p-1}(u_s)\big(1+\|u_s\|^{10}+\mathcal{H}^{5}(u_s)\big)\dd s+M_t^{2,p}+A_t^{2,p},
\end{align}
where $M^{2,p}$ is the martingale part of $\mathcal{G}^p(u_t)$, given by\begin{align}\label{03300458-1}
    &\nonumber M^{2,p}_t=p\sum_{j=1}^\infty b_j\mathrm{Re}\int_0^t\mathcal{G}^{p-1}(u_s)\\&\hspace{2cm}\nonumber\times\Big(\int_\mathbb{R}\partial_x^2u_s\partial_x^2e_j\dd x+\int_{\mathbb{R}} (|u_s|^2\partial_x\overline{u}_s\partial_xe_j\hspace{-0.5mm}+\hspace{-0.5mm}\overline{u}_se_j|\partial_xu_s|^2)\dd x\\&\hspace{2cm}+\int_{\mathbb{R}} (\overline{u}_s^2\partial_x{u}_s\partial_xe_j\hspace{-0.5mm}+\hspace{-0.5mm}{u}_se_j(\partial_x\overline{u}_s)^2)\dd x\Big)(\dd \beta^1_j(s)\hspace{-0.5mm}+\hspace{-0.5mm}i\dd \beta^2_j(s)).
\end{align}
The It\^o correction of $\mathcal{G}^{p}(u_t)$ is given by
   \begin{align}\label{03300458}
   \nonumber \quad A^{2,p}_t=&\hspace{1mm}\frac{p(p-1)}{2}\int_0^t\mathcal{G}^{p-2}(u_s)\sum_{j=1}^\infty b_j^2\Big|\int_\mathbb{R}\partial_x^2u_s\partial_x^2e_j\dd x+\int_{\mathbb{R}} (|u_s|^2\partial_x\overline{u}_s\partial_xe_j\hspace{-0.5mm}\\&+\overline{u}_se_j|\partial_xu_s|^2)\dd x+\int_{\mathbb{R}} (\overline{u}_s^2\partial_x{u}_s\partial_xe_j\hspace{-0.5mm}+\hspace{-0.5mm}{u}_se_j(\partial_x\overline{u}_s)^2)\dd x\Big|^2\dd s\nonumber\\\nonumber\leq&\hspace{1mm} C_p\int_0^t \mathcal{G}^{p-2}(u_s)\sum_{j=1}^\infty b_j^2\big(\|\partial_xu_s\|^2\|\partial_x^3e_j\|^2\\&\quad +\|u_s\|_{L^4}^4\|\partial_x u_s\|^2\|\partial_xe_j\|_{L^\infty}^2+\|\partial_xu_s\|^4\|u_s\|_{L^\infty}^2\|e_j\|_{L^\infty}^2\big)\dd s\nonumber\\\leq&\nonumber \hspace{1mm} C_p\sum_{j=1}^\infty b_j^2\|e_j\|_{H^3}^2\int_0^t \mathcal{G}^{p-2}(u_s)\big(\|\partial_xu_s\|^2+\|u_s\|^4_{L^4}\|\partial_x u_s\|^2\\&\quad+\|\partial_xu_s\|^5\|u_s\| \big)\dd s.
    \end{align}
Thus combining \eqref{02210734-1} and \eqref{03300458}, we obtain that for any $t\geq 0$,
\begin{align}\label{02220446-1}
    \nonumber \mathcal{G}^p(&u_t)+\frac{3\alpha p}{2}\int_0^t\mathcal{G}^p(u_s)\dd s-\mathcal{G}^p(u_0)\\\nonumber\leq &\ \frac{\alpha p}{4}\int_0^t\mathcal{G}^p(u_s)\dd s+C_p\int_0^t\big(1+\|u_s\|^{10p}+\mathcal{H}^{5p}(u_s)\big)\dd s+M^{2,p}_t\\&+C_{p}\int_0^t\mathcal{G}^{p-2}(u_s)\big(\mathcal{H}^2(u_s)+\mathcal{H}^\frac{5}{2}(u_s)\|u_s\|\big)\dd s\nonumber\\\leq&\ \frac{\alpha p}{2}\int_0^t\mathcal{G}^p(u_s)\dd s+C_p\int_0^t\big(1+\|u_s\|^{10p}+\mathcal{H}^{5p}(u_s)\big)\dd s+M^{2,p}_t.
\end{align}
Applying It\^o's formula to $e^{\alpha pt}\mathcal{G}^p(u_t)$ and taking expectations on both sides, we obtain that
\begin{align*}
    e^{\alpha pt}\mathbb{E}\big[\mathcal{G}^p(u_t)\big]-\mathcal{G}^p(u_0)\leq C_p\int_0^te^{\alpha ps}\mathbb{E}[1+\|u_s\|^{10p}+\mathcal{H}^{5p}(u_s)]\dd s.
\end{align*}
By taking $\gamma=\alpha p$ in Lemmas \ref{02280441}(i) and \ref{02280436}(i), we obtain that
\begin{align*}
    \mathbb{E}\big[\mathcal{G}^p(u_t)\big]-e^{-\alpha pt}\mathcal{G}^p(u_0)\leq &\hspace{1mm}C_p\int_0^te^{-\alpha p(t-s)}\dd s+C_p\mathbb{E}[\int_0^te^{\alpha p(s-t)}\mathcal{H}^{5p}(u_s)\dd s]\\&+C_p\mathbb{E}[\int_0^te^{\alpha p(s-t)}\|u_s\|^{10p}\dd s]\\\leq&\hspace{1mm} C_p+e^{-\alpha p t}C_p(\|u_0\|^{10p}+\mathcal{H}^{5p}(u_0)),
\end{align*}
implying (i).   

\noindent{\it Step 3: Proof of $\mathrm{(ii)}$}. From \eqref{02220446-1} we deduce that
\begin{align*}
    \nonumber\sup_{s\in[0,t]}\big\{\mathcal{G}^p(u_s)&+\alpha p\int_0^s\mathcal{G}^p(u_r)\dd r\big\}-\mathcal{G}^p(u_0)\\\leq&\ \nonumber\nonumber C_p\int_0^t\big(1+\|u_s\|^{10p}+\mathcal{H}^{5p}(u_s)\big)\dd s+\sup_{s\in[0,t]}M^{2,p}_s.
\end{align*}
Taking expectation on both sides, we obtain
\begin{align}\label{03300509}
 \nonumber \mathbb{E}\big[\sup_{s\in[0,t]}&\big\{\mathcal{G}^p(u_s)+\alpha p\int_0^s\mathcal{G}^p(u_r)\dd r\big\}\big]-\mathcal{G}^p(u_0) \\\leq&\ C_p\mathbb{E}\big[\int_0^t\big(1+\|u_s\|^{10p}+\mathcal{H}^{5p}(u_s)\big)\dd s\big]+\mathbb{E}\big[\sup_{s\in[0,t]}M^{2,p}_s\big].
\end{align}
Moreover, the Burkholder--Davis--Gundy inequality and \eqref{03300458-1}
imply
\begin{align}\label{03300512}
    \nonumber \ \mathbb{E}&\big[\mathop{\sup}_{s\in[0,t]} M^{2,p}_s \big]\leq C_p(\sum_{j=1}^\infty b_j^2\|\partial_x^2e_j\|^2)^{\frac{1}{2}}\mathbb{E}\big[(\int_0^t\mathcal{G}^{2p-1}(u_s)\dd s)^{\frac{1}{2}}\big]\\&\nonumber \quad+C_p\big(\sum_{j=1}^\infty b_j^2\|\partial_{x}e_j\|_{L^\infty}^2\big)^{\frac{1}{2}}\mathbb{E}\big[\big(\int_0^t\mathcal{G}^{2p-2}(u_s)\|u_s\|_{L^4}^4\|\partial_xu_s\|^2\dd s\big)^{\frac{1}{2}}\big]\\&\nonumber \quad+C_p\big(\sum_{j=1}^\infty b_j^2\|e_j\|_{L^\infty}^2\big)^{\frac{1}{2}}\mathbb{E}\big[\big(\int_0^t\mathcal{G}^{2p-2}(u_s)\|\partial_xu_s\|^5\|u_s\|\dd s\big)^{\frac{1}{2}}\big]\\&\nonumber \leq\frac{1}{2}\mathbb{E}\big[\sup_{s\in[0,t]}\mathcal{G}^{p}(u_s)\big]+\frac{p\alpha}{2}\mathbb{E}\big[\int_0^t\mathcal{G}^p(u_s)\dd s\big]+C_p\mathbb{E}\big[\int_0^t\|\partial_xu_s\|^p\|u_s\|_{L^4}^{2p}\dd s\big]\\&\quad+C_p\mathbb{E}\big[\int_0^t\|\partial_xu_s\|^{\frac{5p}{2}}\|u_s\|^\frac{p}{2}\dd s\big]+C_pt.
\end{align}
 Lemmas \ref{02280441}(ii) and \ref{02280436}(ii) and inequalities \eqref{03300509} and \eqref{03300512} imply that
\begin{align*}
    \mathbb{E}\big[\sup_{s\in[0,t]}\big\{\mathcal{G}^p(u_s)&+{\alpha p}\int_0^s\mathcal{G}^p(u_r)\dd r\big\}\big] \\\leq&\ 2\mathcal{G}^p(u_0)+C_p\mathbb{E}\big[\int_0^t\big(1+\|u_s\|^{10p}+\mathcal{H}^{5p}(u_s)\big)\dd s\big]\\\leq& \ 2\mathcal{G}^p(u_0)+C_p\|u_0\|^{10 p}+C_{p}\mathcal{H}^{5p}(u_0)+C_pt,
\end{align*}
which proves (ii).  

\noindent{\it Step 4: Proof of \rm{(iii)}}.
 To prove {\rm{(iii)}}, we deduce from \eqref{02210734-1} and \eqref{03300458} that
\begin{align}\label{04290441}
    \mathcal{G}^p(u_t)+\alpha p\int_0^t\mathcal{G}^p(u_s)\dd s&\leq \mathcal{G}^p(u_0)+ M^{2,p}_t\nonumber\\&\quad+C_p\int_0^t\big(1+\|u_s\|^{10p}+\mathcal{H}^{5p}(u_s)\big)\dd s.
\end{align}
Applying It\^o's formula to the process $(t+1)^{-2}\mathcal{G}^p(u_t)$, we
find\begin{align*}
    &(t+1)^{-2}\mathcal{G}^p(u_t)-\mathcal{G}^p(u_0)+\alpha p\int_0^t(s+1)^{-2}\mathcal{G}^p(u_s)\dd s\\&\quad\leq  \int_0^t(s+1)^{-2}\dd M_s^{2,p}+ C_p\int_0^t(s+1)^{-2}\big(1+\|u_s\|^{10p}+\mathcal{H}^{5p}(u_s)\big)\dd s\\&\qquad \hspace{2mm}-2\int_0^t(s+1)^{-3}\mathcal{G}^p(u_s)\dd s.
\end{align*}
Replacing $t$ by $\tau\wedge n$, taking expectation, and by Lemmas~\ref{02280441}(i) and~\ref{02280436}(i), we obtain that
\begin{align}\label{03301017}
    \nonumber\mathbb{E}\big[\big((\tau\wedge n)+1\big)^{-2}\mathcal{G}^p(u_{\tau\wedge n})\big]&-\mathcal{G}^p(u_0)+\alpha p\mathbb{E}\big[\int_0^{\tau\wedge n}(s+1)^{-2}\mathcal{G}^p(u_s)\dd s\big]\\\leq&\ C_p\mathbb{E}\big[\int_0^{\tau\wedge n}(s+1)^{-2}(1+\|u_s\|^{10p}+\mathcal{H}^{5p}(u_s))\dd s\big]\nonumber\\\leq&\ C_p\int_0^\infty (s+1)^{-2}\dd s\hspace{0.5mm}(1+\|u_0\|^{10p}+\mathcal{H}^{5p}(u_0))\nonumber\\\leq&\ C_p\big(1+\|u_0\|^{10p}+\mathcal{H}^{5p}(u_0)\big).
\end{align}
Notice that on the event $\{\tau=\infty\}$, we have
\begin{align*}
    \mathbb{E}\big[\big((\tau\wedge n)+1\big)^{-2}
    &\mathcal{G}^p(u_{\tau\wedge n});\tau=\infty\big]
    =\mathbb{E}\big[(n+1)^{-2}\mathcal{G}^p(u_{n});\tau=\infty\big]\\
    &\leq\mathbb{E}\big[(n+1)^{-2}\mathcal{G}^p(u_{n})\big]\\
    &\leq C(n+1)^{-2}\big(1+\mathcal{G}^p(u_0)+\|u_0\|^{10p}
    +\mathcal{H}^{5p}(u_0)\big)\to 0
\end{align*}
as $n\rightarrow\infty$. Therefore, there exists a subsequence $\{n_k\}_{k\geq 1}$ such that $n_k\rightarrow\infty$ as $k\rightarrow\infty$ and
\begin{align*}
    \big((\tau\wedge n_k)+1\big)^{-2}\mathcal{G}^p(u_{\tau\wedge n_k})
    \mathbf{1}_{\tau=\infty}\to0
    \qquad\mathbb{P}\text{-a.s. as }k\rightarrow\infty.
\end{align*}
Taking $n=n_k$ in \eqref{03301017} and applying Fatou's lemma as
$k\to\infty$, we get
\begin{align*}
\nonumber\mathbb{E}\big[(\tau+1)^{-2}\mathcal{G}^p(u_{\tau});\tau<\infty\big]\leq  C_p\big(1+\|u_0\|^{10p}+\mathcal{H}^{5p}(u_0)+\mathcal{G}^p(u_0)\big).\nonumber
\end{align*}
H\"older's inequality then gives
\begin{align*}
     \mathbb{E}\big[\mathcal{G}^{\frac{p}{2}}(u_\tau);\tau<\infty\big]&\leq C_p\big(1+\|u_0\|^{10p}+\mathcal{H}^{5p}(u_0)+\mathcal{G}^p(u_0)\big)^{\frac{1}{2}}\\&\quad\times\mathbb{E}\big[(1+\tau)^2;\tau<\infty\big]^{\frac{1}{2}}.
\end{align*}
Replacing $p$ by $2p$, we obtain (iii).\end{proof}
As above, we set
\begin{align*}
    \mathcal{E}_{2,p}^u(t):=\mathcal{G}^p(u_t)+{\alpha p}\int_0^t\mathcal{G}^p(u_s)\dd s.
\end{align*}
\begin{lemma}\label{03080704}For any $p\geq 1$, there exist $K_{2,p}>0$ and $\mathcal{C}_{p}^\sharp >0$ such that, for any $q\geq 1$ and $\rho\geq 3\mathcal{C}_p^\sharp\vee 1$, 
    \begin{align*}
\mathbb{P}\Big\{\sup_{t\geq0}&\big\{\mathcal{E}_{2,p}^u(t)-K_{2,p}t-\mathcal{G}^{p}(u_0)-\mathcal{C}_{p}^\sharp \|u_0\|^{10p}-\mathcal{C}_{p}^\sharp \mathcal{H}^{5p}(u_0)\big\}\geq\rho\Big\}\\\leq &\ \hspace{1mm}C_{p,q}\big[1+\|u_0\|^{10q(p+\frac{1}{2})}+\mathcal{H}^{5q(p+\frac{1}{2})}(u_0)+\mathcal{G}^{q(p+\frac{1}{2})}(u_0)\big]\rho^{-\frac{q}{2}+1}.
\end{align*}
\end{lemma}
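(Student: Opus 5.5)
The argument follows the template of Lemmas~\ref{03050812-1} and~\ref{03050812}: reduce to a bound on the martingale part of $\mathcal{G}^p(u_t)$, then sum dyadic-in-time Chebyshev estimates. As before, for $q\in[1,2]$ the right-hand side is at least $1$ once $C_{p,q}\ge1$, so we may assume $q>2$. We may also take $p\ge2$, the case $p\in[1,2)$ being handled by regularization as in Case~2 of Lemma~\ref{03050812-1}.

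The first step is the pathwise inequality \eqref{02220446-1}, which gives
\begin{align*}
\mathcal{E}_{2,p}^u(t)-\mathcal{G}^p(u_0)\le C_p\int_0^t\big(1+\|u_s\|^{10p}+\mathcal{H}^{5p}(u_s)\big)\dd s+M^{2,p}_t.
\end{align*}
Unlike the earlier lemmas, the drift still contains the lower-order energies $\|u_s\|^{10p}$ and $\mathcal{H}^{5p}(u_s)$, and these must be absorbed. I will not rely on their expectations, since a pathwise statement is required. Instead, the It\^o inequalities behind Lemmas~\ref{03050812-1} and~\ref{03050812} give, pathwise,
\begin{align*}
\alpha\cdot 5p\int_0^t\|u_s\|^{10p}\dd s&\le \|u_0\|^{10p}+K_{0,5p}t+M^{0,5p}_t-t,\\
\alpha\cdot 5p\int_0^t\mathcal{H}^{5p}(u_s)\dd s&\le \mathcal{H}^{5p}(u_0)+K_{1,5p}t+M^{1,5p}_t-t,
\end{align*}
where $M^{0,5p}$ denotes the martingale $10p\int_0^t\|u_s\|^{10p-2}\langle u_s,b\,\dd W_s\rangle$. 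Multiplying these by $C_p/(5p\alpha)$ and adding, I obtain
\begin{align*}
\mathcal{E}_{2,p}^u(t)-K_{2,p}t-\mathcal{G}^p(u_0)-\mathcal{C}_p^\sharp\big(\|u_0\|^{10p}+\mathcal{H}^{5p}(u_0)\big)\le \widetilde M_t-t,
\end{align*}
with $\mathcal{C}_p^\sharp:=C_p/(5p\alpha)$ and $\widetilde M:=M^{2,p}+\mathcal{C}_p^\sharp(M^{0,5p}+M^{1,5p})$. The constant $K_{2,p}$ is chosen large enough that a spare $-t$ survives. The leftover $-\mathcal{C}_p^\sharp t$ terms only help.

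Next I bound the probability that $\sup_t(\widetilde M_t-t)\ge\rho$ by the same dyadic-in-time estimate as in \eqref{03070952}:
\begin{align*}
\sum_{m\ge0}\mathbb{P}\Big(\sup_{t\le m+1}|\widetilde M_t|\ge \rho/3+m\Big)\le C_{p,q}\sum_{m\ge0}(\rho/3+m)^{-q}\,\mathbb{E}\langle\widetilde M\rangle_{m+1}^{q/2}.
\end{align*}
Here the hypothesis $\rho\ge 3\mathcal{C}_p^\sharp\vee1$ is used to keep $\rho/3\ge1$ and to handle splitting the deviation among the three martingales.

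For the quadratic variation, the bounds on $M^{0,5p}$ and $M^{1,5p}$ are already available from \eqref{03070952} and \eqref{03130248}. They give polynomial growth $(m+1)^{q/2-1}\big(m+1+\|u_0\|^{(10p-1)q}+\mathcal{H}^{q(5p-1/4)}(u_0)\big)$, and both exponents are dominated by those in the claimed bound. For $M^{2,p}$ I use \eqref{03300458-1} together with the estimates already made in \eqref{03300512}:
\begin{align*}
\langle M^{2,p}\rangle_t\le C_p\int_0^t\Big(\mathcal{G}^{2p-1}(u_s)+\mathcal{G}^{2p-2}(u_s)\big(\mathcal{H}^{3}(u_s)+\mathcal{H}^{5/2}(u_s)\|u_s\|\big)\Big)\dd s.
\end{align*}
By Young's inequality this is at most $C_p\int_0^t\big(1+\mathcal{G}^{2p+1}+\|u\|^{10(2p+1)}+\mathcal{H}^{5(2p+1)}\big)\dd s$. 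Raising to the power $q/2$ and applying H\"older in time produces $\mathbb{E}\int_0^{m+1}\big(\mathcal{G}^{q(p+1/2)}+\|u\|^{10q(p+1/2)}+\mathcal{H}^{5q(p+1/2)}\big)\dd s$. Lemmas~\ref{02231236}(i), \ref{02280441}(i) and~\ref{02280436}(i) bound this by
\begin{align*}
(m+1)\Big[1+\|u_0\|^{10q(p+\frac12)}+\mathcal{H}^{5q(p+\frac12)}(u_0)+\mathcal{G}^{q(p+\frac12)}(u_0)\Big].
\end{align*}
These are exactly the moments appearing in the statement, which confirms the choice of Young exponents. Summing the series as in \eqref{08010427} then yields the factor $\rho^{-q/2+1}$.

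The main obstacle is the first step: absorbing the lower-order drift pathwise while keeping an exact $-t$ margin, so that the constants $K_{2,p}$ and $\mathcal{C}_p^\sharp$ close. The Young-inequality bookkeeping in the quadratic variation of $M^{2,p}$ also needs care, to make sure the exponents match $q(p+\tfrac12)$ and do not exceed it.
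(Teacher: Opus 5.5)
Your proof is correct and is essentially the paper's argument. Both use the pathwise bound \eqref{04290441}, absorb $\int_0^t\|u_s\|^{10p}\dd s$ and $\int_0^t\mathcal{H}^{5p}(u_s)\dd s$ through the It\^o inequalities underlying Lemmas~\ref{03050812-1} and~\ref{03050812} (with $p$ replaced by $5p$), and finish with a dyadic Chebyshev--Burkholder--Davis--Gundy estimate in which Young's inequality yields exactly the $q(p+\frac12)$-moments. The only difference is bookkeeping. The paper splits the event into three and applies those two lemmas as black boxes at level $\rho/(3\mathcal{C}_p^\sharp)$, which is precisely where $\rho\ge 3\mathcal{C}_p^\sharp$ is needed. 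You instead merge the martingales into $\widetilde M$, so your version works for all $\rho\ge1$. Note that the hypothesis does not give $\rho/3\ge1$ as you assert, but this is harmless since $\rho/3+m\ge(\rho+m)/3$.
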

\begin{proof}
 As in the proof of Lemma~\ref{03050812-1}, it suffices to consider the case $q>2$. By \eqref{04290441}, there exists $\mathcal{C}_{p}^\sharp>0$ such that
\begin{align*}
    \mathcal{E}_{2,p}^u(t)-\mathcal{G}^p(u_0)\nonumber&\leq 5\alpha p\mathcal{C}_{p}^\sharp \int_0^t\mathcal{H}^{5p}(u_s)\dd s\\&\quad+5\alpha p\mathcal{C}_{p}^\sharp \int_0^t\|u_s\|^{10p}\dd s+\mathcal{C}_{p}^\sharp t+M^{2,p}_t,
\end{align*}
where $M^{2,p}$ is given in \eqref{03300458-1}. Setting
$K_{2,p}:=\mathcal{C}_{p}^\sharp(K_{0,5p}+K_{1,5p}+1)+1/3$, we obtain
\begin{align*}
    &\Big\{\sup_{t\geq0}\big\{\mathcal{E}_{2,p}^u(t)-K_{2,p}t-\mathcal{G}^p(u_0)-\mathcal{C}_{p}^\sharp \|u_0\|^{10p}-\mathcal{C}_{p}^\sharp \mathcal{H}^{5p}(u_0)\big\}\geq\rho\Big\}\\&\subseteq\Big\{\sup_{t\geq 0}\big\{5\alpha p\mathcal{C}_{p}^\sharp \int_0^t\|u_s\|^{10p}\dd s\hspace{-0.5mm}-\hspace{-0.5mm}\mathcal{C}_{p}^\sharp {K_{0,5p}}t\hspace{-0.3mm}-\hspace{-0.3mm}\mathcal{C}_{p}^\sharp \|u_0\|^{10p}\big\}\geq\frac{\rho}{3}\Big\}\\&\qquad {\scalebox{1.3}{$\cup$}}\Big\{\sup_{t\geq 0}\{5\alpha p\mathcal{C}_{p}^\sharp \int_0^t\mathcal{H}^{5p}(u_s)\dd s-\mathcal{C}_{p}^\sharp {K_{1,5p}}t-\mathcal{C}_{p}^\sharp \mathcal{H}^{5p}(u_0)\}\geq\frac{\rho}{3}\Big\}\\&\qquad {\scalebox{1.3}{$\cup$}}\Big\{\sup_{t\geq 0}\big\{M^{2,p}_t-\frac{t}{3}\big\}\geq\frac{\rho}{3}\Big\}.
\end{align*}
Since $\rho\geq 3\mathcal{C}_p^\sharp $, by Lemmas \ref{03050812} and \ref{03050812-1},
\begin{align}\label{03310300}
    \nonumber\mathbb{P}\Big\{\sup_{t\geq0}\big\{\mathcal{E}_{2,p}^u(t)-K_{2,p}&t-\mathcal{G}^p(u_0)-\mathcal{C}_{p}^\sharp \|u_0\|^{10p}-\mathcal{C}_{p}^\sharp \mathcal{H}^{5p}(u_0)\big\}\geq\rho\Big\}\\&\leq C_{p,q}\big(1+\|u_0\|^{q(10p-1)}+\mathcal{H}^{q(5p-\frac{1}{4})}(u_0)\big)\rho^{-\frac{q}{2}+1}\nonumber\\&\quad+\mathbb{P}\Big\{\sup_{t\geq 0}\big\{M^{2,p}_t-\frac{t}{3}\big\}\geq\frac{\rho}{3}\Big\}.
\end{align}
The second term on the right-hand side of this inequality can be bounded by
\begin{align*}
    \nonumber  \mathbb{P}\Big\{\sup_{t\geq 0}\big\{M^{2,p}_t-\frac{t }{3}\big\}\geq\frac{\rho}{3}\Big\}&\leq \sum_{m=0}^\infty\mathbb{P}\Big\{\sup_{t\in [m,m+1]}\big\{M^{2,p}_t-\frac{t }{3}\big\}\geq\frac{\rho}{3}\Big\}\\& \leq\sum_{m=0}^\infty\mathbb{P}\Big\{\sup_{t\in [0,m+1]}M^{2,p}_t\geq\frac{\rho}{3}+\frac{m }{3}\Big\}\nonumber\\&\leq\sum_{m=0}^\infty{(\frac{\rho}{3}+\frac{m }{3})^{-q}}{\mathbb{E}\big[\sup_{t\in [0,m+1]}|M^{2,p}_t|^q\big]}.
\end{align*}
The Burkholder--Davis--Gundy inequality now gives
\begin{align*}
&\nonumber \ \mathbb{P}\Big\{\sup_{t\geq 0}\big\{M^{2,p}_t-\frac{t }{3}\big\}\geq\frac{\rho}{3}\Big\}\leq C_q\sum_{m=0}^\infty{(\rho+m)^{-q}}{\mathbb{E}\big[\sup_{0\leq t\leq m+1}|M_t^{2,p}|^q\big]}\\\leq
    &\nonumber \ C_q\sum_{m=0}^\infty {({\rho}+m)^{-q}}\Big\{\big(\sum_{j=1}^\infty b_j^2\|\partial_x^2e_j\|^2\big)^{\frac{q}{2}}\mathbb{E}\big[\big(\int_0^{m+1}\hspace{-2mm}\mathcal{G}^{2p-1}(u_s)\dd s\big)^{\frac{q}{2}}\big]\\&\nonumber +\big(\sum_{j=1}^\infty b_j^2\|\partial_{x}e_j\|_{L^\infty}^2\big)^{\frac{q}{2}}\mathbb{E}\big[\big(\int_0^{m+1}\hspace{-2mm}\mathcal{G}^{2p-2}(u_s)\|\partial_xu_s\|^2\|u_s\|^4_{L^4}\dd s\big)^{\frac{q}{2}}\big]\\&\nonumber +\big(\sum_{j=1}^\infty b_j^2\|e_j\|_{L^\infty}^2\big)^{\frac{q}{2}}\mathbb{E}\big[\big(\int_0^{m+1}\hspace{-2mm}\mathcal{G}^{2p-2}(u_s)\|\partial_xu_s\|^5\|u_s\|\dd s\big)^{\frac{q}{2}}\big]\Big\}\\\leq&\nonumber  \ C_{p,q}\sum_{m=0}^\infty{({\rho}+m)^{-q}}(m+1)^{\frac{q}{2}-1}\mathbb{E}\Big[\int_0^{m+1}\hspace{-2mm}\big(\mathcal{G}^{\frac{q(2p-1)}{2}}(u_s)\\&\qquad +\mathcal{G}^{{q(p-1)}}(u_s)\mathcal{H}^{q}(u_s)+\mathcal{G}^{{q(p-1)}}(u_s)\mathcal{H}^{\frac{5q}{4}}(u_s)\|u_s\|^{\frac{q}{2}}\big)\dd s\Big].
    \end{align*}
Applying Young's inequality and then
Lemmas~\ref{02280441}(ii), \ref{02280436}(ii), and~\ref{02231236}(ii), we
obtain    \begin{align*}
    &\mathbb{P}\Big\{\sup_{t\geq 0}\big\{M^{2,p}_t-\frac{t }{3}\big\}\geq\frac{\rho}{3}\Big\}\\\leq& \ C_{p,q}\sum_{m=0}^\infty{({\rho}+m)^{-q}}(m+1)^{\frac{q}{2}-1}\mathbb{E}\Big[\int_0^{m+1}\big(\mathcal{G}^{\frac{q(2p+1)}{2}}(u_s)\\&\qquad +\mathcal{H}^{5q(p+\frac{1}{2})}(u_s)+\|u_s\|^{10q(p+\frac{1}{2})}+1\big)\dd s\Big]\\
   \leq&\ C_{p,q}\sum_{m=0}^\infty{({\rho}+m)^{-q}}(m+1)^{\frac{q}{2}-1}\big(\|u_0\|^{10q(p+\frac{1}{2})}+\mathcal{H}^{5q(p+\frac{1}{2})}(u_0)+\mathcal{G}^{q(p+\frac{1}{2})}(u_0)\\&\qquad +m+1\big)\\\leq&\ C_{p,q} \big[1+\|u_0\|^{10q(p+\frac{1}{2})}+\mathcal{H}^{5q(p+\frac{1}{2})}(u_0)+\mathcal{G}^{q(p+\frac{1}{2})}(u_0)\big]\rho^{-\frac{q}{2}+1},
\end{align*}
where the last inequality is obtained by summing the series as in
\eqref{08010427}. Together with \eqref{03310300}, this gives the required inequality.
\end{proof}
Finally, we set
\begin{align}\label{03190353-3}
    \tau_{2,p}^u:=\inf\{t\geq 0: \mathcal{E}_{2,p}^u(t)\geq \rho+Kt+\mathscr{C}(\mathcal{G}^p(u_0)+\|u_0\|^{10p}+\mathcal{H}^{5p}(u_0))\}.
\end{align}
\begin{corollary}\label{03090542}
    For any $p,q\geq1$, $\rho\geq 3\mathcal{C}_p^\sharp\vee 1$, $K\geq K_{2,p}+1$, $\mathscr{C}\geq \mathcal{C}_{p}^\sharp \vee 1$, and $l\geq 0$, we have
    \begin{align*}
        \mathbb{P}(l\leq \tau_{2,p}^u<\infty)\leq\hspace{1mm}C_{p,q} \big[1&+\|u_0\|^{10q(p+\frac{1}{2})}+\mathcal{H}^{5q(p+\frac{1}{2})}(u_0)\\&+\mathcal{G}^{q(p+\frac{1}{2})}(u_0)\big]{\big(\rho+l\big)^{-\frac{q}{2}+1}}.
    \end{align*}
\end{corollary}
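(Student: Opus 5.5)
The plan is to repeat verbatim the argument of Corollaries~\ref{03090533} and~\ref{03090531}, with Lemma~\ref{03080704} as the tail estimate in place of Lemmas~\ref{03050812-1} and~\ref{03050812}. The only new feature is that the threshold in \eqref{03190353-3} contains the lower-order terms $\|u_0\|^{10p}+\mathcal{H}^{5p}(u_0)$. These must be absorbed by the constant $\mathcal{C}_p^\sharp$ appearing in Lemma~\ref{03080704}, which is why we assume $\mathscr{C}\geq\mathcal{C}_p^\sharp\vee1$.

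First we work on the event $\{l\leq\tau_{2,p}^u<\infty\}$. By continuity of $\mathcal{E}_{2,p}^u$ in time (for $u_0\in H^2$ the solution lies in $C(\mathbb{R}_+;H^2)$), the threshold is attained at $\tau_{2,p}^u$:
$$
\mathcal{E}_{2,p}^u(\tau_{2,p}^u)=\rho+K\tau_{2,p}^u+\mathscr{C}\big(\mathcal{G}^p(u_0)+\|u_0\|^{10p}+\mathcal{H}^{5p}(u_0)\big).
$$
We subtract $K_{2,p}\tau_{2,p}^u+\mathcal{G}^p(u_0)+\mathcal{C}_p^\sharp\|u_0\|^{10p}+\mathcal{C}_p^\sharp\mathcal{H}^{5p}(u_0)$ from both sides. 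Three facts control the result: $K-K_{2,p}\geq1$, $\mathscr{C}-1\geq0$, and $\mathscr{C}-\mathcal{C}_p^\sharp\geq0$. They give
$$
\rho+l\leq\rho+\tau_{2,p}^u\leq\sup_{t\geq0}\big\{\mathcal{E}_{2,p}^u(t)-K_{2,p}t-\mathcal{G}^p(u_0)-\mathcal{C}_p^\sharp\|u_0\|^{10p}-\mathcal{C}_p^\sharp\mathcal{H}^{5p}(u_0)\big\}.
$$

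Next we apply Lemma~\ref{03080704} with $\rho+l$ in place of $\rho$. This is allowed since $\rho+l\geq\rho\geq3\mathcal{C}_p^\sharp\vee1$. It yields exactly the stated bound
$$
C_{p,q}\big[1+\|u_0\|^{10q(p+\frac12)}+\mathcal{H}^{5q(p+\frac12)}(u_0)+\mathcal{G}^{q(p+\frac12)}(u_0)\big](\rho+l)^{-\frac q2+1}.
$$

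There is no real obstacle here. The one point to check is that the inequality $\mathscr{C}\geq\mathcal{C}_p^\sharp$ covers both lower-order terms simultaneously. It does, because each appears with coefficient exactly $\mathscr{C}$ in \eqref{03190353-3} and with coefficient exactly $\mathcal{C}_p^\sharp$ in Lemma~\ref{03080704}.
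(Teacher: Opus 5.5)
Your proposal is correct and follows the paper's proof essentially line by line. On $\{l\le\tau_{2,p}^u<\infty\}$ you use $K-K_{2,p}\ge1$, $\mathscr{C}\ge1$ and $\mathscr{C}\ge\mathcal{C}_p^\sharp$ to bound $\rho+l$ by the supremum in Lemma~\ref{03080704}, and you then apply that lemma with $\rho+l$ in place of $\rho$, which is admissible since $\rho+l\ge\rho\ge3\mathcal{C}_p^\sharp\vee1$.
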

\begin{proof}
    Since $K\geq K_{2,p}+1$, we deduce that on the event $\{l\leq \tau_{2,p}^u<\infty\}$, 
    \begin{align*}
    \rho+l\leq&\hspace{1mm} \rho+(K-K_{2,p})\tau_{2,p}^u+(\mathscr{C}-1)\mathcal{G}^p(u_0)\\&+(\mathscr{C}-\mathcal{C}_p^\sharp)\big(\|u_0\|^{10p}+\mathcal{H}^{5p}(u_0)\big)\\
        =&\hspace{1mm}\mathcal{E}_{2,p}^u(\tau_{2,p}^u)-K_{2,p}\tau_{2,p}^u-\mathcal{G}^p(u_0)-\mathcal{C}_{p}^\sharp \|u_0\|^{10p}-\mathcal{C}_{p}^\sharp \mathcal{H}^{5p}(u_0)\\\leq&\ \sup_{t\geq 0}\big\{\mathcal{E}_{2,p}^u(t)-K_{2,p}t-\mathcal{G}^p(u_0)-\mathcal{C}_{p}^\sharp \|u_0\|^{10p}-\mathcal{C}_{p}^\sharp \mathcal{H}^{5p}(u_0)\big\}.
    \end{align*}
    By Lemma \ref{03080704}, we have
    \begin{align*}
        \mathbb{P}\big(l\leq \tau_{2,p}^u&<\infty\big)\\\leq& \hspace{1mm}\mathbb{P}\Big(\sup_{t\geq 0}\big\{\mathcal{E}_{2,p}^u(t)-K_{2,p}t-\mathcal{G}^p(u_0)-\mathcal{C}_{p}^\sharp\|u_0\|^{10p}\\&\hspace{3cm}-\mathcal{C}_{p}^\sharp \mathcal{H}^{5p}(u_0)\big\}\geq \rho+l\Big)\\\leq&\hspace{0.5mm} C_{p,q} \big[1+\|u_0\|^{10q(p+\frac{1}{2})}+\mathcal{H}^{5q(p+\frac{1}{2})}(u_0)+\mathcal{G}^{q(p+\frac{1}{2})}(u_0)\big]{\big(\rho+l\big)^{-\frac{q}{2}+1}}\hspace{-1mm}.
    \end{align*}
\end{proof}

\subsection{Weighted moment estimate}

We consider the space-time weight function $\psi:\mathbb{R}_+\times \mathbb{R}\rightarrow\mathbb{R}$ given in \eqref{E:psi-def}
and estimate the weighted moments $\|\psi(t)u_t\|^{2p}$. For any $t\geq0$, we denote
    \begin{align*}
        \mathcal{E}_{\psi}^{u}(t):=\|\psi(t)u_t\|^{2}+{\alpha }\int_0^t\|\psi(s)u_s\|^{2}\dd s.
    \end{align*}
\begin{lemma}\label{03080957}
    There exist $K_\psi>0$ and $\mathcal{C}_\psi>0$ such that for any $p\geq 1$, $q\geq 1$, $t\geq 0$, and $\rho\geq 3\mathcal{C}_\psi\vee 1$,
    \begin{align*}
        &{\rm{(i)}\ } \mathbb{E}\big[\|\psi(t)u_t\|^{2p}+{\alpha p}\int_0^t\|\psi(s)u_s\|^{2p}\dd s\big]\leq C_pt+C_p\big(\|u_0\|^{2p}+\mathcal{H}^p(u_0)\big),\\&
        {\rm{(ii)}\ } \mathbb{P}\Big\{\sup_{t\geq 0}\big\{\mathcal{E}^{u}_{\psi}(t)-K_\psi t-\mathcal{C}_\psi\big(\|u_0\|^{2}+\mathcal{H}(u_0)\big)\big\}\geq \rho\Big\}\\&\ \qquad\qquad\qquad\qquad\qquad\qquad\qquad\qquad\hspace{2mm}\leq C_{q}\big(1+\|u_0\|^{q}+\mathcal{H}^{\frac{3q}{4}}(u_0)\big)\rho^{-\frac{q}{2}+1}.
    \end{align*}
\end{lemma}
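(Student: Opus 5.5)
The plan is to apply It\^o's formula to $\|\psi(t)u_t\|^2$ and handle the commutator of $\psi$ with $\partial_x^2$. First I collect the properties of the weight. Write $\psi=\varphi(1-e^{-t/\varphi})$. Then
\[
\partial_t\psi=e^{-t/\varphi}\in(0,1],\qquad 0\le\psi\le \min\{t,\varphi\}.
\]
A direct computation also gives
\[
|\partial_x\psi|\le C,\qquad |\partial_x^2\psi|\le C,
\]
uniformly in $t$, because $|\varphi'|\le1$, $|\varphi''|\le1$ and $\frac{t}{\varphi}e^{-t/\varphi}\le C$.

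Formally,
\[
\dd\|\psi u\|^2=2\langle \psi\partial_t\psi\, u,u\rangle\dd t+2\langle\psi^2u,\dd u\rangle+\sum_j b_j^2\|\psi e_j\|^2\,2\,\dd t .
\]
The cubic term drops out because $\langle \psi^2u,i|u|^2u\rangle=0$. The damping contributes $-2\alpha\|\psi u\|^2$. For the dispersive term, $\langle\psi^2u,i\partial_x^2u\rangle=-\langle 2\psi\partial_x\psi\,u,i\partial_xu\rangle$, so it is bounded by $C\|\psi u\|\|\partial_xu\|$. The time-derivative term is bounded by $2\langle\psi u,u\rangle\le 2\|\psi u\|\|u\|$. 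The It\^o correction is at most $2\mathcal B_\varphi$, since $\psi\le\varphi$. This is exactly where \eqref{03300242} is used.

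By Young's inequality these bounds give
\[
\|\psi(t)u_t\|^2+\tfrac{3\alpha}{2}\int_0^t\|\psi u_s\|^2\dd s\le C\int_0^t\big(1+\|u_s\|^2+\mathcal H(u_s)\big)\dd s+M^\psi_t ,
\]
with $M^\psi_t=2\sum_j b_j\int_0^t\langle \psi^2u_s,e_j\dd\beta^1_j+ie_j\dd\beta^2_j\rangle$. The initial term vanishes because $\psi(0)=0$. To justify the computation rigorously, I would use the truncated weights $\psi\wedge R$, or $\psi_R=\psi\chi_R$, and let $R\to\infty$ by monotone convergence. The bounds above are uniform in $R$, since the derivatives of the truncation are bounded.

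\textbf{Part (i).} For $p\ge2$, I apply It\^o to the $p$-th power. The quadratic variation satisfies
\[
\dd\langle M^\psi\rangle\le C\sum_j b_j^2\|\psi e_j\|^2\|\psi u\|^2\dd t\le C\mathcal B_\varphi\|\psi u\|^2\dd t,
\]
so the correction terms are at most $C_p\|\psi u\|^{2p-2}$. Young's inequality then gives
\[
\dd\|\psi u\|^{2p}+\alpha p\|\psi u\|^{2p}\dd t\le C_p\big(1+\|u\|^{2p}+\mathcal H^p(u)\big)\dd t+\dd M.
\]
Taking expectations (after localization) and using Lemmas~\ref{02280441}(i) and~\ref{02280436}(i) to bound $\mathbb E\int_0^t(\|u_s\|^{2p}+\mathcal H^p(u_s))\dd s$ by $C_p t+C_p(\|u_0\|^{2p}+\mathcal H^p(u_0))$ yields (i). The case $p\in[1,2)$ is handled by the $(\varepsilon+\cdot)^p$ regularization, as in Lemma~\ref{03050812-1}.

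\textbf{Part (ii).} I start from the $p=1$ inequality above, which bounds $\mathcal E^u_\psi(t)$ by
\[
C\int_0^t\|u_s\|^2\dd s+C\int_0^t\mathcal H(u_s)\dd s+Ct+M^\psi_t .
\]
I choose $\mathcal C_\psi$ and $K_\psi$ as in Lemma~\ref{03080704}: write $C\int\|u\|^2=\mathcal C_\psi\alpha\int\|u\|^2$, and similarly for $\mathcal H$. The event in (ii) is then contained in the union of three events:
\begin{itemize}
\item[(a)] $\sup_t\{\mathcal C_\psi\alpha\int_0^t\|u_s\|^2\dd s-\mathcal C_\psi K_{0,1}t-\mathcal C_\psi\|u_0\|^2\}\ge\rho/3$;
\item[(b)] the analogous event for $\mathcal H$, with $K_{1,1}$;
\item[(c)] $\sup_t\{M^\psi_t-t/3\}\ge\rho/3$.
\end{itemize}
Events (a) and (b) are controlled by Lemmas~\ref{03050812-1} and~\ref{03050812} with $p=1$; this is where $\rho\ge3\mathcal C_\psi$ is needed. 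Their contributions are $(1+\|u_0\|^q)\rho^{-q/2+1}$ and $(1+\mathcal H^{3q/4}(u_0))\rho^{-q/2+1}$, which matches the claimed exponents.

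For (c) I use the dyadic-in-time splitting together with Burkholder--Davis--Gundy:
\[
\mathbb E\langle M^\psi\rangle_{m+1}^{q/2}\le C\,\mathbb E\Big(\int_0^{m+1}\|\psi u_s\|^2\dd s\Big)^{q/2}\le C(m+1)^{q/2-1}\mathbb E\int_0^{m+1}\|\psi u_s\|^q\dd s .
\]
By part (i) with $p=q/2$, the right-hand side is at most $C(m+1)^{q/2-1}\big(m+1+\|u_0\|^q+\mathcal H^{q/2}(u_0)\big)$. Summing the series as in \eqref{08010427} finishes the proof, since $\mathcal H^{q/2}\le 1+\mathcal H^{3q/4}$.

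\textbf{Main obstacle.} I expect the main difficulty to be the rigorous justification of It\^o's formula for the unbounded weight $\psi^2$. The truncation argument has to keep the commutator bounds uniform and must ensure $\|\psi(t)u_t\|<\infty$. The key point is that $\psi(t)\le t$ makes the weight bounded for each fixed $t$, which gives finiteness at every time.
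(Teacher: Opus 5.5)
Your proposal is correct and follows essentially the same route as the paper. You apply It\^o's formula to $\|\psi(t)u_t\|^2$, bound the commutator and time-derivative terms using $|\partial_x\psi|,|\partial_t\psi|\le C$, and control the It\^o correction via $\psi\le\varphi$; you take the $p$-th power for (i); and for (ii) you use the same three-event decomposition, handled by Lemmas~\ref{03050812-1} and~\ref{03050812} together with Burkholder--Davis--Gundy and part (i) at $p=q/2$. The differences are only cosmetic: you integrate the pointwise bounds of Lemmas~\ref{02280441}(i) and~\ref{02280436}(i) where the paper uses part (ii), and you should state explicitly the trivial reduction to $q>2$ on which your H\"older step relies.
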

      \begin{proof}  
        The proof is divided into three steps.\par
        \noindent{\it Step 1: pathwise estimate for $\|\psi(t)u_t\|^2$}.
        By It\^o's formula, we have
        \begin{align}\label{03310406}
            \nonumber\|\psi(t)u_t\|^2=&\hspace{1mm} 2\int_0^t\langle \psi(s)u_s,\partial_s\psi(s) u_s\rangle \dd s+ 2\int_0^t\langle \psi(s)u_s,\psi(s)(i\partial_x^2 u_s -\alpha u_s\\&\nonumber -i|u_s|^2u_s)\rangle \dd s+2\sum_{j=1}^\infty b_j\int_0^t\langle \psi(s)u_s,\psi(s)e_j\rangle\dd \beta^1_j(s)\\&\nonumber +2\sum_{j=1}^\infty b_j\int_0^t\langle \psi(s)u_s,i\psi(s)e_j\rangle\dd \beta^2_j(s)+2\sum_{j=1}^\infty b_j^2\int_0^t \|\psi(s)e_j\|^2 \dd s\\\leq&\nonumber \hspace{1mm}2\int_0^t\|\psi(s)u_s\|\|\partial_s\psi(s)u_s\| \dd s- 2\int_0^t\langle \partial_x (\psi(s)^2 u_s),i\partial_x u_s\rangle \dd s\\&\nonumber-2\alpha\int_0^t\|\psi(s)u_s\|^2\dd s +2\sum_{j=1}^\infty b_j\int_0^t\langle \psi(s)u_s,\psi(s)e_j\rangle\dd \beta^1_j(s)\\&\nonumber+2\sum_{j=1}^\infty b_j\int_0^t\langle \psi(s)u_s,i\psi(s)e_j\rangle\dd \beta^2_j(s)+2\sum_{j=1}^\infty b_j^2\int_0^t \|\psi(s)e_j\|^2 \dd s\nonumber\\=:&\hspace{0.5mm}\mathrm{I+II}-2\alpha\int_0^t\|\psi(s)u_s\|^2\dd s+2\sum_{j=1}^\infty b_j\int_0^t\langle \psi(s)u_s,\psi(s)e_j\rangle\dd \beta^1_j(s)\nonumber\\&+2\sum_{j=1}^\infty b_j\int_0^t\langle \psi(s)u_s,i\psi(s)e_j\rangle\dd \beta_j^2(s)+\mathrm{III}.
        \end{align}
        Note that
\begin{align}
    |\partial_x\psi(t,x)|\vee |\partial_t\psi(t,x)|\leq C, \qquad (t,x)\in\mathbb{R}_+\times\mathbb{R}.\label{03310443}
\end{align}The terms $\mathrm{I}, \mathrm{II}$ and $\mathrm{III}$ are then estimated as follows:
        \begin{align}\label{03310412}
           \mathrm{I}&\leq \frac{\alpha}{4}\int_0^t\|\psi(s)u_s\|^2\dd s+ C\int_0^t\|u_s\|^2\dd s,\\
            \nonumber \mathrm{II}&=  2\mathrm{Re}\int_0^t\hspace{-1mm}\int_{\mathbb{R}} i\partial_x (\psi(s)^2)u_s\partial_x \overline{u}_s \dd x\dd s+ 2\mathrm{Re}\int_0^t\hspace{-1mm}\int_{\mathbb{R}}i|\partial_x u_s|^2\psi(s)^2\dd x\dd s\\ &=\nonumber  4\mathrm{Re}\int_0^t\hspace{-1mm}\int_{\mathbb{R}} i\psi(s)\partial_x \psi(s)u_s\partial_x \overline{u}_s \dd x\dd s\\&\nonumber\leq C\int_0^t\|\psi(s)u_s\|\|\partial_xu_s\|\dd s\\&\leq  \frac{\alpha}{4}\int_0^t\|\psi(s)u_s\|^2\dd s+C\int_0^t\|\partial_x u_s\|^2\dd s,\label{03310412-1}\\
            \mathrm{III}&\leq 2t\sum_{j=1}^\infty b_j^2\|\varphi e_j\|^2,\label{03310412-2}
        \end{align}
        where the last inequality follows from the fact that
        \begin{align}\label{05200044}
        \psi(t,x)\leq \varphi(x),    \qquad (t,x)\in\mathbb{R}_+\times \mathbb{R}.
        \end{align}
    Combining \eqref{03310412}--\eqref{03310412-2} with \eqref{03310406}, we obtain
    \begin{align*}
        \|\psi(t)u_t\|^2+\frac{3\alpha}{2}\int_0^t\|\psi(s)u_s\|^2\dd s&\leq C\int_0^t\|u_s\|^2\dd s+C\int_0^t\|\partial_x u_s\|^2\dd s\\&\quad  +2t\sum_{j=1}^\infty b_j^2\|\varphi e_j\|^2\\&\quad+2\sum_{j=1}^\infty b_j\int_0^t\langle \psi(s)u_s,\psi(s)e_j\rangle\dd \beta^1_j(s)\\&\quad +2\sum_{j=1}^\infty b_j\int_0^t\langle \psi(s)u_s,i\psi(s)e_j\rangle\dd \beta^2_j(s).
    \end{align*}
 \noindent{\it Step 2: proof of $\mathrm{(i)}$}.    Again, we only consider the case $p\geq2$. We apply It\^o's formula to $\|\psi(t)u_t\|^{2p}$, obtaining that
    \begin{align}\label{02230145}
        &\nonumber\quad \|\psi(t)u_t\|^{2p}+\frac{3\alpha p}{2}\int_0^t\|\psi(s)u_s\|^{2p}\dd s\leq Cp\int_0^t\|\psi(s)u_s\|^{2p-2}\|u_s\|^{2}\dd s\\&\ \nonumber+Cp\int_0^t\|\psi(s)u_s\|^{2p-2}\|\partial_x u_s\|^2\dd s+2p\sum_{j=1}^\infty b_j^2\|\varphi e_j\|^2\int_0^t\|\psi(s)u_s\|^{2p-2}\dd s\\&\ \nonumber+2p\sum_{j=1}^\infty b_j\int_0^t\|\psi(s)u_s\|^{2p-2}\big(\langle \psi(s)u_s,\psi(s)e_j\rangle\dd \beta^1_j(s)\\&\nonumber\ +\langle \psi(s)u_s,i\psi(s)e_j\rangle\dd \beta^2_j(s)\big)+4p(p-1)\sum_{j=1}^\infty b_j^2\|\varphi e_j\|^2\int_0^t\|\psi(s)u_s\|^{2p-2}\dd s\\&\nonumber\leq\frac{\alpha p}{2}\int_0^t\|\psi(s)u_s\|^{2p}\dd s+C_p\int_0^t(\|u_s\|^{2p}+\|\partial_xu_s\|^{2p})\dd s+C_pt\big(\sum_{j=1}^\infty b_j^2\|\varphi e_j\|^2\big)^{p}\\&\ \nonumber+2p\sum_{j=1}^\infty b_j\int_0^t\|\psi(s)u_s\|^{2p-2}\big(\langle \psi(s)u_s,\psi(s)e_j\rangle\dd \beta^1_j(s)\\&\ \qquad\qquad\qquad\qquad\qquad\qquad+\langle \psi(s)u_s,i\psi(s)e_j\rangle\dd \beta^2_j(s)\big).
    \end{align}
    Then applying Lemmas~\ref{02280441}(ii) and~\ref{02280436}(ii), we have
    \begin{align*}
    \mathbb{E}\big[\|\psi(t)u_t\|^{2p} +{\alpha p}\int_0^t\|\psi(s)u_s\|^{2p}\dd s\big]&\leq C_p\mathbb{E}\big[\int_0^t \|u_s\|^{2p}\dd s\big]\\&\quad+C_p\mathbb{E}\big[\int_0^t\|\partial_xu_s\|^{2p}\dd s\big]+C_pt\\&\leq  C_p\big(\|u_0\|^{2p}+\mathcal{H}^{p}(u_0)\big)+C_pt.
    \end{align*} 
        This proves (i).
        
         \noindent{\it Step 3: proof of $\mathrm{(ii)}$}. It suffices to consider the case $q>2$. Inequality \eqref{02230145} implies that, for some $\mathcal{C}_\psi>0$,
      \begin{align}\label{02230225}
    \nonumber\|\psi(t)u_t\|^{2} +{\alpha }\int_0^t\|\psi(s)u_s\|^{2}\dd s\nonumber&\leq  \alpha \mathcal{C}_\psi\int_0^t \|u_s\|^{2}\dd s+{\alpha} \mathcal{C}_\psi\int_0^t\mathcal{H}(u_s)\dd s\\&\quad+\mathcal{C}_\psi t+M^{\psi}_t,
    \end{align}
    where
    \begin{align*}
M^{\psi}_t:=2\sum_{j=1}^\infty b_j\int_0^t\big(\langle \psi(s)u_s,\psi(s)e_j\rangle\dd \beta^1_j(s)+\langle \psi(s)u_s,i\psi(s)e_j\rangle\dd \beta^2_j(s)\big).\nonumber
    \end{align*}
    We denote $K_\psi:=\mathcal{C}_\psi(K_{0,1}+K_{1,1}+1)+1/3$. Then \eqref{02230225} implies that 
    \begin{align*}
       & \Big\{\sup_{t\geq 0}\big\{\mathcal{E}_{\psi}^{u}(t)-K_\psi t-\mathcal{C}_\psi\big(\|u_0\|^{2}+\mathcal{H}(u_0)\big)\big\}\geq\rho\Big\}\\\subseteq& \ \Big\{\sup_{t\geq 0}\big\{M^{\psi}_t-\frac{t}{3}\big\}\geq\frac{\rho}{3}\Big\}\\&{\scalebox{1.3}{$\cup$}}\Big\{\sup_{t\geq 0}\{\alpha \int_0^t \|u_s\|^{2}\dd s-{K_{0,1}}t-\|u_0\|^{2}\}\geq\frac{\rho}{3\mathcal{C}_\psi}\Big\}\\&{\scalebox{1.3}{$\cup$}}\Big\{\sup_{t\geq 0}\big\{{\alpha}\int_0^t\mathcal{H}(u_s)\dd s-K_{1,1}t-\mathcal{H}(u_0)\big\}\geq\frac{\rho}{3\mathcal{C}_\psi}\Big\}.
    \end{align*}
    Since $\rho\geq 3\mathcal{C}_\psi $, by Lemmas \ref{03050812-1} and \ref{03050812},
\begin{align}\label{03081006}
    \nonumber \mathbb{P}\Big\{&\sup_{t\geq 0}\big\{\mathcal{E}^{u}_{\psi}(t)-K_\psi t-\mathcal{C}_\psi\big(\|u_0\|^{2}+\mathcal{H}(u_0)\big)\big\}\geq\rho\Big\}\\\leq&\ \nonumber\mathbb{P}\Big\{\sup_{t\geq 0}\big\{M_t^{\psi}-\frac{t }{3}\big\}\geq\frac{\rho}{3}\Big\}\\&\nonumber+\mathbb{P}\Big\{\sup_{t\geq 0}\big\{\alpha \int_0^t \|u_s\|^{2}\dd s-{K_{0,1}}t-\|u_0\|^{2}\big\}\geq\frac{\rho}{3\mathcal{C}_\psi}\Big\}\\&\nonumber+\mathbb{P}\Big\{\sup_{t\geq 0}\big\{{\alpha}\int_0^t\mathcal{H}(u_s)\dd s-{K_{1,1}}t-\mathcal{H}(u_0)\big\}\geq\frac{\rho}{3\mathcal{C}_\psi}\Big\}\\\leq&\nonumber  \sum_{m=0}^\infty\mathbb{P}\Big\{\sup_{t\in[m,m+1]}\big\{M_t^{\psi}-\frac{t}{3}\big\}\geq\frac{\rho}{3}\Big\}\\&\hspace{1mm} +C_{q}\big(1+\|u_0\|^{q}+\mathcal{H}^{\frac{3q}{4}}(u_0)\big)\rho^{-\frac{q}{2}+1}.
    \end{align}
To estimate the series in this inequality, note that for any $q>2$,
    \begin{align*}
    \nonumber \mathbb{E}\big[\langle M_t^\psi\rangle_{m+1}^{\frac{q}{2}}\big]\leq& \ C_q\mathbb{E}\big[\big(\sum_{j=1}^\infty b_j^2\int_0^{m+1}\hspace{-1mm}\|\psi(s)u_s\|^2\|\psi(s)e_j\|^2\dd s\big)^{\frac{q}{2}}\big]\\\leq&\  C_q\big(\sum_{j=1}^\infty b_j^2\|\varphi e_j\|^2\big)^{\frac{q}{2}}\mathbb{E}\big[\big(\int_0^{m+1}\|\psi(s)u_s\|^2\dd s\big)^{\frac{q}{2}}\big].
 \end{align*}
Thus by applying Chebyshev's, H\"older's, and the Burkholder--Davis--Gundy inequalities, together with
\eqref{05200044} and Lemma~\ref{03080957}(i), we have
\begin{align}\label{05191052}
   &\nonumber\sum_{m=0}^\infty\mathbb{P}\Big\{\sup_{t\in[m,m+1]}\big\{M_t^{\psi}-\frac{t}{3}\big\}\geq\frac{\rho}{3}\Big\}\leq\nonumber\sum_{m=0}^\infty\mathbb{P}\Big\{\sup_{t\in[m,m+1]}M_t^{\psi}\geq\frac{\rho}{3}+\frac{m}{3}\Big\} \\\nonumber\leq&\ C_{q}\sum_{m=0}^\infty\frac{1}{(\rho+m)^q}(m+1)^{\frac{q}{2}-1}\mathbb{E}\big[\int_0^{m+1}\|\psi(s)u_s\|^{q}\dd s\big]\\\nonumber\leq&\ C_{q}\big(\sum_{j=1}^\infty b_j^2\|\varphi e_j\|^2\big)^{\frac{q}{2}}\sum_{m=0}^\infty\frac{1}{(\rho+m)^q}(m+1)^{\frac{q-2}{2}}\big[m+1+\|u_0\|^{q}+\mathcal{H}^\frac{q}{2}(u_0)\big]\nonumber\\\leq&\  C_{q}\big(1+\|u_0\|^{q}+\mathcal{H}^{\frac{q}{2}}(u_0)\big)\rho^{-\frac{q}{2}+1},
\end{align}
where the last step follows by summing the series as in \eqref{08010427}, using the fact that $\rho\geq 1$. Combining  \eqref{03081006} and \eqref{05191052}, we complete the proof.
\end{proof}
As before, we define
\begin{align}\label{03190353-4}
    \tau_{\psi}^{u}:=\inf\big\{t\geq 0:\mathcal{E}_{\psi}^{u}(t)\geq \rho+Kt+\mathscr{C}\big(\|u_0\|^{2}+\mathcal{H}(u_0)\big)\big\}.
\end{align} 
\begin{corollary}\label{03090723}
    For any $q\geq 1$, $\rho\geq 3\mathcal{C}_\psi\vee 1$, $K\geq K_\psi+1$, $\mathscr{C}\geq \mathcal{C}_\psi$, and  $l\geq 0$, we have
    \begin{align*}
        \mathbb{P}\big(l\leq \tau_{\psi}^{u}<\infty\big)\leq C_{q}\big(1+\|u_0\|^{q}+\mathcal{H}^{\frac{3q}{4}}(u_0)\big){(\rho+l)^{-\frac{q}{2}+1}}.
    \end{align*}
\end{corollary}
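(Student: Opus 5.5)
The argument will copy the proofs of Corollaries~\ref{03090533}, \ref{03090531} and~\ref{03090542}, with Lemma~\ref{03080957}(ii) as the underlying tail bound. The idea is that, on the event $\{l\leq\tau_\psi^u<\infty\}$, the weighted energy exceeds the level $\rho+Kt+\mathscr{C}(\|u_0\|^2+\mathcal{H}(u_0))$ at time $\tau_\psi^u$, and this excess then dominates the supremum controlled in Lemma~\ref{03080957}(ii).

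First, I use continuity of $t\mapsto\mathcal{E}_\psi^u(t)$. This holds because $u\in C(\mathbb{R}_+;H^1)$ and $\psi$ is continuous in $t$ and bounded by $\varphi$. The only point needing care is that $\|\psi(t)u_t\|$ is finite and continuous. Weighted finiteness comes from Lemma~\ref{03080957}(i), and continuity can be taken from the It\^o formula~\eqref{03310406}, which represents $\|\psi(t)u_t\|^2$ as a continuous semimartingale. Since $\mathcal{E}_\psi^u(0)=0<\rho$ (recall $\psi(0,\cdot)=0$), on $\{\tau_\psi^u<\infty\}$ we have
\begin{equation*}
\mathcal{E}_\psi^u(\tau_\psi^u)=\rho+K\tau_\psi^u+\mathscr{C}\big(\|u_0\|^2+\mathcal{H}(u_0)\big).
\end{equation*}

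Second, on $\{l\le\tau_\psi^u<\infty\}$, the assumptions $K\ge K_\psi+1$ and $\mathscr{C}\ge\mathcal{C}_\psi$ give
\begin{align*}
\rho+l&\le\rho+(K-K_\psi)\tau_\psi^u+(\mathscr{C}-\mathcal{C}_\psi)\big(\|u_0\|^2+\mathcal{H}(u_0)\big)\\
&=\mathcal{E}_\psi^u(\tau_\psi^u)-K_\psi\tau_\psi^u-\mathcal{C}_\psi\big(\|u_0\|^2+\mathcal{H}(u_0)\big)\\
&\le\sup_{t\ge0}\big\{\mathcal{E}_\psi^u(t)-K_\psi t-\mathcal{C}_\psi\big(\|u_0\|^2+\mathcal{H}(u_0)\big)\big\}.
\end{align*}

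Third, because $\rho+l\ge\rho\ge3\mathcal{C}_\psi\vee1$, Lemma~\ref{03080957}(ii) may be applied with $\rho+l$ in place of $\rho$. This yields the bound $C_q\big(1+\|u_0\|^q+\mathcal{H}^{3q/4}(u_0)\big)(\rho+l)^{-q/2+1}$, which is the claimed inequality.

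There is no substantial obstacle here. The only delicate point is the continuity and attainment of the threshold at $\tau_\psi^u$, handled as described in the first step.
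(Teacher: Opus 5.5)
Your proposal is correct and follows the paper's proof step for step: the same chain $\rho+l\le\rho+(K-K_\psi)\tau_\psi^u+(\mathscr{C}-\mathcal{C}_\psi)\big(\|u_0\|^2+\mathcal{H}(u_0)\big)\le\sup_{t\ge0}\{\mathcal{E}_\psi^u(t)-K_\psi t-\mathcal{C}_\psi(\|u_0\|^2+\mathcal{H}(u_0))\}$ on $\{l\le\tau_\psi^u<\infty\}$, followed by Lemma~\ref{03080957}(ii) with $\rho+l$ in place of $\rho$. On the continuity point, you do not need Lemma~\ref{03080957}(i): continuity of $t\mapsto\psi(t)u_t$ in $L^2$ follows directly from $\psi(t,x)\le t$ and \eqref{03310443}, as in the proof of Lemma~\ref{07101918}, and only the inequality $\mathcal{E}_\psi^u(\tau_\psi^u)\ge\rho+K\tau_\psi^u+\mathscr{C}\big(\|u_0\|^2+\mathcal{H}(u_0)\big)$ is actually used.
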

\begin{proof}
The fact $K\geq K_\psi+1$ implies that on the event $\{l\leq \tau_{\psi}^{u}<\infty\}$,
\begin{align*}
    \rho+l&\leq \rho+(K-K_\psi)\tau_{\psi}^u+\big(\mathscr{C}-\mathcal{C}_\psi\big)\big(\|u_0\|^2+\mathcal{H}(u_0)\big)\\&=\mathcal{E}^{u}_{\psi}(\tau_{\psi}^{u})-K_\psi\tau_{\psi}^{u}-\mathcal{C}_\psi(\|u_0\|^{2}+\mathcal{H}(u_0))\\&\leq  \sup_{t\geq 0}\big\{\mathcal{E}_{\psi}^u (t)-K_\psi t-\mathcal{C}_\psi(\|u_0\|^{2}+\mathcal{H}(u_0))\big\}.
\end{align*}
 Then Lemma~\ref{03080957} implies
\begin{align*}
     \mathbb{P}\big(l\leq \tau_{\psi}^{u}<\infty\big) & \leq   \mathbb{P}\big(\sup_{t\geq 0}\big\{\mathcal{E}^{u}_{\psi}(t)\!-K_\psi t-\mathcal{C}_\psi (\|u_0\|^{2}\!+\!\mathcal{H}(u_0))\big\}\geq \rho+l\big)\\&\leq  C_{q}\big(1+\|u_0\|^{q}+\mathcal{H}^{\frac{3q}{4}}(u_0)\big){(\rho+l)^{-\frac{q}{2}+1}}.
\end{align*}
\end{proof}
The following stopping time is used throughout
Sections~\ref{S:2}--\ref{S:4}:
\begin{align}\label{04080050}
\tau_1^u:=\min\{\tau^u_{0,6},\tau^u_{0,15},\tau_{1,2}^u,\tau_{1,15}^u,
\tau_{2,2}^u,\tau_{2,3}^u,\tau_{\psi}^u\},
\end{align}
where $\tau_{0,p}^{u}$, $\tau_{1,p}^{u}$, $\tau_{2,p}^u$ and
$\tau_{\psi}^{u}$ are defined in \eqref{03190353-1}, \eqref{03190353-2}, \eqref{03190353-3}, and \eqref{03190353-4},
with $p=6,15$; $p=2,15$; and $p=2,3$, respectively. The constants are fixed~by
\begin{gather}
K=\max\{K_{0,6},K_{0,15},K_{1,2},K_{1,15},K_{2,2},K_{2,3},K_\psi\}+1,
\label{04271005-1}\\
\mathscr{C}=\mathcal{C}_2^\sharp\vee\mathcal{C}_3^\sharp
\vee\mathcal{C}_\psi\vee 1,\label{04271005-2}
\end{gather}
while the parameter $\rho$ is required to satisfy
\begin{align}\label{04271005-3}
\rho\ge\rho_*:=1\vee 3\mathcal{C}_2^\sharp\vee 3\mathcal{C}_3^\sharp
\vee 3\mathcal{C}_\psi,
\end{align}
so that the hypotheses of Corollaries~\ref{03090533}, \ref{03090531},
\ref{03090542} and~\ref{03090723} are satisfied. Here $K_{0,6},\ldots,
K_\psi$, $\mathcal{C}_2^\sharp$, $\mathcal{C}_3^\sharp$ and
$\mathcal{C}_\psi$ are the constants of Lemmas~\ref{03050812-1},
\ref{03050812}, \ref{03080704} and~\ref{03080957}.
\begin{lemma}\label{03120055}
   For any $l\geq 0$, $\rho\ge\rho_*$ and $q\geq 1$, we have
    \begin{align*}
      \mathbb{P}\big(l\leq \tau_1^u<\infty\big)\leq C_q\big(1+\|u_0\|^{36q}+\mathcal{H}^{18q}(u_0)+\mathcal{G}^{\frac{7q}{2}}(u_0)\big)\big(\rho+l\big)^{-\frac{q}{2}+1}.
    \end{align*}
\end{lemma}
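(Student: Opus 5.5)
The plan is a union bound over the seven stopping times in \eqref{04080050}, followed by collapsing the resulting initial-data factors into the single expression $1+\|u_0\|^{36q}+\mathcal{H}^{18q}(u_0)+\mathcal{G}^{\frac{7q}{2}}(u_0)$. Since $\tau_1^u$ is the minimum of $\tau^u_{0,6},\tau^u_{0,15},\tau_{1,2}^u,\tau_{1,15}^u,\tau_{2,2}^u,\tau_{2,3}^u,\tau_{\psi}^u$, the event $\{l\le\tau_1^u<\infty\}$ forces one of them to be finite. Because every component is at least $\tau_1^u\ge l$, that component also satisfies $l\le\tau<\infty$. Hence
\begin{align*}
\mathbb{P}(l\le\tau_1^u<\infty)\le \sum_{\tau\in\{\tau^u_{0,6},\tau^u_{0,15},\tau_{1,2}^u,\tau_{1,15}^u,\tau_{2,2}^u,\tau_{2,3}^u,\tau_{\psi}^u\}}\mathbb{P}(l\le\tau<\infty).
\end{align*}
The choices \eqref{04271005-1}--\eqref{04271005-3} of $K$, $\mathscr{C}$ and $\rho_*$ guarantee that the hypotheses of Corollaries~\ref{03090533}, \ref{03090531}, \ref{03090542} and~\ref{03090723} hold for each term. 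Each term is therefore bounded by $C_q\cdot(\text{polynomial in }u_0)\cdot(\rho+l)^{-\frac q2+1}$, all with the same power of $\rho+l$.

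Next, I would record the polynomial factor produced by each corollary.
\begin{itemize}
\item Corollary~\ref{03090533} with $p=6,15$ gives $1+\|u_0\|^{11q}$ and $1+\|u_0\|^{29q}$.
\item Corollary~\ref{03090531} with $p=2,15$ gives $\mathcal{H}^{\frac{7q}{4}}$ and $\mathcal{H}^{\frac{59q}{4}}$.
\item Corollary~\ref{03090542} with $p=2$ gives $\|u_0\|^{25q}+\mathcal{H}^{\frac{25q}{2}}+\mathcal{G}^{\frac{5q}{2}}$.
\item Corollary~\ref{03090542} with $p=3$ gives $\|u_0\|^{35q}+\mathcal{H}^{\frac{35q}{2}}+\mathcal{G}^{\frac{7q}{2}}$.
\item Corollary~\ref{03090723} gives $\|u_0\|^{q}+\mathcal{H}^{\frac{3q}{4}}$.
\end{itemize}
Every exponent is at most $36q$ for $\|u_0\|$, at most $18q$ for $\mathcal{H}$, and at most $\frac{7q}{2}$ for $\mathcal{G}$. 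Using $x^a\le 1+x^b$ for $0\le a\le b$ and $x\ge0$, the sum of these seven factors is bounded by $C(1+\|u_0\|^{36q}+\mathcal{H}^{18q}(u_0)+\mathcal{G}^{\frac{7q}{2}}(u_0))$, which is the claimed estimate.

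The only point needing care is the bookkeeping. I must check that the stated exponents dominate every term. The slack between $35q$ and $36q$, and between $\frac{35q}{2}$ and $18q$, suggests the authors simply rounded up, and it is harmless. I also need to confirm that the constraint $\rho\ge\rho_*$ covers the requirements $\rho\ge 3\mathcal{C}_p^\sharp\vee1$ for $p=2,3$ and $\rho\ge3\mathcal{C}_\psi\vee 1$. It does, by \eqref{04271005-3}.

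The step I expect to be the main obstacle is the case $q\in[1,2]$. There the right-hand side is at least $1$, so the bound is trivial with $C_q\ge1$; I would note this explicitly so the corollaries are only needed for $q>2$. No other difficulty is expected.
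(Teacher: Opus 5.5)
Your proposal is correct and matches the paper's proof: a union bound over the seven component stopping times in \eqref{04080050}, then Corollaries~\ref{03090533}, \ref{03090531}, \ref{03090542} and~\ref{03090723}, then absorption of the resulting initial-data factors. Your exponent bookkeeping (maxima $35q$, $\frac{35q}{2}$ and $\frac{7q}{2}$) is accurate. The separate treatment of $q\in[1,2]$ is harmless but unnecessary, since the corollaries are already stated for all $q\ge1$.
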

\begin{proof}
    It follows from \eqref{04080050} that
    \begin{align*}
    \{l\leq \tau_1^u<\infty\}\subseteq &\hspace{4mm}\{l\leq \tau_{0,6}^u<\infty\}\displaystyle \scalebox{1.3}{$\cup$}\{l\leq \tau_{0,15}^u<\infty\}\\&\hspace{1mm}\displaystyle \scalebox{1.3}{$\cup$} \{l\leq \tau_{1,2}^u<\infty\}\scalebox{1.3}{$\cup$} \{l\leq \tau_{1,15}^u<\infty\}\\&\hspace{1mm}\displaystyle \scalebox{1.3}{$\cup$}\{l\leq \tau_{2,2}^u<\infty\}\displaystyle \scalebox{1.3}{$\cup$}\{l\leq \tau_{2,3}^u<\infty\}\scalebox{1.3}{$\cup$} \{l\leq \tau_\psi^{u}<\infty\}.
    \end{align*}
      Corollaries~\ref{03090533}, \ref{03090531},~\ref{03090542}, and \ref{03090723} imply that
    \begin{align*}
        \mathbb{P} \big(l\leq\hspace{1mm} \tau_1^u<\infty\big)&\leq  \hspace{1mm} \mathbb{P}\big(l\leq \tau_{0,6}^u<\infty\big)+\mathbb{P}\big(l\leq \tau_{0,15}^u<\infty\big)\\&\quad+ \mathbb{P}\big(l\leq \tau_{1,2}^u<\infty\big)+ \mathbb{P}\big(l\leq \tau_{1,15}^u<\infty\big)\\&\quad+\mathbb{P}\big(l\leq \tau_{2,2}^u<\infty\big)+\mathbb{P}\big(l\leq \tau_{2,3}^u<\infty\big)+\mathbb{P}\big(l\leq \tau_\psi^u<\infty\big)\\&\leq C_q\big(1+\|u_0\|^{36q}+\mathcal{H}^{18q}(u_0)+\mathcal{G}^{\frac{7q}{2}}(u_0)\big)(\rho+l)^{-\frac{q}{2}+1}.
    \end{align*}
\end{proof}

\subsection{Exponential recurrence in $H^2$}

The main result of this subsection is the exponential recurrence property in $H^2$, established in Lemma~\ref{03160229}. The following lemma provides the irreducibility needed for the recurrence argument.
\begin{lemma}\label{03041000} For any $R,r>0$, there exist $\varepsilon_{R,r}>0$ and $T(R,r)>0$ such that, for any $t\geq T(R,r)$ and $u_0\in B_{H^2}(R)$, we have
\begin{align*}
    P_{t}\big(u_0,B_{H^2}(r)\big)\geq \varepsilon_{R,r}.
\end{align*}
\end{lemma}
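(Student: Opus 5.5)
We prove the lemma by comparing the stochastic flow with the unforced damped NLS flow: the deterministic dynamics dissipates, and with positive probability the noise stays small enough not to spoil this. Write the solution as $u=z+y$, where $z$ is the stochastic convolution
\[
z_t=\int_0^t e^{(-\alpha+i\partial_x^2)(t-s)}b\,\dd W_s ,
\]
and $y$ solves the random PDE $\partial_t y+\alpha y-i\partial_x^2y+i|y+z|^2(y+z)=0$, $y(0)=u_0$. Since $e^{i\partial_x^2 t}$ is unitary on every $H^k$, the process $z$ is an Ornstein--Uhlenbeck process in $H^4$ (by \eqref{03300242b}) with stationary Gaussian law. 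We aim to show that, for every $\delta>0$ and every fixed time window, the event
\[
\Gamma_\delta:=\Big\{\sup_{s\in[0,t]}\|z_s\|_{H^3}\le\delta\Big\}
\]
has positive probability. For a fixed horizon this is the standard small-ball property of a nondegenerate-or-not Gaussian process whose covariance operator is trace class in $H^3$. For arbitrary $t\ge T(R,r)$, however, we should not try to keep $z$ small on the whole interval, because the probability of that degenerates as $t\to\infty$. Instead we use the Markov property, as described next.

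The deterministic step is to show that, for $z\equiv0$, the solution of the unforced damped NLS $\partial_t y+\alpha y-i\partial_x^2 y+i|y|^2y=0$ issued from $B_{H^2}(R)$ satisfies $\|y_t\|_{H^2}\to0$ uniformly, with an explicit time $T_0(R,r)$ after which $\|y_t\|_{H^2}\le r/2$. The ingredients are the following.
\begin{itemize}
\item From the computations above with $b=0$ we have $\frac{\dd}{\dd t}\|y\|^2=-2\alpha\|y\|^2$ and $\frac{\dd}{\dd t}\mathcal H(y)=-2\alpha\mathcal H(y)$, the latter because the conservative parts cancel.
\item By \eqref{03050118} we have $\frac{\dd}{\dd t}\mathcal G(y)\le-\frac{3\alpha}{2}\mathcal G(y)+C\|\partial_x y\|^8\|y\|^2$, and the forcing term decays like $e^{-10\alpha t}$.
\end{itemize}
Gronwall then gives $\mathcal G(y_t)\le Ce^{-\frac{3\alpha}{2}t}(\mathcal G(y_0)+C_R)$, so by \eqref{E:HGnorms} we obtain $\|y_t\|_{H^2}\le C_R e^{-\alpha t/2}$.

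Next we run a perturbation argument on a fixed interval $[0,T_0]$. We estimate $y$ with $z$ small in $H^3$ through the same energy functionals $\|\cdot\|^2,\mathcal H,\mathcal G$ applied to $y$. The extra terms involve $z$ multiplied by polynomials in $\|y\|_{H^2}$. A bootstrap argument (a continuity argument on $[0,T_0]$, using that $\|y\|_{H^2}\le 2C_R$ persists while $\delta$ is small) shows that on $\Gamma_\delta$ with $\delta=\delta(R,r)$ small we get $\|u_{T_0}\|_{H^2}\le\|y_{T_0}\|_{H^2}+\delta\le r$. This yields $P_{T_0}(u_0,B_{H^2}(r))\ge p_0:=\mathbb P(\Gamma_\delta)>0$ uniformly over $u_0\in B_{H^2}(R)$. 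We expect this bootstrap, with its careful bookkeeping of $H^2$ terms where $z$ enters the cubic nonlinearity (handled through $H^1\hookrightarrow L^\infty$), to be the main technical step, though it is routine.

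To pass to all $t\ge T$ we must prevent escape after reaching $B_{H^2}(r)$. We fix $r'\le r$ so small that, applying the same perturbation argument with $R=r'$, solutions starting in $B_{H^2}(r')$ stay in $B_{H^2}(r)$ at every time $s\in[0,1]$ with probability at least $p_1>0$. This is not quite enough for long times, though. A cleaner route is to combine the moment bound of Lemma~\ref{02231236}(i) with the above. By Chebyshev's inequality, for any $s\ge0$ and $u_0\in B_{H^2}(R)$ we have $\mathbb P(\|u_s\|_{H^2}\le R_1)\ge1/2$ for a constant $R_1$ depending only on $R$, namely $R_1^2\gtrsim 1+R^{20}$. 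Given $t\ge T_0(R_1,r)$, we apply the Markov property at time $s=t-T_0(R_1,r)$, followed by the fixed-horizon bound with $R_1$ in place of $R$. This gives
\[
P_t(u_0,B_{H^2}(r))\ge\tfrac12\,p_0(R_1,r)=:\varepsilon_{R,r},
\]
with $T(R,r):=T_0(R_1,r)$; for $t\in[T_0(R,r),T(R,r)]$ the direct argument with $R\le R_1$ applies as well.
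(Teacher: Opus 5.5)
Your proof is correct and has the same overall structure as the paper's. There is a small-noise event of positive probability, and dissipative estimates for $\|\cdot\|^2$, $\mathcal{H}$ and $\mathcal{G}$, closed by a continuity argument, give $P_{T_0}(u_0,B_{H^2}(r))\ge p_0$ uniformly on $B_{H^2}(R)$. You then apply the Markov property at time $t-T_0(R_1,r)$ together with Chebyshev's inequality and Lemmas~\ref{02280441}(i), \ref{02280436}(i), \ref{02231236}(i); this last step is exactly the paper's Step~3.

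The real difference is the decomposition. The paper subtracts the raw noise, $\mathcal{U}=u-bW$, so its small-noise event $\Omega_{\delta,T}$ is an event on the Wiener path and has positive probability trivially. The price is that the equation for $\mathcal{U}$ carries the linear forcing $i\partial_x^2(bW)-\alpha bW$. In the $\mathcal{G}$-estimate this produces $\langle\partial_x^2\mathcal{U},\partial_x^4(bW)\rangle$, which forces smallness of $bW$ in $H^4$. You subtract the stochastic convolution $z$ instead, so $y=u-z$ is perturbed only through $|y+z|^2(y+z)-|y|^2y$. Its $H^2$-norm is at most $C\|z\|_{H^2}(\|y\|_{H^2}+\|z\|_{H^2})^2$, so the bootstrap is cleaner and smallness of $z$ in $H^2$ already suffices. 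In exchange you must check that $z$ has continuous paths in $H^3$ and invoke the small-ball property of the centered Gaussian law of $z$ on $C([0,t];H^3)$. The continuity is easy, since $e^{i\partial_x^2t}$ is a unitary group and $z_t=e^{(-\alpha+i\partial_x^2)t}\int_0^te^{(\alpha-i\partial_x^2)s}b\,\dd W_s$.

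Two minor points:
\begin{itemize}
\item For the unforced flow one has $\frac{\dd}{\dd t}\mathcal{H}(y)=-2\alpha\mathcal{H}(y)-\frac{\alpha}{2}\|y\|_{L^4}^4$. This is an inequality $\le-2\alpha\mathcal{H}(y)$ rather than the identity you state, but the inequality is all you use.
\item The paragraph about $r'$ and the closing remark on $t\in[T_0(R,r),T(R,r)]$ can be dropped, since the lemma only concerns $t\ge T(R,r)$.
\end{itemize}
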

To prove this lemma, we first control the shifted process $\mathcal U_t:=u_t-bW_t$ on a
small-noise event. To this end, let us consider the functional
\begin{align}\label{04091818}
\widetilde{\mathcal{H}}(u):=\mathcal{H}(u)+\frac{1}{2}\|u\|^2,
\end{align}and the event 
$$
    \Omega_{\delta,T}:=\{\sup_{t \in [0,T]}\|bW_t\|_{H^4}\leq\delta\}
$$for any $\delta, T>0$.

\begin{lemma}\label{03050241}
For any $R,r>0$, there exist $T_1:=T_1(R,r)>0$ and $\delta_1:=\delta_1(R,r)\in (0,1)$ such that, for any $T> T_1$, $\delta\in(0,\delta_1]$, and $u_0\in H^1$ with $\widetilde{\mathcal{H}}(u_0)\leq R$, 
we have
\begin{align*}
\sup_{t\in[0,T]}\widetilde{\mathcal{H}}(\mathcal{U}_t)\leq 2R\quad\text{ and }\quad\sup_{t\in[T_1,T]}\widetilde{\mathcal{H}}(\mathcal{U}_t)\leq r
\end{align*}
on the event $\Omega_{\delta,T}$.
\end{lemma}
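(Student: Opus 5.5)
On the event $\Omega_{\delta,T}$ we write the equation satisfied by $\mathcal U_t=u_t-bW_t$. This is a deterministic, pathwise problem:
\begin{align*}
\partial_t\mathcal U+\alpha\mathcal U-i\partial_x^2\mathcal U+i|\mathcal U|^2\mathcal U=f_t,\qquad f_t:=-\alpha bW_t+i\partial_x^2 bW_t-i\big(|\mathcal U+bW_t|^2(\mathcal U+bW_t)-|\mathcal U|^2\mathcal U\big).
\end{align*}
The plan is to compute $\frac{\dd}{\dd t}\widetilde{\mathcal H}(\mathcal U_t)$, treating $f$ as a perturbation of size $O(\delta)$ that is controlled through $\|bW_t\|_{H^4}\le\delta$. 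For the unforced damped NLS the conservative parts drop out. The pairing $\langle i\partial_x^2\mathcal U-i|\mathcal U|^2\mathcal U,\,D\widetilde{\mathcal H}(\mathcal U)\rangle$ vanishes, because $D\widetilde{\mathcal H}(u)=-\partial_x^2u+|u|^2u+u$ and $\langle i g,g\rangle=0$ together with $\langle i\partial_x^2 u,u\rangle=0$. This leaves
\begin{align*}
\frac{\dd}{\dd t}\widetilde{\mathcal H}(\mathcal U)=-\alpha\big(\|\partial_x\mathcal U\|^2+\|\mathcal U\|_{L^4}^4+\|\mathcal U\|^2\big)+\langle f,D\widetilde{\mathcal H}(\mathcal U)\rangle\le-\alpha\widetilde{\mathcal H}(\mathcal U)+\langle f,D\widetilde{\mathcal H}(\mathcal U)\rangle .
\end{align*}
The term $\langle f,-\partial_x^2\mathcal U\rangle$ is the only place where a derivative falls on $\mathcal U$ beyond $H^1$. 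We handle it by integrating by parts and moving one derivative onto $f$. The linear parts of $f$ lie in $H^1$ with norm $\le C\delta$, since we control $bW$ in $H^4$. For the cubic difference, $\partial_x$ of it is bounded in $L^2$ by $C\delta(1+\|\mathcal U\|_{H^1}^2)\|\mathcal U\|_{H^1}$ plus lower-order terms, using $H^1\hookrightarrow L^\infty$ and the fact that every term contains at least one factor $bW$. The pairing with $|\mathcal U|^2\mathcal U+\mathcal U$ is estimated directly in $L^2$.

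Collecting these bounds and using $\|\mathcal U\|_{H^1}^2\le 2\widetilde{\mathcal H}(\mathcal U)$ and $\|\mathcal U\|_{L^4}^4\le 4\widetilde{\mathcal H}(\mathcal U)$, we aim at the differential inequality
\begin{align*}
\frac{\dd}{\dd t}\widetilde{\mathcal H}(\mathcal U_t)\le-\alpha\widetilde{\mathcal H}(\mathcal U_t)+C\delta\big(1+\widetilde{\mathcal H}(\mathcal U_t)\big)^{3}
\end{align*}
for some polynomial power, say $3$, with $C$ independent of $\delta$, $R$ and $T$. The argument then closes by a continuity (bootstrap) argument. Set $t^*:=\sup\{t\le T:\widetilde{\mathcal H}(\mathcal U_s)\le 2R \text{ for } s\le t\}$. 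On $[0,t^*]$ the nonlinear term is at most $C\delta(1+2R)^3$. Choosing $\delta_1$ so that $C\delta_1(1+2R)^3\le \alpha\min\{R,r\}/2$, Gronwall gives
\begin{align*}
\widetilde{\mathcal H}(\mathcal U_t)\le e^{-\alpha t}R+\tfrac12\min\{R,r\}<2R,
\end{align*}
so $t^*=T$ by continuity. The same bound yields $\widetilde{\mathcal H}(\mathcal U_t)\le r$ for all $t\ge T_1:=\alpha^{-1}\log(2R/r)\vee 1$. Continuity of $t\mapsto\widetilde{\mathcal H}(\mathcal U_t)$ follows from $u\in C(\mathbb R_+;H^1)$.

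The main obstacle is justifying the energy identity rigorously for $u_0\in H^1$ only, since $\partial_x^2\mathcal U\notin L^2$ in general. We would first prove the inequality for $u_0\in H^2$, where the $H^2$-regularity propagation quoted in the introduction makes every term meaningful, and then pass to the limit using continuous dependence in $H^1$ on the initial datum for a fixed noise path. A secondary point is that the integration-by-parts estimate for the cubic difference must not produce $\|\partial_x^2\mathcal U\|$; writing $\langle f,-\partial_x^2\mathcal U\rangle=\langle\partial_x f,\partial_x\mathcal U\rangle$ achieves this.
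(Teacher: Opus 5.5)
Your proposal is correct and takes essentially the paper's route: you write an energy identity for $\widetilde{\mathcal{H}}(\mathcal{U}_t)$ in which the conservative terms cancel, and you bound the $bW$-dependent terms (after moving one derivative onto the forcing) by $C\|bW_t\|_{H^4}$ times a polynomial in $\widetilde{\mathcal{H}}(\mathcal{U}_t)$; this is followed by a stopping-time/continuity argument, Gronwall with the damping, and $T_1\sim\alpha^{-1}\log(2R/r)$. The differences are cosmetic: the paper keeps the rate $2\alpha$ and the sharper power $\widetilde{\mathcal{H}}^{3/2}$, stops at level $3R$, and does not spell out the $H^2$-approximation justification that you add.
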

\begin{proof}
Note that $\mathcal{U}$ satisfies   
    \begin{align}\label{03310343-0}
        \partial_t\mathcal{U}+\alpha \mathcal{U}-i\partial_x^2\mathcal{U}+i|\mathcal{U}|^2\mathcal{U}=P(bW,\mathcal{U}),\quad \mathcal{U}_0=u_0,
    \end{align}where
    \begin{align}\label{03310343}
        \nonumber P(bW,\mathcal{U})_t&=i\partial_x^2 (bW_t)-\alpha bW_t-i[2|bW_t|^2\mathcal{U}_t+2bW_t|\mathcal{U}_t|^2+\overline{bW}_t\hspace{0.5mm}\mathcal{U}_t^2\\&\quad+|bW_t|^2bW_t+(bW_t)^2\overline{\mathcal{U}}_t].
    \end{align}
Multiplying \eqref{03310343-0} by $\overline{\mathcal{U}}-\partial_x^2\overline{\mathcal{U}}+|\mathcal{U}|^2\overline{\mathcal{U}}$, taking the real part, and integrating by parts, we deduce
    \begin{align}\label{03090231}
        \nonumber&\widetilde{\mathcal{H}}(\mathcal{U}_t)\hspace{0.5mm}-\hspace{0.5mm} \widetilde{\mathcal{H}}(u_0)+2\alpha\int_0^t\widetilde{\mathcal{H}}(\mathcal{U}_s)\dd s+\frac{\alpha}{2}\int_0^t\|\mathcal{U}_s\|_{L^4}^4\dd s\\\nonumber=&\int_0^t\langle -\partial_x^2 \mathcal{U}_s+|\mathcal{U}_s|^2\mathcal{U}_s+\mathcal{U}_s,P(bW,\mathcal{U})_s\rangle\dd s\\\nonumber=&\int_0^t\langle -\partial_x^2 \mathcal{U}_s+|\mathcal{U}_s|^2\mathcal{U}_s+\mathcal{U}_s,i\partial_x^2(bW_s)-\alpha bW_s-i|bW_s|^2bW_s\rangle\dd s\\&\ \nonumber-\int_0^t\langle -\partial_x^2 \mathcal{U}_s+|\mathcal{U}_s|^2\mathcal{U}_s+\mathcal{U}_s,2i|bW_s|^2\mathcal{U}_s+i(bW_s)^2\overline{\mathcal{U}}_s\rangle\dd s\\&\ \nonumber -\int_0^t\langle -\partial_x^2 \mathcal{U}_s+|\mathcal{U}_s|^2\mathcal{U}_s+\mathcal{U}_s,2ibW_s|\mathcal{U}_s|^2+i\overline{bW}_s\mathcal{U}_s^2\rangle \dd s\\\leq&\ \nonumber C\int_0^t\hspace{-2mm}\big(\hspace{-0.25mm}(\|bW_s\|_{H^3}\hspace{-0.5mm}+\hspace{-0.5mm}\|bW_s\|_{H^1}^3)\| \mathcal{U}_s\|_{H^1}\hspace{-1mm}+\hspace{-1mm}\|\mathcal{U}_s\|_{L^4}^2\|\mathcal{U}_s\|(\|bW_s\|_{H^3}\hspace{-1mm}+\hspace{-1mm}\|bW_s\|_{H^1}^3)\big)\dd s\\\nonumber&\ +C\int_0^t(\|bW_s\|_{H^2}^2\|\mathcal{U}_s\|^2_{H^1}+\|bW_s\|_{H^1}^2\|\mathcal{U}_s\|_{L^4}^4)\dd s\\\nonumber&\ +C\int_0^t(\|bW_s\|_{H^2}\|\mathcal{U}_s\|_{H^1}^3+\|bW_s\|_{H^1}\|\mathcal{U}_s\|_{L^4}^4\|\mathcal{U}_s\|_{H^1})\dd s\\\leq&\ C\int_0^t(1+\|bW_s\|_{H^4}^2)\|bW_s\|_{H^4}\big(1+\widetilde{\mathcal{H}}^{\frac{3}{2}}(\mathcal{U}_s)\big)\dd s.
    \end{align}
For any $T>0$, we define the stopping time
 \begin{align*}
     \tau_R:=\inf\{t\in[0,T]:\widetilde{\mathcal{H}}(\mathcal{U}_t)\geq 3R\}\wedge T,
 \end{align*}
 and fix $\delta\in(0,1)$. Then
  \eqref{03090231} implies
 \begin{align*}
 \widetilde{\mathcal{H}}(\mathcal{U}_t)\leq e^{-2\alpha t}R+\frac{C}{2\alpha}(1+R^\frac{3}{2})\delta
 \end{align*} for $t \leq \tau_R$ on $\Omega_{\delta,T}$.
 Choosing $\widetilde{\delta}_1:=\widetilde{\delta}_1(R)\in (0,1)$ so small that  
 \begin{align*}
     \frac{C}{2\alpha}(1+R^\frac{3}{2})\widetilde{\delta}_1\leq R,
 \end{align*}
we obtain that for any $\delta\leq \widetilde{\delta}_1$,
 \begin{align*}
     \widetilde{\mathcal{H}}(\mathcal{U}_t)\leq 2R \quad \text{ \ on $\Omega_{\delta,T}$ for   $t\leq \tau_R$}.
 \end{align*}
 This implies $\tau_R=T$ and 
 \begin{align}\label{06190422}
     \sup_{t\in[0,T]}\widetilde{\mathcal{H}}(\mathcal{U}_t)\leq 2R\quad \text{ \  on $\Omega_{\delta,T}$ for   $\delta\leq\widetilde{\delta}_1$}.
 \end{align}
  Moreover, given $r>0$, choosing $\delta_1:=\delta_1(R,r)\leq \widetilde{\delta}_1(R)$ so small  that
 \begin{align*}
     \frac{C}{2\alpha}(1+R^\frac{3}{2})\delta_1\leq \frac{r}{2}
 \end{align*}and setting $T_1({R,r}):=\frac{1}{2\alpha}\ln\big(\frac{2R}{r}\big)\vee 1$, we deduce that, for any $T > T_1$ and~$\delta\leq\delta_1$,
 \begin{align*}
     \widetilde{\mathcal{H}}(\mathcal{U}_t)\leq r \quad\text{ on $\Omega_{\delta,T}$ for   $t\in[T_1,T]$}.
 \end{align*}
  Combining this with \eqref{06190422}, we obtain the required result.
\end{proof}
\begin{proof}[Proof of Lemma~\ref{03041000}] 
The proof is divided into three steps. In Step~1 we derive a pathwise
estimate for $\mathcal{G}(\mathcal{U}_t)$ along the shifted process
$\mathcal{U}=u-bW$. Combined with Lemma~\ref{03050241}, it is used in
Step~2 to prove the following claim: for any $R,r>0$, there are
$T_2:=T_2(R,r)>0$ and $\varepsilon_1:=\varepsilon_1(R,r)>0$ such~that
\begin{align}\label{03050424-2}
    P_{T_2}\big(u_0, B_{H^2}(r)\big)\geq \varepsilon_1
    \qquad\text{for } u_0\in B_{H^2}(R).
\end{align}
Finally, in the last step, we complete the proof of the lemma.\par 
\noindent{\it Step 1: A pathwise estimate of $\mathcal{G}(\mathcal{U}_t)$}.
     We have (cf. \eqref{03050118})
\begin{align}\label{03310314}
     \nonumber\mathcal{G}(\mathcal{U}_t)-\mathcal{G}(u_0)+\frac{3\alpha}{2}\int_0^t\mathcal{G}(\mathcal{U}_s)\dd s & \leq  C\int_0^t\|\partial_x\mathcal{U}_s\|^8\|\mathcal{U}_s\|^2\dd s\\&\quad+\int_0^t\langle D\mathcal{G}(\mathcal{U}_s),P(bW,\mathcal{U})_s\rangle \dd s.
\end{align}
By \eqref{03090253}, the second term on the right-hand side of \eqref{03310314}
can be written~as
\begin{align}\label{03310315}
 \nonumber \int_0^t\langle &D\mathcal{G}(\mathcal{U}_s),P(bW,\mathcal{U})_s\rangle \dd s=\int_0^t\langle \partial_x^2\mathcal{U}_s,\partial_x^2 P(bW,\mathcal{U})_s\rangle \dd s\\&\quad+\nonumber  \int_0^t\langle|\mathcal{U}_s|^2\partial_x P(bW,\mathcal{U})_s,\partial_x\mathcal{U}_s\rangle\dd s+\int_0^t\langle \mathcal{U}_s|\partial_x\mathcal{U}_s|^2, P(bW,\mathcal{U})_s\rangle\dd s\\&\quad+\nonumber  \int_0^t\langle \mathcal{U}_s^2,\partial_x\mathcal{U}_s\partial_x(P(bW,\mathcal{U})_s)\rangle\dd s+\int_0^t\langle (\partial_x\mathcal{U}_s)^2,\mathcal{U}_sP(bW,\mathcal{U})_s\rangle\dd s\\&=: \mathrm{I+II+III+IV+V}.
\end{align}
  For $\mathrm{I}$, \eqref{03310343} and H\"older's inequality yield
\begin{align*}
     \mathrm{I}&=\int_0^t\langle\partial_x^2\mathcal{U}_s,i\partial_x^4(bW_s)- \alpha\partial_x^2(bW_s)-i\partial_x^2(|bW_s|^2bW_s)\rangle\dd s\\&\quad-  \int_0^t\langle \partial_x^2 \mathcal{U}_s,\partial_x^2(2i|bW_s|^2\mathcal{U}_s+i(bW_s)^2\overline{\mathcal{U}}_s)\rangle\dd s\\&\quad-\int_0^t\langle \partial_x^2 \mathcal{U}_s,\partial_x^2(2i|\mathcal{U}_s|^2bW_s +i\mathcal{U}_s^2\overline{bW}_s)\rangle\dd s\\&\leq  C\hspace{-0.5mm}\int_0^t\hspace{-0.5mm}\big(\|\partial_x^4(bW_s)\|\hspace{-0.5mm}+\hspace{-0.5mm}\|\partial_x^2 (bW_s)\|+\|bW_s\|^2_{L^\infty}\|\partial_x^2 (bW_s)\|\hspace{-0.5mm}\\&\qquad\qquad+\hspace{-0.5mm}\|bW_s\|_{L^\infty}\|\partial_x(bW_s)\|_{L^4}^2\big)\|\partial_x^2\mathcal{U}_s\|\dd s\hspace{-0.5mm}\\&\quad +\hspace{-0.5mm}C\int_0^t\hspace{-0.5mm}\big(\|bW_s\|_{L^\infty}^2\|\partial_x^2\mathcal{U}_s\|\hspace{-0.2mm}+\hspace{-0.2mm}\|\partial_x(bW_s)\|_{L^\infty}\|bW_s\|_{L^\infty}\|\partial_x\mathcal{U}_s\|\\&\qquad\qquad+\|\partial_x(bW_s)\|_{L^\infty}^2\|\mathcal{U}_s\|+\|\partial^2_x(bW_s)\|_{L^\infty}\|bW_s\|_{L^\infty}\|\mathcal{U}_s\|\big)\|\partial_x^2\mathcal{U}_s\|\dd s\hspace{-0.5mm}\\&\quad+ C\int_0^t\big(\|\partial^2_x(bW_s)\|_{L^\infty}\|\mathcal{U}_s\|_{L^4}^2+\|bW_s\|_{L^\infty}\|\mathcal{U}_s\|_{L^\infty}\|\partial_x^2\mathcal{U}_s\|\\&\qquad\qquad +\|bW_s\|_{L^\infty}\|\partial_x\mathcal{U}_s\|^2_{L^4}+\|\partial_x(bW_s)\|_{L^\infty}\|\mathcal{U}_s\|_{L^\infty}\|\partial_x\mathcal{U}_s\|\big)\|\partial_x^2\mathcal{U}_s\|\dd s.
\end{align*}
Using the Sobolev embedding $H^1\hookrightarrow L^\infty$ and the interpolation inequality
\begin{align*}
    \|\partial_xu\|_{L^4}\leq C\|\partial_xu\|^{\frac{3}{4}}\|\partial_x^2u\|^{\frac{1}{4}},\qquad u\in H^2,
\end{align*}
we obtain
\begin{align*}
     \mathrm{I}&\leq C\int_0^t(\|bW_s\|_{H^4}+\|bW_s\|^3_{H^2})\|\partial_x^2\mathcal{U}_s\|\dd s+C\int_0^t\|bW_s\|_{H^1}^2\|\partial_x^2 \mathcal{U}_s\|^2\dd s\\&\quad +C\int_0^t\|bW_s\|_{H^2}^2\|\partial_x\mathcal{U}_s\|\|\partial_x^2\mathcal{U}_s\|\dd s+C\int_0^t\| bW_s\|_{H^3}^2\|\mathcal{U}_s\|\|\partial_x^2\mathcal{U}_s\|\dd s\\&\quad +C\int_0^t\|bW_s\|_{H^3}\|\mathcal{U}_s\|^2_{L^4}\|\partial_x^2\mathcal{U}_s\|\dd s+C\int_0^t\|bW_s\|_{H^1}\|\mathcal{U}_s\|_{H^1}\|\partial_x^2\mathcal{U}_s\|^2\dd s\\&\quad +C\int_0^t\|bW_s\|_{H^1}\|\partial_x\mathcal{U}_s\|^{\frac{3}{2}}\|\partial_x^2\mathcal{U}_s\|^{\frac{3}{2}}\dd s+C\int_0^t\|bW_s\|_{H^2}\|\mathcal{U}_s\|_{H^1}^{2}\|\partial_x^2\mathcal{U}_s\|\dd s.
\end{align*}
Arguing similarly for $\mathrm{II}$, we derive
\begin{align*}
    \hspace{1mm}\mathrm{II}&=\int_0^t\langle |\mathcal{U}_s|^2\partial_x\mathcal{U}_s,i\partial_x^3(bW_s)\rangle\dd s-\int_0^t\langle |\mathcal{U}_s|^2\partial_x\mathcal{U}_s,\ \alpha\partial_x(bW_s)\\&\qquad\hspace{-1mm}+i\partial_x(|bW_s|^2bW_s)\rangle\dd s-\int_0^t\langle |\mathcal{U}_s|^2\partial_x\mathcal{U}_s,\partial_x(2i|bW_s|^2\mathcal{U}_s+i(bW_s)^2\overline{\mathcal{U}}_s)\rangle\dd s\\&\qquad\hspace{-1mm}-\int_0^t\langle |\mathcal{U}_s|^2\partial_x\mathcal{U}_s,\partial_x(2ibW_s|\mathcal{U}_s|^2+  i\overline{bW}_s\mathcal{U}_s^2)\rangle\dd s\\&\leq C\int_0^t(\|bW_s\|_{H^4}+\|bW_s\|_{H^2}^3)\|\mathcal{U}_s\|_{L^4}^2\|\partial_x\mathcal{U}_s\|\dd s\hspace{-0.2mm}+\hspace{-0.2mm}C\int_0^t\|bW_s\|_{H^2}^2\|\mathcal{U}_s\|_{H^1}^4\dd s\\&\qquad +C\int_0^t\|bW_s\|_{H^2}\|\mathcal{U}_s\|_{H^1}^5\dd s.
\end{align*}
Proceeding as above, we find
\begin{align*}
\hspace{0.5mm}\mathrm{III}&=\int_0^t\langle |\partial_x\mathcal{U}_s|^2\mathcal{U}_s,i\partial_x^2(bW_s)\rangle\dd s-\int_0^t\langle |\partial_x\mathcal{U}_s|^2\mathcal{U}_s,\ \alpha bW_s+i|bW_s|^2bW_s\rangle\dd s\\&\quad -\int_0^t\langle |\partial_x\mathcal{U}_s|^2\mathcal{U}_s,2i|bW_s|^2\mathcal{U}_s+i(bW_s)^2\overline{\mathcal{U}}_s\rangle\dd s\\&\quad -\int_0^t\langle |\partial_x\mathcal{U}_s|^2\mathcal{U}_s,2ibW_s |\mathcal{U}_s|^2+i\overline{bW}_s\mathcal{U}_s^2\rangle\dd s\\&\leq\hspace{0.5mm}C\int_0^t(\|bW_s\|_{H^3}+\|bW_s\|_{H^1}^3)\|\mathcal{U}_s\|_{H^1}^3\dd s+C\int_0^t\|bW_s\|_{H^1}^2\|\mathcal{U}_s\|_{H^1}^4\dd s\\&\quad +C\int_0^t\|bW_s\|_{H^1}\|\mathcal{U}_s\|_{H^1}^5\dd s.
\end{align*}
The estimates for $\mathrm{IV}$ and $\mathrm{V}$ are the same as those for $\mathrm{II}$ and $\mathrm{III}$. Adding $\mathrm{I}$--$\mathrm{V}$ and applying Young's inequality, we get
\begin{align*}
    &\nonumber \mathrm{I+II+III+IV+V}\leq \frac{\alpha}{2}\int_0^t\mathcal{G}(\mathcal{U}_s)\dd s+\hspace{0.5mm}C\int_0^t(\|bW_s\|_{H^4}^2+\|bW_s\|_{H^2}^6)\dd s\\\nonumber&+C\int_0^t(\|bW_s\|^2_{H^1}+\|bW_s\|_{H^1}\|\mathcal{U}_s\|_{H^1})\|\partial_x^2\mathcal{U}_s\|^2\dd s\\&\nonumber +C\int_0^t(\|bW_s\|_{H^2}^4+\|bW_s\|_{H^3}^4)\|\mathcal{U}_s\|_{H^1}^2\dd s+C\int_0^t\|bW_s\|_{H^3}^2\|\mathcal{U}_s\|_{L^4}^4\dd s\\&\nonumber +C\int_0^t\|bW_s\|_{H^1}^4\|\mathcal{U}_s\|_{H^1}^6\dd s+C\int_0^t\|bW_s\|_{H^2}^2\|\mathcal{U}_s\|_{H^1}^4\dd s\\&+C\int_0^t(\|bW_s\|_{H^4}+\|bW_s\|_{H^2}^3)\|\mathcal{U}_s\|_{L^4}^2\|\partial_x\mathcal{U}_s\|\dd s+C\int_0^t\|bW_s\|_{H^2}\|\mathcal{U}_s\|_{H^1}^5\dd s\\&+C\int_0^t(\|bW_s\|_{H^3}+\|bW_s\|_{H^1}^3)\|\mathcal{U}_s\|_{H^1}^3\dd s.
\end{align*}
Combining this with \eqref{03310314}, \eqref{03310315}, and using \eqref{04091818}, we obtain 
\begin{align}\label{03310507}
    \nonumber \mathcal{G}(&\mathcal{U}_t)-\mathcal{G}(u_0)+\alpha\int_0^t\mathcal{G}(\mathcal{U}_s)\dd s\\\leq&\nonumber \  C\int_0^t\widetilde{\mathcal{H}}^5(\mathcal{U}_s)\dd s+ C\int_0^t(\|bW_s\|_{H^4}^2+\|bW_s\|_{H^2}^6)\dd s\\&+\nonumber C\int_0^t\big(\|bW_s\|_{H^1}^2\hspace{-1mm}+\|bW_s\|_{H^1}\widetilde{\mathcal{H}}^{\frac{1}{2}}(\mathcal{U}_s)\big)\mathcal{G}(\mathcal{U}_s)\dd s\hspace{-0.5mm}\\&+C\int_0^t\hspace{-1mm}\|bW_s\|_{H^4}(\|bW_s\|_{H^4}^3+1)\widetilde{\mathcal{H}}(\mathcal{U}_s)\nonumber \dd s\\&+C\int_0^t\|bW_s\|_{H^3}(\|bW_s\|_{H^3}^2+1)\widetilde{\mathcal{H}}^{\frac{3}{2}}(\mathcal{U}_s)\dd s\nonumber+ C\int_0^t\|bW_s\|_{H^2}^2\widetilde{\mathcal{H}}^2(\mathcal{U}_s)\dd s\\&+\hspace{-0.5mm}C\int_0^t\|bW_s\|_{H^2}\widetilde{\mathcal{H}}^\frac{5}{2}(\mathcal{U}_s)\dd s+C\int_0^t\|bW_s\|_{H^1}^4\widetilde{\mathcal{H}}^3(\mathcal{U}_s)\dd s.
\end{align}
\noindent{\it Step 2: Proof of \eqref{03050424-2}}.
Let $R_1,r_1>0$, and let $T_1:=T_1(R_1,r_1)$ and $\delta_1:=\delta_1(R_1,r_1)\in (0,1)$ be
as in Lemma~\ref{03050241}. That lemma shows that, for any $T>T_1$, any $u_0$
with $\widetilde{\mathcal{H}}(u_0)\leq R_1$, and any $\delta\leq\delta_1$,
\begin{align}\label{03050439}
    \sup_{t\in[0,T]}\widetilde{\mathcal{H}}(\mathcal{U}_t)\leq 2R_1
    \quad\text{ and }\quad
    \sup_{t\in[T_1,T]}\widetilde{\mathcal{H}}(\mathcal{U}_t)\leq r_1
    \text{\quad on }\Omega_{\delta,T}.
\end{align}
From \eqref{03310507} and \eqref{03050439}, and using Gronwall's lemma together with the fact that $\delta_1\leq 1$, we deduce that for any $T> T_1$ and $\delta\leq \delta_1$,
\begin{align*}
     &\nonumber\mathcal{G}(\mathcal{U}_T)-e^{-\frac{\alpha}{2} T}\mathcal{G}(u_0)\\&\leq\  \nonumber C\int_0^{T_1} e^{-\frac{\alpha}{2}(T-s)}(2R_1)^5\dd s+C\int_{T_1}^{T} e^{-\frac{\alpha}{2}(T-s)}(r_1)^5\dd s\\&\nonumber\quad+ C\int_0^T\hspace{-1mm}e^{-\frac{\alpha}{2}(T-s)}\delta^2\dd s+ \int_0^Te^{-\frac{\alpha}{2}(T-s)}\big(C\delta^2+C(2R_1)^{\frac{1}{2}}\delta-\frac{\alpha}{2}\big)\mathcal{G}(\mathcal{U}_s)\dd s\\&\quad+  C\int_0^T\hspace{-1mm}e^{-\frac{\alpha}{2}(T-s)}\delta\big(2R_1+(2R_1)^{3}\big)\dd s\text{\qquad  on $\Omega_{\delta,T}$}.
\end{align*}
We choose $\widetilde{\delta}_2:=\widetilde{\delta}_2(R_1,r_1)\in\big(0,\delta_1(R_1,r_1)\big]$ so small that
\begin{align*}
    C(\widetilde{\delta}_2)^2+C(2R_1)^{\frac{1}{2}}\widetilde{\delta}_2\leq \frac{\alpha}{2}.
\end{align*}
Then the last integrand is non-positive, so that, for $\delta\in (0,\widetilde{\delta}_2]$ and $T>T_1$, the following holds on $\Omega_{\delta,T}$:
\begin{align}\label{03050414}
    \nonumber\hspace{-3mm}\mathcal{G}(\mathcal{U}_{T})\leq&\ \nonumber e^{-\frac{\alpha}{2}T}\mathcal{G}(u_0)+\frac{C\delta}{\alpha}\big(\delta+2R_1+(2R_1)^{3}\big)+C\int_0^{T_1}e^{-\frac{\alpha}{2}(T-T_1)}(2R_1)^5 \dd s\\&+C\int^{T}_{T_1}e^{-\frac{\alpha}{2}(T-s)}(r_1)^5 \dd s\nonumber \\\leq&\ e^{-\frac{\alpha}{2}{T}}\mathcal{G}(u_0)+C\delta\big(1\hspace{-0.5mm}+\hspace{-0.5mm}(R_1)^{3})\hspace{-0.5mm}+\hspace{-0.5mm}C(r_1)^5\hspace{-0.5mm}+\hspace{-0.5mm}C(R_1)^5 T_1e^{-\frac{\alpha}{2}(T-T_1)}.
    \end{align}
We now choose $R_1=R_1(R)$ so large that
\begin{align}\label{06190757}
\widetilde{\mathcal{H}}(v)\vee\mathcal{G}(v)\leq R_1,
\qquad  v\in B_{H^2}(R).
\end{align}
To estimate \eqref{03050414}, we first choose $r_1\leq \frac{r^2}{16}$ so small that  
\begin{align*}
    C(r_1)^5\leq \frac{r^2}{64}.
\end{align*}
Then we choose $\delta_2:=\delta_2(R,r)\leq \widetilde{\delta}_2(R_1,r_1)\wedge\frac{r}{2}$ so small that 
\begin{align*}
     C\delta_2\big(1+(R_1)^{3}\big)\leq \frac{r^2}{64}.
\end{align*}
Finally, choosing $T=T_2:=T_2(R,r)$ so large that
\begin{align*}
    e^{-\frac{\alpha}{2}{T_2}}R_1\leq \frac{r^2}{64}\quad \text{ and } \quad C (R_1)^5 T_1e^{-\frac{\alpha}{2}(T_2-T_1)}\leq \frac{r^2}{64}, 
\end{align*}
we obtain from \eqref{03050414} that if $\|u_0\|_{H^2}\leq R$, then 
\begin{align}\label{06190545-1}
    \mathcal{G}(\mathcal{U}_{T_2})\leq \frac{r^2}{16} \quad\text{on } \Omega_{\delta_2,T_2}.
\end{align}
On the other hand, since $r_1\leq\frac{r^2}{16}$,  \eqref{03050439} implies
\begin{align}\label{06190545-2}
    \widetilde{\mathcal{H}}(\mathcal{U}_{T_{2}})\leq \frac{r^2}{16} \quad\text{on } \Omega_{\delta_2,T_2}.
\end{align}
Combining \eqref{06190545-1}, \eqref{06190545-2} with \eqref{E:HGnorms}, we obtain that $\mathcal{U}_{T_2}\in B_{H^2}(\frac{r}{2})$. Since $\delta_2\leq \frac{r}{2}$, we have  
\begin{align*}
    \|u_{T_2}\|_{H^2}\leq \|\mathcal{U}_{T_2}\|_{H^2}+\|bW_{T_2}\|_{H^2}\leq  r \quad\text{on } \Omega_{\delta_2,T_2}.
\end{align*}
Since $bW$ is a Gaussian process, we have $\varepsilon_1({R,r}):=\mathbb{P}(\Omega_{\delta_2,T_2})>0$, which proves \eqref{03050424-2}.\par 
\noindent{\it Step 3: Conclusion}. Let $R_2=R_2(R)>0$ be a constant to be chosen, and let $T_3:=T_2(R_2,r)$ and $\delta_3:=\delta_2(R_2,r)$ be the corresponding parameters from Step~2, so that $\mathbb{P}(\Omega_{\delta_3,T_3})=\varepsilon_1(R_2,r)$. For $t\geq T_3$, we obtain
\begin{align*}
    \mathbb{P}_{u_0}\big( u_t\in B_{H^2}(r)\big)&\geq\mathbb{E}\big[\mathbf{1}_{u_{t-T_3}\in B_{H^2}(R_2)}{P}_{T_3}(u_{t-T_3}, B_{H^2}(r))\big]\\&\geq \mathbb{P}(\Omega_{\delta_3,T_3})\mathbb{P}_{u_0}\big(u_{t-T_3}\in B_{H^2}(R_2)\big)\\&\geq \mathbb{P}(\Omega_{\delta_3,T_3})\mathbb{P}_{u_0}\Big(\mathcal{G}(u_{t-T_3})+\widetilde{\mathcal{H}}(u_{t-T_3})\leq \frac{R_2^2}{2}\Big).
\end{align*}
By Lemmas \ref{02280441}(i), \ref{02280436}(i), and \ref{02231236}(i), we have
\begin{align*}
\nonumber \mathbb{P}_{u_0}\Big(\mathcal{G}&(u_{t-T_3})+\widetilde{\mathcal{H}}(u_{t-T_3})> \frac{R_2^2}{2}\Big)\\\leq&\nonumber \ \mathbb{P}_{u_0}\big(\mathcal{G}(u_{t-T_3})>\frac{R_2^2}{4}\big)+\mathbb{P}_{u_0}\big(\widetilde{\mathcal{H}}(u_{t-T_3})>\frac{R_2^2}{4}\big)\\\leq&\nonumber \ \frac{4}{R_2^2}\mathbb{E}_{u_0}\big[\mathcal{G}(u_{t-T_3})\big]+\frac{4}{R_2^2}\mathbb{E}_{u_0}\big[\widetilde{\mathcal{H}}(u_{t-T_3})\big]\\\leq&\nonumber \ \frac{4}{R_2^2}\big(e^{-\alpha (t-T_3)}\mathcal{G}(u_0)+Ce^{-\alpha (t-T_3)}\widetilde{\mathcal{H}}^5(u_0)+C\big)\\\nonumber &+\frac{4}{R_2^2}\big(e^{-{\frac{\alpha}{2} }(t-T_3)}\widetilde{\mathcal{H}}(u_0)+C\big)\\\leq& \ \frac{C}{R_2^2}\big(R_1^5+1\big)
\end{align*}
for $R_1(R)$ defined in \eqref{06190757}. By choosing $R_2=R_2(R)>0$ so large that
\begin{align*}
    \frac{C}{R_2^2}\big(R_1^5+1\big)\leq \frac{1}{2},
\end{align*}
  we obtain
\begin{align*}
    \mathbb{P}_{u_0}\Big(\mathcal{G}(u_{t-T_3})+\widetilde{\mathcal{H}}(u_{t-T_3})\leq \frac{R_2^2}{2}\Big)\geq\frac{1}{2},
\end{align*}
which in combination with \eqref{03050424-2} implies that
\begin{align*}
    \mathbb{P}_{u_0}\big( u_t\in B_{H^2}(r)\big)\geq \frac{1}{2}\varepsilon_1({R_2,r}).
\end{align*}
Thus, choosing $T(R,r):=T_3$ and $\varepsilon_{R,r}:=\frac{1}{2}\varepsilon_1({R_2(R),r})$, we complete the proof. 
\end{proof}
Next, we establish exponential recurrence in $H^2$. To this end, we introduce
the functional $\widetilde{F}:H^2\rightarrow \mathbb{R}$ defined by
\begin{align}\label{03120847}
    \widetilde{F}(u):=\|u\|^{10}+\mathcal{H}^5(u)+\mathcal{G}(u).
\end{align}
Let $u$ and $u^\prime$ be the solutions to \eqref{equation-1} with initial
conditions $u_0$ and $u_0^\prime$, driven by independent noises $bW$ and
$bW^\prime$, respectively. For any $d>0$ and time step $T>0$ (to be specified
in the next lemma), we define the stopping time
\begin{align}\label{08030032}
    \vartheta_d:=\min\big\{n\in \mathbb{N}:\|u_{nT}\|_{H^2}\vee\|u^\prime_{nT}\|_{H^2}\leq d\big\}.
\end{align}
\begin{lemma}\label{03160229} For any $d>0$, there exist $c_d, T_{d}, C_{d}>0$ such that, for any
$T\geq T_{d}$ and $u_0,u_0^\prime\in H^2$,
\begin{align}\label{03310635}
    \mathbb{E}\big[\exp(c_d\vartheta_d)\big]\leq C_{d}\big(1+\widetilde{F}(u_0)+\widetilde{F}(u_0^\prime)\big).
\end{align}
\end{lemma}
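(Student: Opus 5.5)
The proof combines a Lyapunov drift for $\widetilde F$ along the time-$T$ skeleton with the irreducibility of Lemma~\ref{03041000}. The result is a geometric bound on the tail of $\vartheta_d$. The pair $(u_{nT},u^\prime_{nT})$ is a Markov chain on $H^2\times H^2$, since the two noises are independent, and we write $\widetilde F_2(u,u^\prime):=\widetilde F(u)+\widetilde F(u^\prime)$.

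\emph{Step 1: drift.} Lemmas~\ref{02280441}(i), \ref{02280436}(i) and \ref{02231236}(i), with $p=5,5,1$, give
\begin{align*}
\mathbb{E}\,\widetilde F(u_T)\le e^{-\alpha T}\mathcal{G}(u_0)+C e^{-\alpha T}\big(\|u_0\|^{10}+\mathcal{H}^5(u_0)\big)+e^{-cT}\big(\|u_0\|^{10}+\mathcal{H}^5(u_0)\big)+C,
\end{align*}
with $c=\min(\gamma,5\alpha/2)>0$ after choosing $\gamma=\alpha$. Taking $T\ge T_d$ large, this yields
\begin{align*}
\mathbb{E}\,\widetilde F_2(u_T,u^\prime_T)\le \tfrac12\widetilde F_2(u_0,u_0^\prime)+B,
\end{align*}
where the constant $B$ does not depend on $T$.

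\emph{Step 2: minorization on a level set.} Fix $R$ so large that $\widetilde F_2\le 4B$ implies $u,u^\prime\in B_{H^2}(R)$. Then Lemma~\ref{03041000} with $r=d$, together with independence of the two noises, gives
\begin{align*}
\mathbb{P}\big(\|u_{T}\|_{H^2}\vee\|u^\prime_{T}\|_{H^2}\le d\big)\ge\varepsilon_{R,d}^2=:\varepsilon
\end{align*}
whenever $T\ge T(R,d)$ and $\widetilde F_2(u_0,u_0^\prime)\le 4B$. We enlarge $T_d$ so that it also dominates $T(R,d)$.

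\emph{Step 3: exponential moments, the main obstacle.} This is the standard Foster--Lyapunov route, and it is the step needing care, since the hitting of the level set $\{\widetilde F_2\le 4B\}$ and the success of the minorization must be chained with the Markov property. Let $\varsigma$ be the first $n$ with $\widetilde F_2(u_{nT},u^\prime_{nT})\le 4B$. Outside this set the drift gives $\mathbb{E}[\widetilde F_2(z_{n+1})\mid z_n]\le \tfrac34\widetilde F_2(z_n)$, so $(4/3)^{n\wedge\varsigma}\widetilde F_2(z_{n\wedge\varsigma})$ is a supermartingale. This yields
\begin{align*}
\mathbb{E}\,(4/3)^{\varsigma}\le C(1+\widetilde F_2(z_0)),
\end{align*}
while for starting points in the level set we only get a uniform bound $C_B$. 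Define successive visits $\varsigma_k$ to the level set. At each visit the next step lands in $B_{H^2}(d)^2$ with probability at least $\varepsilon$, independently of the past by the strong Markov property of the skeleton. Hence the number $J$ of visits before success satisfies $\mathbb{P}(J\ge j)\le(1-\varepsilon)^j$.

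To conclude, write $\vartheta_d\le\varsigma_{J}+1$ and use H\"older's inequality to combine the geometric tail of $J$ with the $(4/3)^{\cdot}$ moments of the return times. Each return time has an exponential moment bounded by $C_B$, since the restarts occur from $\widetilde F_2(z_{\varsigma_k+1})$, whose conditional mean is at most $B+4B/2$ by Step~1. For $c_d$ small, $\mathbb{E}e^{c_d\vartheta_d}\le C(1+\widetilde F_2(z_0))\sum_j (1-\varepsilon)^{j/2}K^{j}$. This sum converges once $K=\mathbb{E}[e^{2c_d(\text{return})}]^{1/2}$ is close to $1$, which holds for small $c_d$ by Jensen's inequality and the uniform exponential moment bound. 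The result is \eqref{03310635}.
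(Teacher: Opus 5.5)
Your proposal is correct. It uses exactly the paper's ingredients: the moment bounds of Lemmas~\ref{02280441}(i), \ref{02280436}(i) and \ref{02231236}(i) for the drift, and Lemma~\ref{03041000} together with independence of the two noises for the minorization.

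The difference lies in your Step~3. The paper does not carry out this step at all: it invokes Proposition~5.1 of~\cite{Mar14} (see also Proposition~3.3 of~\cite{Shi08}) and only verifies that result's three hypotheses:
\begin{itemize}
\item[(i)] a contraction $\mathbb{E}[1+\widetilde F(u_{T_d})]\le a(1+\widetilde F(u_0))$ for a single copy outside the ball $B_{H^2}(R_*)$;
\item[(ii)] a uniform bound on $\mathbb{E}\,\widetilde F(u_t)$ for all $t$ from inside that ball;
\item[(iii)] a positive probability that both copies enter $B_{H^2}(d)$ when started from $B_{H^2}(R_*)\times B_{H^2}(R_*)$.
\end{itemize}
You instead prove that abstract result by hand. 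You use the pair functional $\widetilde F(u)+\widetilde F(u^\prime)$ as the Lyapunov function, take its sublevel set as the small set, and then run a geometric-trials argument with H\"older's inequality. Because your drift inequality holds for the pair chain everywhere, you need neither a separate uniform bound like (ii) nor any treatment of the mixed case where one copy is in the ball and the other is not; the sum absorbs both. The paper's route is shorter but rests on the cited proposition; yours is self-contained and makes the dependence on the initial data transparent.

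One cosmetic point. The supermartingale $(4/3)^{n\wedge\varsigma}\widetilde F_2(z_{n\wedge\varsigma})$ gives $\mathbb{P}(\varsigma>n)\le(3/4)^n\widetilde F_2(z_0)/(4B)$. Since the terminal value $\widetilde F_2(z_\varsigma)$ may vanish, this yields $\mathbb{E}\,a^{\varsigma}\le C_a(1+\widetilde F_2(z_0))$ only for $a<4/3$, not at $a=4/3$ itself. This is harmless: any fixed $a>1$ suffices for your Jensen and H\"older step.
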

\begin{proof}
We deduce \eqref{03310635} from Proposition~5.1 in \cite{Mar14} (see also Proposition~3.3 in \cite{Shi08}). By that proposition, it suffices to find constants $C_*, R_*, T_d>0$ and $a\in(0,1)$ such that
\begin{itemize}
       \item[(i)] $\mathbb{E}\big[1+\widetilde{F}(u_{T_d})\big]\leq a\big(1+\widetilde{F}(u_0)\big)$ for any $\|u_0\|_{H^2}\geq R_*$,
    \item[(ii)] $\mathbb{E}\big[\widetilde{F}(u_t)\big]\leq C_*$ for any $\|u_0\|_{H^2}\leq R_*$ and $t\geq 0$,
    \item[(iii)] $\inf\limits_{\|u_0\|_{H^2}\vee \|u_0^\prime\|_{H^2}\leq R_*}\mathbb{P}\big(\|u_{T_d}\|_{H^2}\vee\|u_{T_d}^\prime\|_{H^2}\leq d\big)>0$.
\end{itemize}
By Lemmas~\ref{02280441}(i), \ref{02280436}(i), and \ref{02231236}(i), there
exists $\mathcal{C}_5>1$ such that 
\begin{align}\label{03310616}
    \mathbb{E}_{u_0}\big[1+\widetilde{F}(u_t)\big]\leq e^{-\alpha t}\widetilde{F}(u_0)+\mathcal{C}_5e^{-\alpha t}(\|u_0\|^{10}+\mathcal{H}^5(u_0))+\mathcal{C}_5
\end{align}  for any $u_0\in H^2$ and $t\ge0$.

We first verify (i). By \eqref{E:HGnorms}, we have $\|u\|_{H^2}^2\leq C(1+\widetilde{F}(u))$ for any $u\in H^2$, so that we can choose $R_*>0$ so large that
\begin{align}\label{03310616-w}
    1+\widetilde{F}(u_0)\geq 4\mathcal{C}_5
    \qquad\text{whenever } \|u_0\|_{H^2}\geq R_*,
\end{align}
and then $T_*>0$ so large that $\mathcal{C}_5e^{-\alpha T_*}\leq 1/8$. 
Combining \eqref{03310616}, \eqref{03310616-w}, and the choice of $T_*$ yields,
for $t\geq T_*$ and $\|u_0\|_{H^2}\geq R_*$,
\begin{align*}
    \mathbb{E}_{u_0}\big[1+\widetilde{F}(u_t)\big]\leq \frac{1}{2}\big(1+\widetilde{F}(u_0)\big), 
\end{align*} which is (i) with $a=1/2$.
On the other hand, for $\|u_0\|_{H^2}\le R_*$,
estimate \eqref{03310616} gives a constant $C_*=C_*(R_*)>0$ with
\begin{align*}
    \mathbb{E}_{u_0}\big[\widetilde{F}(u_t)\big]\leq C_*, \qquad t\ge0,
\end{align*}
which is (ii). Finally, to verify (iii), Lemma~\ref{03041000} provides
$\varepsilon_{R_*,d}>0$ and $T(R_*,d)>0$ such that
$$
P_t\big(w,B_{H^2}(d)\big)\ge\varepsilon_{R_*,d},
\qquad  t\geq T(R_*,d),\ w\in B_{H^2}(R_*).
$$
Since $u$ and $u^\prime$ are driven by independent noises, it follows that 
\begin{align*}
\mathbb{P}\big(\|u_{t}\|_{H^2}\vee\|u^\prime_{t}\|_{H^2}\leq d\big)\geq(\varepsilon_{R_*,d})^2>0
\end{align*}
for any $t\geq T(R_*,d)$ and $u_0,u_0^\prime\in B_{H^2}(R_*)$. Conditions
(i)--(iii) therefore hold with $T_d:=T_*\vee T(R_*,d)$, which completes the proof.
\end{proof}

\subsection{Existence and regularity of stationary measures}\label{S:A4}

In this subsection, we construct a stationary measure concentrated
on~$H^2$ and show that every stationary measure in $\mathcal{P}(H^2)$ has
finite $H^2$-moments of all orders.
\begin{lemma}\label{05050844}
Equation~\eqref{equation-1} admits a stationary measure
$\nu\in\mathcal{P}(H^2)$.\end{lemma}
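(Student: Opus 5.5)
The plan is the Krylov--Bogolyubov procedure, run with initial datum $u_0=0$ and time averages taken in $\mathcal P(H^1)$. The required tightness will come from uniform $H^2$-moment bounds combined with the spatial weight $\psi$.

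\emph{Averaged measures and the moment bound.} Set $\bar\mu_T:=\frac1T\int_0^T P_t(0,\cdot)\,\dd t$ for $T\ge1$. By Lemmas~\ref{02280441}(i), \ref{02280436}(i) and~\ref{02231236}(i) with $u_0=0$, we have $\sup_{t\ge0}\mathbb{E}_0\big[\|u_t\|^{2}+\mathcal H(u_t)+\mathcal G(u_t)\big]\le C$. Combined with \eqref{E:HGnorms}, this gives $\sup_{t\ge0}\mathbb{E}_0\|u_t\|_{H^2}^2\le C$, and hence $\int\|u\|_{H^2}^2\,\bar\mu_T(\dd u)\le C$ uniformly in $T$.

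\emph{Tightness in $H^1$.} The $H^2$ bound alone does not give compactness in $H^1(\mathbb R)$, so spatial decay is needed as well. Since $\psi(0,\cdot)=0$, Lemma~\ref{03080957}(i) with $p=1$ and $u_0=0$ yields
\[
\mathbb{E}_0\Big[\alpha\int_0^T\|\psi(s)u_s\|^2\dd s\Big]\le C T .
\]
For $s\ge1$ we have $\psi(s,x)\ge c\,\min\{s,\varphi(x)\}$, so on the region $\varphi\le s$ the weight $\psi(s)$ dominates $c\,\varphi$. I would therefore replace $\varphi$ by the bounded truncation $\varphi_{A}:=\min\{\varphi,A\}$. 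For each $A$ this gives $\frac1T\int_1^T\mathbb{E}_0\|\varphi_A u_s\|^2\dd s\le C$ with $C$ independent of $A$ and $T$; the point is that on $\{\varphi>s\}$ the weight still satisfies $\psi(s)\ge c s\ge c\min(\varphi,A)$ whenever $s\ge A$, and the contribution of $s\le A$ is $O(A/T)$. Letting $T\to\infty$ first along the subsequence, and then $A\to\infty$ by monotone convergence, produces a bound on $\int\|\varphi u\|^2$ for the limit measure.

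To obtain genuine tightness for fixed $T$, I would instead use the set
\[
K_R=\big\{u:\|u\|_{H^2}\le R,\ \|\varphi_{A}u\|\le R\ \text{for all }A\le A(T)\big\}.
\]
A simpler route is to note that the family $\{u:\|u\|_{H^2}\le R,\ \|\varphi u\|\le R\}$ is compact in $H^1$: it is compact by Rellich on bounded intervals, and the tails are controlled because $\int_{|x|>L}|u|^2\le R^2/L^2$, while the $H^1$ tail is obtained by interpolating that $L^2$ tail bound against the $H^2$ bound. The obstacle is that $\|\varphi u_s\|$ itself is not bounded in expectation uniformly in $s$ from the estimates above; only $\psi(s)$ is. I expect this to be the main difficulty. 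My first attempt would be to use $\psi(s)$ directly: for fixed $L$ and every $s\ge L$, the bound $\psi(s,x)\ge c\min(L,\varphi(x))$ controls the tail $\int_{|x|>L}|u_s|^2$ by $C\|\psi(s)u_s\|^2/L^2$. Averaging in time then gives
\[
\bar\mu_T\big(\{u:\textstyle\int_{|x|>L}|u|^2>\eta\}\big)\le \frac{L}{T}+\frac{C}{\eta L^2}.
\]
This bound is uniform in $T\ge T_0(L)$, which is enough for tightness via the standard criterion for the family $\{\bar\mu_T\}_{T\ge T_0}$.

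\emph{Passage to the limit.} By Prokhorov there is a weak limit $\nu$ of $\bar\mu_{T_k}$ in $\mathcal P(H^1)$. Stationarity of $\nu$ follows from the Feller property of $\PPPP_t$ on $C_b(H^1)$ together with the standard estimate $\|\PPPP_t^*\bar\mu_T-\bar\mu_T\|\le 2t/T$. Finally, $u\mapsto\|u\|_{H^2}^2$ (set equal to $+\infty$ off $H^2$) is lower semicontinuous on $H^1$, so the portmanteau theorem gives $\int\|u\|_{H^2}^2\,\nu(\dd u)\le C$. In particular $\nu(H^2)=1$, so $\nu\in\mathcal P(H^2)$.
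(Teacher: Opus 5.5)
Your argument is correct, but for the existence part it takes a different route from the paper. The paper proves no tightness itself. It invokes Theorem~3.4 of \cite{EKZ17}, which already gives a subsequence of $Q_{n}=\frac1{n}\int_0^{n}P_t(0,\cdot)\,\dd t$ converging in $\mathcal P(H^1)$ to a stationary measure $\nu$. It then adds only the step you also have at the end: the uniform bound $\int\|u\|_{H^2}^2\,Q_{n_k}(\dd u)\le C$ from Lemmas~\ref{02280441}(i), \ref{02280436}(i), \ref{02231236}(i), combined with lower semicontinuity of $\|\cdot\|_{H^2}$ on $H^1$.

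You instead carry out the Krylov--Bogolyubov argument in full. You get tightness in $H^1$ from two ingredients. The first is the $H^2$ bound, via Rellich on bounded intervals and interpolation for the $H^1$ tails. The second is the tail estimate $\int_{|x|>L}|u_s|^2\le (cL)^{-2}\|\psi(s)u_s\|^2$ for $s\ge L$, which holds because $\psi(s,x)\ge(1-e^{-1})\min\{s,\varphi(x)\}$, together with Lemma~\ref{03080957}(i) at $u_0=0$. This makes the lemma self-contained and shows that the paper's own weight $\psi$ already supplies the compactness lost on $\mathbb R$. The paper's route is shorter but depends on an external theorem.

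One point to tidy concerns your bound $L/T+C/(\eta L^2)$. It is small only for $T\ge T_0(L)$, so it does not verify $\lim_{L\to\infty}\sup_{T\ge T_0}\bar\mu_T(\int_{|x|>L}|u|^2>\eta)=0$ for a fixed $T_0$, as your phrase ``for the family $\{\bar\mu_T\}_{T\ge T_0}$'' suggests. Work with a sequence $T_k\to\infty$ instead. Then $\limsup_k\bar\mu_{T_k}(\int_{|x|>L}|u|^2>\eta)\le C/(\eta L^2)$. The finitely many remaining $k$ are absorbed by enlarging $L$: the tail is non-increasing in $L$ and tends to $0$ pointwise, so each single $\bar\mu_{T_k}$ satisfies the tail condition for $L$ large. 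That is all Prokhorov's theorem needs. The preliminary discussion with $\varphi_A$ and the sets $K_R$ is not used and can be dropped.
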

\begin{proof}
By Theorem 3.4 in \cite{EKZ17}, there is a subsequence $\{n_k\}_{k\in\mathbb{N}}\subseteq\mathbb{N}$ with $n_k\to+\infty$ as
$k\rightarrow\infty$ and a stationary measure $\nu\in \mathcal{P}(H^1)$ such
that
\begin{align}\label{04091400}
    Q_{n_k}(\cdot):=\frac{1}{n_k}\int_0^{n_k}P_t(0,\cdot)\dd t\rightarrow \nu
\quad\text{in } \mathcal{P}(H^1).
\end{align}
  Let $u$ be the solution to \eqref{equation-1} issued from $u_0=0$. Then, by Lemmas~\ref{02280441}(i), \ref{02280436}(i), and~\ref{02231236}(i),
there exists $C>0$ such that, for any $k\geq1$,
\begin{align*}
    \int_{H^1}\|u\|_{H^2}^2 Q_{n_k}(\dd u)\leq
    \frac{2}{n_k}\int_0^{n_k}\mathbb{E}\big[\|u_t\|^2+\mathcal{H}(u_t)
    +\mathcal{G}(u_t)\big]\dd t\leq C.
\end{align*}
Since $\|\cdot\|_{H^2}:H^1\rightarrow [0,\infty]$ is lower semicontinuous, we deduce from \eqref{04091400} that
\begin{align*}
    \int_{H^1}\|u\|_{H^2}^2 \nu(\dd u)\leq \liminf_{k\rightarrow\infty}\int_{H^1}\| u\|_{H^2}^2 Q_{n_k}(\dd u)\leq C,
\end{align*}
which implies that $\nu(H^2)=1$.
\end{proof}
 \begin{lemma}\label{05050848}
    Let $\mu\in\mathcal{P}(H^2)$ be a stationary measure for
    \eqref{equation-1}. Then, for any $p\geq 1$,
    \begin{align*}
        \int_{H^2}\|u\|_{H^2}^{2p}\mu(\dd u)<\infty.
    \end{align*}
\end{lemma}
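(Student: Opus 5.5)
The plan is to exploit stationarity together with the dissipative moment bounds of Lemmas~\ref{02280441}(i), \ref{02280436}(i), and~\ref{02231236}(i). Since \eqref{E:HGnorms} gives $\|u\|_{H^2}^{2p}\le C_p(\|u\|^{2p}+\mathcal{H}^p(u)+\mathcal{G}^p(u))$, it suffices to show that $\int \|u\|^{2p}\,\dd\mu$, $\int\mathcal{H}^p\,\dd\mu$, and $\int\mathcal{G}^p\,\dd\mu$ are all finite. The difficulty is that a priori $\mu$ only charges $H^2$ and has no known moments. We therefore cannot integrate the bounds directly, because the right-hand sides involve $\mathcal{G}^p(u_0)$ and other quantities that may not be $\mu$-integrable. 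We work around this by truncating with a bounded function.

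Fix $p\ge1$ and $M>0$, and let $\Phi(u):=\|u\|^{2p}+\mathcal{H}^p(u)+\mathcal{G}^p(u)$, which is finite on $H^2$. Consider $\Phi\wedge M$; it is bounded and Borel on $H^1$, with $\Phi$ taken as $+\infty$ off $H^2$. Stationarity gives
\[
\int \Phi\wedge M\,\dd\mu=\int \mathbb{E}_{u_0}[\Phi(u_t)\wedge M]\,\mu(\dd u_0)
\]
for every $t$, using the $H^2$-propagation of solutions. For each fixed $u_0\in H^2$, the three lemmas give
\[
\mathbb{E}_{u_0}[\Phi(u_t)]\le e^{-ct}\big(\Phi(u_0)+C_p\|u_0\|^{10p}+C_p\mathcal{H}^{5p}(u_0)\big)+C_p,
\]
with $c=\alpha p/2\wedge\alpha p$ (taking $\gamma$ large in (i)). Hence
\[
\mathbb{E}_{u_0}[\Phi(u_t)\wedge M]\le \min\big\{M,\;e^{-ct}K(u_0)+C_p\big\},
\]
where $K(u_0)<\infty$ for every $u_0\in H^2$. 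As $t\to\infty$ this bound converges pointwise to $\min\{M,C_p\}\le C_p$, and it is dominated by $M$. Dominated convergence in $u_0$ then yields $\int\Phi\wedge M\,\dd\mu\le C_p$. Letting $M\to\infty$ by monotone convergence gives $\int\Phi\,\dd\mu\le C_p$, which is the claim.

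The main point to check is that the constant $C_p$ in the three lemmas is independent of $u_0$. This is the case: the additive constants there are uniform, and the dependence on the initial data sits only in the exponentially decaying term. The measurability of $u_0\mapsto\mathbb{E}_{u_0}[\Phi(u_t)\wedge M]$ follows from the Feller property and the measurable dependence of the $H^2$-valued solution on the datum, so the stationarity identity for bounded Borel test functions applies.
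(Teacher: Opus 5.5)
Your proof is correct, and it follows a different, more self-contained route than the paper's.

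The paper splits the task in two:
\begin{itemize}
\item The moments $\int(\|u\|^{2m}+\mathcal{H}^m(u))\,\mu(\dd u)$ are taken from Proposition~4.1 of \cite{BFZ23}, with the remark that this result does not need large damping.
\item Only $\mathcal{G}^p$ is handled internally. It truncates to $\mathcal{G}_R=\mathcal{G}\wedge R$ and averages the stationarity identity over $t\in[0,T]$. It then splits $\mu$ into $B_{H^2}(\varrho)$, where Lemma~\ref{02231236}(i) integrated in time gives $C_p(\varrho^{20p}+1)+C_pT$, and its complement, where only $\mathcal{G}_R\le R$ is used. Finally it sends $T\to\infty$, then $\varrho\to\infty$, then $R\to\infty$.
\end{itemize}

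You instead truncate the full functional $\|u\|^{2p}+\mathcal{H}^p+\mathcal{G}^p$ at a single time $t$. You use the fact that in Lemmas~\ref{02280441}(i), \ref{02280436}(i) (with $\gamma$ large) and \ref{02231236}(i), the $u_0$-dependence sits entirely in a term decaying at least like $e^{-\alpha p t/2}$, while the additive constant is uniform in $u_0$. Dominated convergence in $u_0$, with dominating function $M$, then does the work that the Ces\`aro averaging and the ball/complement splitting do in the paper. The paper's splitting is essentially a hand-made version of your dominated-convergence step.

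The gain of your version is that it needs no external input. The $L^2$- and $\mathcal{H}$-moments come out of the paper's own a~priori bounds by the same argument. So one does not have to check that the cited result of \cite{BFZ23} holds without the large-damping hypothesis, and the whole lemma takes a few lines. Beyond that, the two arguments rest on the same mechanism: truncation by a bounded Borel function, stationarity, and decay of the initial-data contribution.
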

\begin{proof}
By Proposition~4.1 in~\cite{BFZ23}, which holds without the largeness assumption on $\alpha$, every stationary measure satisfies, for
all $m\geq 1$,
\begin{align}\label{05050735}
    \int_{H^2}\big(\|u\|^{2m}+\mathcal{H}^m(u)\big)\mu(\dd u)\leq C_m.
\end{align}
In view of the inequality 
$$
\|u\|_{H^2}^{2p}\le C_p(\|u\|^{2p}+\mathcal H^p(u)+\mathcal G^p(u))
$$
and \eqref{05050735}, it suffices to show
$\int_{H^2}\mathcal{G}^p(u)\,\mu(\dd u)<\infty$.

For any $R>0$, we consider the truncated functional
$\mathcal{G}_R:H^2\rightarrow \mathbb{R}$ defined by $\mathcal{G}_R(u):=\mathcal{G}(u)\wedge R$. Since $\mu\in\mathcal{P}(H^2)$ is stationary and $u_0\in H^2$ implies
$u_t\in H^2$ almost surely, we have
\begin{align}\label{E:stat-Borel}
    \int_{H^2}\mathbb{E}_u\big[g(u_t)\big]\mu(\dd u)=\int_{H^2}g(u)\mu(\dd u)
\end{align}
for every bounded Borel function $g:H^2\to\mathbb{R}$ and every $t\ge0$; we
shall use this with $g=\mathcal{G}_R^p$.

 By Lemma~\ref{02231236}(i), for any $\varrho>0$, we have 
\begin{align}\label{08240007}
   \int_0^T\int_{B_{H^2}(\varrho)}&\mathbb{E}_{u}
   \big[\mathcal{G}^{p}_R(u_t)\big]\mu(\dd u)\dd t
   \leq \int_0^T\int_{B_{H^2}(\varrho)}
   \mathbb{E}_{u}\big[\mathcal{G}^p(u_t)\big]\mu(\dd u)\dd t\nonumber\\
   &\leq C_p\int_{B_{H^2}(\varrho)}\big(\|u\|^{10p}+\mathcal{H}^{5p}(u)
   +\mathcal{G}^p(u)\big)\mu(\dd u)+C_pT\nonumber\\
   &\leq C_p(\varrho^{20p}+1)+C_pT.
\end{align}
   On the other hand, since $\mathcal G_R\le R$,
   \begin{align}\label{05050651-1} \int_0^T \int_{\big(B_{H^2}(\varrho)\big)^c}\mathbb{E}_{u}
   \big[\mathcal{G}^{p}_R(u_t)\big]\mu(\dd u)\dd t\leq T R^p\mu\big(\big(B_{H^2}(\varrho)\big)^c\big).
   \end{align}
   Combining \eqref{E:stat-Borel}--\eqref{05050651-1}, we~obtain 
   \begin{align*}
    \int_{H^2}\mathcal{G}_R^p(u)\mu(\dd u)
    &=\frac{1}{T}\int_0^T\!\!\int_{H^2}\mathbb{E}_u
      \big[\mathcal{G}_R^p(u_t)\big]\mu(\dd u)\dd t\\
    &\leq \frac{C_p(\varrho^{20p}+1)}{T}+C_p
      +R^p\mu\big(\big(B_{H^2}(\varrho)\big)^c\big).
\end{align*}
For fixed $R$, letting $T\to\infty$ and then $\varrho\to\infty$ gives
$\int_{H^2}\mathcal{G}_R^p\,\dd\mu\leq C_p$ with $C_p$ independent of $R$;
monotone convergence as $R\to\infty$ then yields
 $$
       \int_{H^2}\mathcal{G}^p(u)\mu(\dd u)\leq C_p.
 $$
 Combining this with \eqref{05050735}, we obtain the required estimate.
\end{proof}

\subsection{Well-posedness of the auxiliary process}\label{WPA}

In this subsection, we establish the following lemma.
\begin{lemma}\label{WPA2}
    For any $N \ge1$ and $u_0,u_0^\prime\in H^1$, problem \eqref{auxiliary} is globally well-posed in~$H^1$. Moreover, for any $t>0$ and $p\geq 2$,
    \begin{align*}
        \mathbb{E}\big[\sup_{s\in[0,t]}\widetilde{\mathcal{H}}^p(v_s)\big]<\infty.
    \end{align*}
\end{lemma}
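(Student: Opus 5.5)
The natural route is to reduce \eqref{auxiliary} to a deterministic perturbation of the original equation. I would subtract the noise, or better, work with the difference $w=u-v$. Since $u$ is already known to be a global $H^1$ (indeed $H^2$) solution of \eqref{equation-1}, $v$ solves \eqref{auxiliary} if and only if $w$ solves the pathwise equation \eqref{02230853-1}. That equation contains no stochastic term, and in it $u$ enters only as a given continuous $H^1$-valued coefficient.

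\textbf{Local well-posedness.} I would write \eqref{02230853-1} in Duhamel form with the group generated by $-\alpha+i\mathsf{Q}_N\partial_x^2$. The operator $i\mathsf{Q}_N\partial_x^2=i\partial_x^2-i\mathsf{P}_N\partial_x^2$ is a bounded perturbation of $i\partial_x^2$ on $H^1$, because $\mathsf{P}_N\partial_x^2$ has finite rank and $e_j\in H^4$. Hence it generates a $C_0$-group on $H^1$ with $\|e^{t(\cdot)}\|\le e^{C_Nt}$. The nonlinearity $\mathsf{Q}_N[(|u|^2+|v|^2)w+uv\overline w]$, with $v=u-w$, is locally Lipschitz on $H^1$, since $H^1$ is an algebra in one dimension and $\mathsf{Q}_N$ is bounded on $H^1$. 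A contraction argument in $C([0,\delta];H^1)$ then gives a unique local solution. The local existence time depends only on $\|w_0\|_{H^1}$ and $\sup_{[0,t]}\|u\|_{H^1}$, and the solution depends continuously on $u_0'$. Pathwise uniqueness of $v$ follows as well.

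\textbf{Global existence.} This requires an a priori bound on $\|w\|_{H^1}$ on bounded intervals. I would use the computation behind Proposition~\ref{03100330}, in its pathwise form before taking expectations. The inequality \eqref{05041049}, together with the $L^2$ estimate for $w$ from \eqref{03070318} and \eqref{04060303}, gives
\[
\widetilde{\mathcal M}(t,w_t)\le \widetilde{\mathcal M}(0,w_0)+C\int_0^t\widetilde{\mathcal M}\,\mathcal L\,\dd r+C_N\int_0^t\|\mathsf{P}_Nw\|^2\dd r+M^0_t .
\]
The catch is that $\mathcal L$ involves $v$ itself, so the bound is circular. To avoid this I would argue up to the stopping times $\sigma_R=\inf\{t:\|v_t\|_{H^1}\ge R\}$. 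An alternative that avoids the $H^2$ norms in $\mathcal L$ is to derive directly an energy identity for $\widetilde{\mathcal H}(v)$. Applying It\^o's formula to $\widetilde{\mathcal H}(v)$, the main terms cancel exactly as for \eqref{equation-1}. The feedback term contributes $\langle D\widetilde{\mathcal H}(v),i\mathsf{P}_N(\cdots)\rangle$. Because $\mathsf{P}_N$ maps into a finite-dimensional span of smooth functions, this is bounded by
\[
C_N\|v\|_{L^6}^3\bigl(\|v\|_{L^6}^3+\|u\|_{L^6}^3+\|u\|_{H^1}+\|v\|\bigr)\le C_N\bigl(1+\widetilde{\mathcal H}^{2}(v)+\widetilde{\mathcal H}^{3}(u)\bigr),
\]
using the Gagliardo--Nirenberg inequality. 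This polynomial growth is superlinear, however, so Gronwall would not close. The better choice is to bound only the $L^2$ mass of $v$ first. By \eqref{08301619}, the feedback pairing with $v$ is $\langle i\mathsf{P}_N(|v|^2v-|u|^2u-\partial_x^2(v-u)),v\rangle$, and this is at most $C_N\|\mathsf{P}_Nv\|(\ldots)$. Since $\mathsf{P}_Nw$ decays explicitly by \eqref{03190251}, $\|\mathsf{P}_Nv\|\le\|\mathsf{P}_Nu\|+\|\mathsf{P}_Nw_0\|$ is controlled by $u$ alone. I would then exploit this low-mode control in the $H^1$ energy, writing each feedback term as a pairing with $\mathsf{P}_N$ of smooth functions against $v$, $|v|^2v$, and so on. Every such term is then controlled by $\|\mathsf{P}_N(\cdot)\|$ through duality with the $e_j$.

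\textbf{Main obstacle and moment bound.} The obstacle I expect is exactly this term $\langle \partial_x^2 v-|v|^2v,\ \mathsf{P}_N(|v|^2v)\rangle$ in the $\widetilde{\mathcal H}$ balance: it is quartic times cubic and is not obviously sublinear. My first attempt would be to use the finite-dimensional structure, noting that $\|\mathsf{P}_N(|v|^2v)\|_{H^2}\le C_N\|v\|_{L^6}^3\le C_N\|v\|^2\|v\|_{H^1}$. Combined with the $L^2$ control of $v$ from the previous step, this gives a bound of the form $C(1+\|v\|^{k})\,\widetilde{\mathcal H}(v)$. Because $\|v\|$ is bounded pathwise on $[0,t]$ in terms of $u$, this is linear in $\widetilde{\mathcal H}(v)$. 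A stochastic Gronwall argument, applied with localisation by $\sigma_R$ and then $R\to\infty$, then excludes blow-up. For the moment bound $\mathbb E\sup_{[0,t]}\widetilde{\mathcal H}^p(v_s)<\infty$, I would apply It\^o's formula to $\widetilde{\mathcal H}^p(v)$, estimate the martingale by the Burkholder--Davis--Gundy inequality, and use Lemmas~\ref{02280441}(ii) and \ref{02280436}(ii) to control all moments of $u$ on $[0,t]$. If the pathwise $L^2$ bound only gives random constants, I would instead weight the estimate by $\exp(-C\int_0^t\Lambda(u_s)\dd s)$ and use Hölder's inequality, accepting a worse exponent.
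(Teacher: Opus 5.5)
Your local theory is fine: the pathwise Duhamel formulation for $w$, treating $-i\mathsf{P}_N\partial_x^2$ as a bounded finite-rank perturbation, is a clean way to do it. The global existence argument, however, has a genuine gap.

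\textbf{The pathwise $L^2$ bound is false.} Your argument rests on the claim that $\|v\|$ is bounded pathwise on $[0,t]$ in terms of $u$. In the mass balance, the $v$-part of the feedback contributes $\langle v, i\mathsf{P}_N(|v|^2v-\partial_x^2v)\rangle=\langle \mathsf{P}_Nv, i(|v|^2v-\partial_x^2 v)\rangle$. The piece $\langle \mathsf{P}_Nv,i|v|^2v\rangle$ admits no bound in terms of $\|v\|$ and $\|\mathsf{P}_Nv\|$ alone; the natural bound is $\|\mathsf{P}_Nv\|_{L^4}\|v\|_{L^4}^3$. So the ``$(\ldots)$'' in your mass estimate contains $\|v\|_{L^3}^3$ or $\|v\|_{L^4}^3$, and the $L^2$ estimate does not close on its own.

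\textbf{Consequences.} Without an independent bound on $\|v\|$, your bound $C(1+\|v\|^k)\widetilde{\mathcal H}(v)$ for the obstacle term is only controlled by $C(1+\widetilde{\mathcal H}^{k/2}(v))\widetilde{\mathcal H}(v)$. That is superlinear, and no Gronwall argument, stochastic or not, excludes blow-up from it. Your ``duality with the $e_j$'' does not help in the energy balance either. There it moves $\mathsf{P}_N$ onto $|v|^2v$ or $\partial_x^2v$, not onto $v$, so the low-mode control of $\mathsf{P}_Nv$ from \eqref{03190251} is of no use. The exponential-weight fallback also fails for the moment bound. Undoing a weight $\exp(-C\int_0^t\Lambda(u_s)\,\dd s)$ by H\"older requires exponential moments of $\int_0^t\Lambda(u_s)\,\dd s$, which are not available for the energy functionals involved.

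\textbf{The missing idea is an exact cancellation.} Write $D\mathcal H(v)=-\partial_x^2v+|v|^2v$. The $v$-dependent part of the feedback is $i\mathsf{P}_N D\mathcal H(v)$, and by \eqref{08301619} we have $\langle D\mathcal H(v), i\mathsf{P}_N D\mathcal H(v)\rangle=0$. Hence the term $\langle D\mathcal H(v),i\mathsf{P}_N(|v|^2v)\rangle$ you flag as the obstacle is cancelled exactly by its partner $-\langle D\mathcal H(v),i\mathsf{P}_N\partial_x^2v\rangle$. Equivalently, $\partial_t v=-\alpha v-i\mathsf{Q}_N D\mathcal H(v)+i\mathsf{P}_N(\partial_x^2u-|u|^2u)+b\partial_tW$, and the $\mathsf{Q}_N$-projected Hamiltonian part conserves $\mathcal H$. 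In the paper this is the cancellation of $\int_0^t\langle i\mathsf{Q}_N(|v_s|^2v_s),\partial_x^2v_s\rangle\dd s$ between \eqref{03061034-1} and \eqref{03070134}.

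\textbf{How the estimate then closes.} Mass and energy must be estimated \emph{jointly}. The remaining superlinear $v$-terms have the form $\|v\|_{L^4}^3$ times a quantity depending only on $u$ or $\mathsf{P}_Nv$. Examples are $\langle\mathsf{P}_Nv,i|v|^2v\rangle\le\|v\|_{L^4}^3\|\mathsf{P}_Nv\|_{L^4}$ from the mass balance, and $\langle |v|^2v,i\mathsf{P}_N(|u|^2u)\rangle$ from the energy balance, handled via \eqref{07080243-1}. All of these are absorbed by the dissipation $\alpha\|v\|_{L^4}^4$ from the $L^4$ part of $\widetilde{\mathcal H}$. The low modes are controlled by $\|\mathsf{P}_Nv\|\le\|\mathsf{P}_Nu\|+e^{-\alpha t}\|\mathsf{P}_Nw_0\|$, from \eqref{03190251}. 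This yields \eqref{07102337}, whose right-hand side involves only $u$, $\mathsf{P}_Nv$ and a martingale. The moment bound then follows directly from the Burkholder--Davis--Gundy and Young inequalities together with Lemmas~\ref{02280441} and~\ref{02280436}, with no exponential weights.
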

\begin{proof}
The local well-posedness of \eqref{auxiliary} in $H^1$ follows from a 
standard fixed-point argument (cf.~Section~3.1 of \cite{DD03}). To 
establish global well-posedness, it suffices to derive an a priori bound 
on the $H^1$-norm of $v$, which, together with a standard stopping time argument (cf.~Proposition~3.4 of~\cite{DD03}), rules out finite-time blow-up. Since the stopping time argument is standard, we restrict ourselves to deriving the required a priori estimate.

To this end, we apply It\^o's formula to $\frac{1}{2}\|v_t\|^2$ and use that
$\mathsf{P}_N$ is an orthogonal projection commuting with the multiplication
by~$i$:
\begin{align*}
    \frac{1}{2}\|v_t&\|^2-\frac{1}{2}\|u_0^\prime\|^2
    +\alpha \int_0^t\|v_s\|^2\dd s\\
    =&\ \int_0^t\langle i\mathsf{P}_N(|v_s|^2v_s),v_s\rangle\dd s
    -\int_0^t\langle i\mathsf{P}_N\partial_x^2v_s, v_s\rangle \dd s
    -\int_0^t\langle i\mathsf{P}_N(|u_s|^2u_s),v_s\rangle\dd s\\
    &+\int_0^t\langle i\partial_x^2u_s,\mathsf{P}_N v_s\rangle\dd s
    +\int_0^t\langle v_s, b\dd W_s\rangle+\sum_{j=1}^\infty b_j^2\|e_j\|^2t\\\leq&\int_0^t\|v_s\|_{L^4}^3\|\mathsf{P}_Nv_s\|_{L^4}\dd s+\int_0^t\|\partial_x^2\mathsf{P}_N v_s\|\| v_s\|\dd s+\int_0^t\|u_s\|_{L^4}^3\|\mathsf{P}_Nv_s\|_{L^4}\dd s\\&\ +\int_0^t\|u_s\|\|\partial_x^2\mathsf{P}_Nv_s\|\dd s+\int_0^t\langle v_s,b\dd W_s\rangle+Ct.
\end{align*}
Since $e_j\in H^4$, using integration by parts, we deduce that the operators $\partial_x^k\mathsf{P}_N$ and $\mathsf{P}_N\partial_x^k$, $k=0,1,2,$ are well defined on $L^q$ for $q\in[1,2]$ and satisfy
\begin{align}
&\|\partial_x^k\mathsf{P}_N\xi\|_{L^p}\leq C_{N,p,k}\|\mathsf{P}_N\xi\|,
\qquad p\in[2,\infty],\label{07080243}\\
&\|\mathsf{P}_N\partial_x^k\xi\|_{L^p}\leq C_{N,p,q,k}\|\xi\|_{L^q},
\qquad p\in[2,\infty],\ q\in[1,2].\label{07080243-1}
\end{align}
Inequality \eqref{07080243}, together with Young's inequality, implies that
\begin{align}\label{03061034}
     \frac{1}{2}\|v_t\|^2&-\frac{1}{2}\|u_0^\prime\|^2
     +\alpha \int_0^t\|v_s\|^2\dd s\nonumber\\
     &\leq \frac{\alpha}{4}\int_0^t\|v_s\|^4_{L^4}\dd s
     +\frac{\alpha}{4}\int_0^t\|v_s\|^2\dd s
     +C_N\int_0^t\big(\|\mathsf{P}_Nv_s\|^2+\|\mathsf{P}_Nv_s\|^4\big)\dd s\nonumber\\
     &\quad+C_N\int_0^t\big(\|u_s\|^2+\|u_s\|_{L^4}^4\big)\dd s
      +\int_0^t\langle v_s, b\dd W_s\rangle+Ct.
\end{align}
To estimate $\mathcal{H}(v)$, we first apply It\^o's formula to $\frac{1}{2}\|\partial_xv_t\|^2$:
\begin{align*}
    \nonumber \frac{1}{2}\|\partial_xv_t\|^2&-\frac{1}{2}\|\partial_xu_0^\prime\|^2+\alpha\int_0^t\|\partial_x v_s\|^2\dd s\\=&\nonumber\int_0^t\langle -i\mathsf{Q}_N(\partial_x^2v_s),\partial_x^2v_s\rangle\dd s+\int_0^t\langle i\mathsf{Q}_N(|v_s|^2v_s),\partial_x^2v_s\rangle\dd s\\&\ \nonumber+\int_0^t\langle i\mathsf{P}_N(|u_s|^2u_s),\partial_x^2v_s\rangle\dd s-\int_0^t\langle i\mathsf{P}_N\partial_x^2u_s,\partial_x^2v_s\rangle\dd s\\&\ \nonumber-\int_0^t\langle \partial_x^2v_s, b\dd W_s\rangle+{t}\sum_{j=1}^\infty b_j^2\|\partial_xe_j\|^2\\\leq&\nonumber\int_0^t\langle i\mathsf{Q}_N(|v_s|^2v_s),\partial_x^2v_s\rangle\dd s -\int_0^t\langle \partial_x^2v_s, b\dd W_s\rangle+t\sum_{j=1}^\infty b_j^2\|\partial_xe_j\|^2\nonumber\\&\ +C\int_0^t\|\mathsf{P}_N\partial^2_xv_s\|_{L^4}\|u_s\|_{L^4}^3\dd s+C\int_0^t\|\mathsf{P}_N\partial_x^2u_s\|\|\mathsf{P}_N\partial_x^2v_s\|\dd s,
\end{align*}
where
\begin{align*}
    \int_0^t\langle \partial_x^2 v_s,b\dd W_s\rangle
    :=-\mathrm{Re}\sum_{j=1}^\infty b_j\int_0^t\int_{\mathbb{R}}
    \partial_x\overline{v}_s\,\partial_x e_j\dd x
    \big(\dd \beta_j^1(s)+i\dd \beta_j^2(s)\big).
\end{align*}By \eqref{07080243-1} and Young's inequality, 
\begin{align} \label{03061034-1}   
    \nonumber\frac{1}{2}\|\partial_xv_t\|^2&-\frac{1}{2}\|\partial_xu_0^\prime\|^2+\alpha\int_0^t\|\partial_x v_s\|^2\dd s\\\nonumber&\leq \int_0^t\langle i\mathsf{Q}_N(|v_s|^2v_s),\partial_x^2v_s\rangle\dd s+C_N\int_0^t(\|u_s\|_{L^4}^{6}+\|u_s\|^2)\dd s\\&\quad+\frac{\alpha}{4}\int_0^t\|v_s\|^2\dd s -\int_0^t\langle \partial_x^2v_s,b\dd W_s\rangle+Ct.
\end{align}
Similarly, applying It\^o's formula to $\frac{1}{4}\|v_t\|_{L^4}^4$ and
using H\"older's inequality, we obtain  
\begin{align*}
    \nonumber\frac{1}{4}&\|v_t\|_{L^4}^4-\frac{1}{4}\|u_0^\prime\|_{L^4}^4+\alpha\int_0^t\|v_s\|_{L^4}^4\dd s\\=&\int_0^t\langle |v_s|^2v_s,i\mathsf{Q}_N\partial_{x}^2 {v}_s-i\mathsf{Q}_N(|v_s|^2v_s)\rangle\dd s\nonumber+2\sum_{j=1}^\infty b_j^2\int_0^t\|v_s e_j\|^2\dd s\\\nonumber&-\int_0^t\langle |v_s|^2v_s,i\mathsf{P}_N(|u_s|^2u_s)-i\mathsf{P}_N(\partial_x^2u_s)\rangle\dd s+\int_0^t\langle |v_s|^2v_s,b\dd W_s\rangle\\\leq&\nonumber\int_0^t\langle |v_s|^2v_s,i\mathsf{Q}_N\partial_x^2v_s\rangle\dd s+C_N\int_0^t\|v_s\|_{L^4}^3\|\mathsf{P}_N(|u_s|^2u_s)\|_{L^4}\dd s\\&\nonumber+C_N\int_0^t\|v_s\|_{L^4}^3\|\mathsf{P}_N\partial^2_xu_s\|_{L^4}\dd s+\int_0^t\langle |v_s|^2v_s,b\dd W_s\rangle+C\int_0^t\|v_s\|_{L^4}^2\dd s.
\end{align*}
Therefore,  invoking again~\eqref{07080243-1} and applying Young's inequality,
\begin{align}\label{03070134}
    \frac{1}{4}\|v_t\|_{L^4}^4-&\frac{1}{4}\|u_0^\prime\|_{L^4}^4+\alpha\int_0^t\|v_s\|_{L^4}^4\dd s\nonumber\\\leq&\nonumber\int_0^t\langle |v_s|^2v_s,i\mathsf{Q}_N\partial_x^2v_s\rangle\dd s+C_N\int_0^t\|v_s\|_{L^4}^3(\|u_s\|_{L^4}^3+\|u_s\|)\dd s\\&+\int_0^t\langle |v_s|^2v_s,b\dd W_s\rangle+ C\int_0^t\|v_s\|_{L^4}^2\dd s\nonumber\\\leq&-\int_0^t\langle i\mathsf{Q}_N(|v_s|^2v_s),\partial_x^2v_s\rangle\dd s\hspace{-0.5mm}+\hspace{-0.5mm}\frac{\alpha}{4}\int_0^t\|v_s\|_{L^4}^4\dd s\hspace{-0.5mm}+\hspace{-0.5mm}\int_0^t\langle |v_s|^2v_s,b\dd W_s\rangle\nonumber\\& +C_N\int_0^t(\|u_s\|_{L^{4}}^{12}+\|u_s\|^4)\dd s+C_Nt.
\end{align}
Recalling $\widetilde{\mathcal{H}}$ from~\eqref{04091818},
and using \eqref{03061034}--\eqref{03070134}, we get
\begin{align}\label{07102337}
    \nonumber\widetilde{\mathcal{H}}(v_t)-\widetilde{\mathcal{H}}(u_0^\prime)&+  \alpha\int_0^t\widetilde{\mathcal{H}}(v_s)\dd s\leq  C_N\int_0^t\big(\|u_s\|_{L^4}^{12}+\|u_s\|^4\big)\dd s\\&+C_N\int_0^t(\|\mathsf{P}_Nv_s\|^2+\|\mathsf{P}_Nv_s\|^4)\dd s+Ct +M^*_t,
\end{align}
where the martingale term is given by 
\begin{align*}
    M_t^*:=\int_0^t\langle |v_s|^2v_s+v_s-\partial_x^2v_s,\,b\dd W_s\rangle.
\end{align*}The quadratic variation of $M^*$ is  estimated by (cf.~\eqref{03130248}) 
\begin{align*}
    \langle M^*\rangle_t\leq&\hspace{1mm} C\sum_{j=1}^\infty b_j^2\int_0^{t}\hspace{-1mm}\big(\|\partial_x v_s\|^2\|\partial_x e_j\|^2+\|v_s\|^2\|e_j\|^2+\|v_s\|_{L^4}^6\|e_j\|_{L^4}^2\big)\dd s\\\leq&\hspace{1mm} C\int_0^t\big(\|v_s\|_{H^1}^2+\|v_s\|_{L^4}^6\big)\dd s.
\end{align*}
Then, for $p \ge 2$, taking the supremum over $s\in[0,t]$ in
\eqref{07102337}, raising to the $p$-th power, taking expectations, and
applying \eqref{03190251}, the Burkholder--Davis--Gundy inequality, and
Lemmas~\ref{02280441}(i) and \ref{02280436}(i), we~obtain
\begin{align*}
    \mathbb{E}\big[\sup_{s\in[0,t]}&\widetilde{\mathcal{H}}^p(v_s)\big]\leq C_p\widetilde{\mathcal{H}}^p(u_0^\prime)+C_{N,p}\mathbb{E}\big[\big(\int_0^t(\|\mathsf{P}_Nv_s\|^2+\|\mathsf{P}_Nv_s\|_{L^4}^4)\dd s\big)^p\big]\\&\quad+C_{N,p}t^p+C\mathbb{E}\big[\big(\int_0^t\|v_s\|_{L^4}^6\dd s\big)^{\frac{p}{2}}\big]+C\mathbb{E}\big[\big(\int_0^t\|v_s\|_{H^1}^2\dd s\big)^{\frac{p}{2}}\big]\\&\quad +C_{N,p}\mathbb{E}\big[\big(\int_0^t\big(\widetilde{\mathcal{H}}^3(u_s)+1\big)\dd s\big)^p\big]\\&\leq C_p\widetilde{\mathcal{H}}^p(u_0^\prime)+C_{N,p,t}\widetilde{\mathcal{H}}^{3p}(u_0)+C_{N,p}t^p\\&\quad +C_{N,p}t^{p-1}\mathbb{E}\big[\int_0^t\big(\|\mathsf{P}_Nu_s\|^{2p}+\|\mathsf{P}_Nu_s\|_{L^4}^{4p}\big)\dd s\big]\\&\quad +C_{N,p}t^{p-1}\int_0^t \big[\|\mathsf{P}_N(u_0-u_0^\prime)\|^{2p}+\|\mathsf{P}_N(u_0-u_0^\prime)\|_{L^4}^{4p}\big]e^{-2\alpha ps}\dd s\\&\quad +C_pt^{\frac{p}{2}}\mathbb{E}\big[\big(\sup_{s\in[0,t]}\widetilde{\mathcal{H}}^{\frac{p}{4}}(v_s)+1\big)\sup_{s\in[0,t]}\widetilde{\mathcal{H}}^{\frac{p}{2}}(v_s)\big].
\end{align*}
Since $L^4$ and $L^2$ norms are equivalent on the finite-dimensional range
of~$\mathsf{P}_N$, by applying Lemmas~\ref{02280441}(ii) and \ref{02280436}(ii), and using Young's inequality, we obtain that 
\begin{align*}
    \mathbb{E}\big[\sup_{s\in[0,t]}\widetilde{\mathcal{H}}^p(v_s)\big]&\leq C_p\widetilde{\mathcal{H}}^p(u_0^\prime)+C_{N,p,t}+C_{N,p,t}\widetilde{\mathcal{H}}^{3p}(u_0)\\&\quad+C_{N,p,t}(1+\|\mathsf{P}_N(u_0-u_0^\prime)\|^{4p}).
\end{align*}
This estimate implies that \eqref{auxiliary} is globally well-posed. 
\end{proof}

We close this subsection with the $H^2$-regularity of the process $v$.
\begin{lemma}\label{07101918}
If $u_0, u_0^\prime\in H^2$, then almost surely $v\in C(\mathbb{R}_+;H^2)$
and   $\|\psi(t)v_t\|$ is bounded on bounded time
intervals. In particular, the stopping times $\tau^v_{0,p}$,
$\tau^v_{1,p}$, $\tau^v_{2,p}$, and $\tau^v_{\psi}$ are well defined.
\end{lemma}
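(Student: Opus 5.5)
The plan is to propagate $H^2$-regularity to $v$ by writing $v=u-w$ and proving the regularity for $w$. Here $u\in C(\mathbb R_+;H^2)$ almost surely by the result of \cite{DD03} recalled in the introduction. The difference $w$ solves the pathwise (noise-free) equation \eqref{02230853-1}, whose coefficients involve $u$ and $v=u-w$. Substituting $v=u-w$ turns it into a random PDE for $w$ in which $u|_{[0,T]}\in C([0,T];H^2)$ is a given external datum. The key advantage is that there is no stochastic integral, so everything can be done $\omega$ by $\omega$.

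\textbf{Local existence and continuation in $H^2$.} First, for fixed $\omega$, I would run a fixed-point argument in $C([0,T_0];H^2)$ for the mild form
\begin{equation*}
w_t=e^{-\alpha t}S_N(t)w_0-\int_0^te^{-\alpha(t-s)}S_N(t-s)\,i\mathsf Q_N\big[(|u|^2+|v|^2)w+uv\overline w\big]\,\dd s.
\end{equation*}
Here $S_N(t)$ is the group generated by $i\mathsf Q_N\partial_x^2$. It is a bounded perturbation of $e^{it\partial_x^2}$, since $\mathsf P_N\partial_x^2$ is bounded on $H^k$ for $k\le 2$ by \eqref{07080243} and $e_j\in H^4$, so it is a $C_0$-group on $H^2$. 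The cubic terms are locally Lipschitz on $H^2$ because $H^2$ is an algebra. This yields a unique maximal $H^2$ solution, and by uniqueness in $H^1$ (Lemma~\ref{WPA2}) it coincides with the $H^1$ solution.

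\textbf{Main obstacle: ruling out $H^2$ blow-up.} To exclude blow-up, I would derive a linear Gronwall bound for $\|\partial_x^2 w\|^2$ through the energy identity, obtained by pairing with $\partial_x^4 w$ after a standard regularization. The dispersive term $\langle i\mathsf Q_N\partial_x^2w,\partial_x^4w\rangle$ is not zero but equals $-\langle i\mathsf P_N\partial_x^2 w,\partial_x^4w\rangle$. This can be moved onto $e_j\in H^4$ and bounded by $C_N\|w\|^2$. The nonlinear term $\partial_x^2\big[(|u|^2+|v|^2)w+uv\overline w\big]$ has a dangerous top-order contribution $v\overline v\,\partial_x^2 w$ paired with $\partial_x^2 w$, which is fine because it is bounded by $\|v\|_{L^\infty}^2\|\partial_x^2w\|^2$. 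The other contributions involve $\partial_x^2 v=\partial_x^2u-\partial_x^2 w$ times $L^\infty$ quantities of $w$ and $v$ controlled by the $H^1$ norms. Altogether this gives
\begin{equation*}
\tfrac{\dd}{\dd t}\|\partial_x^2w\|^2\le C(1+\|u\|_{H^2}^2+\|v\|_{H^1}^2)^2(1+\|\partial_x^2w\|^2).
\end{equation*}
The coefficient is finite on $[0,T]$ by Lemma~\ref{WPA2} and $u\in C([0,T];H^2)$, so no blow-up occurs. Hence $w$, and therefore $v=u-w$, lies in $C(\mathbb R_+;H^2)$ almost surely.

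\textbf{Weighted bound and stopping times.} For $\|\psi(t)v_t\|$, I would repeat Step 1 of the proof of Lemma~\ref{03080957} for $v$. The feedback term $i\mathsf P_N(\cdots)$ paired with $\psi^2v$ is bounded by $\|\varphi \mathsf P_N\xi\|\,\|\psi v\|$, and $\|\varphi\mathsf P_N\xi\|\le C_N\|\xi\|_{L^1\cap L^2}$ once $\varphi e_j\in L^2$ for the $N$ modes (we have $\varphi e_j\in L^2$ wherever $b_j\ne0$, which holds for $j\le N_0$). Alternatively one can bound $\|\psi(t)v_t\|\le\|\psi(t)u_t\|+\|\psi(t)w_t\|$, where the bound for $w$ is deterministic given the data. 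Gronwall then gives local boundedness, which makes $\mathcal E^v_\psi$ and all $\mathcal E^v_{i,p}$ finite and continuous, so the stopping times are well defined.
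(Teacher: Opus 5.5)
Your proof is correct, and for the $H^2$ statement it takes a genuinely different route from the paper. The paper keeps $v$ as the unknown. Since $e_j\in H^4$, the feedback satisfies $\|\mathsf P_N(|v|^2v-|u|^2u-\partial_x^2(v-u))\|_{H^2}\le C_N(\|u\|_{H^1}^3+\|v\|_{H^1}^3+\|u\|+\|v\|)$. So \eqref{auxiliary} is \eqref{equation-1} with an additional force in $L^p(\Omega;L^\infty(0,T;H^2))$, and $H^2$-regularity is propagated by the standard stochastic arguments of \cite{DD03}.

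You instead work with the noise-free equation \eqref{02230853-1} for $w=u-v$, freeze $u\in C([0,T];H^2)$ as a coefficient, and argue $\omega$ by $\omega$. The local theory uses the bounded perturbation $i\mathsf Q_N\partial_x^2$ of the Schr\"odinger group. Globality follows from a linear Gronwall bound on $\|\partial_x^2 w\|^2$, available because the cubic term is linear in the top derivative once $\|u\|_{H^2}$ and $\|v\|_{H^1}$ are controlled.

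Your route is self-contained and needs no It\^o calculus at the $H^2$ level. It also matches the pathwise viewpoint of property~(ii) in Subsection~\ref{S:4.1}. The paper's route is shorter and gives moment bounds on the forcing, but relies on a forced version of the result in \cite{DD03}. You could even skip the regularized energy identity: Duhamel's formula with $\|S_N(t)\|_{H^2\to H^2}\le e^{C_N t}$ and $\|(|u|^2+|v|^2)w+uv\overline w\|_{H^2}\le C(1+\|w\|_{H^2})$ gives the same conclusion. Here $C$ depends on $\sup_{[0,T]}(\|u\|_{H^2}+\|v\|_{H^1})$.

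The weighted part, by contrast, is a detour. As written it needs $\varphi e_j\in L^2$ for $j\le N$, which the lemma does not assume. This holds only when $b_1,\dots,b_N\neq0$, e.g.\ $N=N_0$ under \eqref{07070547}.

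The paper avoids this entirely. Since $1-e^{-y}\le y$, one has $\psi(t,x)\le t$ pointwise, hence $\|\psi(t)v_t\|\le t\|v_t\|$ for every $N$. Combined with \eqref{03310443}, this also gives $\|\psi(t)v_t-\psi(s)v_s\|\le C|t-s|\,\|v_t\|+s\|v_t-v_s\|$. That is the continuity of $t\mapsto\psi(t)v_t$ in $L^2$ which the hitting-time definition of $\tau^v_\psi$ requires. Your proposal asserts this continuity but only derives local boundedness.
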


\begin{proof}[Sketch of the proof]
Since $e_j\in H^4$, the bound \eqref{07080243} also holds with
$\|\cdot\|_{L^p}$ replaced by $\|\cdot\|_{H^2}$, and the feedback term in
\eqref{auxiliary} therefore satisfies, almost surely, 
\begin{align*}
    \big\|\mathsf{P}_N\big(|v_t|^2v_t-|u_t|^2u_t&
    -\partial_x^2(v_t-u_t)\big)\big\|_{H^2}\\&\leq C_N\big\|\mathsf{P}_N\big(|v_t|^2v_t-|u_t|^2u_t
    -\partial_x^2(v_t-u_t)\big)\big\|
    \\&\leq C_N\big(\|u_t\|_{H^1}^3+\|v_t\|_{H^1}^3
    +\|u_t\|+\|v_t\|\big),
\end{align*}
where the last inequality follows from \eqref{07080243-1}. \par By Lemmas~\ref{WPA2}, \ref{02280441}(ii), and~\ref{02280436}(ii), equation
\eqref{auxiliary} is thus \eqref{equation-1} perturbed by a force belonging
to $L^p\big(\Omega,L^\infty([0,T];H^2)\big)$ for any $p\geq1$ and $T>0$, so
that the $H^2$-regularity of the initial condition propagates by standard
arguments (cf.~\cite{DD03}); in particular $v\in C(\mathbb{R}_+;H^2)$. Finally, since $\psi(t,x)\leq t$ for each $x\in\mathbb{R}$ and Lipschitz in $t$
uniformly in $x$ by \eqref{03310443}, the process $t\mapsto\psi(t)v_t$ is
continuous in~$L^2$. 
\end{proof}

\subsection{A parameterized maximal coupling}\label{S:maxcoup}

The coupling construction in Section~\ref{S:4} requires a pair of driving noises, depending measurably on the initial data, such that the corresponding solutions form a maximal coupling. Such a construction is provided by the following lemma. In the special case $f_1=f_2$, it appears as Exercise~1.2.30 in~\cite{KS12}. The construction given on page~296 of that book extends directly to the present setting of two different maps, and we therefore omit the proof.

\begin{lemma}\label{L:max-coupling}
Let $X,$ $Y,$ and $Z$ be Polish spaces, let $\mu\in\mathcal{P}(X)$, and let
$f_1,f_2:Z\times X\to Y$ be measurable. Set
$\nu_i(z,\hspace{0.5mm}\cdot\hspace{0.5mm}):=f_i(z,\hspace{0.5mm}\cdot\hspace{0.5mm})_*\mu,$ $i=1,2$. Then there exist a probability space
$(\Omega,\mathcal{F},\mathbb{P})$ and measurable maps
$\xi_1,\xi_2:Z\times\Omega\to X$ such that, for every $z\in Z$,
\begin{itemize}
\item[(i)] $\mathcal{D}\big(\xi_i(z,\hspace{0.5mm}\cdot\hspace{0.5mm})\big)=\mu$ for $i=1,2$;
\item[(ii)] the pair
\[
\big(f_1(z,\xi_1(z,\hspace{0.5mm}\cdot\hspace{0.5mm})),\ f_2(z,\xi_2(z,\hspace{0.5mm}\cdot\hspace{0.5mm}))\big)
\]
is a maximal coupling of $\big(\nu_1(z,\hspace{0.5mm}\cdot\hspace{0.5mm}),\nu_2(z,\hspace{0.5mm}\cdot\hspace{0.5mm})\big)$, in the sense
of Definition~1.2.21 in~\cite{KS12}.
\end{itemize}
\end{lemma}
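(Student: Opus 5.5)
The plan is a Picard fixed point on short intervals, uniform in the frozen data, applied to the difference $w$; the nonlinear estimates are handled through $\widetilde w$, and the lemma (whose proof the paper omits) then follows from measurable dependence on parameters. The estimates below are valid in $H^1$, since we work with $u_0,u_0^\prime\in H^2\subset H^1$. The construction is that of Exercise~1.2.30 in~\cite{KS12}, adapted to two maps $f_1\neq f_2$. The key object is a measurable kernel description of the maximal coupling. For $z\in Z$, write $\nu_i=\nu_i(z,\cdot)$ and set $\lambda_z:=\nu_1+\nu_2$. Let $\rho_i(z,y):=\frac{d\nu_i}{d\lambda_z}(y)$ and $m(z,y):=\rho_1\wedge\rho_2$. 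The first step is to choose versions of $\rho_i$ that are jointly measurable in $(z,y)$. I would do this by the standard martingale (Doob) argument: take a refining sequence of countable Borel partitions of the Polish space $Y$, compute ratios of cell masses (these are measurable in $z$), and pass to the limit. Set $p(z):=\int m\,d\lambda_z=1-\|\nu_1-\nu_2\|_{\mathrm{var}}$.

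Next I would build the maximal coupling of $(\nu_1,\nu_2)$ in the usual way. With probability $p(z)$, draw a common point $y$ from $m\lambda_z/p$ and set $Y_1=Y_2=y$. Otherwise, draw $Y_1$ from $(\rho_1-m)\lambda_z/(1-p)$ and $Y_2$ from $(\rho_2-m)\lambda_z/(1-p)$ independently; these laws have disjoint supports. Then $\mathbb P(Y_1\neq Y_2)=1-p$, which is exactly maximality.

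The remaining point is that the coupling must be realized through \emph{noises}: we need $\xi_i$ with law $\mu$ such that $Y_i=f_i(z,\xi_i)$. For this I would disintegrate $\mu$ along $f_i(z,\cdot)$. Since $X$ and $Y$ are Polish, there is a kernel $K_i(z,y,\dd x)$, measurable in $(z,y)$, with $\mu(\dd x)=\int K_i(z,y,\dd x)\,\nu_i(z,\dd y)$ and $K_i(z,y,\cdot)$ concentrated on $\{f_i(z,\cdot)=y\}$ for $\nu_i$-a.e.\ $y$. Such a kernel comes from the disintegration of the joint law of $(z,x)\mapsto(f_i(z,x),x)$ with parameters. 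Given $Y_i$, I then sample $\xi_i\sim K_i(z,Y_i,\cdot)$, conditionally independently. Property (i) follows because $Y_i\sim\nu_i$ and $K_i$ disintegrates $\mu$. Property (ii) holds almost surely because $f_i(z,\xi_i)=Y_i$, so the pair $(f_1(z,\xi_1),f_2(z,\xi_2))$ equals $(Y_1,Y_2)$ and is a maximal coupling.

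To produce a single probability space and jointly measurable maps, take $\Omega=[0,1]^{\mathbb N}$ with Lebesgue product measure. Every kernel step above is then realized by a Borel-measurable randomization function: a kernel into a Polish space can be written as $g(z,y,U)$ with $U$ uniform, via a Borel isomorphism with $[0,1]$ and a quantile transform. This yields measurable $\xi_1,\xi_2:Z\times\Omega\to X$.

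The main obstacle is joint measurability in $z$ throughout, and in particular for the parameterized disintegrations $K_i$ and the densities $\rho_i$. I would handle both by reducing, via Borel isomorphism, to $Y=[0,1]$. There conditional distribution functions can be written down explicitly as limits of ratios over dyadic intervals. Null-set ambiguities are then fixed by using $\limsup$'s, which are Borel functions of $(z,y)$.
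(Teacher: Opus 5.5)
Your argument is correct and is essentially the construction the paper cites from \cite{KS12} (Exercise~1.2.30, p.~296) without proof. You build a maximal coupling $(Y_1,Y_2)$ of the image measures $\nu_1(z,\cdot),\nu_2(z,\cdot)$ measurably in $z$ via jointly measurable densities, lift it to noises $\xi_i\sim\mu$ with $f_i(z,\xi_i)=Y_i$ almost surely through a parameterized disintegration of $\mu$ along $f_i(z,\cdot)$, and realize every kernel by randomization on $[0,1]^{\mathbb N}$. One correction: delete the opening sentences about a Picard fixed point for $w$, $\widetilde w$, and $H^1$-estimates with $u_0,u_0^\prime\in H^2$, since they are unrelated to this purely measure-theoretic lemma.
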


  \subsection*{AI use disclosure}
The authors used Anthropic's Claude models to check this paper for typos and
potential errors in the proofs. All mathematical arguments and proofs were
developed and written by the authors, who take full responsibility for the
content of the paper. 
 
  \subsection*{Acknowledgments}
This work is partially supported by NSFC Grant No.~12571156 and the NYU Shanghai
Boost Fund. The authors thank Meng Zhao for valuable discussions.

\addtocontents{toc}{\protect\setcounter{tocdepth}{1}} 
\addcontentsline{toc}{section}{References}
\bibliographystyle{alpha-author}
\bibliography{References}

@article{DO05,
     TITLE = {Ergodicity for a weakly damped stochastic non-linear
              {S}chr\"odinger equation},
  AUTHOR = {Debussche, A.  and Odasso, C.},
  JOURNAL = {J. Evol. Equ.},
  Year = {2005},
  VOLUME = {5},
  NUMBER = {3},
     PAGES = {317--356},
}

@article{DD03,
  TITLE = {The stochastic nonlinear {Schr\"odinger} equation in {$H^1$}},
  AUTHOR = {de Bouard, A. and Debussche, A.},
  JOURNAL = {Stochastic Anal. Appl.},
  Year = {2003},
  VOLUME = {21},
  NUMBER = {1},
  PAGES = {97--126}
}

@article{NZ24,
  TITLE = {Exponential mixing for the white-forced complex {Ginzburg--Landau} equation in the whole space},
  AUTHOR = {Nersesyan, V.  and  Zhao, M.},
  JOURNAL = {SIAM J. Math. Anal.},
  Year = {2024},
  VOLUME = {56},
  NUMBER = {3},
  PAGES = {3646--3678}
}

@article{NZ24b,
  TITLE = {Polynomial mixing for the white-forced {Navier--Stokes} system in the whole space},
  AUTHOR = {Nersesyan, V. and Zhao, M.},
  JOURNAL = {arXiv:2410.15727},
  Year = {2024},
}

@article{EKZ17,
  TITLE = {Existence of invariant measures for the stochastic damped {Schr\"odinger} equation},
  AUTHOR = {Ekren, I. and Kukavica, I. and  Ziane, M.},
  JOURNAL = {Stoch. Partial Differ. Equ. Anal. Comput.},
  Year = {2017},
  VOLUME = {5},
  NUMBER = {3},
  PAGES = {343--367}
}

@book{KS14,
  author    = {Karatzas, I. and Shreve, S. E.},
  title     = {Brownian Motion and Stochastic Calculus},
  series    = {Grad. Texts in Math.},
  volume    = {113},
  edition   = {2},
  publisher = {Springer-Verlag},
  address   = {New York},
  year      = {1991},
}

@article{CXZZ25,
  TITLE = {Exponential mixing for the randomly forced {NLS} equation},
  AUTHOR = {Chen, Y. and Xiang, S. and Zhang, Z. and Zhao, J.},
  JOURNAL = {arXiv:2506.10318},
  Year = {2025}
}

@article{CXZZ26,
  TITLE = {Exponential mixing for nonlinear {Schr\"odinger} equations perturbed by bounded degenerate noise},
  AUTHOR = {Chen, Y. and Xiang, S. and Zhang, Z.},
  JOURNAL = {arXiv:2604.05911},
  Year = {2026}
}

@article{CX25,
  TITLE = {{Donsker--Varadhan} large deviation principle for locally damped and randomly forced {NLS} equations},
  AUTHOR = {Chen, Y. and Xiang, S.},
  JOURNAL = {To appear in Annales Henri Poincaré},
  Year = {2026}
}

@article{G24,
  TITLE = {Polynomial mixing for the white-forced wave equation on the whole line},
  AUTHOR = {Gao, P.},
  JOURNAL = {arXiv:2412.13230},
  Year = {2024}
}

@article{Ha02,
  TITLE = {Exponential mixing properties of stochastic {PDEs} through asymptotic coupling},
  AUTHOR = {Hairer, M.},
  JOURNAL = {Probab. Theory Related Fields},
  Year = {2002},
    VOLUME = {124},
  NUMBER = {3},
       PAGES = {345--380},

}

@article{BFZ23,
  author  = {Brze\'zniak, Z. and Ferrario, B. and Zanella, M.},
  title   = {Ergodic results for the stochastic nonlinear {S}chr\"odinger equation with large damping},
  journal = {J. Evol. Equ.},
  volume  = {23},
  number  = {1},
  pages   = {Paper No. 19, 31},
  year    = {2023},
}

@article{NS25,
  TITLE = {Polynomial mixing for the stochastic {S}chr\"odinger equation with large damping in the whole space},
  AUTHOR = {Nguyen, H. and  Seong, K.},
  JOURNAL = {arXiv:2512.24599},
  Year = {2025},
}

@article{CHW17,
     AUTHOR = {Chen, C.  and Hong, J.  and Wang, X.},
     TITLE = {Approximation of invariant measure for damped stochastic
              nonlinear {S}chr\"odinger equation via an ergodic numerical
              scheme},
   JOURNAL = {Potential Anal.},
     VOLUME = {46},
      YEAR = {2017},
    NUMBER = {2},
     PAGES = {323--367},
 }

@article{BCK14,
  TITLE = {Space-time stationary solutions for the {B}urgers equation},
  AUTHOR = {Bakhtin, Y. and Cator, E. and Khanin, K.},
JOURNAL = {J. Amer. Math. Soc.},
  
    VOLUME = {27},
      YEAR = {2014},
    NUMBER = {1},
     PAGES = {193--238},
  
}

@book{KS12,
  TITLE = {Mathematics of two-dimensional turbulence},
  AUTHOR = {S. Kuksin and A. Shirikyan},
  publisher = {Cambridge University Press},
  Year = {2012},
 
}

@incollection{Shi08,
  author    = {Shirikyan, A.},
  title     = {Exponential mixing for randomly forced partial differential equations: method of coupling},
  booktitle = {Instability in Models Connected with Fluid Flows. {II}},
  series    = {Int. Math. Ser. (N.~Y.)},
  volume    = {7},
  pages     = {155--188},
  publisher = {Springer},
  address   = {New York},
  year      = {2008},
}

@book{HW19,
  author    = {Hong, J. and Wang, X.},
  title     = {Invariant measures for stochastic nonlinear {S}chr\"odinger equations},
  series    = {Lecture Notes in Math.},
  volume    = {2251},
  publisher = {Springer},
  address   = {Singapore},
  year      = {2019},
}

@article{GL26,
  TITLE = {Polynomial mixing for stochastic viscous conservation law equation on the whole line},
  AUTHOR = {Gao, P. and Li, L.},
  JOURNAL = {arXiv:2608.29824},
  Year = {2026},
}

@article{Bak13,
  author  = {Bakhtin, Y.},
  title   = {The {B}urgers equation with {P}oisson random forcing},
  journal = {Ann. Probab.},
  volume  = {41},
  number  = {4},
  pages   = {2961--2989},
  year    = {2013}
}

@article{BaL19,
  author  = {Bakhtin, Y. and Li, L.},
  title   = {Thermodynamic limit for directed polymers and stationary solutions of the {B}urgers equation},
  journal = {Comm. Pure Appl. Math.},
  volume  = {72},
  number  = {3},
  pages   = {536--619},
  year    = {2019}
}

@article{DGR21,
  author  = {Dunlap, A. and Graham, C. and Ryzhik, L.},
  title   = {Stationary solutions to the stochastic {B}urgers equation on the line},
  journal = {Comm. Math. Phys.},
  volume  = {382},
  number  = {2},
  pages   = {875--949},
  year    = {2021}
}

@incollection{Deb13,
  author    = {Debussche, A.},
  title     = {Ergodicity results for the stochastic {N}avier--{S}tokes equations: an introduction},
  booktitle = {Topics in Mathematical Fluid Mechanics},
  series    = {Lecture Notes in Math.},
  volume    = {2073},
  pages     = {23--108},
  publisher = {Springer},
  address   = {Heidelberg},
  year      = {2013}
}

@article{Ner22,
  author  = {Nersesyan, V.},
  title   = {Ergodicity for the randomly forced {N}avier--{S}tokes system in a two-dimensional unbounded domain},
  journal = {Ann. Henri Poincar\'e},
  volume  = {23},
  number  = {6},
  pages   = {2277--2294},
  year    = {2022}
}

@article{HM06,
  author  = {Hairer, M. and Mattingly, J. C.},
  title   = {Ergodicity of the 2{D} {N}avier--{S}tokes equations with degenerate stochastic forcing},
  journal = {Ann. of Math. (2)},
  volume  = {164},
  number  = {3},
  pages   = {993--1032},
  year    = {2006}
}

@article{EMS01,
  author  = {E, W. and Mattingly, J. C. and Sinai, Ya.},
  title   = {Gibbsian dynamics and ergodicity for the stochastically forced {N}avier--{S}tokes equation},
  journal = {Comm. Math. Phys.},
  volume  = {224},
  number  = {1},
  pages   = {83--106},
  year    = {2001}
}

@article{BDHPS03,
  author  = {Briand, Ph. and Delyon, B. and Hu, Y. and Pardoux, E. and Stoica, L.},
  title   = {{$L^p$ solutions of backward stochastic differential equations}},
  journal = {Stochastic Process. Appl.},
  volume  = {108},
  number  = {1},
  pages   = {109--129},
  year    = {2003}
}

@article{FM95,
  author  = {Flandoli, F. and Maslowski, B.},
  title   = {Ergodicity of the 2-{D} {N}avier--{S}tokes equation under random perturbations},
  journal = {Comm. Math. Phys.},
  volume  = {172},
  number  = {1},
  pages   = {119--141},
  year    = {1995}
}

@article{KS17,
  author  = {Kuksin, S. and Shirikyan, A.},
  title   = {Rigorous results in space-periodic two-dimensional turbulence},
  journal = {Physics of Fluids},
  volume  = {29},
  pages   = {125106},
  year    = {2017}
}

@article{KS02,
  author  = {Kuksin, S. and Shirikyan, A.},
  title   = {Coupling approach to white-forced nonlinear {PDE}s},
  journal = {J. Math. Pures Appl. (9)},
  volume  = {81},
  number  = {6},
  pages   = {567--602},
  year    = {2002}
}

@article{FP67,
  author  = {Foia{\c{s}}, C. and Prodi, G.},
  title   = {Sur le comportement global des solutions non-stationnaires des \'equations de {N}avier--{S}tokes en dimension 2},
  journal = {Rend. Sem. Mat. Univ. Padova},
  volume  = {39},
  pages   = {1--34},
  year    = {1967}
}

@article{Mar14,
  author  = {Martirosyan, D.},
  title   = {Exponential mixing for the white-forced damped nonlinear wave equation},
  journal = {Evol. Equ. Control Theory},
  volume  = {3},
  number  = {4},
  pages   = {645--670},
  year    = {2014}
}

@article{Kim06,
  author  = {Kim, J. U.},
  title   = {Invariant measures for a stochastic nonlinear {S}chr\"odinger equation},
  journal = {Indiana Univ. Math. J.},
  volume  = {55},
  number  = {2},
  pages   = {687--717},
  year    = {2006}
}
\end{document}